\def\ol#1{\overline{#1}}
\def\wt#1{\widetilde{#1}}
\theoremstyle{plain}
    \newtheorem{theorem}{Theorem}[section]
    \newtheorem{theorem*}{Theorem}
    \newtheorem{proposition}[theorem]{Proposition}
    \newtheorem{proposition-definition}[theorem]{Proposition-Definition}
    \newtheorem{lemma}[theorem]{Lemma}
    \newtheorem{corollary}[theorem]{Corollary}
      \newtheorem{conjecture}[theorem]{Conjecture}
\theoremstyle{definition}
    \newtheorem{definition}[theorem]{Definition}
    \newtheorem{example}[theorem]{Example}
    \newtheorem{remark}[theorem]{Remark}
    \newtheorem{hypothesis}[theorem]{Hypothesis}
    \newtheorem{condition}[theorem]{Condition}
\def\Alphabet{A,B,C,D,E,F,G,H,I,J,K,L,M,N,O,P,Q,R,S,T,U,V,W,X,Y,Z}
\def\alphabet{a,b,c,d,e,f,g,h,i,j,k,l,m,n,o,p,q,r,s,t,u,v,w,x,y,z}
\def\endpiece{xxx}
\def\makeAlphabet[#1]{\expandafter\makeA#1,xxx,}
\def\makealphabet[#1]{\expandafter\makea#1,xxx,}
\def\makeA#1,{\def\temp{#1}\ifx\temp\endpiece\else%
\mkbb{#1}\mkfrak{#1}\mkbf{#1}\mkcal{#1}\mkscr{#1}\mkbs{#1}\expandafter\makeA\fi}%
\def\makea#1,{\def\temp{#1}\ifx\temp\endpiece\else\mkfrak{#1}\mkbf{#1}\mkbs{#1}\expandafter\makea\fi}%
\def\mkbb#1{\expandafter\def\csname bb#1\endcsname{\mathbb{#1}}}
\def\mkfrak#1{\expandafter\def\csname fr#1\endcsname{\mathfrak{#1}}}
\def\mkbf#1{\expandafter\def\csname b#1\endcsname{\mathbf{#1}}}
\def\mkcal#1{\expandafter\def\csname c#1\endcsname{\mathcal{#1}}}
\def\mkscr#1{\expandafter\def\csname s#1\endcsname{\mathscr{#1}}}
\def\mkbs#1{\expandafter\def\csname bs#1\endcsname{{\boldsymbol{#1}}}}
\def\makeop[#1]{\xmakeop#1,xxx,}
\def\mkop#1{\expandafter\def\csname #1\endcsname{{\mathrm{#1}}}} %
\def\xmakeop#1,{\def\temp{#1}\ifx\temp\endpiece\else\mkop{#1}\expandafter\xmakeop\fi}%
\def\makeup[#1]{\xmakeup#1,xxx,}
\def\mkup#1{\expandafter\def\csname #1\endcsname{{\mathrm{#1}\,}}} %
\def\xmakeup#1,{\def\temp{#1}\ifx\temp\endpiece\else\mkup{#1}\expandafter\xmakeup\fi}%
\def\cLog{\cL\!\operatorname{og}}
\def\sLog{\sL\!\!\operatorname{og}}
\def\bbLog{\bbL\!\operatorname{og}}
\def\bsalpha{{\boldsymbol{\alpha}}}
\def\bsbeta{{\boldsymbol{\beta}}}
\def\absv#1{|#1|}
\def\pol{\boldsymbol{\operatorname{pol}}}
\def\bone{\mathbbm{1}}
\def\pair#1{\langle#1\rangle}
\def\bra#1{{\langle #1\rangle}}
\def\al{\alpha}
\def\bsal{\bsalpha}
\def\T{{\bbT}}
\def\b{\bullet}
\def\a{\bsalpha}
\def\Ft{F^\times_+}
\newcommand{\isomto}{\stackrel{\cong}{\to}}
\def\Nekovar{Nekov\'a\v r }
\def\hyp{\text{-}}
\def\Hdg{{}}
\def\ve{\varepsilon}
\newcommand{\dlog}{d\!\log}
\newcommand{\lra}{\longrightarrow}
\newcommand{\eps}{\varepsilon}
\renewcommand{\det}{\operatorname{\mathrm{det}}}
\newcommand{\Nr}{\mathrm{N}}
\DeclareMathOperator*{\bigboxtimes}{\scalerel*{\boxtimes}{\sum}}
\newcommand{\hooklongrightarrow}{\lhook\joinrel\longrightarrow}
\begin{document}

\title[Polylogarithm and the Shintani Generating Class]{The Hodge Realization of the 
Polylogarithm and the Shintani Generating Class for Totally Real Fields}
\author[Bannai]{Kenichi Bannai$^{*\diamond}$}
\email{bannai@math.keio.ac.jp}
\address{${}^*$Department of Mathematics, Faculty of Science and Technology, Keio University, 3-14-1 Hiyoshi, Kouhoku-ku, Yokohama 223-8522, Japan}
\address{${}^\diamond$Mathematical Science Team, RIKEN Center for Advanced Intelligence Project (AIP),1-4-1 Nihonbashi, Chuo-ku, Tokyo 103-0027, Japan}
\address{${}^\circ$Department of Mathematics, Cooperative Faculty of Education, Gunma University,
Maebashi, Gunma 371-8510, Japan}

\author[Bekki]{Hohto Bekki$^{*}$}
\author[Hagihara]{Kei Hagihara$^{*}$}
\author[Ohshita]{Tatsuya Ohshita$^\circ$}
\author[Yamada]{Kazuki Yamada$^*$}
\author[Yamamoto]{Shuji Yamamoto$^{*}$}

\date{\today}
\date{\today\quad(Version $2.0$)}
\begin{abstract}
	In this article, 
	we construct the Hodge realization of the \emph{polylogarithm class} in the equivariant Deligne--Beilinson cohomology 
	of a certain algebraic torus associated to a totally real field. 
	We then prove that the de Rham realization of this polylogarithm gives the \textit{Shintani generating class}, 
	a cohomology class generating the values of the Lerch zeta functions of the totally real field at nonpositive integers. 
	Inspired by this result, we give a conjecture concerning the specialization 
	of this polylogarithm class at torsion points, 
	and discuss its relation to the Beilinson conjecture for Hecke characters of totally real fields.
\end{abstract}
\thanks{This research is supported by KAKENHI JP18H05233, JP20J01008, 
JP20K14295 and JP21K18577.
The topic of research initiated from the KiPAS program FY2014--2018 of the Faculty of Science and Technology at Keio University.}
\subjclass[2010]{Primary: 11G55, 14C30, 11M35, Secondary: 11R42, 11R80} 
\maketitle 
\setcounter{tocdepth}{1}
\setcounter{section}{0}

\tableofcontents 

%
%
%
%
%
\section{Introduction}
%
%
%
%
%

The \textit{Beilinson conjecture} for algebraic varieties and more general 
motives defined over algebraic number fields 
is a conjecture relating the special values of Hasse-Weil $L$-functions 
to the determinant of the regulator map from motivic cohomology to Deligne--Beilinson cohomology. 
This conjecture is a vast generalization of the Gross conjecture for special values of Artin $L$-functions \cite{Gro79}, 
based on the work by Borel \cite{Bor77} on special values of Dedeking zeta functions. 
The general strategy for the proof of this conjecture is to construct explicit elements in motivic cohomology 
whose image by the regulator map is amenable to calculation. 
Beilinson originally proved the Beilinson conjecture for the noncritical values of Dirichlet $L$-functions 
by constructing the \textit{cyclotomic elements} in the motivic cohomology, or equivalently $K$-groups, 
associated to the spectrum of cyclotomic fields \cites{Bei84,Neu88}.  
Beilinson and Deligne provided a universal construction of the cyclotomic elements 
in terms of the \textit{motivic polylogarithm}, a class constructed 
in the motivic cohomology of the projective line minus three points. 
This allows for a simplified and conceptual proof of the Beilinson conjecture for 
the Dirichlet $L$-function, by reducing the complicated 
calculation of the regulator to the calculation of the Hodge realization of the polylogarithm class 
\cites{BD94,Del89,HW98}. 
Although perhaps not written explicitly in literature, 
similar strategy may be applied to the proof of the Beilinson conjecture for Hecke character
associated to imaginary quadratic fields \cite{Den97}, originally proved by Deninger \cites{Den89,Den90},
using the elliptic polylogarithm constructed by Beilinson and Levin \cite{BL94}.

In this article, we prospect a similar strategy for Hecke $L$-functions
of totally real fields, using an equivariant version of the polylogarithm for certain algebraic
tori associated to totally real fields. 
Let $F$ be a totally real field and $g\coloneqq[F\colon\bbQ]$.  
We denote by $\frI$ the set of nonzero fractional ideals of $F$,
and by $\Ft$ the multiplicative group of totally positive elements of $F$.
For any $\fra\in\frI$, we let
$
	\T^\fra\coloneqq\Hom(\fra,\bbG_m),
$
which is a $g$-dimensional affine group scheme over $\bbQ$. 
If we let 
\begin{align*}
	U&\coloneqq\coprod_{\fra\in\frI}U^\fra
\end{align*}
for $U^\fra\coloneqq\T^\fra\setminus\{1\}$, then $U$ has a natural action of $\Ft$. 
We regard the quotient stack $U/\Ft$ as a generalization of $\bbP^1\setminus\{0,1,\infty\}$; 
indeed, when $F=\bbQ$, we have $U/\Ft\cong\bbG_m\setminus\{1\}=\bbP^1\setminus\{0,1,\infty\}$. 

The polylogarithm classes of general commutative group schemes were constructed by Huber and Kings \cite{HK18}. 
We consider an equivariant version of their construction in the case of $U/\Ft$. 
More precisely, we construct an element of the equivariant Deligne--Beilinson cohomology $H^{2g-1}_{\sD}(U/\Ft,\bbLog)$ 
with coefficients in the logarithm sheaf $\bbLog$, which is a certain equivariant 
pro-unipotent variation of mixed $\bbR$-Hodge structures on $U$. 
Since the general theory of equivariant variations of mixed $\bbR$-Hodge structures 
and equivariant Deligne-Beilinson cohomology
are not yet sufficiently developed,
we give a definition of equivariant Deligne-Beilinson cohomology $H^{m}_{\sD}(U/\Ft,\bbLog)$
specific to our case,
using the logarithmic Dolbeault complex
 (see Propositions \ref{prop: construction} and \ref{prop: A11}).
 
 We define the polylogarithm class in equivariant Deligne-Beilinson cohomology as follows.

\begin{definition}[= Definition \ref{def: polylog class}]\label{def: polylog}
	Let $\Cl^+_F$ denote the narrow ideal class group of $F$. 
	We define the polylogarithm class 
	\[
		\pol\in H^{2g-1}_{\sD}(U/\Ft,\bbLog)
	\]
	to be the class in the equivariant Deligne--Beilinson cohomology of $U$ with coefficients in $\bbLog$
	that maps to $(1,\ldots,1)\in \bigoplus_{\Cl^+_F}\bbR$ through the isomorphism
	$H^{2g-1}_{\sD}(U/\Ft,\bbLog)\cong\bigoplus_{\Cl^+_F}\bbR$ given in Proposition \ref{prop: key}. 
\end{definition}

When $F=\bbQ$, the polylogarithm class $\pol$ coincides with the Hodge realization of the polylogarithm class 
constructed by Beilinson and Deligne on the projective line minus three points.

Previously in \cites{BHY19,BHY20}, we constructed the Shintani generating class,
which is a certain equivariant cohomology class 
on $U$ generating the values of the Lerch zeta function of $F$ at nonpositive integers.
Our main theorem relates our polylogarithm class with the Shintani generating class.

\begin{theorem}[= Theorem \ref{thm: main}]\label{thm: intro}
	Under a natural inclusion
	\[
		H^{2g-1}_{\sD}(U/\Ft,\bbLog)\hookrightarrow H^{2g-1}_\dR(U/\Ft,\bbLog\otimes\sO_U), 
	\]
	the polylogarithm class $\pol$ maps to the de Rham Shintani class $\cS$, 
	constructed from the Shintani generating class.
\end{theorem}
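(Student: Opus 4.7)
The plan is to exploit the fact that $\pol$ is characterized by its image under the isomorphism $H^{2g-1}_{\sD}(U/\Ft,\bbLog)\cong\bigoplus_{\Cl^+_F}\bbR$ of Proposition \ref{prop: key}. If the natural inclusion $\iota\colon H^{2g-1}_{\sD}(U/\Ft,\bbLog)\hookrightarrow H^{2g-1}_\dR(U/\Ft,\bbLog\otimes\sO_U)$ fits into a commutative square with an analogous de Rham characterization
\[
	H^{2g-1}_\dR(U/\Ft,\bbLog\otimes\sO_U)\longrightarrow\bigoplus_{\Cl^+_F}\bbR
\]
extending the Deligne--Beilinson one, then it suffices to verify that $\cS$ is sent to $(1,\ldots,1)$ by the de Rham characterization, because then $\iota(\pol)$ and $\cS$ have the same image and injectivity of $\iota$ forces the two to coincide.

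The first step is therefore to construct such a commutative diagram. One expects the isomorphism of Proposition \ref{prop: key} to be obtained by combining a localization or residue along (a torsor under) the unit section of each component $U^\fra$ with the canonical projection of $\bbLog$ onto its degree-zero Hodge-graded piece, and this recipe should lift verbatim to the de Rham level once one has the equivariant de Rham formalism for $\bbLog\otimes\sO_U$ provided by Hypothesis \ref{hypo: RGamma_Hdg Log}. The second step is to compute the image of $\cS$ under the de Rham characterization. Here one invokes the explicit construction of the Shintani generating class in \cites{BHY19,BHY20}: $\cS$ is represented by explicit Shintani-type differential forms on the tori $\T^\fra$ whose expansions at the unit section encode the special values of partial Lerch zeta functions of $F$ at nonpositive integers. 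The computation then reduces to summing these residues over a Shintani decomposition of a fundamental domain for the $\Ft$-action, and the combinatorics of the parametrization by $\Cl^+_F$ should produce $(1,\ldots,1)$ on the nose.

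The hard part will be the first step: one must build the de Rham version of Proposition \ref{prop: key} in a manner genuinely compatible with the Deligne--Beilinson side. In particular, to ensure that the comparison square commutes, one has to track the interplay of the weight and Hodge filtrations on $\bbLog$ between the two cohomology theories inside the equivariant framework stipulated by Hypothesis \ref{hypo: RGamma_Hdg Log}, and to verify that the explicit differential-form representatives of $\cS$ lie in the appropriate subspace responsible for the de Rham characterization. Once this compatibility is in place, the remaining numerical identification is an instance of the Shintani-style residue computations already carried out in our previous work on the Shintani generating class.
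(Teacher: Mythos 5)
Your overall strategy --- build a de Rham \emph{characterization} compatible with the one on the Deligne--Beilinson side, compute the image of $\cS$, and compare --- is the same skeleton as the paper's proof, which reduces Theorem \ref{thm: main} to the characterization in Proposition \ref{prop: characterization}. But as written your argument has a genuine gap at its central step, and it would not close.

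The claim that $\rho(\iota(\pol))=\rho(\cS)$ together with injectivity of $\iota$ forces $\iota(\pol)=\cS$ is a logical error: injectivity of $\iota$ says something about its \emph{domain} $H^{2g-1}_{\sD}(U/\Ft,\bbLog)$, whereas you need to distinguish two elements of its \emph{codomain} $H^{2g-1}_\dR(U/\Ft,\cLog)$. What you would actually need is injectivity of the de Rham map $\rho$ on a subspace containing both $\iota(\pol)$ and $\cS$. This fails badly: $H^{2g-1}_\dR(U/\Ft,\cLog^N)=H^{2g-1}(U/\Ft,\bbLog^N)\otimes\bbC$ is much larger than $\bigoplus_{\Cl^+_F}\bbR$ --- by Proposition \ref{prop: splits} and Corollary \ref{cor: splits} it contains, e.g., a copy of $\bbR(-g)\otimes\bbC$ for each narrow class. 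A single residue-type functional to $\bigoplus_{\Cl^+_F}\bbR$ cannot determine an element. Alternatively, you could try to first prove $\cS\in\operatorname{Im}(\iota)$ and then use $\rho\circ\iota$ being an isomorphism; but $\cS\in\operatorname{Im}(\iota)$ is precisely (most of) the content of the theorem, so this would be circular.

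The missing idea is that the de Rham Shintani class must be pinned down by \emph{two} conditions, not one, and this is exactly what Proposition \ref{prop: characterization} supplies: (i) the value of the composite of the forgetful map $H^{2g-1}_\dR(U/\Ft,\cLog^N)\to\prod_\fra H^{2g-1}_\dR(U^\fra,\cLog^N_\fra)$ with the residue maps $\res_\fra$ to $\prod_\fra\prod_{\absv{\bsk}\le N}\bbC\,u_\fra^\bsk$, \emph{and} (ii) the vanishing of the image in $H^{2g-1}_\dR(U/\Ft,\cLog^0)$. The uniqueness for $g>1$ comes from the short exact sequence
\[
0\lra H^{g-1}(\Ft,H^g_\dR(U,\cLog^N))\lra H^{2g-1}_\dR(U/\Ft,\cLog^N)\lra H^{2g-1}_\dR(U,\cLog^N)^{\Ft}\lra 0,
\]
where the right-hand part is detected by (i) and the left-hand part, being independent of $N$, is detected by (ii) via the comparison with the $N=0$ case. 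The proof of the theorem then checks (i) for $\pol^N$ by chasing it through $H^{2g-1}_{\sD}\to H^{2g-1}\to\prod_\fra H^{2g-1}(U^\fra,\bbLog^N_\fra)\to\prod_\fra\bigl(\prod_k\Sym^k\bbR(\bone)\bigr)(-g)$ and using a commutative square with the de Rham residue, and (ii) follows from Proposition \ref{prop: key} since $H^{2g-1}_{\sD}(U/\Ft,\bbLog^0)=0$. Your sketch of the residue computation for $\cS$ is plausible (it amounts to property (i), which is read off from the explicit cocycle \eqref{eq: S^fra_bsalpha}), but without condition (ii) and the spectral-sequence uniqueness argument, the comparison does not determine $\cS$.
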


Theorem \ref{thm: intro} indicates that our polylogarithm class is related to 
the critical values of Hecke $L$-functions, 
since these values are written in terms of the Lerch zeta values at nonpositive integers. 
We further expect that our polylogarithm class is related to the Lerch zeta values at positive integers, 
and hence to the \emph{noncritical} Hecke $L$-values. 
Let $\xi$ be a nontrivial torsion point in $U$, 
and let $\xi\Delta$ be the $\Delta$ orbit of $\xi$ with respect to the action of $\Delta\coloneqq\cO_{F+}^\times$.
Assuming further the existence of a conjectural \textit{equivariant plectic Hodge theory}, 
fitting in with the formalism of the category of mixed plectic $\bbR$-Hodge 
structures $\MHS^{\boxtimes I}_{\bbR}$ defined by \Nekovar and Scholl \cite{NS16},
we may define the specialization
\begin{equation}\label{eq: specialization}
	i_{\xi\Delta}^*\colon H^{2g-1}_{\sD}(U/\Ft,\bbLog)
	\rightarrow H^{2g-1}_{\sD^I}(\xi\Delta/\Delta,i_{\xi\Delta}^*\bbLog)
	\cong\prod_{n=1}^\infty(2\pi i)^{(n-1)g}\bbR
\end{equation}
with respect to the equivariant morphism 
$i_{\xi\Delta}\colon\xi\Delta\rightarrow U$. 
We propose the following: 

\begin{conjecture}[= Conjecture \ref{conj: i^*pol}]\label{conj}
	Let $\xi$ be a torsion point in $U$, and we let $\xi\Delta$ be the
	orbit of $\xi$ in $U$. Then 
	$i_{\xi\Delta}^*\pol\in H^{2g-1}_{\sD^I}(\xi\Delta/\Delta,i_{\xi\Delta}^*\bbLog)
	=\prod_{n=1}^\infty(2\pi i)^{(n-1)g}\bbR$ 
	satisfies
	\[
		i_{\xi\Delta}^*\pol=(d_F^{1/2}\cL^\infty(\xi\Delta,n))_{n=1}^\infty\in\prod_{n=1}^\infty(2\pi i)^{(n-1)g}\bbR. 
	\]
	Here $d_F$ denotes the discriminant of $F$ and 
	\[
		\cL^\infty(\xi\Delta,n)
		\coloneqq\begin{cases}
		     \Re \cL(\xi\Delta,n) & \text{if $(n-1)g$ is even}, \\
		     i\,\Im \cL(\xi\Delta,n) & \text{if $(n-1)g$ is odd}, 
		\end{cases}
    \]
	where $\cL(\xi\Delta,s)$ is the Lerch zeta function corresponding to the point $\xi$
	(see Definition \ref{def: Lerch}).
\end{conjecture}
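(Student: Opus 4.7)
The plan is to combine Theorem \ref{thm: intro}---which pins down the de Rham realization of $\pol$---with an independent computation of the Betti side of the specialization, exploiting the expected plectic structure of $\bbLog$ at torsion points. Granting equivariant plectic Hodge theory, the pullback $i_{\xi\Delta}^*\bbLog$ should factor as a plectic box product of one-variable logarithm sheaves indexed by the archimedean places of $F$, and correspondingly the target group in \eqref{eq: specialization} should decompose into components indexed by $n\geq 1$, with the $n$-th piece isomorphic to $(2\pi i)^{(n-1)g}\bbR$. The goal is then to identify the $n$-th component of $i_{\xi\Delta}^*\pol$ with the scalar $d_F^{1/2}\cL^\infty(\xi\Delta,n)$ predicted by the conjecture.

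The first step invokes Theorem \ref{thm: intro} together with the functoriality of the specialization map: the image of $i_{\xi\Delta}^*\pol$ in de Rham cohomology coincides with $i_{\xi\Delta}^*\cS$, and by the construction of the Shintani generating class in \cites{BHY19,BHY20} the $n$-th component of this specialization recovers the critical Lerch value $\cL(\xi\Delta,1-n)$. Invoking the functional equation for $\cL(\xi\Delta,s)$ then rewrites these critical values in terms of $\cL(\xi\Delta,n)$, the discriminant factor $d_F^{1/2}$, and a gamma factor whose phase places the result in the real part or in $i$ times the imaginary part according to the parity of $(n-1)g$---exactly the recipe defining $\cL^\infty(\xi\Delta,n)$. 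This pins down the de Rham projection of each component.

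The second step is to control the Betti side, since a Deligne--Beilinson class is not determined by its de Rham image alone. Here one would produce an explicit Betti representative of $\pol$ by adapting the cocycle construction of Huber--Kings \cite{HK18} to the equivariant setting on $U/\Ft$. Because the integral structure of $\bbLog$ at an $\fra$-torsion point is controlled by characters of the finite torsion subgroup through which $\xi$ factors, the Betti realization of $i_{\xi\Delta}^*\pol$ should reduce to an explicit finite character sum at $\xi$, which can then be matched, component by component, with the Dirichlet series expansion of $\cL(\xi\Delta,n)$ at the positive integers $s=n$.

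The principal obstacle is foundational: the equivariant plectic Hodge theory of \Nekovar--Scholl \cite{NS16} required even to define \eqref{eq: specialization} does not yet exist, so any proof will be conditional on that framework. Beyond this, the deepest analytic ingredient is the compatibility of the plectic decomposition with the functional equation for the Lerch zeta function, which governs the passage between critical and noncritical values and lies at the heart of the conjecture. As a sanity check, the case $F=\bbQ$ collapses to the classical Beilinson--Deligne computation of the cyclotomic polylogarithm at roots of unity, which recovers the standard expression of polylogarithm values in terms of Dirichlet $L$-values.
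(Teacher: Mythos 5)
This statement is Conjecture~\ref{conj: i^*pol} in the paper; the paper deliberately does \emph{not} prove it, and describes it only as ``inspired by'' Theorem~\ref{thm: main} and Theorem~\ref{thm: generate}. Your proposal therefore cannot be compared against a proof in the paper; the question is whether your strategy would actually close the gap. It would not, for the following structural reason. By Lemma~\ref{lem: splitting plectic cohomology} and its proof, the target $H^{2g-1}_{\sD^I}(\xi\Delta/\Delta,i^*_{\xi\Delta}\bbLog)$ is computed entirely by the $E_2^{g,g-1}$--term of the spectral sequence \eqref{eq: ss3}, namely $\Ext^g_{\MHS^{\boxtimes I}_\bbR}\bigl(\bbR(0_I),H^{g-1}(\xi\Delta/\Delta,i^*_{\xi\Delta}\bbLog)\bigr)$. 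In particular $H^{2g-1}(\xi\Delta/\Delta,-)$ itself vanishes: $\xi\Delta$ is zero-dimensional and $\Delta_\xi\cong\bbZ^{g-1}$, so the equivariant sheaf (hence also de~Rham) cohomology lives only in degrees $0\le m\le g-1$, and $2g-1>g-1$ whenever $g\ge 1$. Your first step --- that ``the image of $i_{\xi\Delta}^*\pol$ in de~Rham cohomology coincides with $i_{\xi\Delta}^*\cS$'' and gives the critical values $\cL(\xi\Delta,1-n)$ --- is invoking a target group that is zero; the de~Rham realization of a class living entirely in the plectic $\Ext^g$--direction contains no information. Theorem~\ref{thm: main} identifies $\pol$ and $\cS$ in $H^{2g-1}_{\dR}(U/\Ft,\cLog)$ and Theorem~\ref{thm: generate} relates $\partial^k\cG(\xi)$ (a class in $H^{g-1}$, obtained from $\cG$ by the operator $\partial$, not by specializing $\cS$) to $\cL(\xi\Delta,-k)$, but neither result transports through the specialization $i^*_{\xi\Delta}$ to produce a nonzero de~Rham class in the degree relevant to the conjecture.

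A second gap is in the ``functional equation'' step. The paper establishes functional equations for Hecke $L$-functions (Theorem~\ref{thm: Functional equations}), and the passage to Lerch zeta functions runs through Gauss sums and sums over ray classes (Theorem~\ref{thm: Hecke vs Lerch}, Theorem~\ref{thm: Lerch vs Artin}); a \emph{single} $\cL(\xi\Delta,s)$ does not cleanly satisfy a one-variable functional equation relating $s$ to $1-s$ with only the factors $d_F^{1/2}$ and a phase. Even granting all that, the fundamental missing ingredient is the identification of the $\Ext^g$--period of the specialized class with the Dirichlet series value $\cL^\infty(\xi\Delta,n)$. This is the genuinely transcendental content of the conjecture and is not reducible to a Betti cocycle computation plus formal manipulations: already in the $g=1$ case the statement is the Beilinson--Deligne theorem (cited in the paper as \cite{HW98}*{Proposition V2.1}), which is obtained by an explicit construction and evaluation of the extension class of $\pol$ at roots of unity, not by the de~Rham-plus-functional-equation route you outline. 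So your proposal identifies correctly where the difficulties lie (absence of equivariant plectic Hodge theory, and the critical-vs-noncritical bridge), but the strategy as written has a false first step and leaves untouched the core regulator computation that the conjecture actually asserts.
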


In a subsequent research, we will also consider a syntomic version of this conjecture.

If $F=\bbQ$, then the Lerch zeta function for a nontrivial torsion point $\xi\in\bbG_m(\bbC)$ is given as
$\cL(\xi,s)=\sum_{n=1}^\infty \xi^n n^{-s}$, 
hence we have $\cL(\xi,k)=\Li_k(\xi)$, where $\Li_k(\xi)$ is the value at $\xi$ of the $k$-th polylogarithm function
given by the series $\Li_k(t)\coloneqq\sum_{n=1}^\infty t^n n^{-k}$. 
Therefore Conjecture \ref{conj} is a direct generalization
of the result of Beilinson and Deligne on the calculation of the specialization of $\pol$ at 
nontrivial roots of unity. Recall that this calculation and the motivicity of the polylogarithm class 
lead to a conceptual proof of the Beilinson conjecture for the Dirichlet $L$-functions. 
In \S\ref{subsec: plectic roadmap}, we will explore how Conjecture \ref{conj} and some kind of motivicity of $\pol$
imply the Beilinson conjecture for the Hecke $L$-function of totally real fields. 

We remark that Beilinson, Kings and Levin \cite{BKL18} 
introduced the topological polylogarithm of totally real fields 
and investigated its relation to the critical Hecke $L$-values. 
It seems interesting to clarify the precise relation between their topological polylogarithm 
and our polylogarithm class $\pol$. 

The precise contents of this article are as follows.
In \S\ref{sec: log}, we give the definition of equivariant variation of mixed Hodge structures, 
and discuss their cohomology theory under a basic assumption (Condition \ref{cond: RGamma_Hdg}). 
In \S\ref{sec: log sheaf}, we introduce the logarithm sheaf $\bbLog$, 
which is an equivariant variation of mixed Hodge structures on $\bbT=\coprod \bbT^\fra$. 
In \S\ref{sec: polylog}, we construct the polylogarithm class $\pol$ in the equivariant 
Deligne--Beilinson cohomology of $U$ with coefficients in $\bbLog$. 
The construction of this cohomology group itself is postponed to the Appendix. 
In \S\ref{sec: shintani}, we recall the construction of the Shintani generating class $\cG$, 
and introduce its modification $\cS$, the de Rham Shintani class. 
Then we give a precise relation between $\pol$ and $\cS$ (Theorem \ref{thm: intro}). 
In \S\ref{sec: plectic}, assuming the existence of a suitable formalism of plectic Hodge theory, 
we construct the plectic polylogarithm in the equivariant plectic Deligne--Beilinson cohomology
$H^{2g-1}_{\sD^I}(U/\Ft,\bbLog)$.  In fact, we prove that this cohomology is isomorphic 
to the (non-plectic) equivariant Deligne--Beilinson cohomology $H^{2g-1}_{\sD}(U/\Ft,\bbLog)$, and the 
plectic polylogarithm coincides with the polylogarithm of Definition \ref{def: polylog}
through this isomorphism (see Remark \ref{rem: same}).
We then state Conjecture \ref{conj} concerning the specialization of the polylogarithm class
and discuss its relation to the Beilinson conjecture for the Hecke $L$-function of totally real fields. 

\subsection{Notation and Convention}
In this article, we will consider certain kinds of ``spaces'' (e.g., schemes or complex manifolds) 
and ``sheaves'' on them (e.g., $\sO$-modules, local systems or variations of mixed $\bbR$-Hodge structures). 
In general, the pullback functor on the category of such sheaves along a morphism $f$ of such spaces is 
denoted by $f^*$, as opposed to $f^{-1}$ which denotes the pullback of the underlying abelian sheaves. 
For example, when $f\colon X\to Y$ is a morphism of schemes, we have 
$f^*\sF=f^{-1}\sF\otimes_{f^{-1}\sO_Y}\sO_X$ for an $\sO_Y$-module $\sF$. 

We often regard a smooth algebraic variety $X$ over $\bbC$ as a complex manifold, 
and denote by $\Omega^\b_X$ the complex of sheaves of holomorphic differential forms on $X$.

The symbol $\bbN$ denotes the set of non-negative integers: $\bbN=\{0,1,2,\ldots\}$.

%
%
%
%
%
\section{Equivariant Variations of Mixed Hodge Structures}\label{sec: log}
%
%
%
%
%

In this section, we briefly review the theory of variations of mixed Hodge structures 
and introduce its equivariant version under a certain hypothetical condition (Condition \ref{cond: RGamma_Hdg}). 
In particular, we give a construction of equivariant Deligne--Beilinson cohomology with coefficients 
assuming this condition.  Since the general theory of equivariant variations of mixed $\bbR$-Hodge structures
is not yet fully developed, the above condition has not yet been verified.  In this article, motivated by the above
consideration, we will use a definition of equivariant Deligne-Beilinson cohomology specific to our case
which we give in the Appendix (Definition \ref{def: A10}, see also Proposition \ref{prop: construction}).

%
%
\subsection{Variations of Mixed $\bbR$-Hodge Structures}\label{subsec: variations}
%
%

In this subsection, we review the basics of the category of variations of 
mixed $\bbR$-Hodge structures.
See for example \cite{BZ90}, or \cite{BHY18}*{\S2} for details.

\begin{definition}\label{def: MHS}
    Let $V=(V,W_\bullet,F^\bullet)$ be a triple consisting of a finite dimensional $\bbR$-vector space $V$, 
    a finite ascending filtration $W_\bullet$ on $V$ by $\bbR$-linear subspaces, and a finite descending 
    filtration $F^\bullet$ on $V_\bbC\coloneqq V\otimes\bbC$ by $\bbC$-linear subspaces.
    We say that $V$ is a \textit{mixed $\bbR$-Hodge structure} if 
    \[
    	\Gr^W_nV_\bbC=\bigoplus_{p+q=n}(F^p\cap\bar F^q)\Gr^W_nV_\bbC
    \]
    holds for each $n\in\bbZ$, where $\Gr^W_nV_\bbC\coloneqq W_nV_\bbC/W_{n-1}V_\bbC$ and 
    the filtrations $F^\b$ and $\bar F^\b$ on $\Gr^W_nV_\bbC$ are the filtrations induced by $F^\b$ on $V_\bbC$
    and its complex conjugate $\bar F^\b$.
    We denote by $\MHS_{\bbR}$ the category of mixed $\bbR$-Hodge structures, whose morphisms are $\bbR$-linear
    maps of the underlying $\bbR$-vector spaces compatible with the filtrations $W_\bullet$ and $F^\bullet$.
    We say that $V$ is \emph{pure of weight $n$} if $V=\Gr^W_nV$.
\end{definition}

\begin{example}\label{ex: Tate object}
    For each $k\in\bbZ$, the \emph{Tate object} $\bbR(k)$ in $\MHS_\bbR$ is defined as follows. 
    We set $\bbR(k)\coloneqq (2\pi i)^k\bbR$ as an $\bbR$-vector space, and define the filtrations by 
    \[W_{-2k-1}\bbR(k)\coloneqq 0,\quad W_{-2k}\bbR(k)\coloneqq\bbR(k),\quad 
    F^{-k}\bbR(k)_\bbC\coloneqq\bbR(k)_\bbC,\quad F^{-k+1}\bbR(k)_\bbC\coloneqq 0.\]
    This is a pure $\bbR$-Hodge structure of weight $-2k$. 
\end{example}

\begin{definition}
	Let $X$ be a complex manifold. 
	A \textit{variation of mixed $\bbR$-Hodge structures} on $X$ is 
	a triple $\bbV = (\bbV, W_\bullet, F^\bullet)$ consisting of an $\bbR$-local system $\bbV$ on $X$,
	a finite ascending filtration $W_\bullet$ of $\bbV$ by $\bbR$-local subsystems, and 
	a finite descending filtration $F^\bullet$ on $\cV \coloneqq \bbV\otimes\sO_X$ 
	by locally free $\sO_X$-submodules, satisfying the following conditions. 
	\begin{enumerate}
	    \item The Griffiths transversality
    	\[
	    	\nabla(F^p \cV) \subset F^{p-1}\cV\otimes\Omega^1_{X} 
	    \]
	    holds, where $\nabla$ is the integrable connection on $\cV$ defined by $\nabla\coloneqq 1\otimes d$. 
	    \item For any point $\xi\in X$ giving an inclusion $i_\xi: \{\xi\} \hookrightarrow X$, 
	    the triple $i^*_\xi \bbV \coloneqq( i^*_\xi \bbV, W_\bullet, F^\bullet)$ is a mixed 
	    $\bbR$-Hodge structure. Here $W_\bullet$ is the filtration on $i^*_\xi\bbV$
	    given by $W_n (i^*_\xi \bbV) \coloneqq i^*_\xi (W_n \bbV)$ for any $n\in\bbZ$, and 
	    $F^\bullet$ is the filtration on $i^*_\xi\bbV \otimes \bbC\cong i_\xi^*\cV$ 
	    given by $F^p(i^*_\xi \bbV \otimes\bbC) \coloneqq i^*_\xi (F^p \cV)$ for any $p\in\bbZ$.
	\end{enumerate}
\end{definition}

Later, we will need the notion of variation of mixed $\bbR$-Hodge structures 
on a (not necessarily finite) disjoint union of smooth algebraic varieties over $\bbC$, 
which is obviously defined. 

Any mixed $\bbR$-Hodge structure $V$ defines a variation of mixed $\bbR$-Hodge structures on $X$, 
by taking $\bbV$ to be the constant $\bbR$-local system on $X$ defined by $V$ with filtrations $W_\b$ 
and $F^\b$ induced by that on $V$. 
We call such $\bbV$ a \emph{constant} variation of mixed $\bbR$-Hodge structures. 

\begin{definition}
	Let $\bbV = (\bbV, W_\bullet, F^\bullet)$ be a variation of mixed $\bbR$-Hodge structures on $X$.  
	We say that $\bbV$ is \textit{unipotent}, if $\Gr^W_n \bbV \coloneqq (\Gr^W_n \bbV, F^\bullet)$ is 
	a constant variation of pure $\bbR$-Hodge structure of weight $n$,  
	where the filtration $F^\bullet$ on $\Gr^W_n \cV = \Gr^W_n \bbV \otimes \sO_X$ is 
	induced by $F^\bullet$ on $\cV=\bbV \otimes \sO_X$.
\end{definition}

We next define the admissibility of unipotent variation of mixed $\bbR$-Hodge structures. 
Let $X$ be a smooth algebraic variety over $\bbC$ and 
$X \hookrightarrow \ol X$ be a smooth compactification of $X$ such that 
the complement $D\coloneqq\ol X\setminus X$ is a normal crossing divisor. 
Let $\bbV = (\bbV, W_\bullet, F^\bullet)$ be a variation of mixed $\bbR$-Hodge structures on $X$. 
We let $\cV \coloneqq \bbV \otimes \sO_X$, with integrable connection $\nabla\coloneqq1\otimes d$. 
By \cite{Del70}*{Proposition 5.2}, there exists 
a canonical extension $\cV(D)$ of $\cV$, which is a certain locally free $\sO_{\ol X}$-module of finite type 
with integrable logarithmic connection along $D$ (denoted again by $\nabla$). 
Since $W_n\cV \coloneqq W_n\bbV\otimes \sO_X$ for $n\in\bbZ$ is an $\sO_X$-submodule of $\cV$ 
compatible with the connection, 
the canonical extension of $W_\bullet$ on $\cV$ defines a finite ascending 
filtration $W_\bullet$ on $\cV(D)$. 

\begin{definition}
	We say that the filtration $F^\bullet$ on $\cV$ \emph{extends} to $\cV(D)$ when 
	there exists a finite descending filtration $\wt{F}^\bullet$ on $\cV(D)$ 
	consisting of locally free $\sO_{\ol X}$-modules satisfying the following conditions: 
	\begin{enumerate}
	    \item The restriction of $\wt{F}^\bullet$ to $X$ coincides with $F^\bullet$.
	    \item The Griffiths transversality with respect to $\nabla$ holds. 
	    \item The graded quotients $\Gr_{\wt{F}}^p\Gr^W_n\cV(D)$ of the filtration on $\Gr^W_n\cV(D)$ 
	    induced by $\wt{F}^\bullet$ are locally free. 
	\end{enumerate}
	By abuse of notation, we often write $F^\bullet$ for such an extension $\wt{F}^\bullet$. 
\end{definition}

Kashiwara \cite{Kas86}*{\S1.9} introduced the notion of admissibility of a variation of mixed $\bbR$-Hodge structures. 
In the unipotent case, this is equivalent to the notion of goodness defined by Hain--Zucker, 
which may be described as follows (see \cite{HZ87}*{(1.5)} and the remark after it). 

\begin{proposition}\label{prop: admissible}
	A unipotent variation of mixed $\bbR$-Hodge structures $\bbV$ on $X$ is admissible 
	if and only if it satisfies the following conditions.
	\begin{enumerate}
		\item Let $\cV(D)$ be the canonical extension of $\cV$ to $\ol X$.  
		The filtration $F^\bullet$ of $\cV$ extends to a filtration $F^\bullet$ of $\cV(D)$
		compatible with $W_\bullet$. 
		\item For any component $D_i$ of $D$, the residue $\Res_{D_i}$ 
		of $\nabla$ along $D_i$ defined in \cite{Del70}*{(3.8.3)} satisfies 
		$\Res_{D_i}\bigl(W_n\cV(D)|_{D_i}\bigr) 
		\subset W_{n-2} \cV(D)|_{D_i}$ for any $n\in\bbZ$. 
	\end{enumerate}
\end{proposition}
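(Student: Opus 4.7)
The statement is essentially a translation of Kashiwara's general admissibility condition \cite{Kas86}*{\S1.9} into the more concrete terms given by Hain and Zucker \cite{HZ87}*{(1.5)} in the unipotent setting. The plan is to reconcile the two formulations.

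First I would recall that Kashiwara's definition of admissibility requires (a) the Hodge filtration $F^\b$ to extend to a filtration on the Deligne canonical extension $\cV(D)$ by locally free $\sO_{\ol X}$-submodules satisfying Griffiths transversality with locally free graded quotients, and (b) for each component $D_i$ of $D$, the existence of the relative monodromy filtration $M(N_i,W)$ of $N_i\coloneqq\log T_i^{\mathrm{unip}}$ on the nearby fibers, where $T_i$ is the local monodromy around $D_i$. Condition (a) is literally our condition (1), so the task reduces to interpreting (b) in the unipotent case.

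Next I would translate condition (b) in the unipotent setting. Since $\bbV$ is unipotent, the monodromy representation is automatically unipotent, and each $N_i$ acts trivially on $\Gr^W_n\bbV$ because the latter is a constant variation. Hence $N_i$ preserves $W_\b$ and induces zero on each graded piece. Under this triviality on graded pieces, the standard existence criterion for the relative monodromy filtration collapses to the single requirement $N_i(W_n)\subset W_{n-2}$ for every $n\in\bbZ$: indeed the usual convolution of $W_\b$ with the monodromy weight filtration on each $\Gr^W_n$ reduces to a mere shift of $W_\b$. By Deligne's description of the canonical extension \cite{Del70}*{(3.8.3)}, the residue $\Res_{D_i}$ is the fiberwise avatar of $N_i$ up to the standard $(2\pi i)^{-1}$ normalization, so this condition on $N_i$ translates verbatim into our condition (2).

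Combining the two translations yields the claimed equivalence. The main bookkeeping obstacle will be the careful identification of $\Res_{D_i}$ with $N_i$ up to the correct normalization, together with a precise statement of the existence criterion for $M(N_i,W)$ under the constraint that $N_i$ is zero on graded pieces. Both are standard consequences of the Deligne--Kashiwara formalism but require explicit invocation to be formally complete; in practice, I would simply cite \cite{HZ87}*{(1.5)}, which records the reduction in exactly the unipotent generality we need.
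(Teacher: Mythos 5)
The paper gives no proof of Proposition~\ref{prop: admissible}; it merely cites \cite{HZ87}*{(1.5)} and the remark following it for the equivalence with Kashiwara's admissibility in the unipotent case. Your proposal correctly reconstructs the content behind that citation and takes essentially the same route: you match condition (1) with Kashiwara's extension condition on the Hodge filtration, and you observe that since $\bbV$ is unipotent each residue $N_i$ vanishes on the weight-graded pieces, so that the relative monodromy filtration $M(N_i,W)$, if it exists, must coincide with $W$ itself (shifted trivially on each $\Gr^W_n$), whence its existence is equivalent to the shift $N_i(W_n)\subset W_{n-2}$, i.e.\ condition (2). The normalization of $\Res_{D_i}$ versus $N_i$ is a harmless nonzero scalar and does not affect the inclusion. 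Your sketch is correct and is in the spirit of what the paper delegates to Hain--Zucker.
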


The cohomology with coefficients in $\bbV$ is known to have a mixed $\bbR$-Hodge structure \cite{HZ87}*{(8.6)}. 
More precisely, we need an object of the derived category of $\MHS_\bbR$ whose cohomology gives $H^m(X,\bbV)$. 
Such an object can be systematically constructed by using Saito's theory of mixed Hodge modules as follows. 

\begin{definition}\label{def: RGamma_Hdg}
Let $\bbV$ be an admissible unipotent variation of mixed $\bbR$-Hodge structures on $X$. 
Regarding $\bbV$ as a mixed $\bbR$-Hodge module, we set 
\[R\Gamma_{\mathrm{Hdg}}(X,\bbV)\coloneqq \pi_*\bbV\in D^b(\MHS_\bbR), \]
where $\pi$ denotes the projection of $X$ onto a point (see \cite{Sa90}*{Theorems 0.2 and 0.1}). 
Note that, if we forget the mixed $\bbR$-Hodge structure, 
it coincides (in the derived category) with the complex $R\Gamma(X,\bbV)$ 
which gives the usual sheaf cohomology. 
\end{definition}

%
%
\subsection{Equivariant Sheaves and Their Cohomology}\label{subsec: equivariant sheaves}
%
%

We next introduce equivariant sheaves and their cohomology. 
Let $G$ be a group with identity $e$, and $X$ a topological space with a right $G$-action. 
We denote by  $\pair{u}\colon X \rightarrow X$ the action of $u\in G$, so that 
$\pair{uv} = \pair{v}\circ\pair{u}$ for any $u,v\in G$ holds. 

\begin{definition}\label{def: equivariant sheaf}
A \textit{$G$-equivariant sheaf} $\sF$ on $X$ is an abelian sheaf on $X$ equipped with 
an isomorphism $\iota_u\colon\pair{u}^{-1}\sF\isomto\sF$ for each $u\in G$, 
such that $\iota_e=\id$ and $\iota_u\circ\pair{u}^{-1}\iota_v=\iota_{uv}$. 
\end{definition}

The \textit{equivariant global section} of a $G$-equivariant sheaf $\sF$ is defined as 
\[
	\Gamma(X/G,\sF)\coloneqq\Hom_{\bbZ[G]}(\bbZ,\Gamma(X,\sF))=\Gamma(X,\sF)^G.
\]

\begin{definition}\label{def: equivariant}
	For any integer $m$, we define the equivariant cohomology $H^m(X/G,\sF)$ 
	of a $G$-equivariant sheaf $\sF$ to be the group obtained by applying to $\sF$ the 
	$m$-th right derived functor of $\Gamma(X/G,-)$.
\end{definition}

Note that there is a spectral sequence 
\[E_2^{p,q}=H^p(G,H^q(X,\sF))\Longrightarrow H^{p+q}(X/G,\sF). \]

Let $\pi\colon G\rightarrow H$ be a homomorphism between groups.
For topological spaces $X$ and $Y$ with actions of $G$ and $H$, respectively, 
we say that a continuous map $f\colon X\rightarrow Y$ is \textit{equivariant}, 
if the actions of $G$ and $H$ are compatible with $f$ through $\pi$. 
Then, for an $H$-equivariant sheaf $\sF$ on $Y$, $f^{-1}\sF$ naturally has a structure of 
a $G$-equivariant sheaf on $X$, and $f$ induces the pullback homomorphism on equivariant cohomology
\[
	f^{-1}\colon H^m(Y/H,\sF)\rightarrow H^m(X/G, f^{-1}\sF).
\]

When $X$ is a scheme, the notion of a \emph{$G$-equivariant $\sO_X$-module} is defined in a way similar 
to Definition \ref{def: equivariant sheaf}, with $\pair{u}^{-1}$ replaced by $\pair{u}^*$. 
Then the equivariant cohomology $H^m(X/G,\sF)$ is defined to be that of the underlying $G$-equivariant abelian sheaf. 
Moreover, for an equivariant morphism $f\colon X\to Y$, we have the pullback homomorphism 
\[
	f^*\colon H^m(Y/H,\sF)\rightarrow H^m(X/G, f^*\sF).  
\]

Now let $X$ be a smooth complex variety with a right $G$-action. 
Again, the notion of a \emph{$G$-equivariant variation of mixed $\bbR$-Hodge structures} is defined similarly, 
and the equivariant cohomology is defined to be that of the underlying $G$-equivariant local system. 

Let $\bbV$ be an admissible $G$-equivariant variation of mixed $\bbR$-Hodge structures on $X$, 
where the admissibility is defined by forgetting the $G$-action. 
As in the non-equivariant case, we want to equip the equivariant cohomology $H^m(X/G,\bbV)$ 
with a mixed $\bbR$-Hodge structure. For that purpose, we first consider the commutative diagram 
\[\xymatrix{
D^b(G\hyp\MHS_\bbR)\ar[d]\ar[r] & D^b(\MHS_\bbR)\ar[d]\\
D^b(G\hyp\Vec_\bbR)\ar[r] & D^b(\Vec_\bbR)
}\]
of forgetful functors, where $\Vec_\bbR$ denotes the category of $\bbR$-vector spaces and 
$G\hyp\MHS_\bbR$ (resp.\ $G\hyp\Vec_\bbR$) denotes the category of mixed $\bbR$-Hodge structures 
(resp.\ $\bbR$-vector spaces) with $G$-action. 
Forgetting $G$-equivariant structure, we have objects $R\Gamma_{\mathrm{Hdg}}(X,\bbV)$ of $D^b(\MHS_\bbR)$ and $R\Gamma(X,\bbV)$ of $D^b(\Vec_\bbR)$ by the construction of Definition \ref{def: RGamma_Hdg}.
Moreover $R\Gamma(X,\bbV)$ naturally lifts to an object of $D^b(G\hyp\Vec_\bbR)$, which is denoted by the same symbol 
(to see this, for instance, note that the Godement resolution inherits the $G$-action). 
Given these data, the following condition makes sense: 

\begin{condition}\label{cond: RGamma_Hdg}
There exists a lift of $R\Gamma_{\mathrm{Hdg}}(X,\bbV)$ in $D^b(G\hyp\MHS_\bbR)$, 
which is denoted by the same symbol, and it coincides with $R\Gamma(X,\bbV)$ in $D^b(G\hyp\Vec_\bbR)$ 
if we forget mixed $\bbR$-Hodge structure. 
\end{condition}


Let $R\Gamma(X/G,\bbV)$ denote the object of $D^b(\Vec_\bbR)$ which yields the $G$-equivariant sheaf cohomology. 
Assuming Condition \ref{cond: RGamma_Hdg}, we can construct its natural lift $R\Gamma_\Hdg(X/G,\bbV)$ 
with mixed $\bbR$-Hodge structure, as follows. 

\begin{lemma}\label{lem: G-invariant}
There is a commutative diagram
\[\xymatrix{
D^+(G\hyp\Pro(\MHS_\bbR))\ar[d]\ar[rr]^-{R\Gamma(G,-)} && D^+(\Pro(\MHS_\bbR))\ar[d]\\
D^+(G\hyp\Pro(\Vec_\bbR))\ar[rr]^-{R\Gamma(G,-)} && D^+(\Pro(\Vec_\bbR)), 
}\]
where the vertical arrows are the forgetful functors and $\Gamma(G,-)$ denotes the functor taking the $G$-invariant part. 
Moreover, if $G$ is a free abelian group of finite rank, there is a commutative diagram 
\begin{equation}\label{eq: forget MHS}
\xymatrix{
D^b(G\hyp\MHS_\bbR)\ar[d]\ar[rr]^-{R\Gamma(G,-)} && D^b(\MHS_\bbR)\ar[d]\\
D^b(G\hyp\Vec_\bbR)\ar[rr]^-{R\Gamma(G,-)} && D^b(\Vec_\bbR). 
}
\end{equation}
\end{lemma}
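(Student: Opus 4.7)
The plan is to compute $R\Gamma(G,-)$ explicitly via a projective resolution of the trivial $\bbZ[G]$-module $\bbZ$, so that the construction is manifestly compatible with the forgetful functors and bypasses the need for injective resolutions, which may be hard to control Hodge-theoretically.

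For the first diagram, I would fix a projective resolution $P_\bullet\to\bbZ$ of $\bbZ$ by free $\bbZ[G]$-modules; for instance the bar resolution with $P_n=\bbZ[G]^{\oplus G^n}$. For a bounded-below complex $V^\bullet$ of $G$-equivariant objects of $\Pro(\MHS_\bbR)$, I would then set $R\Gamma(G,V^\bullet)\coloneqq\Tot\Hom_{\bbZ[G]}^\bullet(P_\bullet,V^\bullet)$, where $\Hom_{\bbZ[G]}(\bbZ[G]^{(I)},V)=\prod_I V$ is interpreted as a product in $\Pro(\MHS_\bbR)$. The key observation is that $\Pro(\MHS_\bbR)$ admits all small products, realisable as cofiltered limits of finite products (which exist in $\MHS_\bbR$), and the forgetful functor $\Pro(\MHS_\bbR)\to\Pro(\Vec_\bbR)$ is exact and product-preserving since it is induced levelwise by the exact, product-preserving functor $\MHS_\bbR\to\Vec_\bbR$. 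Commutativity of the first diagram then follows at once because $\Hom_{\bbZ[G]}(P_n,-)$ depends only on the underlying $\bbZ[G]$-module structure of its argument.

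For the second diagram, specialising to $G\cong\bbZ^r$, I would replace the bar resolution by the Koszul resolution: writing $\bbZ[G]\cong\bbZ[t_1^{\pm1},\ldots,t_r^{\pm1}]$, the trivial module admits a free resolution $K_\bullet$ of length $r$ whose term in degree $n$ is the free $\bbZ[G]$-module of rank $\binom{r}{n}$. Consequently $\Hom_{\bbZ[G]}(K_n,V)$ is a finite direct sum of copies of $V$, which exists already in $\MHS_\bbR$ without recourse to the Pro-completion, and $\Tot\Hom_{\bbZ[G]}^\bullet(K_\bullet,V^\bullet)$ is thus a bounded complex of mixed $\bbR$-Hodge structures computing $R\Gamma(G,V^\bullet)$. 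The same naturality argument as before delivers the second diagram.

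The main technical point is verifying that these explicit constructions genuinely represent the right derived functor of $(-)^G$ in each of the four derived categories and descend to well-defined functors at the derived level. The standard argument is that any two projective resolutions of $\bbZ$ over $\bbZ[G]$ are homotopy equivalent, and $\Hom_{\bbZ[G]}(P_\bullet,-)$ is exact in the second variable; provided the ambient abelian category admits the requisite products, this functor computes $R\Gamma(G,-)$. The hard part will be precisely this proviso: $\MHS_\bbR$ does not in general possess infinite products, which is why the Pro-completion is indispensable in the first diagram, and why the finite-rank free abelian hypothesis (which forces $K_\bullet$ to be a bounded complex of finitely generated free modules) is imposed in the second.
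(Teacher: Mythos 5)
The paper disposes of this lemma with a single citation (the Burgos--Wildeshaus reference, Proposition 3.17), so your constructive route via explicit resolutions of $\bbZ$ over $\bbZ[G]$ is a genuinely different, and more self-contained, argument. The central observation---that $R\Gamma(G,-)$ can be represented by $\Tot\Hom^\bullet_{\bbZ[G]}(P_\bullet,-)$, which only ever forms products of the coefficient object and hence commutes on the nose with the exact, product-preserving forgetful functor $\Pro(\MHS_\bbR)\to\Pro(\Vec_\bbR)$---is exactly right, and you have correctly identified why the first square needs $\Pro$ (infinite products) while the second does not (the Koszul resolution for $G\cong\bbZ^r$ makes everything finite rank and bounded).

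Where I would press you is the closing claim that, given products, this functor ``computes $R\Gamma(G,-)$.'' Exactness of $\Hom_{\bbZ[G]}(P_n,-)$ and homotopy invariance of $P_\bullet$ show that your construction descends to a well-defined triangulated functor on $D^+$ extending $\Gamma(G,-)$ on $H^0$, but that does not yet identify it with the right derived functor: one still has to check effaceability. The standard supplement is to embed every $V$ into the coinduced object $\prod_{G}V$ (which again uses products, so again lives in $\Pro(\MHS_\bbR)$) and observe via induction--restriction adjunction that $\Hom^\bullet_{\bbZ[G]}(P_\bullet,\prod_G A)\cong\Hom^\bullet_\bbZ(P_\bullet,A)$ is acyclic in positive degrees, since a free $\bbZ[G]$-resolution of $\bbZ$ is in particular a free $\bbZ$-resolution. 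This also means the second square cannot be treated entirely in isolation: $\MHS_\bbR$ itself lacks the products this effaceability argument needs, so the clean way to finish is to define $R\Gamma(G,-)$ once and for all via the first square in $\Pro$, and then observe that for $G$ free abelian of finite rank the Koszul complex $\Tot\Hom^\bullet(K_\bullet,V^\bullet)$ is a bounded complex of honest mixed $\bbR$-Hodge structures, so the value lies in the essential image of $D^b(\MHS_\bbR)$ inside $D^+(\Pro(\MHS_\bbR))$.
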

\begin{proof}
This is a special case of \cite{BurgosWildeshaus}*{Proposition 3.17}. 
\end{proof}

\begin{definition}
Under Condition \ref{cond: RGamma_Hdg}, we define 
\[R\Gamma_{\mathrm{Hdg}}(X/G,\bbV):=R\Gamma(G,R\Gamma_{\mathrm{Hdg}}(X,\bbV))\in D^+(\Pro(\MHS_\bbR)).\]
If $G$ is a free abelian group of finite rank, we have $R\Gamma_{\mathrm{Hdg}}(X/G,\bbV)\in D^b(\MHS_\bbR)$. 
\end{definition}

By Lemma \ref{lem: G-invariant} and the equality $R\Gamma(X/G,\bbV)=R\Gamma(G,R\Gamma(X,\bbV))$, 
we see that $R\Gamma_{\mathrm{Hdg}}(X/G,\bbV)$ is a lift of $R\Gamma(X/G,\bbV)$. 
Thus we can equip the equivariant cohomology $H^m(X/G,\bbV)$ with a (pro-)mixed $\bbR$-Hodge structure. 

In what follows, we assume that $G$ is a free abelian group of finite rank. 
Then we have the following: 

\begin{lemma}
There is a commutative diagram of functors 
\begin{equation}\label{eq: RGamma_D}
\xymatrix{
D^b(G\hyp\MHS_\bbR)\ar[d]_-{R\Hom_{\MHS_\bbR}(\bbR(0),-)}\ar[rr]^-{R\Gamma(G,-)} 
&& D^b(\MHS_\bbR)\ar[d]^-{R\Hom_{\MHS_\bbR}(\bbR(0),-)}\\
D^b(G\hyp\Vec_\bbR)\ar[rr]^-{R\Gamma(G,-)} && D^b(\Vec_\bbR). 
}
\end{equation}
\end{lemma}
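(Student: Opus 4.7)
The plan is to realize both compositions in \eqref{eq: RGamma_D} by a common finite double complex, exploiting that $G$ is a free abelian group of finite rank $r$ (so $R\Gamma(G,-)$ is computed by a Koszul complex of length $r$) and that $\MHS_\bbR$ has cohomological dimension $1$ (so $R\Hom_{\MHS_\bbR}(\bbR(0),-)$ admits a functorial additive model by a two-term complex).

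First I would fix a basis $g_1,\ldots,g_r$ of $G$ and consider the functorial Koszul complex $K^\bullet_G$ whose degree-$p$ term sends an object $V$ to $V\otimes_\bbR\bigwedge^p\bbR^r$, with differentials built from $\bbR$-linear combinations of the operators $1-g_i$. When $V$ lies in $G\hyp\MHS_\bbR$, each $1-g_i$ is a morphism of $\MHS_\bbR$, so $K^\bullet_G(V)$ is a complex in $\MHS_\bbR$ computing $R\Gamma(G,V)$; forgetting the Hodge structure recovers the analogous complex in $\Vec_\bbR$ computing $R\Gamma(G,V)$ there. This is the same Koszul model underlying the proof of Lemma \ref{lem: G-invariant}. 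Next I would fix a functorial additive complex $C\colon\MHS_\bbR\to C^b(\Vec_\bbR)$ representing $R\Hom_{\MHS_\bbR}(\bbR(0),-)$, for instance the standard Deligne/absolute Hodge complex of length two.

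Given $V\in D^b(G\hyp\MHS_\bbR)$, after reducing to the case of a single object by termwise application and totalization, $R\Hom_{\MHS_\bbR}(\bbR(0),R\Gamma(G,V))$ is represented by $\Tot(C(K^\bullet_G(V)))$, while $R\Gamma(G,R\Hom_{\MHS_\bbR}(\bbR(0),V))$ is represented by $\Tot(K^\bullet_G(C(V)))$. Since $C$ is additive and commutes with finite direct sums, and since the differentials of $K^\bullet_G$ are $\bbR$-linear combinations of morphisms of $\MHS_\bbR$ that $C$ preserves functorially, these two double complexes are canonically isomorphic. Totalizing yields the asserted commutativity in $D^b(\Vec_\bbR)$.

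The main obstacle will be producing a functorial additive model $C$ for $R\Hom_{\MHS_\bbR}(\bbR(0),-)$ that strictly preserves $G$-actions termwise; once such a model is fixed (either by citing an existing construction as in \cite{BurgosWildeshaus} or by a direct cone construction from the filtrations $W_\bullet$ and $F^\bullet$), the remainder is the routine observation that applying two independent, functorial, finite-length constructions commutes up to canonical isomorphism.
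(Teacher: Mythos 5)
Your proof is correct in outline but takes a genuinely different route from the paper's. The paper argues abstractly: it passes to ind-categories $\Ind(G\hyp\MHS_\bbR)$ etc.\ to have enough injectives, and then shows that both compositions in the diagram are equal to the right derived functor of the single composite functor $V\mapsto \Hom_{\MHS_\bbR}(\bbR(0),V)^G = \Hom_{\MHS_\bbR}(\bbR(0),V^G)$. This reduction is immediate once one observes that $\Gamma(G,-)$ and $\Hom_{\MHS_\bbR}(\bbR(0),-)$ both preserve injectives (because each has an exact left adjoint), so the two ways of iterating derived functors agree by the standard Grothendieck composition theorem; the bounded statement then follows by restriction (citing Huber). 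Your proposal instead constructs explicit finite functorial models --- the Koszul complex $K^\bullet_G$ for $R\Gamma(G,-)$, using that $G$ is free abelian of finite rank, and a two-term Deligne/absolute-Hodge complex $C$ for $R\Hom_{\MHS_\bbR}(\bbR(0),-)$, using that $\MHS_\bbR$ has Ext-dimension $1$ --- and then identifies the two total complexes of the double complex $C\circ K^\bullet_G$ and $K^\bullet_G\circ C$ by additivity and functoriality.

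Both routes are sound, and what you buy with each is roughly this: the paper's abstract balancing argument is shorter and avoids choosing a specific model of $R\Hom$, and would extend without change to any $G$ for which the problem still lives in $D^b$; your explicit-model route stays entirely inside bounded complexes, makes the commutation visibly concrete, and ties in directly with the Koszul computation already needed in Lemma \ref{lemma: resolution}. The one thing you should make explicit, which you flag but do not resolve, is that the functorial model $C$ must be not merely additive but \emph{exact} on short exact sequences in $\MHS_\bbR$ (so that it computes $R\Hom$ termwise on a bounded complex): this holds for the Deligne complex because morphisms of mixed $\bbR$-Hodge structures are strict with respect to $W_\bullet$ and $F^\bullet$, so $W_0(-)$, $F^0(-_\bbC)$, and the cone are exact. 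With that point filled in, your argument is complete.
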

\begin{proof}
Note first that there is a diagram, commutative or not, of derived functors 
\begin{equation}\label{eq: RGamma_D Ind}
\xymatrix{
D^+(\Ind(G\hyp\MHS_\bbR))\ar[d]_-{R\Hom_{\MHS_\bbR}(\bbR(0),-)}\ar[rr]^-{R\Gamma(G,-)} 
&& D^+(\Ind(\MHS_\bbR))\ar[d]^-{R\Hom_{\MHS_\bbR}(\bbR(0),-)}\\
D^+(\Ind(G\hyp\Vec_\bbR))\ar[rr]^-{R\Gamma(G,-)} && D^+(\Ind(\Vec_\bbR)). 
}
\end{equation}
The diagram \eqref{eq: RGamma_D} is obtained by restriction to full subcategories, cf.~\cite{Hub93}*{Proposition 2.2}. 

The commutativity of \eqref{eq: RGamma_D Ind} follows from the fact that both compositions are equal to 
the derived functor of 
\[G\hyp\MHS_\bbR\longrightarrow \Vec_\bbR;\ 
V\longmapsto \Hom_{\MHS_\bbR}(\bbR(0),V)^G=\Hom_{\MHS_\bbR}(\bbR(0),V^G). \]
To prove these equalities, it suffices to note that the functors $\Gamma(G,-)$ and $\Hom_{\MHS_\bbR}(\bbR(0),-)$ 
preserve the injectives in ind-categories since they admit exact left adjoint functors. 
\end{proof}

\begin{definition}\label{def: EDBC}
Under Condition \ref{cond: RGamma_Hdg} (and the assumption that $G$ is free abelian of finite rank), we define 
\begin{align*}
R\Gamma_{\sD}(X,\bbV)&:=R\Hom_{\MHS_\bbR}(\bbR(0),R\Gamma_{\mathrm{Hdg}}(X,\bbV))\in D^b(G\hyp\Vec_\bbR),\\
R\Gamma_{\sD}(X/G,\bbV)&:=R\Hom_{\MHS_\bbR}(\bbR(0),R\Gamma_{\mathrm{Hdg}}(X/G,\bbV))\\
&=R\Gamma(G,R\Gamma_{\sD}(X,\bbV))\in D^b(\Vec_\bbR).  
\end{align*}
The cohomology of $R\Gamma_{\sD}(X/G,\bbV)$ is called the \emph{equivariant Deligne--Beilinson cohomology} 
with coefficients in $\bbV$ and denoted by $H^m_{\sD}(X/G,\bbV)$. 
\end{definition}

By construction, we have spectral sequences
\begin{align*}
	&E_2^{p,q}=H^p(G,H^q_{\Hdg}(X,\bbV))\Rightarrow H^{p+q}_{\Hdg}(X/G,\bbV),\\
	&E_2^{p,q}=\Ext^p_{\MHS_{\bbR}}(\bbR(0),H^q_{\Hdg}(X,\bbV))\Rightarrow H^{p+q}_{\sD}(X,\bbV),\\
	&E_2^{p,q}=\Ext^p_{\MHS_{\bbR}}(\bbR(0),H^q_{\Hdg}(X/G,\bbV))\Rightarrow H^{p+q}_{\sD}(X/G,\bbV). 
\end{align*}

From the next section, we mainly focus on specific triples $(X,G,\bbV)=(U^\fra,\Delta,\bbLog_\fra^N)$, 
for which we are currently unable to prove Condition \ref{cond: RGamma_Hdg}. 
However, it is possible to construct a complex 
which will play the role of the complex $R\Gamma_{\mathrm{Hdg}}(U^\fra/\Delta,\bbLog_\fra^N)$, 
and this is sufficient for our purpose. 
We will give such a construction in the Appendix.

\section{The Logarithm Sheaf}\label{sec: log sheaf}
Let $F$ be a totally real field of degree $g$ and $\frI$ the set of nonzero fractional ideals of $F$. 
In this section, we first introduce a projective system $\bbLog_\fra=(\bbLog_\fra^N)_{N\ge 0}$ of 
variations of mixed $\bbR$-Hodge structures on the character torus $\bbT^\fra$ for each $\fra\in\frI$. 
The family $(\bbLog_\fra)_{\fra\in\frI}$ gives a projective system of variations of 
mixed $\bbR$-Hodge structures $\bbLog$ on the union $\bbT=\coprod_{\fra\in\frI}\bbT^\fra$ of tori, 
which admits a natural $F^\times_+$-equivariant structure. 

\subsection{The Algebraic Torus $\bbT^\fra$ and Its Cohomology}\label{subsec: H_fra}
 
For $\fra\in\frI$, let $\bbT^\fra\coloneqq\Hom(\fra,\bbG_m)$ be the algebraic torus of characters of $\fra$. 
Then we have $\T^{\fra}=\Spec\bbQ[t^\alpha\mid\alpha\in\fra]$, where $t^\alpha$ for $\alpha\in\fra$ are parameters
satisfying $t^\alpha t^{\alpha'}=t^{\alpha+\alpha'}$ for any $\alpha,\alpha'\in\fra$.
If we fix a basis $\alpha_1,\ldots,\alpha_g\in\fra$ of the $\bbZ$-module $\fra$, 
then we have $\bbT^\fra\cong\Spec\bbQ[t^{\pm\alpha_1},\ldots,t^{\pm\alpha_g}]$, 
hence $\bbT^\fra$ is an affine group scheme of dimension $g$ over $\bbQ$.

Let $H_\fra$ denote the dual of the mixed $\bbR$-Hodge structure $H^1(\bbT^\fra(\bbC),\bbR)$. 
By definition, $H_\fra=H^1(\T^\fra(\bbC),\bbR)^\vee$ as an $\bbR$-vector space, and 
we have a canonical isomorphism $H_{\fra,\bbC}\coloneqq H_\fra\otimes\bbC\cong H^1_\dR(\bbT^\fra/\bbC)^\vee$. 
If we take a $\bbZ$-basis $\bsalpha=(\alpha_1,\ldots,\alpha_g)$ of $\fra$, then
we have an isomorphism $\bbT^\fra\cong \mathbb{G}_m^g$, which induces an isomorphism
\[H^1_{\dR}(\T^\fra/\bbC)\cong \bigoplus_{i=1}^g\bbC\,\dlog t^{\alpha_i}.\]
We denote by $u_{\alpha_1},\ldots,u_{\alpha_g}\in H_{\fra,\bbC}$ 
the dual basis of $\dlog t^{\alpha_1},\ldots,\dlog t^{\alpha_g}$.
Then $\gamma_{\alpha_i}\coloneqq 2\pi i\,u_{\alpha_i}$ form a basis of $H_\fra$. 
The weight and Hodge filtrations on $H_\fra$ are given by 
\[W_{-3}H_\fra=0,\quad W_{-2}H_\fra=H_\fra,\quad F^{-1}H_{\fra,\bbC}=H_{\fra,\bbC},\quad F^0H_{\fra,\bbC}=0. \]

Next, we give another basis of $H_\fra$ independent of the choice of $\bbZ$-basis $\bsalpha$. 
Recall that we have a canonical isomorphism 
\[
	F\otimes\bbR\cong\bbR^I\coloneqq \prod_{\tau\in I}\bbR,  \qquad 
	\alpha\otimes 1 \mapsto (\alpha^\tau),
\]
where $I\coloneqq\Hom(F,\bbR)$ and we let $\alpha^\tau\coloneqq\tau(\alpha)$ for $\tau\in I$. 
Let us imagine that, for any $\fra\in\frI$, 
there were functions $t_\tau=t_{\tau,\fra}$ on $\T^\fra$ for $\tau\in I$ 
such that $t^\alpha$ for all $\alpha\in \fra$ were represented as 
\[t^\alpha=\prod_{\tau\in I} (t_\tau)^{\alpha^\tau}. \]
Then we would have identities of differential forms 
$\dlog t^\alpha=\sum_{\tau\in I}\alpha^\tau \dlog t_\tau$. 
Although such functions $t_\tau$ do not actually exist globally, 
we may \emph{define} the differential forms $\dlog t_\tau=\dlog t_{\tau,\fra}$ 
as the solutions of the linear equations 
\begin{equation}\label{eq: base change}
    \dlog t^{\alpha_i}=\sum_{\tau\in I}\alpha_i^\tau \dlog t_\tau 
\end{equation}
for some $\bbZ$-basis $(\alpha_1,\ldots,\alpha_g)$ of $\fra$. 
These solutions are independent of the choice of the $\bbZ$-basis, 
and form a basis of $H^1_\dR(\bbT^\fra/\bbC)=H_{\fra,\bbC}^\vee$. 
We write $(u_\tau)_{\tau\in I}$ for the basis of $H_{\fra,\bbC}$ dual to $(\dlog t_\tau)_{\tau\in I}$. 
Moreover, if we set $\gamma_\tau\coloneqq 2\pi i\,u_\tau$,
then $(\gamma_\tau)_{\tau\in I}$ is a basis of $H_\fra$. 

\subsection{Construction of $\cLog_\fra^N$}\label{ssec:2.3.2}
Fix $\fra\in\frI$ and an integer $N\ge 0$. 
Let $\cH_\fra\coloneqq \sO_{\T^\fra}\otimes H_{\fra,\bbC}$ be the free $\sO_{\T^\fra}$-module 
associated with the $\bbC$-vector space $H_{\fra,\bbC}$. 
Then we set 
\begin{equation}
    \cLog^N_\fra\coloneqq \prod_{k=0}^N \Sym^k_{\sO_{\T^\fra}} \cH_\fra. 
\end{equation}
We also define the filtrations $W_\b$ and $F^\b$ on $\cLog^N_\fra$ by 
\begin{equation}\label{eq: WF}
	W_{-2m}\cLog^N_\fra \coloneqq W_{-2m+1}\cLog^N_\fra \coloneqq
	\prod_{m\leq k\leq N}  \Sym^k_{\sO_{\T^\fra}} \cH_\fra, \quad 
	F^{-p} \cLog^N_\fra\coloneqq\prod_{0\le k\le p} \Sym^k_{\sO_{\T^\fra}} \cH_\fra.
\end{equation}

Next we define an integrable connection. 
Let $\eta_\fra$ be the element of $H_\fra\otimes H_\fra^\vee$ corresponding to 
$\id\colon H_\fra\to H_\fra$ under the canonical isomorphism $H_\fra\otimes H_\fra^\vee\cong\End(H_\fra)$. 
We denote the corresponding element of $H_{\fra,\bbC}\otimes H_{\fra,\bbC}^\vee$ 
by the same symbol $\eta_\fra$
Moreover, by the lifting $H_{\fra,\bbC}^\vee=H^1_\dR(\T^\fra/\bbC)\to\Omega^1(\T^\fra)$ given by 
$[\dlog t^\alpha]\mapsto \dlog t^\alpha$, 
we lift $\eta_\fra$ to a global section of $\cH_\fra\otimes\Omega^1_{\T^\fra}$, 
which is also denoted by the same symbol $\eta_\fra$. 
Then the connection 
\begin{equation}\label{eq: connection0}
\nabla\colon\cLog^N_\fra\to\cLog^N_\fra\otimes\Omega^1_{\T^\fra}
\end{equation}
is defined on $\Sym^k H_{\fra,\bbC}$ ($\subset\Sym^k_{\sO_{\T^\fra}} \cH_\fra$) to be zero for $k=N$, 
and to be the composite 
\begin{equation}
    \Sym^k H_{\fra,\bbC} 
    \to \Sym^k H_{\fra,\bbC} \otimes H_{\fra,\bbC}\otimes\Omega^1_{\T^\fra}
    \to \Sym^{k+1} H_{\fra,\bbC} \otimes\Omega^1_{\T^\fra}
\end{equation}
for $0\le k\le N-1$, where the first map is the tensor product with $\eta_\fra$ 
and the second is induced by the natural product map 
$\Sym^k H_\fra \otimes H_\fra\to \Sym^{k+1} H_\fra$.

Now we describe the connection $\nabla$ on $\cLog^N_\fra$ explicitly in terms of 
a basis $\bsalpha=(\al_1,\ldots,\al_g)$ of $\fra$. 
Recall that $u_{\alpha_1},\ldots,u_{\alpha_g}\in H_{\fra,\bbC}$ denotes the dual basis of 
$\dlog t^{\alpha_1},\ldots,\dlog t^{\alpha_g}\in H_{\fra,\bbC}^\vee$. 
Thus we have 
\[
	\eta_\fra= \sum_{i=1}^g u_{\al_i}\otimes\dlog t^{\al_i}.
\]
For $\bsk = (k_i) \in \bbN^g$, we put 
$u^\bsk_\bsal\coloneqq u_{\alpha_1}^{k_1}\cdots u_{\alpha_g}^{k_g}\in \Sym^{\absv{\bsk}}H_\fra$,  
where $\absv{\bsk}\coloneqq \sum_{i=1}^g k_i$. 
Then the $u^\bsk_\bsal$ for $|\bsk|\leq N$ form a free basis of $\cLog^N_\fra$:
\begin{equation}\label{eq: cLog}
	\cLog^N_\fra=\prod_{|\bsk|\leq N} \sO_{\T^\fra}\, u^\bsk_\bsal.
\end{equation}
The connection $\nabla$ on $\cLog^N_\fra$ is given by
\begin{equation}\label{eq: connection}
	\nabla(f\,u_\bsal^\bsk)=u_\bsal^\bsk\otimes df
	+\sum_{i=1}^g f\,u_\bsal^{\bsk+1_i}\otimes \dlog t^{\alpha_i}
\end{equation}
for any section $f$ of $\sO_{\T^\fra}$, where $1_i$ denotes the element in $\bbN^g$ 
whose $i$-th component is $1$ and the other components are $0$,
and $u_\bsal^\bsk$ is regarded as zero in $\cLog^N_\fra$ if $|\bsk|>N$.

\subsection{Construction of $\bbLog_\fra^N$}\label{ssec:2.3.3}
Note that the integrable connection $\nabla$ on $\cLog_\fra^N$ gives a $\bbC$-local system 
$\bbLog_{\fra,\bbC}^N$ of horizontal sections. Let us define its $\bbR$-structure $\bbLog_\fra^N$ as follows. 

As before, let $\bsalpha=(\al_1,\ldots,\al_g)$ be a $\bbZ$-basis of $\fra$, and 
$(d\log t^{\alpha_i})$ and $(u_{\alpha_i})$ the associated basis of $H_{\fra,\bbC}^\vee$ and $H_{\fra,\bbC}$, 
respectively. 
We fix local branches of $\log t^{\alpha_i}$. 
Then we define the local sections $\gamma_\bsalpha^\bsk$ of $\cLog_\fra^N$ for $\bsk=(k_i)\in\bbN^g$ by 
\begin{equation}\label{eq: transformation}
	 \gamma_\bsalpha^\bsk \coloneqq (2\pi i)^{\absv{\bsk}}
	 \exp\biggl(\sum_{i=1}^g (-\log t^{\alpha_i}) u_{\alpha_i}\biggr)\cdot u_\bsalpha^\bsk
	 =(2\pi i)^{\absv{\bsk}}\sum_{\bsn=(n_i)\in\bbN^g}\prod_{i=1}^g
	 \frac{(-\log t^{\alpha_i})^{n_i}}{n_i!} u_\bsalpha^{\bsk+\bsn}. 
\end{equation}
Note that $\gamma_\bsalpha^\bsk$ is \emph{not} equal to the product 
$\gamma_{\alpha_1}^{k_1}\cdots\gamma_{\alpha_g}^{k_g}$ in general, 
where $(\gamma_{\alpha_i})$ is the basis of $H_\fra$ defined by $\gamma_{\alpha_i}\coloneqq 2\pi i\,u_{\alpha_i}$. 

\begin{lemma}\label{lem: gamma_bsal}
The local section $\gamma_\bsalpha^\bsk$ is horizontal with respect to $\nabla$, 
and the family $(\gamma_\bsalpha^\bsk)_{\absv{\bsk}\le N}$ is a local basis of 
the $\bbC$-local system $\bbLog_{\fra,\bbC}^N$ of horizontal sections of $\cLog_\fra^N$. 
Moreover, their $\bbR$-span 
\[
	\bbLog^N_\fra\coloneqq\prod_{\bsk\in\bbN^g, \absv{\bsk}\leq N}\bbR\,\gamma_\bsalpha^\bsk 
\] 
gives a well-defined global $\bbR$-structure of $\bbLog_{\fra,\bbC}^N$. 
\end{lemma}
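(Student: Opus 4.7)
The plan is to verify the three assertions—horizontality, the local-basis property, and the well-definedness of the $\bbR$-structure—in turn, only the last being subtle. For horizontality, I would compute $\nabla\gamma_\bsalpha^\bsk$ term-by-term from the series on the right-hand side of \eqref{eq: transformation}, applying \eqref{eq: connection} to each summand $f\,u_\bsalpha^{\bsk+\bsn}$ with $f=\prod_i(-\log t^{\alpha_i})^{n_i}/n_i!$. The $df$-part produces a sum over $j$ of terms with $n_j$ replaced by $n_j-1$ and an extra factor $-\dlog t^{\alpha_j}$, while the $\eta_\fra$-part of $\nabla u_\bsalpha^{\bsk+\bsn}$ produces a sum over $j$ of terms shifted to $u_\bsalpha^{\bsk+\bsn+1_j}$ with a factor $+\dlog t^{\alpha_j}$; after the reindexing $\bsn\mapsto\bsn-1_j$ in the second sum these cancel term-by-term.

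For the local-basis property, I would observe from \eqref{eq: transformation} that the leading ($\bsn=0$) summand of $\gamma_\bsalpha^\bsk$ is $(2\pi i)^{|\bsk|}u_\bsalpha^\bsk$, while the remaining summands lie in $\sO_{\T^\fra}\cdot u_\bsalpha^\bsl$ with $|\bsl|>|\bsk|$. Ordering $(u_\bsalpha^\bsl)_{|\bsl|\le N}$ by total degree, the expansion matrix of $(\gamma_\bsalpha^\bsk)_{|\bsk|\le N}$ is block-triangular with invertible diagonal entries $(2\pi i)^{|\bsk|}$, so $(\gamma_\bsalpha^\bsk)_{|\bsk|\le N}$ is a local $\sO_{\T^\fra}$-basis of $\cLog_\fra^N$. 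Together with the horizontality just established, it is a local basis of the $\bbC$-local system $\bbLog_{\fra,\bbC}^N$ of flat sections.

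For the $\bbR$-structure, the key step is to rewrite the definition in a manifestly basis-free form. Using $\log t^{\alpha_i}=\sum_\tau\alpha_i^\tau\log t_\tau$ (an integrated form of \eqref{eq: base change}) together with the dual identity $u_{\alpha_j}=\sum_\tau(\alpha^{-1})_{\tau j}u_\tau$ in $H_{\fra,\bbC}$, one checks $\sum_i(-\log t^{\alpha_i})u_{\alpha_i}=-\sum_{\tau\in I}(\log t_\tau)u_\tau$, so that
\[
\gamma_\bsalpha^\bsk=L\cdot\bigl((2\pi i)^{|\bsk|}u_\bsalpha^\bsk\bigr),\qquad L\coloneqq\exp\Bigl(-\sum_{\tau\in I}(\log t_\tau)\,u_\tau\Bigr).
\]
Since $(2\pi i)^{|\bsk|}u_\bsalpha^\bsk=\prod_i\gamma_{\alpha_i}^{k_i}\in\Sym^{|\bsk|}H_\fra$ and $\bigl(\prod_i\gamma_{\alpha_i}^{k_i}\bigr)_{|\bsk|\le N}$ is an $\bbR$-basis of the real subspace $V\coloneqq\bigoplus_{k\le N}\Sym^kH_\fra\subset\cLog_\fra^N$, one obtains $\bbLog_\fra^N=L(V)$, manifestly independent of the choice of $\bsalpha$. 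A branch change $\log t_\tau\mapsto\log t_\tau+2\pi i\mu_\tau$ (induced by integer shifts $m_i\in\bbZ$ of $\log t^{\alpha_i}$, which force $\mu_\tau\in\bbR$) replaces $L$ by $LM$ with $M=\exp(-\sum_\tau\mu_\tau\gamma_\tau)$; since $\gamma_\tau\in H_\fra$ and $\mu_\tau\in\bbR$, $M$ is a unipotent $\bbR$-linear operator on $V$, so $M(V)=V$ and $LM(V)=L(V)$. Hence the local $\bbR$-spans agree on overlaps and glue to a global $\bbR$-local subsystem $\bbLog_\fra^N\subset\bbLog_{\fra,\bbC}^N$. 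The hard part is exactly this last step: the basis- and branch-dependence of \eqref{eq: transformation} is only apparent, and dissolving it via the intrinsic basis $(u_\tau)_{\tau\in I}$ from \S\ref{subsec: H_fra} is what makes the independence transparent.
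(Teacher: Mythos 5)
Your proof is correct and follows the same overall route as the paper, with one variation worth highlighting. For the $\bbR$-structure, the paper checks branch- and basis-independence separately: for the branch shift $\log t^{\alpha_j}\mapsto\log t^{\alpha_j}-2\pi i$ it computes $\gamma_\bsalpha^{\prime\bsk}=\sum_{n\ge 0}\frac1{n!}\gamma_\bsalpha^{\bsk+n\cdot 1_j}$ and concludes by a dimension count, and for a change of $\bbZ$-basis it observes (with compatibly chosen branches) that $\sum_i(-\log t^{\beta_i})u_{\beta_i}=\sum_i(-\log t^{\alpha_i})u_{\alpha_i}$. Your reformulation $\gamma_\bsalpha^\bsk=L\cdot\bigl((2\pi i)^{\absv{\bsk}}u_\bsalpha^\bsk\bigr)$ with the intrinsic operator $L=\exp(-\sum_\tau(\log t_\tau)u_\tau)$, and hence $\bbLog_\fra^N=L(V)$ with $V=\prod_{k\le N}\Sym^kH_\fra$, makes the basis-independence tautological and packages the branch change as a single unipotent $\bbR$-linear automorphism $M=\exp(-\sum_\tau\mu_\tau\gamma_\tau)$ of $V$ (here $\mu_\tau\in\bbR$ because $(\alpha_i^\tau)$ is a real invertible matrix and the $m_i$ are integers), avoiding the dimension count. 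This is a slightly cleaner presentation, and it also previews the operator that underlies the sections $\gamma_\fra^\bsk$ introduced later in \S\ref{ssec:2.3.4}. Your block-triangularity argument for the local-basis claim is likewise a small variant of the paper's inversion-by-$\exp(+\sum_i(\log t^{\alpha_i})u_{\alpha_i})$ argument; both rely on the same observation that the $\bsn=0$ term is the leading one.
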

\begin{proof}
We have 
\begin{multline*}
	\nabla(\gamma_\bsal^\bsk)=(2\pi i)^{\absv{\bsk}}\exp\biggl(\sum_{i=1}^g(-\log t^{\alpha_i})u_{\alpha_i}\biggr) 
	\cdot \sum_{j=1}^g  u^{\bsk+1_j}_\bsal\otimes \dlog t^{\alpha_j}\\
	-(2\pi i)^{\absv{\bsk}}\sum_{j=1}^g\exp\biggl(\sum_{i=1}^g(-\log t^{\alpha_i})u_{\alpha_i}\biggr)
	u_{\alpha_j}\cdot u^\bsk_\bsal\otimes \dlog t^{\alpha_j}=0,
\end{multline*}
hence $\gamma_\bsalpha^\bsk$ is horizontal. 
On the other hand, a $\bbC$-linear relation $\sum_{\bsk}c_\bsk \gamma_\bsalpha^\bsk=0$ implies 
\[\sum_{\bsk} (2\pi i)^{\absv{\bsk}} c_\bsk u_\bsalpha^\bsk 
=\exp\biggl(\sum_{i=1}^g(\log t^{\alpha_i})u_{\alpha_i}\biggr) \sum_{\bsk} c_\bsk \gamma_\bsalpha^\bsk=0, \]
hence all $c_\bsk$ must be zero. Thus the sections $\gamma_\bsalpha^\bsk$ are linearly independent over $\bbC$. 
Since the dimension of the space of horizontal sections is equal to the rank of the $\sO_{\bbT^\fra}$-module 
$\cLog_\fra^N$, i.e., the number of $\bsk\in\bbN^g$ such that $\absv{\bsk}\le N$, 
the sections $\gamma_\bsalpha^\bsk$ form a basis of the space of horizontal sections. 

To check that the $\bbR$-span is independent of the branches of $\log t^{\alpha_i}$, 
we take any $j\in\{1,\ldots,g\}$ and replace $\log t^{\alpha_j}$ by $\log t^{\alpha_j}-2\pi i$. 
If $\gamma_\bsalpha^{\prime\bsk}$ denotes the corresponding horizontal section, we have 
\[
\gamma_\bsalpha^{\prime\bsk}=\exp(2\pi i\,u_{\alpha_j})\cdot \gamma_\bsalpha^\bsk
=\sum_{n=0}^\infty\frac{(2\pi i)^n}{n!}u_{\alpha_j}^n\cdot \gamma_\bsalpha^\bsk
=\sum_{n=0}^\infty \frac{1}{n!}\gamma_\bsalpha^{\bsk+n\cdot 1_j}, 
\]
which shows that the $\bbR$-span of $\gamma_\bsalpha^{\prime\bsk}$ is contained 
in the $\bbR$-span of $\gamma_\bsalpha^{\bsk}$. 
Since the $\bbR$-spans have the same dimension, they are equal. 

Next, let $\bsbeta=(\beta_1,\ldots,\beta_g)$ be another $\bbZ$-basis of $\fra$ and 
$(c_{ij})$ the matrix such that $\beta_i=\sum_{j=1}^g c_{ij}\alpha_j$. 
Then, taking the local branches of $\log t^{\beta_i}$ and $\log t^{\alpha_j}$ so that 
$\log t^{\beta_i}=\sum_{j=1}^g c_{ij}\log t^{\alpha_j}$, we have 
\[\sum_{i=1}^g(-\log t^{\beta_i})u_{\beta_i}=\sum_{i=1}^g(-\log t^{\alpha_i})u_{\alpha_i}, \]
hence the operator $\exp\bigl(\sum_{i=1}^g(-\log t^{\alpha_i})u_{\alpha_i}\bigr)$ does not depend on the basis. 
Since $(2\pi i)^{\absv{\bsk}}u_\bsbeta^\bsk$ and $(2\pi i)^{\absv{\bsk}}u_\bsalpha^\bsk$ 
span the same $\bbR$-vector space $\prod_{\absv{\bsk}\le N}\Sym^{\absv{\bsk}}H_\fra$, 
we see that $\gamma_\bsbeta^\bsk$ and $\gamma_\bsalpha^\bsk$ also span the same $\bbR$-vector space. 
\end{proof}

We define the weight filtration on $\bbLog^N_\fra$ by 
\[
	W_{-2m}\bbLog^N_\fra\coloneqq W_{-2m+1}\bbLog^N_\fra\coloneqq\prod_{m\leq\absv\bsk\leq N} \bbR\,\gamma_\bsalpha^\bsk,
\]
which is compatible with the weight filtration $W_\b$ on $\cLog^N_\fra$ through the natural isomorphism 
\[
	\bbLog^N_\fra\otimes\sO_{\T^\fra} \cong \cLog^N_\fra.
\]
Together with the filtration $F^\b$ on $\cLog_\fra^N$, the triple $\bbLog_\fra^N=(\bbLog_\fra^N,W_\b,F^\b)$ 
forms an admissible unipotent variation of mixed $\bbR$-Hodge structures on $\bbT^\fra$, as is shown below. 

By mapping the sections $\gamma^\bsk_\bsal$ of $\bbLog^N_\fra$ to $\prod_{i=1}^g\gamma_{\alpha_i}^{k_i}$, 
we obtain an isomorphism 
\begin{equation}\label{eq: Gr^W}
	\Gr^W_{-2k} \bbLog^N_\fra \isomto \Sym^k H_\fra
\end{equation}
of $\bbR$-local systems on $\T^\fra$, compatible with the filtrations $F^\b$,
where we view $\Sym^k H_\fra$ as a constant variation of pure $\bbR$-Hodge structures on $\T^\fra$.
Thus the triple $\bbLog^N_\fra=(\bbLog^N_\fra, W_\b, F^\b)$ is 
a \textit{unipotent} variation of mixed $\bbR$-Hodge structures.  

\begin{proposition}\label{prop: bbLog_fra^N}
$\bbLog^N_\fra$ is an admissible unipotent variation of mixed $\bbR$-Hodge structures on $\bbT^\fra$. 
\end{proposition}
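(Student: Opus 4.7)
The plan is to verify the remaining properties in two stages: first that $\bbLog^N_\fra$ is genuinely a variation of mixed $\bbR$-Hodge structures, and second that it is admissible in the sense of Proposition \ref{prop: admissible}. Unipotency has already been established via the isomorphism \eqref{eq: Gr^W}, so I will not revisit it.

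First, I would check Griffiths transversality directly from the explicit formula \eqref{eq: connection}. Since the connection sends a section of $\Sym^k_{\sO_{\T^\fra}}\cH_\fra$ to a sum of a term in the same summand $\Sym^k$ (from $df$) and a term in $\Sym^{k+1}$ (from multiplication by $\eta_\fra$), tensored with $\Omega^1_{\T^\fra}$, the formula for $F^\b$ in \eqref{eq: WF} immediately gives $\nabla(F^{-p}\cLog^N_\fra)\subset F^{-p-1}\cLog^N_\fra\otimes\Omega^1_{\T^\fra}$. For the fiberwise mixed $\bbR$-Hodge structure, Lemma \ref{lem: gamma_bsal} identifies the $\bbR$-lattice as the span of the horizontal $\gamma^\bsk_\bsal$, and the isomorphism \eqref{eq: Gr^W} exhibits each $\Gr^W_{-2k}$ as $\Sym^k H_\fra$, i.e., a direct sum of Tate structures $\bbR(k)$, whose Hodge decomposition is tautological. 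Thus the pointwise condition in Definition \ref{def: MHS} reduces to compatibility of the filtrations induced from $W_\b$ and $F^\b$ on the fiber, which is built into the definitions.

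Second, I would establish admissibility via Proposition \ref{prop: admissible} by choosing the smooth compactification $\ol{\bbT^\fra}\coloneqq(\bbP^1)^g$ obtained from the isomorphism $\bbT^\fra\cong\bbG_m^g$ attached to a $\bbZ$-basis $\bsalpha=(\alpha_1,\ldots,\alpha_g)$ of $\fra$. The boundary $D=\ol{\bbT^\fra}\setminus\bbT^\fra$ is the normal crossing divisor whose components are $\{t^{\alpha_i}=0\}$ and $\{t^{\alpha_i}=\infty\}$. I would compute the canonical extension directly: since $\cLog^N_\fra$ is the free $\sO_{\bbT^\fra}$-module on the basis $(u^\bsk_\bsal)_{|\bsk|\le N}$ of \eqref{eq: cLog}, and the connection \eqref{eq: connection} has logarithmic poles in the coordinates $t^{\alpha_i}$ with residues given by the nilpotent operators $u_{\alpha_i}$, the Deligne canonical extension is simply the free $\sO_{\ol{\bbT^\fra}}$-module on the same basis, with the connection extended by the same formula \eqref{eq: connection}.

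Third, I would extend $F^\b$ by the identical formula $\wt F^{-p}\cV(D)=\prod_{0\le k\le p}\Sym^k_{\sO_{\ol{\bbT^\fra}}}\cH_\fra(D)$; this is manifestly locally free, satisfies Griffiths transversality (by the same argument as above, now with logarithmic differentials), and has locally free graded quotients (each $\Gr^p_{\wt F}\Gr^W_{-2k}\cV(D)$ is simply $\Sym^k\cH_\fra(D)$ when $p=k$ and zero otherwise). For the residue condition, I would observe that along each boundary component $D_i$ the residue of $\nabla$ equals $\pm u_{\alpha_i}$, acting on $\cLog^N_\fra$ by raising the symmetric degree by one; hence it sends $W_{-2m}\cV(D)|_{D_i}=\prod_{m\le k\le N}\Sym^k\cH_\fra(D)|_{D_i}$ into $\prod_{m+1\le k\le N}\Sym^k\cH_\fra(D)|_{D_i}=W_{-2m-2}\cV(D)|_{D_i}$, which is exactly the required containment $\Res_{D_i}(W_n)\subset W_{n-2}$. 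I expect the main (albeit still routine) obstacle to be the bookkeeping around identifying the canonical extension, i.e., checking that the explicit free $\sO_{\ol{\bbT^\fra}}$-module described above genuinely coincides with the Deligne canonical extension; this reduces to verifying that the residues of $\nabla$ are nilpotent (which is clear since the $u_{\alpha_i}$ act by shifting symmetric degree on a bounded-degree object) and have eigenvalues in $[0,1)$, namely zero.
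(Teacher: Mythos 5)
Your proposal is correct and follows essentially the same route as the paper: compactify $\bbT^\fra\cong\bbG_m^g$ to $(\bbP^1)^g$ using a fixed $\bbZ$-basis, extend $\cLog^N_\fra$ to the free $\sO$-module on the basis $(u^\bsk_\bsal)_{|\bsk|\le N}$ with the logarithmic connection \eqref{eq: connection}, identify it as the canonical extension via nilpotency of the residues $\pm u_{\alpha_i}$, and verify the conditions of Proposition \ref{prop: admissible} (filtration extends, residues drop weight by two). The paper states these verifications can be read off from the construction and leaves them to the reader, whereas you have spelled them out explicitly; as a small remark, the paper's displayed canonical extension is written with the local sections $\gamma^\bsk_\bsal$, but your use of the global basis $u^\bsk_\bsal$ is the correct formulation since the $\gamma^\bsk_\bsal$ are only defined on simply connected opens.
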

\begin{proof}
We will use Proposition \ref{prop: admissible} to show the admissibility. 
Fix a $\bbZ$-basis $\bsal=(\al_1,\ldots,\al_g)$ of $\fra$, which induces an isomorphism $\T^\fra\cong(\bbG_m)^g$. 
Then we have an inclusion $\T^\fra\hookrightarrow X_\bsal\coloneqq(\bbP^1)^g$ 
with complement $D_\bsal\coloneqq X_\bsal \setminus \T^\fra$.
We denote by $\Omega^\b_{X_\bsal}(D_\bsal)$ the complex of differentials on $X_\bsal$ 
with logarithmic singularities along $D_\bsal$.
Note that $\cLog^N_\fra$ extends naturally to a coherent $\sO_{X_\bsal}$-module
\[
	\cLog_\fra^N(D_\bsal)\coloneqq\prod_{\absv\bsk\leq N} \sO_{X_\bsal} \gamma^\bsk_\bsal,
\]
and \eqref{eq: connection} defines a connection 
\[
	\nabla\colon\cLog^N_\fra(D_\bsal) \rightarrow \cLog_\fra^N(D_\bsal) \otimes \Omega^1_{X_\bsal}(D_\bsal)
\] 
with logarithmic singularities along $D_\bsal$. 
Then $\cLog^N_\fra(D_\bsal)$ is the canonical extension of $\cLog^N_\fra$ to $X_\bsal$, 
and the conditions of Proposition \ref{prop: admissible} can be verified from the construction. 
\end{proof}

\subsection{$\Ft$-Equivariant Structure}\label{ssec:2.3.4}
For any $\fra\in\frI$ and $x\in F^\times_+$, the multiplication by $x$ gives an isomorphism $\fra\isomto x\fra$ 
and induces an isomorphism $\pair{x}\colon\bbT^{x\fra}\isomto\bbT^\fra$. 
These maps define an action of $F^\times_+$ on the disjoint union $\bbT\coloneqq\coprod_{\fra\in\frI}\bbT^\fra$. 

Since $\bbT$ is a disjoint union of varieties, 
the notion of ($F^\times_+$-equivariant) variation of mixed $\bbR$-Hodge structures on $\bbT$ can be naturally defined. 
In fact, a variation of mixed $\bbR$-Hodge structures $\bbV$ on $\bbT$ corresponds to 
a family $(\bbV_\fra)_{\fra\in\frI}$ of variations of mixed $\bbR$-Hodge structures on $\bbT^\fra$, 
and an $F^\times_+$-equivariant structure on $\bbV$ corresponds to a compatible family of isomorphisms 
$\iota_x\colon\pair{x}^*\bbV_\fra\isomto\bbV_{x\fra}$. 

The family $(H_\fra)_{\fra\in\frI}$, regarded as a family of constant variations of pure $\bbR$-Hodge structures 
on $\bbT^\fra$, defines a variation of pure $\bbR$-Hodge structures on $\bbT$, which is denoted by $\bbH$. 
For $\fra\in\frI$ and $x\in\Ft$, the isomorphism $\pair{x}\colon \T^{x\fra}\isomto \T^\fra$ induces 
an isomorphism $H_{x\fra}\isomto H_\fra$. We regard its inverse as the map 
$\iota_x\colon\pair{x}^*H_\fra\isomto H_{x\fra}$ of constant variations, 
which gives the $\Ft$-equivariant structure on $\bbH$. 
Explicitly, for a basis $\bsal=(\alpha_1,\ldots,\alpha_g)$ of $\fra$, 
we have $\pair{x}^*\dlog t^{\alpha_i}=\dlog t^{x\alpha_i}$ and hence $\iota_x(u_{\alpha_i})=u_{x\alpha_i}$. 

In fact, $\bbH$ is constant on the whole of $\bbT$. 
We set $\bbR(\bone)\coloneqq\bigoplus_{\tau\in I}\bbR(1_\tau)$, where each $\bbR(1_\tau)$ is 
a copy of the Tate object $\bbR(1)$ in $\MHS_{\bbR}$. 
Recall that $\bbR(1)=2\pi i\,\bbR$ as an $\bbR$-vector space (see Example \ref{ex: Tate object}). 
We denote by $(e_\tau)_\tau$ the basis of $\bbR(\bone)$ consisting of vectors $e_\tau$ 
whose $\tau$-component is $2\pi i\in\bbR(1_\tau)$ and others are zero. 
The associated constant variation of pure $\bbR$-Hodge structures on $\bbT$ is denoted 
by the same symbol $\bbR(\bone)$. 
Moreover, we equip $\bbR(\bone)$ with an $\Ft$-equivariant structure so that 
$x\in\Ft$ acts on $\bbR(1_\tau)$ as the multiplication by $(x^\tau)^{-1}$. 
Recall that the basis $(\gamma_\tau)$ of $H_\fra$ is defined to be 
$2\pi i$ times the basis $(u_\tau)$ of $H_{\fra,\bbC}$, the latter of which is 
dual to the basis $(\dlog t_\tau)$ of $H_{\fra,\bbC}^\vee$ determined by \eqref{eq: base change} 
(see \S\ref{subsec: H_fra}). 
Here we add the subscript $\fra$ as $\gamma_{\tau,\fra}$ to indicate the dependence on $\fra$. 
Using this basis, we define the $\bbR$-linear map 
\begin{equation}\label{eq: R(1) to H_fra}
    \bbR(\bone)\lra H_\fra;\ e_\tau\longmapsto \gamma_{\tau,\fra}. 
\end{equation}
This is an isomorphishm of pure $\bbR$-Hodge structures, and one can view it 
as an isomorphism of constant variations of pure $\bbR$-Hodge structures on $\T^\fra$. 
Moreover, by collecting these isomorphisms for all $\fra\in\frI$, 
we obtain an isomorphism $\bbR(\bone)\isomto\bbH$ of variations of pure $\bbR$-Hodge structures on $\bbT$. 

\begin{lemma}\label{lem: H}
The isomorphism $\bbR(\bone)\isomto\bbH$ constructed above is $\Ft$-equivariant. 
\end{lemma}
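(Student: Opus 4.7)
The claim reduces to an explicit comparison of the two $\Ft$-actions under the map $e_\tau \mapsto \gamma_{\tau,\fra}$. Concretely, the plan is to show that for every $x\in\Ft$, every $\fra\in\frI$, and every $\tau\in I$, the equivariant structure on $\bbH$ satisfies
\[
    \iota_x(\gamma_{\tau,\fra}) = (x^\tau)^{-1}\,\gamma_{\tau,x\fra}
\]
in $H_{x\fra}$, which matches exactly the prescribed action on $\bbR(1_\tau)\subset\bbR(\bone)$.

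The first step is to compute the pullback $\pair{x}^{*}$ on $H^1_{\dR}$. Since the isomorphism $\pair{x}\colon\bbT^{x\fra}\isomto\bbT^\fra$ is induced by multiplication $\fra\isomto x\fra$, it pulls back the character $t^\alpha$ on $\bbT^\fra$ to $t^{x\alpha}$ on $\bbT^{x\fra}$, hence $\pair{x}^{*}\dlog t^\alpha = \dlog t^{x\alpha}$ for each $\alpha\in\fra$. Fixing a $\bbZ$-basis $\bsal=(\alpha_1,\ldots,\alpha_g)$ of $\fra$, so that $(x\alpha_1,\ldots,x\alpha_g)$ is a $\bbZ$-basis of $x\fra$, I would write out the defining equations \eqref{eq: base change} on both $\bbT^\fra$ and $\bbT^{x\fra}$ and use the identity $(x\alpha_i)^\tau = x^\tau\alpha_i^\tau$ to deduce
\[
    \pair{x}^{*}\dlog t_{\tau,\fra} = x^\tau\,\dlog t_{\tau,x\fra},
\]
using the invertibility of the matrix $(\alpha_i^\tau)$ to invert the linear relations.

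Next I would dualize. Since $\iota_x$ is defined as the inverse of $\pair{x}^{*\vee}\colon H_{x\fra}\to H_\fra$, and the calculation above shows that $\pair{x}^{*\vee}$ sends $u_{\tau,x\fra}$ to $x^\tau u_{\tau,\fra}$ (checked by pairing against the basis $(\dlog t_{\sigma,\fra})$), it follows immediately that $\iota_x(u_{\tau,\fra})=(x^\tau)^{-1}u_{\tau,x\fra}$, and multiplying by $2\pi i$ gives the displayed formula for $\gamma_{\tau,\fra}$.

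Comparing with the $\Ft$-equivariant structure on $\bbR(\bone)$, where by definition $x$ acts on $e_\tau\in\bbR(1_\tau)$ as multiplication by $(x^\tau)^{-1}$, the commutativity of the square expressing $\Ft$-equivariance of $\bbR(\bone)\isomto\bbH$ is then a one-line check on each basis vector $e_\tau$. The only real delicacy in this argument is bookkeeping: getting the contravariance of $\pair{x}$, the direction of the dualization defining $\iota_x$, and the sign convention of the scalar action on $\bbR(1_\tau)$ all consistent. Once these conventions are nailed down, the proof is pure linear algebra, so the principal obstacle is presentational rather than mathematical.
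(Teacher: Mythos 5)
Your proof is correct and follows essentially the same route as the paper's: both derive $\pair{x}^*\dlog t_{\tau,\fra}=x^\tau\dlog t_{\tau,x\fra}$ from the defining relations \eqref{eq: base change} applied to $\fra$ and $x\fra$, then dualize to get $\iota_x(\gamma_{\tau,\fra})=(x^\tau)^{-1}\gamma_{\tau,x\fra}$ and compare with the action on $\bbR(\bone)$. Your version merely spells out the dualization step a bit more explicitly.
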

\begin{proof}
We first observe that 
\[\sum_{\tau\in I}(x\alpha)^\tau\dlog t_{\tau,x\fra}=\dlog t^{x\alpha}
=\pair{x}^*\dlog t^{\alpha}=\sum_{\tau\in I}\alpha^\tau\pair{x}^*\dlog t_{\tau,\fra} \]
holds for all $\alpha\in\fra$. 
From this equation, we obtain that $\pair{x}^*\dlog t_{\tau,\fra}=x^\tau\dlog t_{\tau,x\fra}$ 
and hence $\iota_x(\gamma_{\tau,\fra})=(x^\tau)^{-1}\gamma_{\tau,x\fra}$. 
This shows the desired equivariance. 
\end{proof}

Next we consider the associated $\sO_\T$-module $\cLog^N$ corresponding to 
the collection $(\cLog_\fra^N)_{\fra\in\frI}$ of $\sO_{\T^\fra}$-modules. 
We have 
\[\cLog^N=\prod_{k=0}^N\Sym^k_{\sO_\T}\cH, \]
where $\cH\coloneqq\bbH\otimes\sO_\T$ denotes the free $\sO_\T$-module corresponding to the local system $\bbH$. 
Then the $\Ft$-equivariant structure on $\bbH$ induces an $\Ft$-equivariant structure on $\cLog^N$. 
Explicitly, for a basis $\bsal=(\alpha_1,\ldots,\alpha_g)$ of $\fra$, the map 
\begin{equation}\label{eq: equiv}
\iota_x\colon \pair{x}^*\cLog^N_\fra \isomto \cLog^N_{x\fra} 
\end{equation}
is given by $\iota_x(u_\bsal^\bsk)=u_{x\bsal}^\bsk$. 
Then it follows from the definition \eqref{eq: transformation} that 
$\iota_x(\gamma_\bsal^\bsk)=\gamma_{x\bsal}^\bsk$. 
Therefore, $\iota_x$ induces an isomorphism $\pair{x}^*\bbLog_\fra^N\isomto\bbLog_{x\fra}^N$ of 
variations of mixed $\bbR$-Hodge structures. In other words, we have obtained an $\Ft$-equivariant structure 
on the variation of mixed $\bbR$-Hodge structures $\bbLog^N$ on $\bbT$. 

Note that \eqref{eq: Gr^W} and Lemma \ref{lem: H} gives the isomorphisms 
\[\Gr^W_{-2k}\bbLog^N\cong \Sym^k\bbH\cong \Sym^k\bbR(\bone) \]
of variations of pure $\bbR$-Hodge structures on $\bbT$, which shows that $\bbLog^N$ is unipotent. 
To describe it explicitly, we put 
$u_\fra^\bsk\coloneqq \prod_{\tau\in I} u_{\tau,\fra}^{k_\tau}\in\Sym^{\absv{\bsk}}H_{\fra,\bbC}$ 
for $\bsk=(k_\tau)\in\bbN^I$, and set 
\[
\gamma_\fra^\bsk\coloneqq
(2\pi i)^{\absv{\bsk}}\exp\biggl(\sum_{\tau\in I}(-\log t_{\tau,\fra})u_{\tau,\fra}\biggr)\cdot u_\fra^\bsk. 
\]
Here $\log t_{\tau,\fra}$ is a local section of $\sO_{\T^\fra}$ 
determined by the equation 
\[\log t^{\alpha_i}=\sum_{\tau\in I}\alpha_i^\tau\log t_{\tau,\fra}\] 
for some basis $\bsal=(\alpha_1,\ldots,\alpha_g)$ of $\fra$ and local branches of $\log t^{\alpha_i}$. 
Then one can verify that the sections $\gamma_\fra^\bsk$ form a local basis of $\bbLog_\fra^N$: 
\[\bbLog_\fra^N=\prod_{\absv{\bsk}\le N}\bbR\,\gamma_\fra^\bsk. \]

\begin{lemma}
The map $\Gr^W_{-2k}\bbLog^N\to\Sym^k\bbR(\bone)$ defined on $\T^\fra$ by 
\[\gamma_\fra^\bsk\longmapsto e^\bsk\coloneqq \prod_{\tau\in I}e_\tau^{k_\tau}
\qquad (\bsk\in\bbN^I,\ \absv{\bsk}=k)\]
is an isomorphism of $\Ft$-equivariant variations of pure $\bbR$-Hodge structures on $\T$. 
\end{lemma}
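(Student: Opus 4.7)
The plan is to verify three things in turn: that the prescription $\gamma_\fra^\bsk \mapsto e^\bsk$ gives a well-defined morphism of local systems on each $\T^\fra$ (i.e.\ matches the already-constructed isomorphism $\Gr^W_{-2k}\bbLog^N\cong\Sym^k\bbH\cong\Sym^k\bbR(\bone)$), that it respects the Hodge filtration, and that it intertwines the $\Ft$-actions on the two sides.

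First I would reprove, in the $\tau$-indexed setting, the analog of Lemma \ref{lem: gamma_bsal}: the sections $\gamma_\fra^\bsk$ for $\absv{\bsk}\le N$ are horizontal with respect to $\nabla$, are linearly independent over $\bbC$, and their $\bbR$-span is independent of the chosen local branches of $\log t_{\tau,\fra}$. The horizontality is a direct calculation exactly parallel to the one in Lemma \ref{lem: gamma_bsal}, using the identity $\dlog t_{\tau,\fra}$ corresponding dually to $u_{\tau,\fra}$; a change of branch $\log t_{\tau,\fra}\mapsto \log t_{\tau,\fra}-2\pi i$ merely transforms $(\gamma_\fra^\bsk)$ by an upper-triangular unipotent matrix over $\bbR$. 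Hence the $\bbR$-span already appears in $\bbLog_\fra^N$ (one can also deduce this from the fact that, over $\bbC$, the transition matrix from $(\gamma_\bsal^\bsk)$ to $(\gamma_\fra^\bsk)$ is block triangular, with diagonal blocks expressed by the transformation matrix $(\alpha_i^\tau)$, which is invertible and real).

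Next, the formula \eqref{eq: transformation}, rewritten in terms of $t_{\tau,\fra}$, shows
\[
\gamma_\fra^\bsk \equiv (2\pi i)^{\absv\bsk}u_\fra^\bsk \pmod{W_{-2\absv\bsk-1}\cLog^N_\fra}.
\]
Under the previously-defined isomorphism $H_\fra\cong\bbR(\bone)$ sending $\gamma_{\tau,\fra}=2\pi i\,u_{\tau,\fra}$ to $e_\tau$, the class $(2\pi i)^{\absv\bsk}u_\fra^\bsk$ is carried to $e^\bsk$. Thus the prescribed map is exactly the composite $\Gr^W_{-2k}\bbLog^N\cong\Sym^k\bbH\cong\Sym^k\bbR(\bone)$ obtained from Lemma \ref{lem: H} and \eqref{eq: Gr^W}, so it is an isomorphism of constant variations of pure $\bbR$-Hodge structures. (The compatibility with $F^\bullet$ is automatic from \eqref{eq: WF} since $u_\fra^\bsk\in F^{-\absv\bsk}$.)

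Finally, for $\Ft$-equivariance I would compute $\iota_x(\gamma_\fra^\bsk)$ directly. From the proof of Lemma \ref{lem: H} we have $\pair{x}^*\dlog t_{\tau,\fra}=x^\tau\dlog t_{\tau,x\fra}$, hence $\pair{x}^*\log t_{\tau,\fra}=x^\tau\log t_{\tau,x\fra}$ after suitable choice of branches (which is harmless by the first step), while dually $\iota_x(u_{\tau,\fra})=(x^\tau)^{-1}u_{\tau,x\fra}$. Substituting into the definition of $\gamma_\fra^\bsk$, the factors $x^\tau$ in the exponent cancel with the $(x^\tau)^{-1}$ in the $u_{\tau,x\fra}$ inside the exponential, and we are left with
\[
\iota_x(\gamma_\fra^\bsk)=\prod_{\tau\in I}(x^\tau)^{-k_\tau}\,\gamma_{x\fra}^\bsk,
\]
which matches the action of $x$ on $e^\bsk\in\Sym^k\bbR(\bone)$ (namely $e_\tau\mapsto (x^\tau)^{-1}e_\tau$). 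The main technical obstacle, if any, is bookkeeping: making sure the branch-dependence of $\log t_{\tau,\fra}$ is handled coherently under $\pair{x}^*$, and that the identification $H_\fra\cong\bbR(\bone)$ used here (through $\gamma_{\tau,\fra}\mapsto e_\tau$) is exactly the one of Lemma \ref{lem: H}; once these are checked, the rest is a direct calculation.
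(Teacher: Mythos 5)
Your proposal is correct and takes essentially the same route as the paper: the paper declares the isomorphism of variations of pure $\bbR$-Hodge structures ``clear from the construction'' and then checks equivariance via the identities $\pair{x}^*\log t_{\tau,\fra}=x^\tau\log t_{\tau,x\fra}$ and $\iota_x(u_{\tau,\fra})=(x^\tau)^{-1}u_{\tau,x\fra}$, yielding $\iota_x(\gamma_\fra^\bsk)=x^{-\bsk}\gamma_{x\fra}^\bsk$, exactly as you do. You simply spell out in more detail the two steps the paper leaves to the reader (that $(\gamma_\fra^\bsk)$ is a horizontal real basis with the right $\Gr^W$ behaviour, by comparing with the $\gamma_\bsal^\bsk$ via a real invertible transition matrix).
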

\begin{proof}
It is clear from the construction that this is an isomorphism of variations of pure $\bbR$-Hodge structures.
To check the equivariance, note that 
the identities $\pair{x}^*\log t_{\tau,\fra}=x^\tau\log t_{\tau,x\fra}$ and 
$\iota_x(u_{\tau,\fra})=(x^\tau)^{-1}u_{\tau,x\fra}$ imply 
$\iota_x(\gamma_\fra^\bsk)=x^{-\bsk}\gamma_{x\fra}^\bsk$, 
where $x^{-\bsk}\coloneqq \prod_{\tau\in I}(x^\tau)^{-k_\tau}$. 
This is compatible with the fact that $x$ acts on $e^\bsk\in\Sym^k\bbR(\bone)$ 
as the multiplication by $x^{-\bsk}$. 
\end{proof}

\begin{proposition}
For each integer $N>0$, $\bbLog^N$ is an $\Ft$-equivariant admissible unipotent variation of 
mixed $\bbR$-Hodge structures on $\T$. Moreover, there is a natural exact sequence 
\begin{equation}\label{eq: SES}
	0 \rightarrow \Sym^{N} \bbR(\bone) \rightarrow \bbLog^{N} \rightarrow \bbLog^{N-1} \rightarrow 0 
\end{equation}
of $\Ft$-equivariant variations of mixed $\bbR$-Hodge structures. 
\end{proposition}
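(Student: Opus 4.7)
The plan is to reduce everything to statements already proved or constructed in the preceding subsections. The $\Ft$-equivariant structure on $\bbLog^N$ was exhibited explicitly just above via the isomorphisms $\iota_x(\gamma_\bsal^\bsk)=\gamma_{x\bsal}^\bsk$ of \eqref{eq: equiv}, and the unipotence was verified by identifying the graded pieces with $\Sym^k\bbR(\bone)$. So the only genuinely new content of the first assertion is admissibility over $\T$.

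First, I would observe that since $\T=\coprod_{\fra\in\frI}\T^\fra$ is a disjoint union of smooth varieties, admissibility on $\T$ is by definition admissibility on each component $\T^\fra$, which is precisely Proposition \ref{prop: bbLog_fra^N}. Note that the definition of admissibility forgets the $\Ft$-action, so no further compatibility has to be checked; the $\Ft$-equivariant structure from \S\ref{ssec:2.3.4} then upgrades $\bbLog^N$ to an $\Ft$-equivariant admissible unipotent variation.

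For the exact sequence, the natural inclusion and projection are suggested by the definition \eqref{eq: WF}: the lowest weight piece is
\[
W_{-2N}\cLog^N_\fra=\Sym^N_{\sO_{\T^\fra}}\cH_\fra,
\]
and the quotient $\cLog^N_\fra/W_{-2N}\cLog^N_\fra$ is canonically $\cLog^{N-1}_\fra$. Crucially, by the definition of the connection \eqref{eq: connection} the piece $\Sym^N H_{\fra,\bbC}$ is horizontal (the $k=N$ case is set to zero), so $W_{-2N}\bbLog^N_\fra$ is the constant $\bbR$-local system underlying $\Sym^N H_\fra$, and the projection $\bbLog^N_\fra\to\bbLog^{N-1}_\fra$ of $\bbR$-local systems is read off by sending $\gamma^\bsk_\bsal$ with $|\bsk|=N$ to zero and $\gamma^\bsk_\bsal$ with $|\bsk|<N$ to itself. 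Using the isomorphism $\bbH\cong\bbR(\bone)$ of Lemma \ref{lem: H}, the first term $W_{-2N}\bbLog^N_\fra$ is identified with $\Sym^N\bbR(\bone)$, giving the three morphisms of the claimed sequence on each $\T^\fra$.

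The remaining verifications are routine: the maps respect $W_\b$ and $F^\b$ because they come from the description \eqref{eq: WF}, and they are $\Ft$-equivariant because, in terms of the bases $\gamma^\bsk_\fra$ and $e^\bsk$, both the inclusion and the projection are given by sending basis vectors to basis vectors in a manner compatible with the transformation rules $\iota_x(\gamma_\fra^\bsk)=x^{-\bsk}\gamma_{x\fra}^\bsk$ and the action of $x$ on $e^\bsk$ by $x^{-\bsk}$, established in the lemma just above. The main (and only) potential obstacle is to pin down that the subobject $W_{-2N}\bbLog^N$ really is constant as a local system and maps isomorphically to $\Sym^N\bbR(\bone)$ as an $\Ft$-equivariant variation of pure $\bbR$-Hodge structures; this is precisely what the preceding lemma (with $k=N$) supplies, so no additional work is needed.
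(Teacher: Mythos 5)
Your proposal is correct and follows essentially the same route as the paper: admissibility is reduced component-by-component to Proposition \ref{prop: bbLog_fra^N}, and the exact sequence is built from the inclusion of the bottom weight piece $W_{-2N}\bbLog^N=\Gr^W_{-2N}\bbLog^N$, identified with $\Sym^N\bbR(\bone)$ via the preceding lemma through $\gamma_\fra^\bsk\leftrightarrow e^\bsk$. The only cosmetic difference is that the paper phrases the map as an injection $\Sym^N\bbR(\bone)\to\bbLog^N$, $e^\bsk\mapsto\gamma_\fra^\bsk=(2\pi i)^N u_\fra^\bsk$, and observes directly that these are global horizontal sections, whereas you frame the same data in terms of the weight filtration; both amount to the same computation.
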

\begin{proof}
We have already shown that $\bbLog^N$ is an $\Ft$-equivariant unipotent variation of 
mixed $\bbR$-Hodge structures on $\T$. The admissibility follows from Proposition \ref{prop: bbLog_fra^N}. 
To prove the latter statement, note that $\gamma_\fra^\bsk = (2\pi i)^N u_\fra^\bsk$ gives 
a global section of $\bbLog_\fra^N$ when $\absv{\bsk}=N$. 
Hence we can define an injection $\Sym^{N}\bbR(\bone)\to\bbLog^{N}$ by 
$e^\bsk\mapsto\gamma_\fra^\bsk$ on $\T^\fra$.  
This is a morphism of $\Ft$-equivariant variations of mixed $\bbR$-Hodge structures 
and its cokernel is naturally isomorphic to $\cLog^{N-1}$. 
Hence we obtain the exact sequence \eqref{eq: SES}. 
\end{proof}

\begin{definition}\label{def: log}
	We define the \emph{logarithm sheaf} $\bbLog$ to be the projective system $\bbLog=(\bbLog^N)_{N\geq0}$ 
	of $\Ft$-equivariant admissible unipotent variations of mixed $\bbR$-Hodge structures $\bbLog^N$ on $\T$ 
	with respect to the projection $\bbLog^{N}\to\bbLog^{N-1}$ in \eqref{eq: SES}. 
\end{definition}

\subsection{The Splitting Principle}\label{ssec: splitting}
This principle asserts that the logarithm sheaf splits into the direct product of $\Sym^k\bbR(\bone)$ 
at each torsion point. In our situation, this is formulated as follows. 

Let $\xi\in\bbT^\fra$ be a torsion point. 
Then the pull-back $i_\xi^*\bbLog^N$ of the logarithm sheaf $\bbLog^N$ 
along the inclusion $i_\xi\colon\{\xi\}\hookrightarrow\T$, 
in other words, the fiber of $\bbLog^N$ at the point $\xi$
inherits a mixed $\bbR$-Hodge structure. 
Moreover, for $x\in\Ft$, the map $\pair{x}\colon\T^{x\fra}\isomto\T^\fra$ induces an isormorphism 
\[\pair{x}^*\colon i_\xi^*\bbLog^N\isomto i_{\xi'}^*\bbLog^N\]
of mixed $\bbR$-Hodge structures, where we put $\xi'=\pair{x}^{-1}\xi$. 

\begin{proposition}\label{prop: splitting principle}
For each torsion point $\xi\in\T^\fra$, we have an isomorphism 
\begin{equation}\label{eq: splitting}
	i^*_\xi \bbLog^N\cong\prod_{k=0}^N \Sym^k \bbR(\bone)
\end{equation}
of mixed $\bbR$-Hodge structures. In addition, through this isomorphism and 
the one corresponding to $\xi'=\pair{x}^{-1}\xi$, 
the map $\pair{x}^*\colon i_\xi^*\bbLog^N\isomto i_{\xi'}^*\bbLog^N$ is identified 
with the action of $x$ on $\prod_{k=0}^N \Sym^k \bbR(\bone)$. 
In particular, the isomorphism \eqref{eq: splitting} is $\Delta_\xi$-equivariant, 
where $\Delta_\xi\subset\Ft$ denotes the isotropy subgroup of $\xi$. 
\end{proposition}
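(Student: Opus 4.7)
\emph{Proof proposal.} The plan is to construct the splitting isomorphism by directly comparing the two $\bbR$-structures inside a common complexification. Both $i_\xi^*\bbLog^N$ and $\prod_{k=0}^N\Sym^k\bbR(\bone)$ have complexifications naturally identified with $\prod_{|\bsk|\le N}\bbC\,u_\fra^\bsk$: the former by the construction of $\cLog^N$, and the latter via the isomorphism $\bbR(\bone)\cong H_\fra$ of Lemma~\ref{lem: H} together with the relation $e^\bsk = (2\pi i)^{|\bsk|}u_\fra^\bsk$ under this isomorphism. Under this common identification the Hodge filtrations already coincide, both being equal to $\prod_{|\bsk|\le p}\bbC\,u_\fra^\bsk$ in degree $F^{-p}$. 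It therefore suffices to match the $\bbR$-structures and the weight filtrations.

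The key computation is to evaluate the horizontal section $\gamma_\fra^\bsk$ at the torsion point $\xi$. Since $\xi^{\alpha_i}$ is a root of unity, for any fixed branches we have $\log\xi^{\alpha_i}\in 2\pi i\,\bbQ$; solving the defining equations $\log t^{\alpha_i} = \sum_\tau\alpha_i^\tau\log t_{\tau,\fra}$ at $\xi$ then gives $\log t_{\tau,\fra}(\xi) = 2\pi i\,s_\tau$ for some $s_\tau\in\bbR$ (a real linear combination of the rational numbers $\log\xi^{\alpha_i}/2\pi i$ via the inverse of the real matrix $(\alpha_i^\tau)$). Substituting into the formula defining $\gamma_\fra^\bsk$ in \S\ref{ssec:2.3.4} and using $e^\bsj = (2\pi i)^{|\bsj|}u_\fra^\bsj$, we obtain
\[ \gamma_\fra^\bsk(\xi) = \sum_{\bsn\in\bbN^I}\prod_{\tau\in I}\frac{(-s_\tau)^{n_\tau}}{n_\tau!}\cdot e^{\bsk+\bsn}. \]
This expresses the change of basis from $(\gamma_\fra^\bsk(\xi))$ to $(e^\bsk)$ as a real, unipotent matrix (triangular with $1$'s on the diagonal with respect to the partial order on $\bbN^I$). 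Hence the $\bbR$-spans of $(\gamma_\fra^\bsk(\xi))_{|\bsk|\le N}$ and of $(e^\bsk)_{|\bsk|\le N}$ coincide, and since both weight filtrations $W_{-2m}$ are spanned by basis vectors indexed by $\bsk$ with $|\bsk|\ge m$, the weight filtrations match as well. The resulting $\bbR$-linear identification is the desired isomorphism of mixed $\bbR$-Hodge structures.

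For the equivariance, take $x\in\Ft$ and put $\xi'\coloneqq\pair{x}^{-1}\xi\in\T^{x\fra}$. The identity $\pair{x}^*\log t_{\tau,\fra} = x^\tau\log t_{\tau,x\fra}$ established in the proof of Lemma~\ref{lem: H} yields $s'_\tau = (x^\tau)^{-1}s_\tau$ for the constants associated to $\xi'$. Combined with $\iota_x(\gamma_\fra^\bsk) = x^{-\bsk}\gamma_{x\fra}^\bsk$ from \S\ref{ssec:2.3.4} and the fact that $x$ acts on $e^{\bsk+\bsn}\in\Sym^{|\bsk+\bsn|}\bbR(\bone)$ by multiplication by $x^{-(\bsk+\bsn)}$, a direct substitution into the series above shows that the splittings at $\xi$ and $\xi'$ intertwine the map induced by $\iota_x$ with the action of $x$ on $\prod_{k=0}^N\Sym^k\bbR(\bone)$. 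The last statement on $\Delta_\xi$-equivariance is then the special case $\xi' = \xi$.

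No serious obstacle is anticipated: independence of the branch choices and of the auxiliary $\bbZ$-basis $\bsalpha$ of $\fra$ has already been incorporated into Lemma~\ref{lem: gamma_bsal} and the definitions in \S\ref{ssec:2.3.4}, so the whole verification reduces to the explicit triangular basis-change computation above.
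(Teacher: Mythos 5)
Your argument is correct and follows essentially the same route as the paper's: both rest on the observation that at a torsion point the unipotent change of basis between $(\gamma^\bsk)$ and $((2\pi i)^{|\bsk|}u^\bsk)$ has real entries, so the two $\bbR$-structures in $\prod_{|\bsk|\le N}\bbC\,u_\fra^\bsk$ agree, after which the isomorphism $\bbR(\bone)\cong H_\fra$ of Lemma~\ref{lem: H} finishes the job. The only (cosmetic) difference is that you express the computation in the $\tau$-indexed basis $\gamma_\fra^\bsk$ with real coefficients $s_\tau$, whereas the paper works with a chosen $\bbZ$-basis $\bsal$ of $\fra$ where the coefficients $\log t^{\alpha_i}(\xi)/2\pi i$ are even rational.
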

\begin{proof}
Choose a basis $\bsal=(\alpha_1,\ldots,\alpha_g)$ of $\fra$ and 
branches of $\log t^{\alpha_i}$. 
Since $\xi$ is a torsion point of $\T^\fra$, the values of $\log t^{\alpha_i}$ at $\xi$ belong to $2\pi i\bbQ$. 
Then the formula 
\[(2\pi i)^{\absv{\bsk}}u_\bsal^\bsk
=\exp\Biggl(\sum_{i=1}^g(\log t^{\alpha_i})u_{\alpha_i}\Biggr)\gamma_\bsal^\bsk
=\sum_{\bsn=(n_i)\in\bbN^g}\prod_{i=1}^g\frac{1}{n_i!}\biggl(\frac{\log t^{\alpha_i}}{2\pi i}\biggr)^{n_i}
\gamma_\bsal^{\bsk+\bsn}\]
shows that $(2\pi i)^{\absv{\bsk}}u_\bsal^\bsk$ evaluated at $\xi$ belongs to $i_\xi^*\bbLog^N$. 
Hence we obtain a canonical isomorphism 
\[i_\xi^*\bbLog^N=\prod_{0\le\absv{\bsk}\le N}\bbR\cdot (2\pi i)^{\absv{\bsk}}u_\bsal^\bsk
=\prod_{k=0}^N\Sym^k H_\fra, \]
which is clearly compatible with $\Ft$-action. 
Thus the isomorphism \eqref{eq: R(1) to H_fra} of $\bbR(\bone)$ and $H_\fra$ gives 
the desired isomorphism \eqref{eq: splitting}, and the $\Ft$-equivariance follows from 
Lemma \ref{lem: H}. 
\end{proof}

%
%
%
%
%
\section{The Polylogarithm Class}\label{sec: polylog}
%
%
%
%
%

In this section, we calculate the cohomology of the logarithm sheaf $\bbLog$,
and construct the polylogarithm class in the equivariant Deligne--Beilinson cohomology.

\subsection{Cohomology of the Logarithm Sheaf}\label{subsec: cohomology}
%

In this subsection, we investigate the cohomologies of $\T$ and its open subset 
$U\coloneqq\coprod_{\fra\in\frI}(\T^\fra\setminus\{1\})$ with coefficients in $\bbLog^N$, 
and their $\Ft$-equivariant structures. 
This amounts to considering the cohomologies of $\T^\fra$ and $U^\fra\coloneqq\T^\fra\setminus\{1\}$ 
for $\fra\in\frI$, and the effects of $\pair{x}^*$ on them for $x\in\Ft$. 
First note that, ignoring the equivariant structure, we have the following result. 

\begin{proposition}\label{prop: calculation}
	Let $\fra\in\frI$.
	For integers $m$ and $N>0$, the natural map
	\[
		H^m(\T^\fra, \bbLog^N_\fra) \rightarrow H^m(\T^\fra, \bbLog^{N-1}_\fra)
	\]
	is the zero map if $m\neq g$, and is an isomorphism if $m=g$.
	Moreover, we have 
	\[
		H^g(\T^\fra,\bbLog^N_\fra)\xrightarrow\cong\cdots\xrightarrow\cong H^g(\T^\fra,\bbLog^0_\fra)\cong\bbR(-g), 
	\]
    where the last isomorphism is given by 
    \[
    H^g(\T^\fra,\bbLog^0_\fra)=H^g(\T^\fra,\bbR)=\bigwedge^g H^1(\T^\fra,\bbR)=\bigwedge^g H_\fra^\vee 
    \xrightarrow{\eqref{eq: R(1) to H_fra}^\vee} \bigwedge^g \bbR(-\bone) \cong \bbR(-g). 
    \]
	In particular, we have 
	\[
		H^m(\T^\fra,\bbLog_\fra)\coloneqq\varprojlim_N H^m(\T^\fra,\bbLog^N_\fra)
		\cong\begin{cases}
			\bbR(-g)   &  m=g\\
			0 & m\neq g.
		\end{cases}
	\]
\end{proposition}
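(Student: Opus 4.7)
Since $\bbT^\fra(\bbC) \cong (\bbC^\times)^g$ is an Eilenberg--MacLane space for $L := \pi_1(\bbT^\fra) \cong \bbZ^g$, I would begin by identifying $H^m(\bbT^\fra, \bbLog^N_\fra) = H^m(L, V^N)$, where $V^N := \prod_{|\bsk| \leq N} \bbR\, \gamma^\bsk_\bsalpha$ is the fiber carrying the unipotent $L$-action with the $i$-th generator acting by $\exp(-\gamma_{\alpha_i})$, truncated above symmetric degree $N$. The Koszul resolution of $\bbR$ over $\bbR[L]$ computes this as $H^*(V^N \otimes \bigwedge^\bullet L^\vee, d)$, and since $(1-e^{-x})/x$ is a unit in the nilpotent operator $\gamma_{\alpha_i}$, an invertible change of basis reduces the differential to the simple form $d(\omega) = -\sum_i \gamma_{\alpha_i}\cdot\omega \otimes e_i^\vee$ with truncated multiplication.

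For the top degree $m = g$, the cokernel of the last Koszul map is $(V^N / \sum_i \gamma_{\alpha_i} V^N) \otimes \bigwedge^g L^\vee$; every $\gamma^\bsk_\bsalpha$ with $|\bsk| \geq 1$ lies in the ideal generated by the $\gamma_{\alpha_i}$, so this cokernel is one-dimensional, spanned by the class of $1$. Combined with $\bigwedge^g L^\vee \cong \bigwedge^g \bbR(-\bone) \cong \bbR(-g)$ coming from \eqref{eq: R(1) to H_fra}, this gives $H^g(\bbT^\fra, \bbLog^N_\fra) \cong \bbR(-g)$, and the transition map sends $[1]$ to $[1]$, an isomorphism; the base case $\bbLog^0_\fra = \bbR$ supplies the explicit identification $H^g(\bbT^\fra, \bbR) = \bigwedge^g H_\fra^\vee \cong \bbR(-g)$.

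For $m \neq g$, the long exact sequence of \eqref{eq: SES} reduces the vanishing of the transition map to the surjectivity of the natural map $H^m(\bbT^\fra, \Sym^N \bbR(\bone)) = \Sym^N \bbR(\bone) \otimes \bigwedge^m H_\fra^\vee \to H^m(\bbT^\fra, \bbLog^N_\fra)$. I would establish this surjectivity using the $L$-stable filtration $F^p V^N := \prod_{p \leq |\bsk| \leq N} \bbR\, \gamma^\bsk_\bsalpha$ by symmetric degree (stable because the monodromy strictly raises symmetric degree). The associated spectral sequence has $E_1$-term $\Sym^p \otimes \bigwedge^q L^\vee$, and its $d_1$ is the non-truncated ascending Koszul differential of $\bbR[\gamma_{\alpha_1}, \ldots, \gamma_{\alpha_g}]$; the latter is acyclic except at the corner $(p,q)=(0,g)$ (which accounts for $H^g$). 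In total degree $m < g$, the only surviving $E_2$-contribution arises at the truncation boundary $p=N$, where $d_1$-out is killed by the truncation, and that contribution is precisely $\Sym^N \otimes \bigwedge^m L^\vee$. Since the transition $V^N \to V^{N-1}$ annihilates the degree-$N$ part, the induced map on cohomology is zero.

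The main obstacle is to verify rigorously that no higher differentials $d_r$ ($r \geq 2$) contribute and that the boundary classes at $p=N$ match the image of $H^m(\Sym^N \bbR(\bone))$ in the expected way. A parallel, more hands-on alternative constructs the contracting homotopy $h(\gamma^\bsk_\bsalpha \otimes e_J) := \sum_i k_i\, \gamma^{\bsk - 1_i}_\bsalpha \otimes \iota_{e_i^\vee}(e_J)$; a direct calculation gives $dh + hd = (g - |J| + |\bsk|)\id$ in the non-truncated setting, so all cohomology outside the harmonic corner $(|\bsk|, |J|) = (0, g)$ vanishes. The failure of this identity on top symmetric degree $|\bsk| = N$ is precisely what produces the residual $\Sym^N$-cohomology for $m \neq g$, which then vanishes under the transition to $V^{N-1}$.
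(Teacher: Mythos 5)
Your argument is correct in outline, and it takes a genuinely different route from the paper's: the published proof simply reduces $\bbT^\fra$ to $\bbG_m^g$ by a choice of $\bbZ$-basis and then cites Huber--Kings \cite{HK18}*{Corollary 7.1.6} together with \cite{BHY18}*{Lemma 3.3}, whereas you carry out the underlying group-cohomology computation directly via the Koszul resolution for $\bbZ^g$. This buys a self-contained and transparent proof. Your $m=g$ computation (cokernel spanned by $[1]$, transition preserving $[1]$) is complete, and the identification with $\bbR(-g)$ via $\eqref{eq: R(1) to H_fra}^\vee$ is exactly the one the proposition prescribes. One small imprecision in the $m<g$ case: the surviving $E_2$-term at $p=N$ in total degree $m$ is not $\Sym^N\otimes\bigwedge^m L^\vee$ itself, but its quotient by the image of the incoming Koszul $d_1$; this does not affect the conclusion, since what you actually use is that all $p<N$ columns of $E_2$ vanish outside the corner $(0,g)$, and that the transition map annihilates the whole $p=N$ column.

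The gap you flag is real but easy to close, and your homotopy alternative does it most cleanly once pushed one step further. Set $D\coloneqq|\bsk|+g-|J|$; then $D$ commutes with both $d$ and $h$ on the untruncated complex $\prod_{k\geq 0}\Sym^k H_\fra\otimes\bigwedge^\bullet L^\vee$, and $dh+hd=D$ there. Fix $m<g$ and take a $d_N$-cocycle $\omega$ of degree $m$ in the truncated complex $C^N\coloneqq V^N\otimes\bigwedge^\bullet L^\vee$; split $\omega=\omega_{<N}+\omega_N$ into its $|\bsk|<N$ and $|\bsk|=N$ parts. Because $d$ raises $|\bsk|$ by exactly one, the condition $d_N\omega=0$ forces $d\omega_{<N}=0$ already in the untruncated complex, so the homotopy identity applies to $\omega_{<N}$ with no truncation interference. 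Since $D\geq g-m>0$ on the $\bigwedge^m L^\vee$-component, we obtain $\omega_{<N}=D^{-1}(dh+hd)\,\omega_{<N}=d\bigl(D^{-1}h\,\omega_{<N}\bigr)$, and the primitive $D^{-1}h\,\omega_{<N}$ has symmetric degree at most $N-2$, so this equality holds verbatim in $C^{N-1}$. The transition $C^N\twoheadrightarrow C^{N-1}$ is precisely ``discard $\omega_N$'', hence it maps $[\omega]$ to $[\omega_{<N}]=0$. This settles the vanishing for $m\neq g$ (the case $m>g$ being trivial) and the boundary-matching concern simultaneously, and then the projective-limit statement is immediate.
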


\begin{proof}
	If we fix a $\bbZ$-basis $\bsal=(\al_1,\ldots,\al_g)$ of $\fra$, then we have an isomorphism
	\[
		\T^\fra=\Spec\bbC[t^{\pm\alpha_1},\ldots,t^{\pm\alpha_g}]\cong\bbG_m^g.
	\]
	Then $\bbLog^N_\fra$ corresponds to the usual logarithm sheaf on $\bbG_m^g$,
	and our assertion
	is a special case of the calculation by Huber and Kings \cite{HK18}*{Corollary 7.1.6}
	for a general commutative group scheme given as an extension of an abelian variety by a torus. 
	An explicit proof for the case of $\bbG_m^g$ is also given in \cite{BHY18}*{Lemma 3.3}.
\end{proof}

\begin{remark}\label{rem: weights}
	By Proposition \ref{prop: calculation}, the mixed $\bbR$-Hodge structure 
	$H^{g}(\bbT^\fra,\bbLog^N_\fra)\cong\bbR(-g)$ is pure of weight $2g$.
    On the other hand, for $0\leq m<g$, the short exact sequence \eqref{eq: SES} gives rise to an \emph{injection} 
    \begin{equation}\label{eq: SES injection}
        H^{m}(\bbT^\fra,\bbLog^{N}_\fra)\to H^{m+1}(\bbT^\fra,\Sym^{N+1}\bbR(\bone)), 
    \end{equation}
    since the map $H^{m}(\bbT^\fra,\bbLog^{N+1}_\fra)\rightarrow H^{m}(\bbT^\fra,\bbLog^{N}_\fra)$ is zero 
    by Proposition \ref{prop: calculation}. 
	Noting that 
	\begin{align*}
	    H^{m+1}(\bbT^\fra,\Sym^{N+1}\bbR(\bone))&=H^{m+1}(\bbT^\fra, \bbR)\otimes \Sym^{N+1}\bbR(\bone)
	    =\bigwedge^{m+1} H^{1}(\bbT^\fra,\bbR)\otimes \Sym^{N+1}\bbR(\bone)\\
	    &=\bigwedge^{m+1} \bbR(-\bone)\otimes \Sym^{N+1}\bbR(\bone), 
	\end{align*}
	we see that the mixed $\bbR$-Hodge structure on $H^{m}(\bbT^\fra,\bbLog^N_\fra)$ is 
	pure of weight $2(m-N)$ for $0\leq m<g$.
\end{remark}

Next we consider the $\Ft$-action. What we will need later is the following: 

\begin{lemma}\label{lem: divides}
\begin{enumerate}
\item For $x\in\Ft$, the diagram 
\[\xymatrix{
     H^g(\T^\fra,\bbLog_\fra^N) \ar[r]^-{\cong} \ar[d]_{\pair{x}^*} & \bbR(-g) \ar[d]^{\Nr(x)} \\
     H^g(\T^{x\fra},\bbLog_{x\fra}^N) \ar[r]^-{\cong} & \bbR(-g)
}\]
is commutative, where $\Nr(x)$ denotes the norm of $x$ and 
the horizontal isomorphisms are those given in Proposition \ref{prop: calculation}. 
In particular, the action of $\Delta\coloneqq\cO_{F+}^\times$ on $H^g(\T^\fra,\bbLog_\fra^N)$ is trivial. 
\item Let $0\le m<g$. If $g|N$, the $\Delta$-invariant part of $H^{m}(\bbT^\fra,\bbLog^{N}_\fra)$ is given by 
\[H^m(\T^\fra,\bbLog_\fra^N)^\Delta\cong\begin{cases}
    \bbR(N) & (m=0),\\
    0 & (0<m<g). 
\end{cases}\]
\end{enumerate}
\end{lemma}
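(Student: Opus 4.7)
The plan is to handle the two parts separately, both reducing to the $\Ft$-equivariance of the comparison isomorphism \eqref{eq: R(1) to H_fra} established in Lemma \ref{lem: H}, with part (2) also requiring Dirichlet's unit theorem.

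For part (1), I would first unpack the isomorphism $H^g(\T^\fra, \bbLog_\fra^N) \cong \bbR(-g)$ of Proposition \ref{prop: calculation} as the composition
\[
H^g(\T^\fra, \bbLog_\fra^N) \xrightarrow{\cong} H^g(\T^\fra, \bbLog_\fra^0) = \bigwedge^g H_\fra^\vee \xrightarrow{\cong} \bigwedge^g \bbR(-\bone) \cong \bbR(-g).
\]
The first arrow iterates the isomorphisms induced by the $\Ft$-equivariant projections $\bbLog^N \to \bbLog^{N-1}$, and the second is the dual of \eqref{eq: R(1) to H_fra}, which is $\Ft$-equivariant by Lemma \ref{lem: H}. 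Hence $\pair{x}^*$ corresponds to the action of $x$ on $\bbR(-g) \cong \bigotimes_{\tau \in I} \bbR(-1_\tau)$, namely multiplication by $\prod_{\tau \in I} x^\tau = \Nr(x)$. For $x \in \Delta = \cO_{F+}^\times$, every totally positive unit has norm $1$, yielding the asserted triviality.

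For part (2), I would treat the cases $m = 0$ and $0 < m < g$ separately. When $m = 0$, the long exact sequence of the $\Ft$-equivariant short exact sequence \eqref{eq: SES}, together with the vanishing of $H^0(\T^\fra, \bbLog_\fra^N) \to H^0(\T^\fra, \bbLog_\fra^{N-1})$ from Proposition \ref{prop: calculation}, produces a $\Delta$-equivariant isomorphism
\[H^0(\T^\fra, \bbLog_\fra^N) \cong H^0(\T^\fra, \Sym^N \bbR(\bone)) = \Sym^N \bbR(\bone).\]
On the basis $e^\bsk$ indexed by $\bsk \in \bbN^I$ with $|\bsk| = N$, any $x \in \Delta$ acts by the character $x^{-\bsk} = \prod_\tau (x^\tau)^{-k_\tau}$. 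By Dirichlet's unit theorem, the image of $\Delta$ in $\bbR^I$ under $x \mapsto (\log x^\tau)_\tau$ is a full-rank lattice in the hyperplane $\sum_\tau y_\tau = 0$, so $x^{\bsk} = 1$ on $\Delta$ precisely when $\bsk$ is a scalar multiple of $(1, \ldots, 1)$; combined with $|\bsk| = N$ and $\bsk \in \bbN^I$, this forces $\bsk = (N/g, \ldots, N/g)$, which exists if and only if $g \mid N$. The resulting $1$-dimensional invariant is isomorphic as a Hodge structure to $\bigotimes_\tau \bbR(N/g) \cong \bbR(N)$. When $0 < m < g$, I would instead use the $\Delta$-equivariant injection \eqref{eq: SES injection} and compute the invariants of the target
\[H^{m+1}(\T^\fra, \Sym^{N+1} \bbR(\bone)) \cong \bigwedge^{m+1} \bbR(-\bone) \otimes \Sym^{N+1} \bbR(\bone).\]
On basis elements $e_S^\vee \otimes e^\bsk$ with $S \subset I$ of size $m+1$ and $|\bsk| = N+1$, the character by which $x \in \Delta$ acts has total exponent $(m+1) - (N+1) = m - N$, so by the same Dirichlet argument invariance requires this exponent vector to be proportional to $(1,\ldots,1)$, forcing $g \mid (m - N)$. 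Under the hypotheses $g \mid N$ and $0 < m < g$, this has no solution, so the invariants of the target vanish, and so do those of the source.

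The main obstacle, common to both subcases of part (2), is the precise application of Dirichlet's unit theorem to characterize which real-valued characters $x \mapsto x^{\bsn}$ of $\Delta$ are trivial; once this observation is fixed, the remaining work is routine bookkeeping with multi-indices and degrees.
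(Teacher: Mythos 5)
Your proof is correct and follows the same route as the paper, whose proof of (1) is a one-line ``easily verified from the construction'' and whose proof of (2) hinges on exactly the injection \eqref{eq: SES injection} together with the character analysis on $\bigwedge^{m+1}\bbR(-\bone)\otimes\Sym^{N+1}\bbR(\bone)$ that you spell out. You have filled in the details the paper elides (tracing the isomorphism through \eqref{eq: R(1) to H_fra}, the explicit Dirichlet-unit-theorem argument for when the character $x\mapsto x^{\bsn}$ is trivial on $\Delta$, and the $m=0$ identification $H^0\cong\Sym^N\bbR(\bone)$ via the long exact sequence), but there is no difference in strategy.
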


\begin{proof}
    The assertion (1) is easily verified from the construction. 
    To prove (2), we note that the $\Delta$-invariant part of 
	\[H^{m+1}(\bbT^\fra,\Sym^{N+1}\bbR(\bone))=\bigwedge^{m+1}\bbR(-\bone)\otimes\Sym^{N+1}\bbR(\bone)\]
    is nontrivial if and only if $g|(N-m)$, which holds only when $m=0$ since we assume $g|N$. 
    From this and the injection \eqref{eq: SES injection}, we obtain our assertion for $0<m<g$. 
	For $m=0$, we have $H^{0}(\bbT^\fra,\bbLog^{N}_\fra)\cong\Sym^N\bbR(\bone)$, which proves the assertion.
\end{proof}

Now we turn to the open set $U^\fra\coloneqq\bbT^\fra\setminus\{1\}$ of $\T^\fra$. 
The following Proposition is the equivariant version of \cite{BHY18}*{Proposition 3.4}, 
which calculates the cohomology of $\bbG_m^g\setminus\{1\}$. 

\begin{proposition}\label{prop: U}
    Let $N\ge 0$ be an integer. 
    \begin{enumerate}
    \item When $g>1$, we have 
    \[
	    H^m(U^\fra,\bbLog^N_\fra)
	    \cong\begin{cases}
	    	H^m(\bbT^\fra,\bbLog^N_\fra) & 0\leq m\leq g,\\
	    	\bigl(\prod_{k=0}^N\Sym^k\bbR(\bone)\bigr)(-g) & m=2g-1,\\
    		0& \text{otherwise}, 
	    \end{cases}
    \]
    and in particular, 
	\[
		H^m(U^\fra,\bbLog_\fra)\coloneqq 
		\varprojlim_N H^m(U^\fra,\bbLog^N_\fra)
		\cong\begin{cases}
			\bbR(-g) & m=g,\\
			\bigl(\prod_{k=0}^\infty\Sym^k\bbR(\bone)\bigr)(-g) & m=2g-1,\\
			0 & \text{otherwise}.
		\end{cases}
	\]
	\item When $g=1$, we have 
    \[
	    H^m(U^\fra,\bbLog^N_\fra)
	    \cong\begin{cases}
	    	H^0(\bbT^\fra,\bbLog^N_\fra) & m=0,\\
	    	H^1(\bbT^\fra,\bbLog^N_\fra)\oplus\bigl(\prod_{k=0}^N\Sym^k\bbR(\bone)\bigr)(-1) & m=1,\\
    		0& \text{otherwise}, 
	    \end{cases}
    \]
    and in particular, 	
	\[
		H^{m}(U^\fra, \bbLog_\fra)
		\cong\begin{cases}
			\bbR(-1)\oplus\prod_{k=0}^\infty \bbR(k-1) & m=1,\\
			0  & \text{otherwise}.
		\end{cases}
	\]
	\end{enumerate}
	In both cases, these isomorphisms are $\Ft$-equivariant. 
\end{proposition}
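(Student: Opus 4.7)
\noindent My plan is to apply the localization long exact sequence in $\MHS_\bbR$ for the closed immersion $i_1\colon\{1\}\hookrightarrow\bbT^\fra$ with open complement $j\colon U^\fra\hookrightarrow\bbT^\fra$:
\[
\cdots\to H^m_{\{1\}}(\bbT^\fra,\bbLog^N_\fra)\to H^m(\bbT^\fra,\bbLog^N_\fra)\to H^m(U^\fra,\bbLog^N_\fra)\to H^{m+1}_{\{1\}}(\bbT^\fra,\bbLog^N_\fra)\to\cdots.
\]
The Thom--Gysin purity isomorphism combined with Proposition \ref{prop: splitting principle} identifies the local cohomology as
\[
H^m_{\{1\}}(\bbT^\fra,\bbLog^N_\fra)\cong H^{m-2g}(\{1\},i_1^*\bbLog^N_\fra)(-g)\cong\begin{cases}\bigl(\prod_{k=0}^N\Sym^k\bbR(\bone)\bigr)(-g)&m=2g,\\ 0&\text{otherwise}.\end{cases}
\]
Combined with Proposition \ref{prop: calculation} and the vanishing $H^m(\bbT^\fra,\bbLog^N_\fra)=0$ for $m>g$ (Andreotti--Frankel applied to the smooth affine variety $\bbT^\fra$ of complex dimension $g$), this reduces the entire computation to chasing the long exact sequence.

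For $g>1$ the nonvanishing local cohomology sits at degree $2g$. In the range $0\le m\le g$ both flanking boundary maps vanish, so the restriction $H^m(\bbT^\fra,\bbLog^N_\fra)\to H^m(U^\fra,\bbLog^N_\fra)$ is an isomorphism. In the range $g<m<2g-1$ all four adjacent terms vanish. At $m=2g-1$ the four-term exact sequence
\[0\to H^{2g-1}(\bbT^\fra,\bbLog^N_\fra)\to H^{2g-1}(U^\fra,\bbLog^N_\fra)\to H^{2g}_{\{1\}}(\bbT^\fra,\bbLog^N_\fra)\to H^{2g}(\bbT^\fra,\bbLog^N_\fra)\]
has vanishing outer terms (since $2g-1,2g>g$), yielding the stated isomorphism, and for $m\ge 2g$ everything vanishes.

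For $g=1$ the case $m=0$ is immediate, and for $m=1$ we obtain the short exact sequence
\[
0\to H^1(\bbT^\fra,\bbLog^N_\fra)\to H^1(U^\fra,\bbLog^N_\fra)\to\biggl(\prod_{k=0}^N\Sym^k\bbR(\bone)\biggr)(-1)\to 0,
\]
which I must show splits as mixed $\bbR$-Hodge structures. Since $I$ is a singleton, $\Sym^k\bbR(\bone)\cong\bbR(k)$, so the right-hand term is $\prod_{k=0}^N\bbR(k-1)$, with $\bbR(k-1)$ pure of weight $2-2k$, while the kernel $H^1(\bbT^\fra,\bbLog^N_\fra)\cong\bbR(-1)$ is pure of weight $2$. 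For $k\ge 1$ the weight of $\bbR(k-1)$ is strictly less than $2$, so the weight filtration of the extension supplies a canonical splitting of that summand; the $k=0$ contribution is an extension of $\bbR(-1)$ by $\bbR(-1)$, pure of weight $2$ and thus semisimple. Combining gives the claimed direct-sum decomposition.

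The $\Ft$-equivariance of all these isomorphisms is built in because $\pair{x}\colon\bbT^{x\fra}\isomto\bbT^\fra$ preserves the identity and restricts to $U^{x\fra}\isomto U^\fra$, so both the localization sequence and the purity/splitting-principle identification are functorial under $\pair{x}^*$. The most delicate step is the $g=1$ splitting: the weight-filtration argument shows the sequence splits abstractly, and I expect compatibility with the $\Ft$-action to follow from the canonicity of the weight-filtration splitting for each $k\ge 1$ summand. Should equivariance fail at the $k=0$ component, a backup is to construct an explicit splitting via residues of logarithmic forms such as $\dlog(t-1)$ on $\bbP^1\setminus\{0,1,\infty\}$, which manifestly commutes with $\pair{x}^*$.
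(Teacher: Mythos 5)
Your computation for $g>1$ is the same as the paper's: the paper applies the localization triangle $i_\fra^!\to\id\to Rj_*j^*$ for the closed point $Z^\fra=\{1\}$ in the formalism of mixed Hodge modules, and uses $i^!=\mathbf{D}i^*\mathbf{D}$ together with the splitting principle to identify the third term with $H^{m+1-2g}(Z^\fra,i_\fra^*\bbLog^N_\fra)(-g)$; your local cohomology sequence plus Thom--Gysin purity is precisely this, and the degree-by-degree chase is identical.

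For $g=1$ the approaches genuinely diverge on how the short exact sequence
\[
0\to H^1(\bbT^\fra,\bbLog_\fra^N)\to H^1(U^\fra,\bbLog_\fra^N)\to H^0(Z^\fra,i_\fra^*\bbLog_\fra^N)(-1)\to 0
\]
is split. The paper builds an explicit retraction of the first map by composing the projection $\bbLog_\fra^N\to\bbLog_\fra^0$ with a residue-at-$0$ map coming from the embedding $\bbG_m\hookrightarrow\bbA^1$; this produces a concrete left inverse whose compatibility with $\pair{x}^*$ is visible by inspection. You instead argue abstractly that the extension class vanishes, treating the $k\ge 1$ quotient summands by a weight argument and the $k=0$ summand by semisimplicity. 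This is correct in substance, and has the small advantage of avoiding any auxiliary compactification, but two points deserve tightening. First, ``pure of weight $2$ and thus semisimple'' is not quite the right justification in $\MHS_\bbR$ (pure Hodge structures are semisimple only after imposing polarizability); what you actually need is $\Ext^1_{\MHS_\bbR}(\bbR(-1),\bbR(-1))=\Ext^1_{\MHS_\bbR}(\bbR(0),\bbR(0))=0$, which is exactly Lemma~\ref{lem: ext1}, and in fact that lemma gives $\Ext^1_{\MHS_\bbR}(\bbR(k-1),\bbR(-1))=0$ uniformly for all $k\ge 0$, so you could dispense with the two-case weight/semisimplicity split altogether. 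Second, your worry about $\Ft$-equivariance of a non-canonical splitting at $k=0$ is unfounded here: for $g=1$ the isotropy group $\Delta=\cO_{F+}^\times$ of any single $\fra$ is trivial and $\Ft$ acts simply transitively on $\frI$ (the narrow class group of $\bbQ$ being trivial), so any splitting chosen at one $\fra$ transports uniquely to an $\Ft$-equivariant family; the ``backup'' explicit residue construction you mention is precisely what the paper does, and is the cleaner route if one prefers a manifestly functorial splitting.
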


\begin{proof}
    First assume that $g>1$. 
    Let $i_\fra\colon Z^\fra\hookrightarrow\bbT^\fra$ denote the inclusion of 
    $Z^\fra\coloneqq\bbT^\fra\setminus U^\fra=\{1\}$ into $\T^\fra$. 
    Then we have the localization exact sequence 
    \begin{equation}\label{eq: localization bbLog}
	    \cdots\rightarrow H^m(\bbT^\fra, \bbLog^N_\fra) \rightarrow  H^m(U^\fra, \bbLog^N_\fra) 
	    \to  H^{m+1-2g}(Z^\fra, i^*_\fra\bbLog^N_\fra)(-g) \rightarrow \cdots 
    \end{equation}
    by \cite{Sa90}*{(4.4.1)} and the relation $i^!=\mathbf{D}i^*\mathbf{D}$ 
    where $\mathbf{D}$ denotes the dual functor. 
    From this, we obtain the isomorphism 
    \begin{equation}\label{eq: calc2}
	    H^m(U^\fra,\bbLog^N_\fra)
	    \cong\begin{cases}
	    	H^m(\bbT^\fra,\bbLog^N_\fra) & 0\leq m\leq g,\\
	    	H^{0}(Z^\fra, i^*_\fra\bbLog^N_\fra)(-g) & m=2g-1,\\
    		0& \text{otherwise}.  
	    \end{cases}
    \end{equation}
    Hence we get the result from Proposition \ref{prop: splitting principle} 
    and Proposition \ref{prop: calculation}. 
    
	When $g=1$, the above computation works for $m\ne 1$. For $m=1$, we should consider a short exact sequence 
	\begin{equation}\label{eq:g=1 short exact seq}
	    0\lra H^1(\T^\fra,\bbLog_\fra^N)\lra H^1(U^\fra,\bbLog_\fra^N)
	    \lra H^0(Z^\fra,i_\fra^*\bbLog_\fra^N)(-1)\lra 0. 
	\end{equation}
	By embedding $\bbT^\fra\cong\bbG_m$ into $\bbA^1$ and considering the residue at $0$, 
	we obtain a chain of maps 
	\[H^1(U^\fra,\bbLog_\fra^N)\lra H^1(U^\fra,\bbLog_\fra^0)\lra \bbR(-1) 
	\overset{\cong}{\longleftarrow} H^1(\T^\fra,\bbLog_\fra^0)
	\overset{\cong}{\longleftarrow} H^1(\T^\fra,\bbLog_\fra^N),\]
    which gives a splitting of \eqref{eq:g=1 short exact seq}. 
	
	Since these constructions are compatible with $\pair{x}\colon\T^{x\fra}\to\T^\fra$, 
	the assertion follows. 
\end{proof}

%
%
\subsection{Equivariant Cohomology of the Logarithm Sheaf}\label{subsec: equivariant cohomology of Log}
%
%

We next calculate the equivariant cohomology of the logarithm sheaf on $U$. 
If we fix a finite set of fractional ideals $\frC$ of $F$ representing the narrow class group $\Cl^+_F$,
then we have a natural isomorphism of $\bbR$-vector spaces 
\begin{equation}\label{eq: decomposition5}
	H^m(U/\Ft,\bbLog^N)\cong\prod_{\fra\in\frC}H^m(U^\fra/\Delta,\bbLog^N_\fra). 
\end{equation}
Thus it is equivalent to consider the cohomology groups on $U/\Ft$ 
and on $U^\fra/\Delta$ for each $\fra\in\frI$. 

Although we have not established Condition \ref{cond: RGamma_Hdg}, 
we can show the following Proposition, which we will prove in the Appendix 
(Lemmas \ref{lem: A8}, \ref{lem: A9} and Proposition \ref{prop: A11}). 

\begin{proposition}\label{prop: construction}
For $\fra\in\frI$ and an integer $N\ge 0$, there exists an absolute $\bbR$-Hodge complex 
$R\Gamma_{\mathrm{Hdg}}(U^\fra/\Delta,\bbLog_\fra^N)$ together with an isomorphism 
\[H^m(U^\fra/\Delta,\bbLog_\fra^N)\cong H^m(R\Gamma_{\mathrm{Hdg}}(U^\fra/\Delta,\bbLog_\fra^N))\]
of $\bbR$-vector spaces. Moreover, we have the following: 
\begin{enumerate}
\item 
If we equip $H^m(U^\fra/\Delta,\bbLog_\fra^N)$ 
with a mixed $\bbR$-Hodge structure via the above isomorphism, 
we have a spectral sequence 
\[E^{p,q}_2=H^p(\Delta, H^q(U^\fra,\bbLog^N_\fra))\Rightarrow H^{p+q}(U^\fra/\Delta,\bbLog^N_\fra)\]
in the category of mixed $\bbR$-Hodge structures. 
\item 
If we define the equivariant Deligne-Beilinson cohomology by 
\[H_{\sD}^m(U^\fra/\Delta,\bbLog_\fra^N)\coloneqq 
\Ext^m_{\MHS_\bbR}\bigl(\bbR(0),R\Gamma_{\mathrm{Hdg}}(U^\fra/\Delta,\bbLog_\fra^N)\bigr),\]
we have a spectral sequence 
\[E_2^{p,q}=\Ext^p_{\MHS_{\bbR}}\bigl(\bbR(0), H^{q}(U^\fra/\Delta,\bbLog_\fra^N)\bigr)
\Rightarrow H^{p+q}_{\sD}(U^\fra/\Delta,\bbLog_\fra^N). \]
\end{enumerate}
\end{proposition}

We equip $H^m(U/\Ft,\bbLog^N)$ with a mixed $\bbR$-Hodge structure 
via the isomorphism \eqref{eq: decomposition5}, and 
define the equivariant Deligne-Beilinson cohomology as
\[
	H_{\sD}^m(U/\Ft,\bbLog^N)\coloneqq\prod_{\fra\in\frC}H_{\sD}^m(U^\fra/\Delta,\bbLog^N_\fra). 
\]

In order to calculate the group cohomology of $\Delta \cong \bbZ^{g-1}$,
we will first define a free resolution of $\bbZ$ as a module over the ring of Laurent polynomials. 
Although the following Lemma is a special case of the well-known fact on the Koszul complex, 
we give a proof for convenience of the reader. 

\begin{lemma}\label{lemma: resolution}
Let $x_1,\ldots,x_n$ be indeterminates and $A^{(j)}\coloneqq \bbZ[x_j^{\pm 1}]$ 
the ring of Laurent polynomials in $x_j$ for $j=1,\ldots,n$. We also set 
$A\coloneqq A^{(1)}\otimes_\bbZ\cdots\otimes_\bbZ A^{(n)}=\bbZ[x_1^{\pm 1},\ldots,x_n^{\pm 1}]$, 
and consider the chain complex of $A$-modules 
\[K_\b\coloneqq K^{(1)}_\b\otimes_\bbZ\cdots\otimes_\bbZ K^{(n)}_\b, \]
where $K^{(j)}_\b\coloneqq [A^{(j)}\xrightarrow{x_j-1} A^{(j)}]$ in degrees $1$ and $0$. 
Then the complex $K_\b$ with the augmentation map $K_0\to \bbZ$ sending all monomials to $1$ 
gives a free resolution of $\bbZ$ considered as an $A$-module on which each $x_j$ acts trivially. 
\end{lemma}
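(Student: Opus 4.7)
The plan is to prove the lemma by induction on $n$, exploiting the tensor-product structure of $K_\bullet$ and the fact that everything in sight is free over $\bbZ$.

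\textbf{Base case $n=1$.} I would verify directly that the complex $K^{(1)}_\bullet = [A^{(1)} \xrightarrow{x_1 - 1} A^{(1)}]$ augmented by the map $A^{(1)} \to \bbZ$ sending each monomial to $1$ is a resolution of $\bbZ$. Injectivity of multiplication by $x_1 - 1$ on $A^{(1)} = \bbZ[x_1^{\pm 1}]$ is immediate since $A^{(1)}$ is an integral domain. The augmentation factors through the cokernel, and the resulting map $A^{(1)}/(x_1-1) \to \bbZ$ is an isomorphism (the inverse sends $n$ to the class of $n$), so $K^{(1)}_\bullet \to \bbZ$ is exact.

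\textbf{Inductive step.} Assuming the statement for fewer factors, I would pass to the general $n$ via the Künneth formula. Since each $A^{(j)} = \bbZ[x_j^{\pm 1}]$ is $\bbZ$-free, every term of every $K^{(j)}_\bullet$ is $\bbZ$-free and in particular $\bbZ$-flat. All Tor-terms in the Künneth spectral sequence for the iterated tensor product $K_\bullet = K^{(1)}_\bullet \otimes_\bbZ \cdots \otimes_\bbZ K^{(n)}_\bullet$ therefore vanish, and one obtains
\[
H_i(K_\bullet) \cong \bigotimes_{j=1}^n H_i(K^{(j)}_\bullet).
\]
By the base case $H_0(K^{(j)}_\bullet) = \bbZ$ and $H_i(K^{(j)}_\bullet) = 0$ for $i > 0$, so $H_0(K_\bullet) = \bbZ$ and $H_i(K_\bullet) = 0$ for $i > 0$. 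One then checks that under these identifications the resulting augmentation $K_0 = A \to \bbZ$ is precisely the map sending all monomials to $1$.

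\textbf{Freeness.} Each $K^{(j)}_i$ is a free $A^{(j)}$-module of rank $1$, and $A = A^{(1)} \otimes_\bbZ \cdots \otimes_\bbZ A^{(n)}$, so each $K_i = \bigoplus K^{(1)}_{i_1} \otimes \cdots \otimes K^{(n)}_{i_n}$ (sum over $i_1 + \cdots + i_n = i$) is a free $A$-module of rank $\binom{n}{i}$.

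There is no real obstacle here; the only subtle point is keeping track of the fact that the $K^{(j)}_\bullet$ are tensored over $\bbZ$ rather than over $A$, which is what makes the Künneth argument work cleanly. A completely equivalent and equally short alternative would be to observe that $K_\bullet$ is the Koszul complex on the sequence $x_1 - 1, \ldots, x_n - 1$ in $A$, and to verify that this is a regular sequence by the change of variables $y_j = x_j - 1$, which identifies $A$ with a localization of $\bbZ[y_1, \ldots, y_n]$ in which $(y_1, \ldots, y_n)$ remains a regular sequence because the quotient $\bbZ$ is nonzero.
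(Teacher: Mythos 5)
Your proof is correct, but it takes a genuinely different route from the paper's. The paper also inducts on $n$, but instead of invoking Künneth it identifies $K_\bullet$ with the cone of $x_n-1$ acting on $K'_\bullet\otimes_\bbZ A^{(n)}$ (where $K'_\bullet$ is the tensor product of the first $n-1$ factors), then uses the short exact sequence $0\to A^{(n)}\xrightarrow{x_n-1} A^{(n)}\to\bbZ\to 0$ to show that this cone is quasi-isomorphic to $K'_\bullet$, concluding by the inductive hypothesis. That argument is more hands-on and stays at the level of short exact sequences of complexes; your Künneth argument is more packaged and says the same thing in one stroke, at the cost of citing a slightly heavier tool. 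Your alternative observation that $K_\bullet$ is the Koszul complex on the regular sequence $(x_1-1,\ldots,x_n-1)$ is also correct and is arguably the most conceptual formulation; the paper alludes to this by calling the lemma a special case of ``the well-known fact on the Koszul complex'' but does not spell it out. One small slip in your write-up: the Künneth isomorphism should read $H_i(K_\bullet)\cong\bigoplus_{i_1+\cdots+i_n=i}\bigotimes_j H_{i_j}(K^{(j)}_\bullet)$ rather than $\bigotimes_j H_i(K^{(j)}_\bullet)$; since each factor has homology concentrated in degree $0$ the conclusion you draw is unaffected, but the displayed formula as written is not the Künneth formula.
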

\begin{proof}
Note that $K_\b$ is a complex of free $A$-modules. Indeed, $K_m$ is non-zero only when $0\le m\le n$, 
and then $K_m\cong A^{\binom{n}{m}}$. We prove the exactness by induction on $n$. 
When $n=0$, we have $A=\bbZ$ and $K_\b=\bbZ$ by convention, hence the assertion is trivial. 

For $n>0$, set $A'\coloneqq A^{(1)}\otimes_\bbZ\cdots\otimes_\bbZ A^{(n-1)}$ and 
$K'_\b\coloneqq K^{(1)}_\b\otimes_\bbZ\cdots\otimes_\bbZ K^{(n-1)}_\b$. 
Then we see from the construction that $K'_\b\otimes_\bbZ K^{(n)}_\b=K_\b$, which shows that 
\begin{equation}\label{eq: Koszul cone}
    \Cone\Bigl(K'_\b\otimes_\bbZ A^{(n)}\xrightarrow{x_n-1}K'_\b\otimes_\bbZ A^{(n)}\Bigr)\cong K_\b. 
\end{equation}
On the other hand, from the short exact sequence of modules 
\[0\lra A^{(n)}\xrightarrow{x_n-1} A^{(n)} \lra \bbZ \lra 0,\]
we obtain the short exact sequence of complexes 
\[0\lra K'_\b\otimes_\bbZ A^{(n)} \xrightarrow{x_n-1} K'_\b\otimes_\bbZ A^{(n)} \lra K'_\b \lra 0. \]
This gives a quasi-isomorphism 
\[\Cone\Bigl(K'_\b\otimes_\bbZ A^{(n)}\xrightarrow{x_n-1}K'_\b\otimes_\bbZ A^{(n)}\Bigr)\lra K'_\b. \]
Thus we have a quasi-isomorphism $K_\b\to K'_\b$, and the assertion follows from the induction hypothesis. 
\end{proof}

We may use the complex $K_\bullet$ for $n=g-1$ to calculate the group cohomology of a $\Delta$-module. 

\begin{proposition}\label{prop: complex} 
    For any $\Delta$-module $M$ and $m\ge g$, we have $H^m(\Delta,M)=0$. 
    Moreover, assume that $M$ is an $\bbR$-vector space on which $\Delta$ acts diagonalizablly, i.e., 
    $M$ is a direct sum of subspaces where each element of $\Delta$ acts as a scalar multiplication. 
    Then we have
	\begin{equation}\label{eq: trivial}
		H^m(\Delta,M)\cong (M^\Delta)^{\oplus\binom{g-1}{m}}. 
	\end{equation}
\end{proposition}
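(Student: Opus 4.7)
The plan is to use Dirichlet's unit theorem to identify $\Delta = \cO_{F+}^\times$ with $\bbZ^{g-1}$, fix a $\bbZ$-basis $x_1,\ldots,x_{g-1}$ of $\Delta$, and identify the group ring $\bbZ[\Delta]$ with $A \coloneqq \bbZ[x_1^{\pm 1},\ldots,x_{g-1}^{\pm 1}]$. Then Lemma~\ref{lemma: resolution} (with $n = g-1$) supplies a free $A$-resolution $K_\bullet \to \bbZ$ concentrated in degrees $0,\ldots,g-1$, and I would compute $H^m(\Delta, M) = H^m\bigl(\Hom_A(K_\bullet, M)\bigr)$. The first assertion follows immediately, since $K_m = 0$ for $m > g-1$ forces $\Hom_A(K_\bullet, M)$ to vanish in degrees $\ge g$.

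For the second assertion, I would decompose $M = \bigoplus_\chi M_\chi$ into joint eigenspaces indexed by the characters $\chi\colon\Delta\to\bbR^\times$ arising from the diagonalizable action, so that $M_1 = M^\Delta$ corresponds to the trivial character. Since each $K_m$ is finitely generated free of rank $\binom{g-1}{m}$, the functor $\Hom_A(K_\bullet, -)$ commutes with arbitrary direct sums, reducing the problem to computing $H^m(\Hom_A(K_\bullet, M_\chi))$ for each $\chi$. The $A$-action on $M_\chi$ factors through the ring homomorphism $A \to \bbR$ sending $x_j \mapsto \chi(x_j)$, under which $x_j - 1$ maps to $\chi(x_j) - 1$. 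Consequently, $\Hom_A(K_\bullet, M_\chi)$ is naturally isomorphic to the Koszul cochain complex on the $\bbR$-vector space $M_\chi$ associated to the sequence of scalars $(\chi(x_j) - 1)_{j=1}^{g-1}\in\bbR$, which I would present as the tensor product over $\bbR$ of the two-term complexes $[M_\chi \xrightarrow{\chi(x_j)-1} M_\chi]$.

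If $\chi$ is trivial, every scalar $\chi(x_j) - 1$ vanishes and all differentials are zero, contributing $M_\chi^{\binom{g-1}{m}} = (M^\Delta)^{\binom{g-1}{m}}$. If $\chi$ is nontrivial, some $\chi(x_{j_0}) - 1$ is a nonzero real number, which makes the two-term factor $[M_\chi \xrightarrow{\chi(x_{j_0})-1} M_\chi]$ acyclic; the Künneth formula over $\bbR$ then forces the full tensor product to be acyclic. Summing over $\chi$ yields the formula~\eqref{eq: trivial}. The only step that demands genuine care is the identification of $\Hom_A(K_\bullet, M_\chi)$ with the explicit Koszul complex over $\bbR$, but this is a routine bookkeeping exercise once one writes out $K_\bullet$ in terms of symbols $e_S$ with $x_j - 1$ differentials and uses the relation $x_j \cdot v = \chi(x_j)\, v$ for $v \in M_\chi$.
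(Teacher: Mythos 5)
Your proof is correct and follows essentially the same route as the paper's: fix a $\bbZ$-basis of $\Delta\cong\bbZ^{g-1}$, use the Koszul resolution $K_\bullet$ of Lemma~\ref{lemma: resolution}, deduce the vanishing for $m\ge g$ from $K_m=0$, decompose $M$ into eigenspaces $M_\chi$ (noting, as you correctly do, that $\Hom_A(K_\bullet,-)$ commutes with direct sums since each $K_m$ is finite free), and observe that the resulting Koszul cochain complex is acyclic whenever $\chi$ is nontrivial. One small imprecision to fix: $\Hom_A(K_\bullet,M_\chi)$ is not literally the tensor product of the two-term complexes $[M_\chi\xrightarrow{\chi(x_j)-1}M_\chi]$ (that would produce $M_\chi^{\otimes(g-1)}$ in each degree); what you want is $\bigl(\bigotimes_{j}[\bbR\xrightarrow{\chi(x_j)-1}\bbR]\bigr)\otimes_\bbR M_\chi$, after which your K\"unneth argument applies verbatim. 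The only genuine divergence from the paper is the final acyclicity step: rather than invoking K\"unneth, the paper reorders the basis so that $\eps_{g-1}$ acts nontrivially and uses the cone isomorphism from \eqref{eq: Koszul cone} to write $M^\bullet\cong\Cone\bigl(M'^\bullet\xrightarrow{\eps_{g-1}-1}M'^\bullet\bigr)$, concluding exactness because the cone of an isomorphism is acyclic. Your K\"unneth version is a close variant that trades the single cone computation for the full tensor factorization; it has the small advantage of not needing to reorder the basis, while the paper's version avoids any appeal to K\"unneth.
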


\begin{proof}
If we fix a generator $\eps_1,\ldots,\eps_{g-1}$ of $\Delta\cong\bbZ^{g-1}$, 
the group ring $\bbZ[\Delta]$ can be identified with the Laurent polynomial ring 
$A=\bbZ[x_1^{\pm 1},\ldots,x_{g-1}^{\pm 1}]$ of $g-1$ variables. 
By applying Lemma \ref{lemma: resolution} for $n=g-1$, we obtain a free resolution $K_\b$ 
of the trivial $\Delta$-module $\bbZ$. Hence we have $H^m(\Delta,M)\cong H^m(M^\b)$, 
where $M^\b\coloneqq \Hom_{\bbZ[\Delta]}(K_\b,M)$. 
This immediately shows the first assertion. 

To prove the second assertion, we may assume that $M$ is a one-dimensional $\bbR$-vector space 
on which each $\eps\in\Delta$ acts as a scalar. 
If the $\Delta$-action on $M$ is trivial, then the all differentials of the complex $M^\b$ are zero and 
we have $H^m(M^\b)=M^m\cong M^{\oplus\binom{g-1}{m}}$. 
On the other hand, if the action is non-trivial, we may assume that $\eps_{g-1}$ acts as a non-trivial 
scalar multiplication. Then the isomorphism of complexes 
\[M^\b\cong \Cone\Bigl(M^{\prime\b}\xrightarrow{\eps_{g-1}-1} M^{\prime\b}\Bigr)\]
induced by \eqref{eq: Koszul cone}, where 
$M^{\prime\b}\cong\Hom_{\bbZ[\Delta]}(K'_\b\otimes \bbZ[\eps_{g-1}^{\pm 1}],M)$, 
implies that $M^\b$ is exact,  since $\eps_{g-1}-1$ acts as an isomorphism. 
\end{proof}

As a consequence of Proposition \ref{prop: complex}, we have the following: 

\begin{lemma}\label{lem: Delta}
	We have
	\[
		H^m(\Delta,\Sym^k\bbR(\bone))=
		\begin{cases}
			\bbR(k)^{\oplus\binom{g-1}{m}}  &  \text{if $g|k$}, \\
			0  &  \text{otherwise}.
		\end{cases}
	\]
\end{lemma}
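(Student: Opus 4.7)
The plan is to decompose $\Sym^k\bbR(\bone)$ into one-dimensional $\Delta$-eigenspaces and then apply Proposition \ref{prop: complex}, which reduces the computation to identifying the $\Delta$-invariant part.

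First I would write
\[
    \Sym^k\bbR(\bone)=\bigoplus_{\bsk\in\bbN^I,\,\absv{\bsk}=k}\bbR\cdot e^\bsk,
\]
where $e^\bsk=\prod_{\tau\in I}e_\tau^{k_\tau}$. From the description of the $\Ft$-action on $\bbR(\bone)$ given in \S\ref{ssec:2.3.4} (namely that $x\in\Ft$ acts on $\bbR(1_\tau)$ as multiplication by $(x^\tau)^{-1}$), each summand $\bbR\cdot e^\bsk$ is stable under $\Delta$, and $\eps\in\Delta$ acts as the scalar $\eps^{-\bsk}=\prod_\tau(\eps^\tau)^{-k_\tau}$. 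In particular, the $\Delta$-action on $\Sym^k\bbR(\bone)$ is diagonalizable, so Proposition \ref{prop: complex} gives
\[
    H^m(\Delta,\Sym^k\bbR(\bone))\cong\bigl((\Sym^k\bbR(\bone))^\Delta\bigr)^{\oplus\binom{g-1}{m}}.
\]

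It remains to identify the invariant part, which reduces to determining when the character $\eps\mapsto\eps^\bsk$ of $\Delta$ is trivial. By Dirichlet's unit theorem, the logarithmic embedding $\eps\mapsto(\log\eps^\tau)_{\tau\in I}$ sends $\Delta=\cO_{F+}^\times$ isomorphically onto a full lattice in the hyperplane $\{\sum_\tau x_\tau=0\}$ of $\bbR^I$. Hence $\sum_\tau k_\tau\log\eps^\tau=0$ holds for every $\eps\in\Delta$ if and only if $\bsk$ is proportional to $(1,\ldots,1)$, that is, $\bsk=(c,\ldots,c)$ for some $c\in\bbN$. Combined with the condition $\absv{\bsk}=k$, this forces $k=gc$, so such a $\bsk$ exists (and is unique) exactly when $g\mid k$. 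In that case $(\Sym^k\bbR(\bone))^\Delta=\bbR\cdot e^\bsk$, and as a Hodge structure this is $\bigotimes_{\tau\in I}\bbR(1_\tau)^{\otimes c}\cong\bbR(k)$, giving the claimed answer.

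The main obstacle is the identification of the trivial characters of $\Delta$ with the diagonal multi-indices, which requires invoking Dirichlet's unit theorem for the totally positive units $\cO_{F+}^\times$; once this is in hand, the rest is a direct combination of the eigenspace decomposition and Proposition \ref{prop: complex}. A minor point to check carefully is that the Hodge-theoretic identification $(\Sym^k\bbR(\bone))^\Delta\cong\bbR(k)$ is compatible with the Tate twist conventions, which follows immediately from the definition $e_\tau\in\bbR(1_\tau)=2\pi i\,\bbR$.
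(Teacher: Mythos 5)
Your proof is correct and takes essentially the same approach as the paper: decompose $\Sym^k\bbR(\bone)$ into $\Delta$-eigenlines $\bbR\cdot e^\bsk$, apply Proposition \ref{prop: complex}, and determine when the eigencharacter is trivial. The only difference is that you spell out the Dirichlet unit theorem argument for why triviality forces $\bsk=(k/g,\ldots,k/g)$, whereas the paper asserts this directly; your added justification is accurate and welcome.
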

\begin{proof}
Recall that $(e_\tau)_{\tau\in I}$ denotes the standard basis of $\bbR(\bone)$. 
We consider the basis of $\Sym^k\bbR(\bone)$ consisting of $e^\bsk=\prod_{\tau\in I}e_\tau^{k_\tau}$, 
where $\bsk=(k_\tau)$ runs through $\bsk\in\bbN^I$ with $\absv{\bsk}=k$. 
By construction, $\eps\in\Delta$ acts on $e^\bsk$ as the multiplication by 
$\eps^{-\bsk}=\prod_{\tau\in I}(\eps^\tau)^{-k_\tau}$, 
which shows that the $\Delta$-action on $\Sym^k\bbR(\bone)$ is diagonalizable. 
Moreover, the action on $\bbR e^\bsk$ is trivial if and only if $\bsk=(k/g,\ldots,k/g)$, 
which occurs only when $k$ is divisible by $g$. Thus the result follows from Proposition \ref{prop: complex}. 
\end{proof}

\begin{lemma}\label{lem: degenerate}
	For any $\fra\in\frI$ and integer $N\geq0$, the spectral sequence
	\begin{equation}\label{eq: sseq}
		E^{p,q}_2=H^p(\Delta, H^q(U^\fra,\bbLog^N_\fra))\Rightarrow H^{p+q}(U^\fra/\Delta,\bbLog^N_\fra)
	\end{equation}
	degenerates at $E_2$.
\end{lemma}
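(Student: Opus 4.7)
The plan is a weight filtration argument. Under Hypothesis~\ref{hypo: RGamma_Hdg Log}, the spectral sequence~\eqref{eq: sseq} is a spectral sequence of mixed $\bbR$-Hodge structures, so every $E_r^{p,q}$ carries a mixed $\bbR$-Hodge structure and every differential $d_r$ is a morphism in $\MHS_{\bbR}$. In particular, any morphism between pure Hodge structures of distinct weights vanishes, and I will exploit this together with the dimension bound $H^p(\Delta, -) = 0$ for $p \ge g$ from Proposition~\ref{prop: complex}.

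When $g = 1$ the group $\Delta$ is trivial and the spectral sequence only has the column $p = 0$, so degeneration is automatic. Assume henceforth $g \ge 2$, so that $\Delta \cong \bbZ^{g-1}$. By Proposition~\ref{prop: U}, $E_2^{p,q}$ is nonzero only for $p \in \{0, \ldots, g-1\}$ and $q \in \{0, 1, \ldots, g, 2g-1\}$. The weights are read off from Remark~\ref{rem: weights} and Propositions~\ref{prop: calculation} and~\ref{prop: U}: $H^q(U^\fra, \bbLog_\fra^N)$ is pure of weight $2(q - N)$ for $0 \le q \le g - 1$, pure of weight $2g$ for $q = g$, and isomorphic to $\bigoplus_{k=0}^N \Sym^k \bbR(\bone)(-g)$ for $q = 2g - 1$, hence a direct sum of pure pieces of weights $\{2(g-k) : 0 \le k \le N\}$. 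Since taking $H^p(\Delta, -)$ preserves pure weight, these weights are inherited by each $E_2^{p,q}$ and hence by every $E_r^{p,q}$.

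I would then verify that $d_r \colon E_r^{p,q} \to E_r^{p+r, q-r+1}$ vanishes for $r \ge 2$ by a short case analysis, writing $q' := q - r + 1$:
\emph{(a)} If $q, q' \in \{0, \ldots, g\}$, the source and target are both pure; weight-matching forces $q = q'$ and hence $r = 1$, a contradiction.
\emph{(b)} If $q = 2g - 1$ and $q' \in \{0, \ldots, g - 1\}$, then $r = 2g - q' \ge g + 1$, so the target column satisfies $p + r \ge g$ and the target vanishes by Proposition~\ref{prop: complex}.
\emph{(c)} If $q = 2g - 1$ and $q' = g$, then $r = g$, and the same dimension bound kills the target at $(p+g, g)$.
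All other $(q, q')$-pairs lie outside the support of the $E_2$ page. This gives $d_r = 0$ for all $r \ge 2$, i.e.\ degeneration at $E_2$.

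The only delicate point is justifying that~\eqref{eq: sseq} is genuinely a spectral sequence in $\MHS_\bbR$, so that the weight-compatibility of the $d_r$ is legitimate; this is exactly what Hypothesis~\ref{hypo: RGamma_Hdg Log} is designed to provide via the construction of $R\Gamma_{\mathrm{Hdg}}(-/\Delta, -)$ developed in Section~\ref{sec: log}.
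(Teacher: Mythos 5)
Your proof is correct and takes essentially the same approach as the paper: a weight argument (using that the spectral sequence lives in $\MHS_\bbR$ under Hypothesis~\ref{hypo: RGamma_Hdg Log}) for differentials whose source and target have $q$-index in $\{0,\ldots,g\}$, combined with the column-vanishing bound $H^p(\Delta,-)=0$ for $p\ge g$ from Proposition~\ref{prop: complex} to kill the differentials emanating from $q=2g-1$. The only cosmetic difference is that you organize the $q=2g-1$ case by the target index $q'$ whereas the paper organizes it by the range of $r$; the two parametrizations cover the same cases. One small attribution slip: the bound $p\in\{0,\ldots,g-1\}$ on the nonzero columns comes from Proposition~\ref{prop: complex}, not from Proposition~\ref{prop: U}, which controls only the $q$-index.
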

\begin{proof}
We have to show that all differentials $d_r^{p,q}$ vanishes for $r\ge 2$. 
We only need to consider $0\le q\le g$ or $q=2g-1$ since, by Proposition \ref{prop: U}, 
we have $H^p(\Delta, H^q(U^\fra,\bbLog^N_\fra))=0$ if $q>g$ and $q\neq 2g-1$. 
When $q=2g-1$, the above vanishing also shows that $d_r^{p,q}=0$ for $2\le r\le g-1$. 
For $r\ge g$, we have $d_r^{p,q}=0$ by Proposition \ref{prop: complex}. 
Next we assume $0\le q\le g$. Then, by Remark \ref{rem: weights}, 
$H^q(U^\fra,\bbLog^N_\fra)$ is pure of weight $2g$ if $q=g$ and $2(q-N)$ if $q<g$, 
and so is $H^p(\Delta, H^q(U^\fra,\bbLog^N_\fra))$. 
This shows that $d_r^{p,q}=0$ for all $r\ge 2$ in this case. 
	\begin{figure}[ht]
		\centering
		\includegraphics[width=8cm]{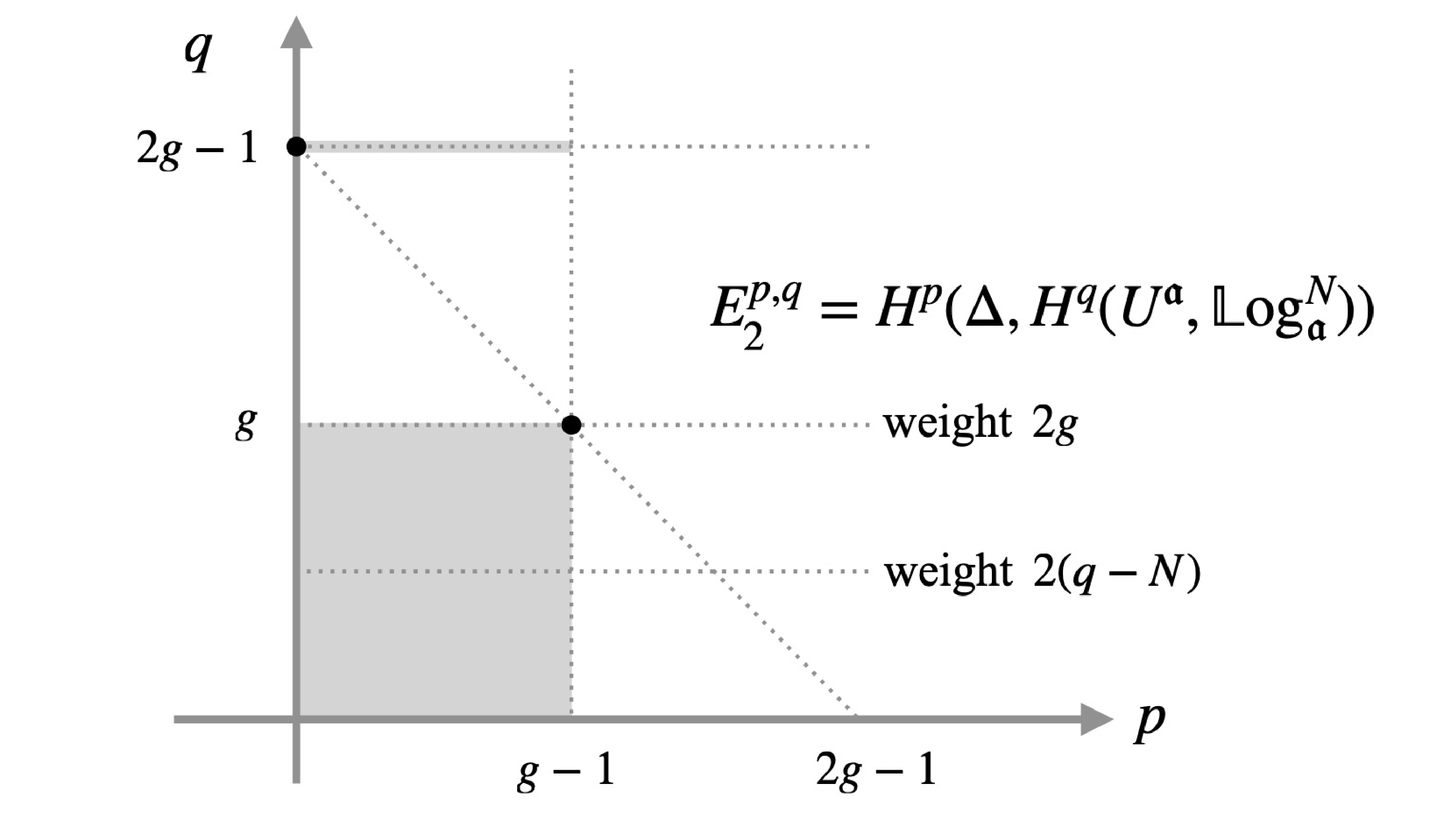}
		\caption{$E_2^{p,q}=H^p(\Delta,H^q(U^\fra,\bbLog_\fra^N))$} \label{fig: E_2 terms}
	\end{figure}
\end{proof}

\begin{proposition}\label{prop: splits}
	For any $\fra\in\frI$ and integer $N\geq 0$, we have a canonical short exact sequence 
	\begin{equation}\label{eq: split}
		0 \lra \bbR(-g) \lra H^{2g-1}(U^\fra/\Delta,\bbLog^N_\fra)
		\lra \prod_{n=0}^{\lfloor N/g \rfloor}\bbR((n-1)g) \lra 0 
	\end{equation}
	of mixed $\bbR$-Hodge structures, 
	where $\lfloor N/g\rfloor$ denotes the maximum integer not greater than $N/g$. 
	Moreover, this short exact sequence is (not canonically) split. 
\end{proposition}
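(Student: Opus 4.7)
The plan is to apply the spectral sequence
\[
E_2^{p,q}=H^p(\Delta,H^q(U^\fra,\bbLog^N_\fra))\Rightarrow H^{p+q}(U^\fra/\Delta,\bbLog^N_\fra)
\]
of Lemma \ref{lem: degenerate}, which degenerates at $E_2$ in $\MHS_\bbR$ under Hypothesis \ref{hypo: RGamma_Hdg Log}. The first step is to identify all $(p,q)$ with $p+q=2g-1$ for which $E_2^{p,q}$ is non-zero. By Proposition \ref{prop: U}, the sheaf cohomology $H^q(U^\fra,\bbLog^N_\fra)$ vanishes outside $q\in\{0,1,\ldots,g\}\cup\{2g-1\}$, and by Proposition \ref{prop: complex} the group cohomology $H^p(\Delta,-)$ vanishes for $p\ge g$. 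Intersecting these constraints with the line $p+q=2g-1$, the only potentially non-vanishing contributions are $(p,q)=(g-1,g)$ and $(p,q)=(0,2g-1)$.

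Next I would compute these two terms. For $(g-1,g)$, Proposition \ref{prop: calculation} identifies $H^g(U^\fra,\bbLog^N_\fra)$ with $H^g(\T^\fra,\bbLog^N_\fra)\cong\bbR(-g)$ and Lemma \ref{lem: divides}(1) says the $\Delta$-action on it is trivial, so Proposition \ref{prop: complex} gives $E_2^{g-1,g}\cong\bbR(-g)^{\oplus\binom{g-1}{g-1}}=\bbR(-g)$. For $(0,2g-1)$, Proposition \ref{prop: U} gives
\[
H^{2g-1}(U^\fra,\bbLog^N_\fra)\cong\Bigl(\prod_{k=0}^N\Sym^k\bbR(\bone)\Bigr)(-g),
\]
and Lemma \ref{lem: Delta} shows the $\Delta$-invariants of $\Sym^k\bbR(\bone)$ are $\bbR(k)$ if $g\mid k$ and zero otherwise; setting $k=ng$ produces $E_2^{0,2g-1}\cong\prod_{n=0}^{\lfloor N/g\rfloor}\bbR((n-1)g)$.

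Since the spectral sequence degenerates at $E_2$, the abutment filtration $F^\b$ on $H^{2g-1}(U^\fra/\Delta,\bbLog^N_\fra)$ collapses to a two-step filtration: $F^g=0$, $F^1=\cdots=F^{g-1}\cong\bbR(-g)$, and $F^0/F^1\cong\prod_{n=0}^{\lfloor N/g\rfloor}\bbR((n-1)g)$. This yields the canonical short exact sequence \eqref{eq: split} in $\MHS_\bbR$.

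For the splitting, I would argue via a vanishing of extensions. We have
\[
\Ext^1_{\MHS_\bbR}\bigl(\bbR((n-1)g),\bbR(-g)\bigr)\cong\Ext^1_{\MHS_\bbR}\bigl(\bbR(0),\bbR(-ng)\bigr),
\]
and since $-ng\le 0$ for every $n\ge 0$, Carlson's formula (or direct inspection, using that $\bbR(m)$ for $m\le 0$ has $W_0=0$ or $F^0=\bbC$ which kills the defining quotient) gives vanishing. Hence every factor of the quotient admits a lift, so the sequence is split, though no such lift is canonical. The main obstacle, and the only part requiring real care, is the bookkeeping of weights and the $\Ft$-equivariant identification in Proposition \ref{prop: U} to guarantee that the sub is $\bbR(-g)$ (rather than the quotient) and that the exponents on the Tate twists in the quotient come out as $(n-1)g$ with $0\le n\le\lfloor N/g\rfloor$.
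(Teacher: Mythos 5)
Your proof is essentially the same spectral-sequence argument as the paper's for the exact sequence, but you take a genuinely different route for the splitting, and there is a small degenerate case you overlook.

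For the short exact sequence: you and the paper both run the $E_2$-degenerate spectral sequence of Lemma~\ref{lem: degenerate}, locate the two contributing positions $(p,q)=(g-1,g)$ and $(0,2g-1)$ on the anti-diagonal $p+q=2g-1$, and compute them from Propositions~\ref{prop: U}, \ref{prop: complex}, Lemma~\ref{lem: divides}(1), and Lemma~\ref{lem: Delta}. This is the paper's argument for $g\ge 2$. The one gap is $g=1$: there the two positions $(g-1,g)=(0,1)$ and $(0,2g-1)=(0,1)$ coincide, so there is no two-step abutment filtration to extract, and your claim that ``the abutment filtration collapses to a two-step filtration'' is vacuous. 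The paper sidesteps this by treating $g=1$ separately, reading the exact sequence (already split as a direct sum) directly off Proposition~\ref{prop: U}(2). You should add the same caveat.

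For the splitting: the paper first splits the $N=0$ case $0\to\bbR(-g)\to H^{2g-1}(U^\fra/\Delta,\bbLog^0_\fra)\to\bbR(-g)\to 0$ via $\Ext^1_{\MHS_\bbR}(\bbR(0),\bbR(0))=0$, then pulls a retraction back along the commutative diagram~\eqref{eq: split3}. You instead observe directly that
\[
\Ext^1_{\MHS_\bbR}\bigl(\bbR((n-1)g),\bbR(-g)\bigr)\cong\Ext^1_{\MHS_\bbR}(\bbR(0),\bbR(-ng))=0\quad\text{for all }n\ge 0,
\]
and since the quotient $\prod_{n=0}^{\lfloor N/g\rfloor}\bbR((n-1)g)$ is a \emph{finite} product, $\Ext^1$ distributes over it and the total obstruction group vanishes. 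This is a shorter and cleaner route; it avoids the diagram chase entirely. (You should say explicitly that finiteness of the product is what lets you conclude from vanishing of each factor's $\Ext^1$ to vanishing for the product.) Both arguments ultimately rest on Lemma~\ref{lem: ext1}, and neither produces a canonical splitting, consistent with the statement.
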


\begin{proof}
    In the case of $g=1$, this follows from Proposition \ref{prop: U} (2) since the group $\Delta$ is trivial. 
    Let us assume that $g\ge 2$. 
	By Lemma \ref{lem: degenerate}, 
	the spectral sequence \eqref{eq: sseq} degenerates at $E_2$.
	For $p+q=2g-1$, the only nonzero $E_2$-terms are 
	$H^0\bigl(\Delta, H^{2g-1}(U^\fra,\bbLog^N_\fra)\bigr)$ and 
	$H^{g-1}\bigl(\Delta, H^g(U^\fra,\bbLog^N_\fra)\bigr)$.
	By Proposition \ref{prop: U} (1) and Lemma \ref{lem: Delta}, we have the exact sequence 
	\eqref{eq: split}. 
	For the case $N=0$, this gives the exact sequence
	\begin{equation}\label{eq: split2}
		0 \lra \bbR(-g) \lra H^{2g-1}(U^\fra/\Delta,\bbLog^0_\fra)
		\lra \bbR(-g) \lra 0, 
	\end{equation}
	which is split since such extension is always split as a 
	mixed $\bbR$-Hodge structure (see Lemma \ref{lem: ext1} below).
	For $N>0$, we have a commutative diagram 
	\begin{equation}\label{eq: split3}
	\xymatrix{
		0 \ar[r] &\bbR(-g) \ar[r] \ar@{=}[d]&H^{2g-1}(U^\fra/\Delta,\bbLog^N_\fra)
		\ar[r]\ar@{->>}[d]&\prod_{n=0}^{\lfloor N/g \rfloor}\bbR((n-1)g)\ar[r]\ar@{->>}[d]&0\\
		0 \ar[r]&\bbR(-g) \ar[r] & H^{2g-1}(U^\fra/\Delta,\bbLog^0_\fra)
		\ar[r] &\bbR(-g)\ar[r] &0,
	}
	\end{equation}
	and the splitting of \eqref{eq: split2} gives a splitting of \eqref{eq: split} as desired.
	This gives our assertion.
\end{proof}

\begin{corollary}\label{cor: splits}
	We have
	\[
		H^{2g-1}(U/\Ft,\bbLog^N)
		\cong\bigoplus_{\Cl^+_F}\biggl(\bbR(-g)\oplus\prod_{n=0}^{\lfloor N/g\rfloor}\bbR((n-1)g)\biggr).
	\]
\end{corollary}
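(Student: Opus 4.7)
The plan is to deduce this corollary directly by combining two earlier results, so there is no substantial new work: the heavy lifting has already been done in Proposition \ref{prop: splits} (and, before that, in the cohomology computations of Proposition \ref{prop: U} and the degeneration in Lemma \ref{lem: degenerate}).

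First, I would invoke the decomposition
\[
	H^{2g-1}(U/\Ft,\bbLog^N)\cong\prod_{\fra\in\frC}H^{2g-1}(U^\fra/\Delta,\bbLog^N_\fra)
\]
from \eqref{eq: decomposition5}, where $\frC$ is a finite set of representatives of $\Cl^+_F$. Since $\frC$ is finite, the direct product is the same as a direct sum indexed by $\Cl^+_F$.

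Next, for each $\fra\in\frC$, Proposition \ref{prop: splits} provides a (non-canonically) split short exact sequence
\[
	0 \lra \bbR(-g) \lra H^{2g-1}(U^\fra/\Delta,\bbLog^N_\fra)
	\lra \prod_{n=0}^{\lfloor N/g \rfloor}\bbR((n-1)g) \lra 0,
\]
so one may identify each factor with $\bbR(-g)\oplus\prod_{n=0}^{\lfloor N/g\rfloor}\bbR((n-1)g)$. Taking the direct sum over $\fra\in\frC\cong\Cl^+_F$ yields the claimed isomorphism.

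The only genuine subtlety to check is that the two previously proved facts are being combined consistently: the indexing sets match (finite representatives of $\Cl^+_F$), and the splittings from Proposition \ref{prop: splits} can be chosen independently for each $\fra$ without affecting the statement, since the target is only described up to isomorphism. There is essentially no obstacle here; this corollary is just a packaging of Proposition \ref{prop: splits} over the narrow class group via \eqref{eq: decomposition5}.
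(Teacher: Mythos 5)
Your proposal is correct and matches the paper's proof exactly: the paper also deduces the corollary by combining the decomposition \eqref{eq: decomposition5} with the split short exact sequence of Proposition \ref{prop: splits}.
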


\begin{proof}
	This follows from \eqref{eq: decomposition5} and Proposition \ref{prop: splits}.
\end{proof}

\begin{proposition}\label{prop: intermediate}
	Suppose $g|N$.  Then we have
	\[
		H^m(U^\fra/\Delta,\bbLog^N_\fra)=
		\begin{cases}
			 \bbR(-g)^{\oplus\binom{g-1}{m-g}}& g\leq m< 2g-1 \\
			 \bbR(N)^{\oplus\binom{g-1}{m}} &  0\leq m <g.
		\end{cases}
	\]
\end{proposition}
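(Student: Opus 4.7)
The plan is to use the Hochschild--Serre-type spectral sequence
\[E^{p,q}_2=H^p(\Delta, H^q(U^\fra,\bbLog^N_\fra))\Rightarrow H^{p+q}(U^\fra/\Delta,\bbLog^N_\fra),\]
which degenerates at $E_2$ by Lemma \ref{lem: degenerate}, and to show that for each $m$ in the stated range exactly one $E_2^{p,q}$-term survives on the anti-diagonal $p + q = m$. By Proposition \ref{prop: U}, $H^q(U^\fra,\bbLog^N_\fra) = H^q(\bbT^\fra,\bbLog^N_\fra)$ for $0 \le q \le g$ and vanishes for $g < q < 2g - 1$. Since $H^p(\Delta,-) = 0$ for $p \ge g$ by Proposition \ref{prop: complex}, all anti-diagonals with $0 \le m < 2g - 1$ are confined to the block $0 \le p \le g-1$, $0 \le q \le g$, and the problem reduces to computing $H^p(\Delta, H^q(\bbT^\fra, \bbLog^N_\fra))$ in this block.

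The key step is to verify that $H^q(\bbT^\fra,\bbLog^N_\fra)$ carries a diagonalizable $\Delta$-action, so that the second part of Proposition \ref{prop: complex} applies and gives
\[E_2^{p,q} \cong \bigl(H^q(\bbT^\fra,\bbLog^N_\fra)^\Delta\bigr)^{\oplus\binom{g-1}{p}}.\]
For $q = g$ the action is trivial by Lemma \ref{lem: divides} (1). For $0 \le q < g$ I would use the $\Ft$-equivariance of the exact sequence \eqref{eq: SES} to promote the connecting map \eqref{eq: SES injection} to a $\Delta$-equivariant injection of $H^q(\bbT^\fra,\bbLog^N_\fra)$ into $\bigwedge^{q+1}\bbR(-\bone) \otimes \Sym^{N+1}\bbR(\bone)$, which has a manifestly diagonalizable $\Delta$-action, since $\bbR(-\bone) = \bigoplus_\tau \bbR(-1_\tau)$ decomposes into one-dimensional $\Delta$-eigenspaces. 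A $\Delta$-stable subspace of a diagonalizable module is diagonalizable, so the conclusion follows.

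Under the hypothesis $g \mid N$, Lemma \ref{lem: divides} (2) tells us that the $\Delta$-invariants of $H^q(\bbT^\fra,\bbLog^N_\fra)$ vanish for $0 < q < g$ and equal $\bbR(N)$ for $q = 0$, the latter via the isomorphism $H^0(\bbT^\fra, \bbLog^N_\fra) \cong \Sym^N\bbR(\bone)$ (a consequence of \eqref{eq: SES} and Proposition \ref{prop: calculation}) combined with Lemma \ref{lem: Delta}. For $q = g$ the invariants equal $\bbR(-g)$. Hence for $g \le m < 2g - 1$ the only surviving term is $E_2^{m-g,g} \cong \bbR(-g)^{\oplus \binom{g-1}{m-g}}$, and for $0 \le m < g$ the only surviving term is $E_2^{m,0} \cong \bbR(N)^{\oplus \binom{g-1}{m}}$. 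Since each such anti-diagonal has at most one non-zero $E_2$-term, $E_2$-degeneration yields the claimed isomorphisms without extension issues.

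The main obstacle I foresee is the diagonalizability step for $0 < q < g$: one must verify carefully that the $\Delta$-module structure on $H^q(\bbT^\fra, \bbLog^N_\fra)$ is genuinely induced from a diagonalization of an enveloping module via the $\Delta$-equivariance of the connecting map coming from \eqref{eq: SES}, as opposed to only being diagonalizable abstractly.
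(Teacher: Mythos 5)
Your proof is correct and follows essentially the same route as the paper: both compute $H^{p}(\Delta, H^{q}(U^\fra,\bbLog^N_\fra))$ for $p+q=m$ by combining Proposition \ref{prop: U}, Lemma \ref{lem: divides}, and Proposition \ref{prop: complex}, and then conclude from the $E_2$-degeneration established in Lemma \ref{lem: degenerate}. The paper's proof is terse and leaves the diagonalizability of the $\Delta$-action on $H^q(\bbT^\fra,\bbLog^N_\fra)$ implicit; you make it explicit via the $\Delta$-equivariance of the injection \eqref{eq: SES injection} into $\bigwedge^{q+1}\bbR(-\bone)\otimes\Sym^{N+1}\bbR(\bone)$, which is precisely the right justification for invoking the second part of Proposition \ref{prop: complex}, so the obstacle you flag at the end is handled and the argument is complete.
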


\begin{proof}
    By combining Lemma \ref{lem: divides}, Proposition \ref{prop: U} and Proposition \ref{prop: complex}, 
    we have 
    \[H^{m-q}(\Delta, H^{q}(U^\fra,\bbLog^N_\fra))\cong \begin{cases}
        \bbR(-g)^{\binom{g-1}{m-g}} & q=g, \\
        0 & q\ne g 
    \end{cases}\]
    for $g\le m<2g-1$, and 
    \[H^{m-q}(\Delta, H^{q}(U^\fra,\bbLog^N_\fra))\cong \begin{cases}
        \bbR(N)^{\binom{g-1}{m}} & q=0, \\
        0 & q\ne 0. 
    \end{cases}\]
    for $0\leq m<g$. 
    Thus the result follows from Lemma \ref{lem: degenerate}. 
\end{proof}

%
%
\subsection{The Polylogarithm Class}\label{subsec: polylog}
%
%

In this subsection, we define the polylogarithm class in the equivariant Deligne--Beilinson cohomology 
$H^{g-1}_{\sD}(U/\Ft,\bbLog^N)$.
Recall that this cohomology fits into the spectral sequence
\begin{equation}\label{eq: ss}
	E_2^{p,q}=\Ext^p_{\MHS_{\bbR}}\bigl(\bbR(0), H^{q}(U/\Ft,\bbLog^N)\bigr)\Rightarrow
	H^{p+q}_{\sD}(U/\Ft,\bbLog^N). 
\end{equation}
To compute the $E_2$-terms, we will use the following: 

\begin{lemma}\label{lem: ext1}
	For the Tate object $\bbR(n)$, we have
	\begin{align*}
		\Hom_{\MHS_{\bbR}}(\bbR(0),\bbR(n))
		&=\begin{cases}
		 \bbR &  n=0,\\
		0  & n\neq0,
		\end{cases}  &
		\Ext^1_{\MHS_{\bbR}}(\bbR(0),\bbR(n))
		&=\begin{cases}
		 (2\pi i)^{n-1}\bbR &  n> 0,\\
		0  & n\leq 0,
		\end{cases}
	\end{align*}
	and $\Ext^m_{\MHS_{\bbR}}(\bbR(0),\bbR(n))=0$ for $m\neq 0,1$.
\end{lemma}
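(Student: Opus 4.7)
The plan is to reduce each case to standard structural facts about the abelian category $\MHS_\bbR$, specifically Carlson's presentation of $\Ext^1$ and Beilinson's theorem that $\MHS_\bbR$ has cohomological dimension one.

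For the $\Hom$ assertion, I would simply note that morphisms in $\MHS_\bbR$ strictly preserve the weight filtration, while $\bbR(0)$ is pure of weight $0$ and $\bbR(n)$ is pure of weight $-2n$; this forces the group to vanish for $n\neq 0$, and when $n=0$ any $\bbR$-linear endomorphism of $\bbR$ trivially preserves the Hodge filtration (both sides being either all of $\bbR_\bbC$ or zero), yielding the full $\bbR$.

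For $\Ext^1$, I would invoke Carlson's formula
\[
\Ext^1_{\MHS_\bbR}(\bbR(0),V)\;\cong\;W_0 V_\bbC\,\big/\,\bigl(F^0 W_0 V_\bbC+W_0 V\bigr)
\]
applied to $V=\bbR(n)$. If $n<0$, then $\bbR(n)$ is pure of weight $-2n>0$ and $W_0\bbR(n)=0$, killing the quotient; if $n=0$, the conventions of Example \ref{ex: Tate object} give $F^0\bbR(0)_\bbC=\bbR(0)_\bbC$, again killing the quotient; if $n>0$, one has $W_0\bbR(n)=\bbR(n)$ and $F^0\bbR(n)_\bbC=0$, so the Ext group becomes $(2\pi i)^n\bbC/(2\pi i)^n\bbR$. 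The only delicate point is identifying this real line with $(2\pi i)^{n-1}\bbR$: writing the quotient canonically as the imaginary part $(2\pi i)^n\cdot i\bbR\subset\bbC$ and using the identity $(2\pi i)\cdot i=-2\pi$ shows the two $\bbR$-subspaces of $\bbC$ coincide.

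The higher vanishing $\Ext^m_{\MHS_\bbR}(\bbR(0),\bbR(n))=0$ for $m\geq 2$ follows immediately from Beilinson's theorem on the cohomological dimension of $\MHS_\bbR$, for which a direct proof can also be given via the two-step resolution underlying Carlson's formula. There is no substantial obstacle here; the only nontrivial bookkeeping is the normalization factor $(2\pi i)^{n-1}$, handled as above.
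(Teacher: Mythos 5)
Your proposal is correct, and it fills in the standard details behind the paper's one-line proof, which simply defers to the reference \cite{BHY18}*{(5.2)}. The three ingredients you use — strictness with respect to the weight filtration for the $\Hom$ group, Carlson's quotient formula $W_0V_\bbC/(W_0V+F^0W_0V_\bbC)$ for $\Ext^1$, and cohomological dimension one of $\MHS_\bbR$ for higher vanishing — are precisely the standard calculation being invoked, and the bookkeeping $(2\pi i)^n\cdot i\bbR=(2\pi i)^{n-1}\bbR$ correctly recovers the stated normalization.
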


\begin{proof}
	This follows from standard calculation of extension groups in $\MHS_{\bbR}$.
	See for example \cite{BHY18}*{(5.2)}.
\end{proof}

Then we have the following result. 

\begin{proposition}\label{prop: key}
	Let $N$ be a non-negative integer divisible by $g$. 
	Then we have canonical isomorphisms 
	\begin{equation}\label{eq: canonical2}
		H^{2g-1}_{\sD}(U/\Ft,\bbLog^N)
		\cong\Hom_{\MHS_\bbR}\bigl(\bbR(0),H^{2g-1}(U/\Ft,\bbLog^N)\bigr)
		\cong\begin{cases}
		    0 & N=0, \\
		    \bigoplus_{\Cl^+_F}\bbR & N>0.
		\end{cases}
	\end{equation}
	In particular, by taking the projective limit, we have a canonical isomorphism
	\begin{equation}\label{eq: canonical}
		H^{2g-1}_{\sD}(U/\Ft,\bbLog)\coloneqq \varprojlim_N H^{2g-1}_{\sD}(U/\Ft,\bbLog^N)
		\cong\bigoplus_{\Cl^+_F}\bbR.
	\end{equation}
\end{proposition}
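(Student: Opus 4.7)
The plan is to analyze the spectral sequence \eqref{eq: ss}
\[E_2^{p,q}=\Ext^p_{\MHS_\bbR}\bigl(\bbR(0),H^q(U/\Ft,\bbLog^N)\bigr)\Rightarrow H^{p+q}_\sD(U/\Ft,\bbLog^N)\]
on the antidiagonal $p+q=2g-1$, showing that the only surviving $E_2$-term is $E_2^{0,2g-1}$, which can then be read off from Corollary \ref{cor: splits}. The decomposition \eqref{eq: decomposition5} reduces the Ext computations to one fractional ideal class at a time, so that Proposition \ref{prop: intermediate}, Corollary \ref{cor: splits}, and Lemma \ref{lem: ext1} supply all the input we need.

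For $p\ge 2$, Lemma \ref{lem: ext1} immediately gives $E_2^{p,2g-1-p}=0$ since $\Ext^p_{\MHS_\bbR}(\bbR(0),-)$ vanishes. For $p=1$, assuming $g\ge 2$, Proposition \ref{prop: intermediate} (valid because $g\mid N$) identifies $H^{2g-2}(U^\fra/\Delta,\bbLog^N_\fra)\cong \bbR(-g)^{g-1}$, and then $\Ext^1_{\MHS_\bbR}(\bbR(0),\bbR(-g))=0$ by Lemma \ref{lem: ext1}, so $E_2^{1,2g-2}=0$. Therefore the spectral sequence degenerates on the antidiagonal, yielding the first isomorphism in \eqref{eq: canonical2}. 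The second isomorphism follows by applying $\Hom_{\MHS_\bbR}(\bbR(0),-)$ to the description
\[H^{2g-1}(U/\Ft,\bbLog^N)\cong \bigoplus_{\Cl^+_F}\Bigl(\bbR(-g)\oplus\prod_{n=0}^{\lfloor N/g\rfloor}\bbR((n-1)g)\Bigr)\]
of Corollary \ref{cor: splits}: by Lemma \ref{lem: ext1}, only the unique $\bbR(0)$-summand contributes, which occurs at $n=1$ and is present precisely when $\lfloor N/g\rfloor\ge 1$, i.e., $N>0$ under the hypothesis $g\mid N$.

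The projective limit \eqref{eq: canonical} is then straightforward: the transition maps $\bbLog^{N+g}\to\bbLog^N$ correspond, under the above identifications, to projections that preserve the $n=1$ summand in Corollary \ref{cor: splits}; hence the inverse system stabilizes for $N\ge g$ and the limit is $\bigoplus_{\Cl^+_F}\bbR$.

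The main obstacle is the case $g=1$, where Proposition \ref{prop: intermediate} no longer controls $H^{2g-2}=H^0$: one has $H^0(U/\Ft,\bbLog^N)\cong \Sym^N\bbR(\bone)\cong \bbR(N)$ as an MHS, and $\Ext^1_{\MHS_\bbR}(\bbR(0),\bbR(N))$ is nonzero for $N>0$, so the argument above fails to kill $E_2^{1,2g-2}$ in this range. Resolving this case likely requires either reducing directly to the classical Beilinson--Deligne calculation on $\bbG_m\setminus\{1\}$ or exploiting an additional constraint specific to the $\Ft$-equivariant structure that forces the unwanted $\Ext^1$ contribution to vanish.
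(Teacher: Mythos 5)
Your argument for $g\geq 2$ coincides with the paper's: degeneration of \eqref{eq: ss} at $E_2$ via Lemma \ref{lem: ext1}, vanishing of $E_2^{1,2g-2}$ via Proposition \ref{prop: intermediate} and Lemma \ref{lem: ext1}, and identification of $E_2^{0,2g-1}$ from Corollary \ref{cor: splits}.

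Your concern about $g=1$ is not a gap in your argument but an error in the paper's. The paper reads off $H^{2g-2}(U^\fra/\Delta,\bbLog_\fra^N)\cong\bbR(-g)^{\oplus(g-1)}$ from Proposition \ref{prop: intermediate}, but that is the formula for the range $g\leq m<2g-1$, which requires $g\geq 2$ when $m=2g-2$. For $g=1$, the proposition gives $H^0(U^\fra/\Delta,\bbLog_\fra^N)\cong\bbR(N)$ (its second case, with $m=0$), and then $\Ext^1_{\MHS_\bbR}(\bbR(0),\bbR(N))=(2\pi i)^{N-1}\bbR\neq 0$ for $N>0$. So $E_2^{1,0}$ is genuinely nonzero and the first isomorphism of \eqref{eq: canonical2} fails at finite level: $H^1_\sD(U/\Ft,\bbLog^N)$ is a two-dimensional extension of $\Hom_{\MHS_\bbR}(\bbR(0),H^1)\cong\bbR$ by $\Ext^1_{\MHS_\bbR}(\bbR(0),\bbR(N))$. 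There is no equivariant constraint that kills this: for $F=\bbQ$ one has $\Delta=\{1\}$ and a single ideal class, so the equivariant cohomology is just ordinary Deligne--Beilinson cohomology of $\bbG_m\setminus\{1\}$.

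What does survive, and what the construction of $\pol$ actually needs, is \eqref{eq: canonical}. By Proposition \ref{prop: calculation} the transition map $H^0(\T^\fra,\bbLog^N_\fra)\to H^0(\T^\fra,\bbLog^{N-1}_\fra)$ is zero (as $0\neq g$), so the transition maps on $\Ext^1_{\MHS_\bbR}(\bbR(0),H^0(U/\Ft,\bbLog^N))$ all vanish; since these are finite-dimensional $\bbR$-vector spaces with eventually zero images, both $\varprojlim$ and $R^1\varprojlim$ of this tower vanish. Passing to the limit of the short exact sequences therefore gives $H^1_\sD(U/\Ft,\bbLog)\cong\varprojlim_N\Hom_{\MHS_\bbR}(\bbR(0),H^1(U/\Ft,\bbLog^N))\cong\bbR$, so \eqref{eq: canonical} and the definition of the limiting class $\pol$ remain valid even for $g=1$. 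The finite-level isomorphism \eqref{eq: canonical2}, however, should either be restricted to $g\geq2$ or replaced for $g=1$ by the statement that the surjection onto $\Hom_{\MHS_\bbR}(\bbR(0),H^{2g-1})$ has kernel $\Ext^1_{\MHS_\bbR}(\bbR(0),\bbR(N))$, which dies in the limit.
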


\begin{proof}
	By Lemma \ref{lem: ext1}, 
	the spectral sequence \eqref{eq: ss} degenerates at $E_2$ and gives the short exact sequence
	\begin{align*}
		0\rightarrow
		\Ext^1_{\MHS_{\bbR}}\bigl(\bbR(0), H^{2g-2}(U/\Ft,\bbLog^N)\bigr)&\rightarrow
		H^{2g-1}_{\sD}(U/\Ft,\bbLog^N)\\
		&\rightarrow\Hom_{\MHS_{\bbR}}\bigl(\bbR(0), H^{2g-1}(U/\Ft,\bbLog^N)\bigr)
		\rightarrow 0.
	\end{align*}
	Note that $H^m(U/\Ft,\bbLog^N)\cong\bigoplus_{\fra\in\frC}H^m(U^\fra/\Delta,\bbLog^N_\fra)$.
	By Proposition \ref{prop: intermediate} and Lemma \ref{lem: ext1}, we have
	\[
		\Ext^1_{\MHS_{\bbR}}\bigl(\bbR(0), H^{2g-2}(U/\Ft,\bbLog^N)\bigr)
		\cong  \bigoplus_{\fra\in\frC}\Ext^1_{\MHS_{\bbR}}\bigl(\bbR(0),\bbR(-g)^{\oplus(g-1)}\bigr)=0.
	\]
	In addition, by Proposition \ref{prop: splits} and Lemma \ref{lem: ext1}, we have
	\begin{align*}
		\Hom_{\MHS_{\bbR}}\bigl(\bbR(0), H^{2g-1}(U/\Ft,\bbLog^N)\bigr)
		&\cong \bigoplus_{\fra\in\frC}\Hom_{\MHS_{\bbR}}\bigl(\bbR(0), H^{2g-1}(U^\fra/\Delta,\bbLog_\fra^N)\bigr)\\
		&\cong \begin{cases}
		    0 & N=0, \\
		    \bigoplus_{\Cl^+_F}\bbR & N>0.
		\end{cases}
	\end{align*}
	This gives \eqref{eq: canonical2} as desired. 
	From the construction, this isomorphism is compatible with the projection $\bbLog^N\to\bbLog^{N-g}$. 
	Thus we obtain the isomorphism \eqref{eq: canonical} 
	by taking the projective limit with respect to integers $N$ divisible by $g$. 
\end{proof}

As in Definition \ref{def: polylog}, we define the polylogarithm class as follows.

\begin{definition}\label{def: polylog class}
	For any integer $N>0$ such that $g|N$, 
	we define 
	the \textit{$N$-th polylogarithm class}
	\[
		\pol^N\in H^{2g-1}_{\sD}\bigl(U/\Ft,\bbLog^N\bigr)
	\]
	to be the element
	which maps to $(1,\ldots,1)\in \bigoplus_{\Cl^+_F}\bbR$ through the isomorphism 
	\eqref{eq: canonical2}.
	Moreover, 
	we define the \textit{polylogarithm class} $\pol$ to be their projective limit 
	\[
		\pol\coloneqq\varprojlim_N\pol^N\in H^{2g-1}_{\sD}(U/\Ft,\bbLog)=\varprojlim_N H^{2g-1}_{\sD}\bigl(U/\Ft,\bbLog^N\bigr),
	\]
	which maps to $(1,\ldots,1)\in \bigoplus_{\Cl^+_F}\bbR$ through the isomorphism 
	\eqref{eq: canonical}.
\end{definition}

%
%
%
%
%
\section{The Shintani Generating Class}\label{sec: shintani}
%
%
%
%
%

The purpose of this section is to prove our main theorem, which asserts that the de Rham 
realization of the polylogarithm class $\pol$ coincides with the de Rham Shintani class. 

%
%
\subsection{The Shintani Generating Class}\label{subsec: shintani}
%
%

In this subsection, we recall the construction of the Shintani generating class on $U$. 
We first review a complex defined in \cite{BHY20}*{\S2.2} calculating the $\Ft$-equivariant cohomology of $U$
with coefficients in an equivariant coherent $\sO_U$-module. 

Let $\fra\in\frI$. 
We say that $\alpha\in\fra_+$ is \textit{primitive} if $\alpha/N\not\in\fra_+$ for any integer $N>1$.
The set of primitive elements of $\fra_+$ is denoted by $\sA_\fra$. 
If we let $U^\fra_\alpha\coloneqq\bbT^\fra\setminus\{t^\alpha=1\}$ for any $\alpha\in\sA_\fra$,
then $\frU^\fra\coloneqq\{U^\fra_\alpha\}_{\alpha\in \sA_\fra}$ gives an affine open covering of $U^\fra$, 
which is $\Delta$-equivariant in the sense that $\pair{\varepsilon}(U^\fra_\alpha)=U^\fra_{\varepsilon\alpha}$,
and $\frU\coloneqq\{ U^\fra_\alpha\}_{\fra\in\frI,\alpha\in\sA_\fra}$ gives 
an $F^\times_+$-equivariant affine open covering of $U=\coprod_{\fra\in\frI}U^\fra$. 
Let $q\geq 0$ be an integer. For any $\bsalpha=(\alpha_0,\ldots,\alpha_q)\in\sA_\fra^{q+1}$, 
we let $U^\fra_\bsalpha\coloneqq U^\fra_{\alpha_0}\cap\cdots\cap U^\fra_{\alpha_q}$. 
For an $F^\times_+$-equivariant sheaf $\cF$ on $U$, 
we write $\cF_\fra$ for the restriction of $\cF$ to $U^\fra$, and let 
$\prod_{\bsalpha\in\sA_\fra^{q+1}}^\alt \Gamma(U^\fra_\bsalpha,\cF_\fra)$ denote 
the alternating part of the product $\prod_{\bsalpha\in\sA_\fra^{q+1}} \Gamma(U^\fra_\bsalpha,\cF_\fra)$, 
that is, 
\begin{equation*}
\prod_{\bsalpha\in\sA_\fra^{q+1}}^\alt \Gamma(U^\fra_\bsalpha,\cF_\fra)
\coloneqq\Biggl\{(s_\bsal)\in\prod_{\bsalpha\in\sA_\fra^{q+1}} \Gamma(U^\fra_\bsalpha,\cF_\fra)
\Biggm| \begin{array}{l}
s_{\rho(\bsalpha)}=\sgn(\rho)s_\bsalpha\ \text{ for all $\rho\in\frS_{q+1}$}, \\
s_\bsalpha=0 \text{ if $\alpha_i=\alpha_j$ for some $i\neq j$}
\end{array}\Biggr\}. 
\end{equation*}

For any $x\in F_+^\times$ and $\bsalpha=(\alpha_0,\ldots,\alpha_q)\in\sA^{q+1}_\fra$, 
let $x\bsalpha=(x\alpha_0,\ldots,x\alpha_q)\in\sA^{q+1}_{x\fra}$. 
Then $\bra{x}\colon U^{x\fra}\isomto U^\fra$ induces isomorphisms 
\[
	\Gamma(U^\fra_{\bsalpha},\cF_{\fra}) \overset{\bra{x}^{*}}{\cong}\Gamma(U^{x\fra}_{x\bsalpha},
	\bra{x}^{*}\cF_{\fra})
	\overset{\iota_x}{\cong}\Gamma(U^{x\fra}_{x\bsalpha},\cF_{x\fra}),
\] 
which define an action of the group $\Ft$ on 
$\prod_{\fra\in\frI}\prod_{\bsalpha\in\sA_\fra^{q+1}}^\alt \Gamma(U^{\fra}_\bsalpha,\cF_\fra)$.
Then we have the following: 

\begin{proposition}[\cite{BHY20}*{Proposition 2.13}]\label{prop: standard}
    For an $\Ft$-equivariant quasi-coherent sheaf $\cF$ on $U$, 
	let $C^\bullet(\frU/\Ft,\cF)$ be the complex consisting of the modules 
	\[
		C^q(\frU/\Ft,\cF)\coloneqq
		\Biggl(\prod_{\fra\in\frI}
		\prod_{\bsalpha\in\sA_\fra^{q+1}}^\alt \Gamma(U^\fra_\bsalpha,\cF_\fra)\Biggr)^{\Ft}
	\]
	and differentials 
	\begin{equation}\label{eq: standard}
		(d^qf)_{\alpha_0\cdots\alpha_{q+1}}\coloneqq\sum_{j=0}^{q+1}(-1)^j
		f_{\alpha_0\cdots\breve\alpha_j\cdots\alpha_{q+1}}\big|_{U^\fra_{\alpha_0\cdots\alpha_{q+1}}}. 
	\end{equation}
	Then for any integer $m\geq0$, the equivariant cohomology $H^m(U/\Ft,\cF)$ is given as
	\[
		H^m(U/\Ft,\cF)=H^m(C^\bullet(\frU/\Ft,\cF)).
	\]
\end{proposition}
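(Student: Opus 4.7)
The approach is to present $R\Gamma(U/\Ft,\cF)$ as the $\Ft$-invariants of the alternating Čech complex on the equivariant affine cover $\frU$: reduce to $\Delta$-equivariance on each $U^\fra$ via the narrow class group decomposition, compute non-equivariant sheaf cohomology via Čech using affineness, and then verify that each Čech term is $\Delta$-acyclic so that taking invariants term-wise computes derived invariants.

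First I would reduce to the $\Delta$-equivariant case. Since $\Ft$ acts on $\frI$ with orbit representatives $\frC$ and stabilizer $\Delta$ at each $\fra\in\frC$, the decomposition $U=\coprod_{\fra\in\frI}U^\fra$ together with the equivariant structure yields canonical isomorphisms
\[
H^m(U/\Ft,\cF)\cong\prod_{\fra\in\frC}H^m(U^\fra/\Delta,\cF_\fra),\qquad
C^\bullet(\frU/\Ft,\cF)\cong\prod_{\fra\in\frC}C^\bullet(\frU^\fra/\Delta,\cF_\fra),
\]
where $C^q(\frU^\fra/\Delta,\cF_\fra)\coloneqq \bigl(\prod^{\alt}_{\bsalpha\in\sA_\fra^{q+1}}\Gamma(U^\fra_\bsalpha,\cF_\fra)\bigr)^{\Delta}$. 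It therefore suffices to show $H^m(U^\fra/\Delta,\cF_\fra)\cong H^m(C^\bullet(\frU^\fra/\Delta,\cF_\fra))$ for each fixed $\fra$. The non-equivariant alternating Čech complex $C^\bullet(\frU^\fra,\cF_\fra)$ computes $R\Gamma(U^\fra,\cF_\fra)$: each $U^\fra_\bsalpha$ is a finite intersection of principal opens in the affine scheme $\bbT^\fra$, hence affine, so Serre vanishing for quasi-coherent sheaves forces the Čech-to-derived spectral sequence to degenerate, and the resulting quasi-isomorphism is $\Delta$-equivariant by functoriality.

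The decisive step is to prove that each Čech term $C^q(\frU^\fra,\cF_\fra)$ is $\Gamma(\Delta,-)$-acyclic, for then
\[
R\Gamma(U^\fra/\Delta,\cF_\fra)\simeq R\Gamma(\Delta,C^\bullet(\frU^\fra,\cF_\fra))\simeq C^\bullet(\frU^\fra,\cF_\fra)^{\Delta}=C^\bullet(\frU^\fra/\Delta,\cF_\fra).
\]
For this, I would observe that $\Delta$ acts freely on $\sA_\fra$, since a totally positive unit fixing a primitive element of $\fra_+$ must be $1$; consequently $\Delta$ acts freely on $\sA_\fra^{q+1}$ as well. Choosing $\Delta$-orbit representatives identifies the full (non-alternating) Čech term with a product of coinduced modules $\operatorname{Coind}^{\Delta}_{\{1\}}\Gamma(U^\fra_{\bsalpha},\cF_\fra)$, each of which is $\Delta$-acyclic by Shapiro's lemma. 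The alternating piece is then the image of the $\Delta$-equivariant anti-symmetrization projector $\tfrac{1}{(q+1)!}\sum_{\rho\in\frS_{q+1}}\sgn(\rho)\rho$, hence a $\Delta$-equivariant direct summand of a $\Delta$-acyclic module, and therefore also $\Delta$-acyclic.

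The main technical obstacle is the careful joint compatibility of three structures living on the same module: the infinite product indexed by $\sA_\fra^{q+1}$, the decomposition into free $\Delta$-orbits, and the $\frS_{q+1}$-anti-symmetrization projector. One must check that these interact harmoniously so that the direct-summand argument genuinely yields $\Delta$-acyclicity, and also that the Čech quasi-isomorphism $C^\bullet(\frU^\fra,\cF_\fra)\simeq R\Gamma(U^\fra,\cF_\fra)$ lifts to an equivariant one at the level of complexes (for instance by working with the equivariant Godement resolution). Once these routine but delicate compatibilities are in place, concatenating the quasi-isomorphisms above and taking cohomology yields the proposition.
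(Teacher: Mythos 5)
The paper itself does not reprove this proposition but cites it from \cite{BHY20}, so a direct line-by-line comparison is impossible; nevertheless your argument is correct and is the natural proof (class-group reduction to $\Delta$, Serre/Cartan acyclicity of the affine cover, free $\Delta$-action on $\sA_\fra$ plus Shapiro, then composition of derived functors), and almost certainly matches the cited source in spirit.

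One remark on your self-identified ``main technical obstacle'': the worry about joint compatibility of the orbit decomposition and the anti-symmetrization is in fact not needed, because the two structures are used \emph{independently} rather than simultaneously. Step one (orbit decomposition of $\sA_\fra^{q+1}$ into free $\Delta$-orbits, identifying the full Čech term with a product of modules coinduced from the trivial subgroup) establishes $\Delta$-acyclicity of the full, non-alternating term; this uses that $H^m(\Delta,-)$ commutes with arbitrary products, which holds because $\Delta\cong\bbZ^{g-1}$ admits a bounded resolution by finite free $\bbZ[\Delta]$-modules (the Koszul complex of Lemma~\ref{lemma: resolution}). Step two observes that since $\frS_{q+1}$ acts $\Delta$-equivariantly and we are in characteristic zero, the idempotent $\tfrac{1}{(q+1)!}\sum_\rho\sgn(\rho)\rho$ exhibits the alternating part as a $\Delta$-equivariant direct summand of the full term (the vanishing-on-diagonals condition in the paper's definition of $\prod^\alt$ is automatic in characteristic zero, since a transposition fixing $\bsalpha$ forces $2s_\bsalpha=0$). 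A direct summand of an acyclic module is acyclic, and that is all that is used; there is no need to describe the alternating part itself as a product of coinduced modules.
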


Next we review the construction of the Shintani generating class given in \cite{BHY20}*{\S2.2}.

\begin{definition}
	Let $\bbR^I_+$ be the set of totally positive elements of $\bbR^I$.
	We define a \textit{cone} in $\bbR^I_+\cup\{0\}$ to be a set of the form
	\[
		\sigma_\bsalpha\coloneqq\{  x_1 \alpha_1+\cdots+x_m\alpha_m \mid x_1,\ldots,x_m \in\bbR_{\geq0}\}
	\]
	for some $m\geq 0$ and $\bsalpha=(\alpha_1,\ldots,\alpha_m)\in (F^\times_+)^m$.
	The dimension $\dim\sigma$ of a cone is the dimension 
	of the $\bbR$-vector space generated by $\sigma$. 
\end{definition}

Let us fix a numbering of elements of $I$ as $I=\{\tau_1,\ldots,\tau_g\}$. 
Then, for any subset $R\subset\bbR_+^I\cup\{0\}$, we let 
\[
	\breve R\coloneqq\{(x_{\tau_1},\ldots,x_{\tau_g})\in\bbR_+^I \mid
	\exists\delta>0,\,0<\forall\delta'<\delta, 
	(x_{\tau_1},\ldots,x_{\tau_{g-1}},x_{\tau_g}-\delta')\in R\}.
\]
As in \cite{BHY20}*{Definition 2.16},
we define the function $\cG^\fra_\sigma(t)$ as follows.

\begin{definition}\label{def: G}
	Let $\fra\in\frI$ and $\bsalpha=(\alpha_1,\ldots,\alpha_g)\in\sA_\fra^g$, 
	and put $\sigma=\sigma_\bsalpha$. Then we define 
	\[
		\cG^\fra_\sigma(t)\coloneqq 
		\frac{\sum_{\alpha\in\breve P_\bsalpha\cap\fra} t^\alpha}{(1-t^{\alpha_1})\cdots(1-t^{\alpha_g})}
	    \in\Gamma(U^\fra_\bsalpha,\sO_{\bbT}),
	\]
	where $P_\bsalpha\coloneqq\{ x_1\alpha_1+\cdots+x_g\alpha_g\mid
	\forall i\,\,0\leq x_i < 1\}$ is the parallelepiped spanned by $\alpha_1,\ldots,\alpha_g$.	
\end{definition}

For $\bsalpha=(\alpha_1,\ldots,\alpha_g)\in\sA_\fra^g$, 
let $\sgn(\bsalpha)\in\{0,\pm1\}$ be the sign of $\det\bigl(\alpha_j^{\tau_i}\bigr)$, 
the determinant of the $g\times g$ matrix $\bigl(\alpha_j^{\tau_i}\bigr)_{i,j}$. 
Then we have the following: 

\begin{proposition}[\cite{BHY20}*{Proposition 2.17}]\label{prop: generating}
	For any $\fra\in\frI$ and $\bsalpha\in\sA_\fra^g$, let
	\[
		\cG^\fra_\bsalpha
		\coloneqq\sgn(\bsalpha)\cG^\fra_{\sigma_\bsalpha}(t)\in\Gamma(U^\fra_\bsalpha,\sO_{\bbT}).
	\]
	Then $(\cG^\fra_\bsalpha)$ is a cocycle in $C^{g-1}(\frU/F_+^\times,\sO_{\bbT})$, 
	hence defines a cohomology class
	\[
		\cG\coloneqq[(\cG^\fra_\bsalpha)]\in H^{g-1}(U/F_+^\times,\sO_{\bbT}),
	\]
	which we call the \emph{Shintani generating class}.
\end{proposition}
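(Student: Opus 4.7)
The plan is to verify the three conditions required to exhibit $(\cG^\fra_\bsalpha)$ as an element of $\ker d^{g-1}$ in $C^\bullet(\frU/F^\times_+, \sO_\bbT)$: the alternating condition, $F^\times_+$-invariance, and the cocycle relation. The first two are largely formal; the third is the combinatorial heart.

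For the alternating condition, observe that $\cG^\fra_{\sigma_\bsalpha}(t)$ is manifestly invariant under permutation of $\alpha_1, \ldots, \alpha_g$: both the half-open parallelepiped $\breve P_\bsalpha$ and the denominator $\prod_i (1-t^{\alpha_i})$ are permutation-symmetric. Meanwhile, for $\rho \in \frS_g$, we have $\det(\alpha_{\rho(j)}^{\tau_i}) = \sgn(\rho)\det(\alpha_j^{\tau_i})$, so $\sgn(\rho(\bsalpha)) = \sgn(\rho)\sgn(\bsalpha)$ and hence $\cG^\fra_{\rho(\bsalpha)} = \sgn(\rho)\cG^\fra_\bsalpha$. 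When two entries of $\bsalpha$ coincide, the vectors are linearly dependent and $\sgn(\bsalpha) = 0$, so the section vanishes by convention. For $F^\times_+$-invariance, the isomorphism $\pair{x}^*$ sends $t^\alpha$ to $t^{x\alpha}$, carrying $\breve P_\bsalpha \cap \fra$ to $\breve P_{x\bsalpha} \cap x\fra$ (using that $x \in F^\times_+$ preserves the perturbation direction $\tau_g$ defining $\breve{\cdot}$) and $\prod_i(1-t^{\alpha_i})$ to $\prod_i(1-t^{x\alpha_i})$; also $\sgn(x\bsalpha) = \sgn(\bsalpha)$ since $\Nr(x) > 0$.

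For the cocycle relation, fix $\bsalpha = (\alpha_0, \ldots, \alpha_g) \in \sA_\fra^{g+1}$; we must show
\[ \sum_{j=0}^g (-1)^j \cG^\fra_{\widehat{\bsalpha}_j}\big|_{U^\fra_\bsalpha} = 0. \]
The key is to interpret $\cG^\fra_{\sigma_\bsalpha}(t)$ as the generating series $\sum_{\beta \in \breve\sigma_\bsalpha \cap \fra} t^\beta$ for lattice points of $\fra$ in the half-open simplicial cone $\breve\sigma_\bsalpha$; this equality holds as a power series on a suitable convergence region and uniquely determines the rational function on all of $\bbT^\fra$. The cocycle identity then reduces to the combinatorial statement that the signed sum
\[ \sum_{j=0}^g (-1)^j \sgn(\widehat{\bsalpha}_j)\, [\breve\sigma_{\widehat{\bsalpha}_j}] \]
of integer-valued characteristic functions vanishes on $\bbR_+^I$. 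This follows from the linear dependence among the $g+1$ vectors $\alpha_0, \ldots, \alpha_g$: writing the unique (up to scalar) relation, splitting the indices by the sign of the coefficients, and tracing which half-open cones cover a given point, the overlaps cancel in signed pairs.

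The main obstacle is precisely this final combinatorial verification, which is the technical core of all Shintani-type cocycle constructions. A cleaner alternative that avoids any discussion of convergence is to clear denominators in the group algebra $\bbZ[\fra]$ and verify the resulting finite identity
\[ \sum_{j=0}^g (-1)^j \sgn(\widehat{\bsalpha}_j)(1 - t^{\alpha_j}) \sum_{\beta \in \breve P_{\widehat{\bsalpha}_j} \cap \fra} t^\beta = 0 \]
directly by coefficient comparison, decomposing each $\breve P_{\widehat{\bsalpha}_j}$ via the common dependence relation.
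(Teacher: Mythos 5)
The paper does not prove this proposition; it recalls it by citation from \cite{BHY20}*{Proposition 2.17}, so there is no internal proof to compare against. What follows is an assessment of your outline on its own merits.

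The structure is correct and you identify the right reduction. The alternating check and the $F^\times_+$-equivariance are handled properly: $\breve P_\bsalpha$ and the denominator are permutation-symmetric while $\sgn$ is alternating; $\pair{x}^*t^\alpha=t^{x\alpha}$ carries $\breve P_\bsalpha\cap\fra$ to $\breve P_{x\bsalpha}\cap x\fra$ because scaling the $\tau_g$-coordinate by $x^{\tau_g}>0$ preserves the direction of the perturbation; and $\sgn(x\bsalpha)=\sgn(\bsalpha)$ since $\Nr(x)>0$. Reducing the cocycle condition, via the power-series identity $\cG^\fra_{\sigma_\bsalpha}(t)=\sum_{\beta\in\breve\sigma_\bsalpha\cap\fra}t^\beta$ (or its denominator-cleared version in the group ring), to the signed vanishing of indicator functions of half-open simplicial cones is also the standard and correct move, and uniqueness of analytic continuation on the connected variety $U^\fra_\bsalpha$ justifies passing from power series to rational functions.

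The problem is that the identity you flag as the technical core,
\[
\sum_{j=0}^g(-1)^j\sgn(\widehat{\bsalpha}_j)\,\mathbbm{1}_{\breve\sigma_{\widehat{\bsalpha}_j}}=0,
\]
is asserted rather than proved. The one-line sketch (split the indices by the sign of the coefficients $c_j$ in the unique dependency $\sum_jc_j\alpha_j=0$; overlaps cancel in signed pairs) is the right idea: by Cramer's rule $c_j$ is proportional to $(-1)^j\det(\alpha_i)_{i\ne j}$, so $(-1)^j\sgn(\widehat{\bsalpha}_j)$ and $\sgn(c_j)$ agree up to a common overall sign, and for a generic perturbed point exactly two cones — one with index in $J^+=\{j:c_j>0\}$, one in $J^-$ — contain it, with opposite contributions. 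But none of this is carried out. Two further points deserve explicit treatment: the degenerate case where some $g$-element subset of $\{\alpha_0,\ldots,\alpha_g\}$ is $\bbQ$-linearly dependent (possible for $g\ge3$ even among distinct primitives, e.g. $\alpha_3=\alpha_1+\alpha_2$), where the corresponding $\sgn$ terms vanish and one must check the remaining terms still cancel; and the implicit genericity requirement that the perturbation direction $-e_{\tau_g}$ not lie in any proper face of the cone arrangement, which is not automatic for a fixed number field and fixed ordering of $I$. None of these are wrong turns — the approach would succeed if completed — but the verification you yourself call the heart of the matter is left open.
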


We call $\cG$ the Shintani \emph{generating} class since it generates the 
special values of Lerch zeta functions in the sense of Theorem \ref{thm: generate} below. 
We first review the definition of the Lerch zeta functions.

\begin{definition}\label{def: Lerch}
	For a torsion point $\xi$ of $\bbT^\fra(\bbC)=\Hom_\bbZ(\fra,\bbC^\times)$, 
	let $\xi\Delta$ denote the $\Delta$-orbit of $\xi$ under the action $\xi^\eps(\alpha)\coloneqq \xi(\eps\alpha)$. 
	The same symbol also denotes the sum of characters in that orbit, that is, 
	\[\xi\Delta(\alpha)\coloneqq \sum_{\eps\in\Delta_\xi\backslash\Delta}\xi^\eps(\alpha) \qquad (\alpha\in\fra).\]
	Then we define the \textit{Lerch zeta function} $\cL(\xi\Delta,s)$ by the series
	\[
		\cL(\xi\Delta,s)\coloneqq\sum_{\alpha\in\Delta\backslash\fra_+}  
		\xi\Delta(\alpha)\Nr(\fra^{-1}\alpha)^{-s}
	\]
	for $\Re(s)>1$. The Lerch zeta function $\cL(\xi\Delta,s)$ continues meromorphically 
	to the whole complex plane, and is entire if $\xi\neq 1$.
\end{definition}

For $\fra\in\frI$, let $(\partial_{\tau,\fra})_{\tau\in I}$  be the system of vector fields on $\T^\fra$ 
which is dual to $(\dlog t_{\tau,\fra})_{\tau\in I}$ 
(here the differential forms $\dlog t_{\tau,\fra}$ are defined by \eqref{eq: base change}). 
If $(\alpha_1,\ldots,\alpha_g)$ is a basis of $\fra$, $\partial_{\tau,\fra}$ is explicitly written as 
\[\partial_{\tau,\fra}=\sum_{i=1}^g  \alpha_i^\tau t^{\alpha_i}\frac{\partial}{\partial t^{\alpha_i}} \]
in terms of the coordinate system $(t^{\alpha_1},\ldots,t^{\alpha_g})$ on $\T^\fra$. 
Then we define a differential operator $\partial\colon\sO_\T\to\sO_\T$ by 
\[\partial|_{\T^\fra}\coloneqq \frac{1}{\Nr\fra}\prod_{\tau\in I}\partial_{\tau,\fra}. \]
This operator $\partial$ is $\Ft$-equivariant and hence induces a map 
\[\partial\colon H^{g-1}(U/\Ft,\sO_\bbT)\rightarrow H^{g-1}(U/\Ft,\sO_\bbT).\] 
For any nontrivial torsion point $\xi\in\bbT^\fra(\bbC)$ and integer $k\geq 0$, let 
\[
	\partial^k\cG(\xi)\in H^{g-1}(\xi\Delta/\Delta,\sO_{\xi\Delta})
\] 
be the pull-back of $\partial^k\cG$ with respect to the equivariant 
morphism $\xi\Delta\rightarrow U$, where the $\Delta$-orbit $\xi\Delta$ is regarded as 
a $\Delta$-scheme with respect to the natural action. 
Then we have the following formula: 

\begin{theorem}\label{thm: generate}
	For any torsion point $\xi\in\bbT^\fra(\bbC)$ and integers $k\geq 0$, we have 
	\[
		\partial^k\cG(\xi)=\cL(\xi\Delta,-k)\in\bbQ(\xi)
	\]
	through a canonical isomorphism 
	\[
		H^{g-1}(\xi\Delta/\Delta,\sO_{\xi\Delta})\cong\bbQ(\xi),
	\]
	where $\bbQ(\xi)$ denotes the minimal extension of $\bbQ$ containing the image of $\xi$.
\end{theorem}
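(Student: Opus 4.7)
The plan is to reduce the identity to Shintani's classical formula expressing Lerch zeta values at non-positive integers as finite rational sums over lattice points in parallelepipeds, made cohomological by the \v{C}ech-type construction of \cite{BHY20}.

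First, I would identify the canonical isomorphism. The orbit $\xi\Delta$ is finite, and by Shapiro's lemma $H^{g-1}(\xi\Delta/\Delta,\sO_{\xi\Delta})\cong H^{g-1}(\Delta_\xi,\bbQ(\xi))$, where $\Delta_\xi\subset\Delta$ is the stabilizer of $\xi$. Because $\xi$ is a torsion point, $\Delta_\xi$ has finite index in $\Delta$, hence is free abelian of rank $g-1$, and it acts trivially on $\bbQ(\xi)$. Proposition \ref{prop: complex} then gives $H^{g-1}(\Delta_\xi,\bbQ(\xi))\cong \bbQ(\xi)$ canonically, once the orientation of $\Delta_\xi^\vee$ is fixed by the chosen ordering of $I=\{\tau_1,\ldots,\tau_g\}$.

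Next, I would make the computation of $\partial^k$ explicit. The formal identity $t^\alpha=\prod_{\tau\in I}t_{\tau,\fra}^{\alpha^\tau}$ and the duality $\partial_{\tau,\fra}\leftrightarrow \dlog t_{\tau,\fra}$ give $\partial_{\tau,\fra}(t^\alpha)=\alpha^\tau\,t^\alpha$, hence $\partial(t^\alpha)|_{\T^\fra}=\Nr(\fra^{-1}\alpha)\,t^\alpha$. In the convergence region $\absv{t^{\alpha_i}}<1$, the generating function of Definition \ref{def: G} expands as $\cG^\fra_\bsalpha=\sgn(\bsalpha)\sum_{\alpha\in\breve\sigma_\bsalpha\cap\fra}t^\alpha$, so applying $\partial^k$ term by term yields
\[
\partial^k\cG^\fra_\bsalpha \;=\; \sgn(\bsalpha)\sum_{\alpha\in\breve\sigma_\bsalpha\cap\fra}\Nr(\fra^{-1}\alpha)^k\,t^\alpha,
\]
which, via analytic continuation in an auxiliary parameter $s$, recovers the value at $s=-k$ of the partial Lerch zeta function attached to the cone $\sigma_\bsalpha$ and the character $\xi$.

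Finally, I would invoke a $\Delta$-equivariant Shintani cone decomposition: there exists a fundamental domain $\cR\subset \sA_\fra^g$ for the $\Delta$-action on the simplicial cones such that $\Delta\backslash\fra_+$ is in bijection with $\coprod_{\bsalpha\in\cR}\breve\sigma_\bsalpha\cap\fra$. Summing the local contributions of the third step over $\cR$ gives $\cL(\xi\Delta,-k)$ on the analytic side; on the cohomological side, under the \v{C}ech-to-group-cohomology comparison of the first step, the restriction of the cocycle $(\cG^\fra_\bsalpha)$ to $\xi\Delta$ is evaluated by precisely the same sum over $\cR$. This yields the claimed equality $\partial^k\cG(\xi)=\cL(\xi\Delta,-k)$ in $\bbQ(\xi)$.

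The main obstacle is bookkeeping of signs and orientations: the factor $\sgn(\bsalpha)$ in the definition of the cocycle, the sign of the Koszul-type generator realizing $H^{g-1}(\Delta_\xi,\bbQ(\xi))\cong\bbQ(\xi)$, and the orientation implicit in the Shintani cone decomposition must match, so that the image of $\partial^k\cG(\xi)$ under the isomorphism of the first step is $\cL(\xi\Delta,-k)$ exactly and not its negative or some rational multiple. A secondary technical point is the case where $t^{\alpha_i}(\xi)=1$ for some $i$; this can be handled either by perturbing the cocycle representative through a coboundary so that each component is defined on an open set containing $\xi$, using the flexibility in the choice of Shintani decomposition, or by a limit argument through nearby torsion points.
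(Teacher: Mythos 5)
Your outline follows essentially the same strategy as the paper, whose own proof consists of a single reference to \cite{BHY19}*{Theorem 5.1}: reduce the equivariant cohomology of the finite orbit to $H^{g-1}(\Delta_\xi,\bbQ(\xi))\cong\bbQ(\xi)$ via Shapiro's lemma and the Koszul resolution of Lemma \ref{lemma: resolution}, compute $\partial(t^\alpha)=\Nr(\fra^{-1}\alpha)t^\alpha$ from the dual basis of vector fields so that $\partial^k\cG^\fra_\bsalpha$ expands formally to $\sgn(\bsalpha)\sum_{\alpha\in\breve\sigma_\bsalpha\cap\fra}\Nr(\fra^{-1}\alpha)^k t^\alpha$, and then use a Shintani cone decomposition furnishing a fundamental domain for the $\Delta$-action on $\fra_+$ to match the specialization of the \v{C}ech cocycle at $\xi\Delta$ with $\cL(\xi\Delta,-k)$. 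Your identification of the two delicate points — consistency of signs/orientations across the Koszul generator, the factor $\sgn(\bsalpha)$, and the cone decomposition, and the degenerate case $t^{\alpha_i}(\xi)=1$ for some $i$, handled by replacing the cocycle by a cohomologous one supported on $U^\fra_\bsalpha\ni\xi$ or by choosing a Shintani decomposition avoiding the bad hyperplanes — is exactly where the real work lies in \cite{BHY19}. One caveat: the step translating between the alternating \v{C}ech cocycle of Proposition \ref{prop: standard} and the Koszul (group-cohomology) cocycle is compressed in your sketch; this comparison is precisely where the Shintani fundamental domain enters, and it requires an explicit choice of a chain of cones or a collapsing map from the covering $\frU^\fra$ to a finite $\Delta$-subcovering, which your proposal alludes to but does not spell out. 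Filling this in would complete the argument, and it is carried out in detail in the reference the paper cites.
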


\begin{proof}
This is a straightforward generalization of \cite{BHY19}*{Theorem 5.1}. 
\end{proof}

%
%
\subsection{Characterization of the de Rham Shintani Class}\label{subsec: characterization}
%
%

In this subsection, we consider a variant $\cS$ of the Shintani generating class, 
which we call the \emph{de Rham Shintani class}, 
in the de Rham cohomology with coefficients in $\cLog^N$. 
This element $\cS$ is characterized in terms of the residue homomorphism 
(Proposition \ref{prop: characterization}). 

To construct $\cS$, we define a morphism $\sO_\T[-g]\to\cLog^N\otimes\Omega_\T^\bullet$ 
of complexes of $\Ft$-equivariant sheaves on $\T$ as follows: On $\T^\fra$, 
it maps any section $f$ to 
\[fu_\fra^{1_I}\otimes \frac{\sgn(\bsalpha)}{\absv{P_\bsalpha\cap\fra}}
\frac{dt^{\alpha_1}}{t^{\alpha_1}}\wedge\cdots\wedge\frac{dt^{\alpha_g}}{t^{\alpha_g}}, \]
where $1_I\coloneqq (1,\ldots,1)\in\bbN^I$ and 
$\bsalpha=(\alpha_1,\ldots,\alpha_g)$ is a tuple of elements of $\fra$ linearly independent over $\bbQ$. 
This does not depend on $\bsalpha$, and induces a homomorphism 
\begin{equation}\label{eq: inclusion}
	H^{g-1}(U/\Ft,\sO_\T)\lra H^{2g-1}_\dR(U/\Ft,\cLog^N). 
\end{equation}
The image of the Shintani generating class $\cG$ under this map is called the de Rham Shintani class 
and denoted by $\cS$. 

The de Rham cohomology $H_\dR^m(U/\Ft,\cLog^N)$ is described in terms of the \v{C}ech--de Rham complex 
as follows. 
Let $C^\bullet(\frU/\Ft,\cLog^N\otimes\Omega_U^\bullet)$ be the double complex 
associated with the open covering $\frU$ of $U$ introduced in the preceding subsection, 
and $C_\dR^\bullet(\frU/\Ft,\cLog^N)$ its total complex. 
Then we have 
\[
H_\dR^m(U/\Ft,\cLog^N)\cong H^m(C_\dR^\bullet(\frU/\Ft,\cLog^N)). 
\]
In fact, this can be shown in the same way as Proposition \ref{prop: standard}, 
i.e., by constructing an acyclic resolution of the complex $\cLog^N\otimes\Omega_U^\bullet$ 
from the covering $\frU$. 
In terms of this \v{C}ech--de Rham complex, the de Rham Shintani class $\cS$ is given by the cocycle 
\[
(\cS^\fra_\bsalpha)\in C^{g-1}(\frU/\Ft,\cLog^N\otimes\Omega_U^g), 
\]
where 
\begin{equation}\label{eq: S^fra_bsalpha}
\cS^\fra_\bsalpha\coloneqq \cG^\fra_\bsalpha\, u_\fra^{1_I}\otimes 
\frac{\sgn(\bsalpha)}{\absv{P_\bsalpha\cap\fra}}
\frac{dt^{\alpha_1}}{t^{\alpha_1}}\wedge\cdots\wedge\frac{dt^{\alpha_g}}{t^{\alpha_g}} 
\end{equation}
for $\fra\in\frI$ and $\bsalpha=(\alpha_1,\ldots,\alpha_g)\in\sA_\fra^g$. 

Note that there is a natural map 
\begin{equation}\label{eq: degeneration}
	 H^{2g-1}_\dR(U/\Ft,\cLog^N) \to H^{2g-1}_\dR(U,\cLog^N)
	 =\prod_{\fra\in\frI} H^{2g-1}_\dR(U^\fra,\cLog^N_\fra), 
\end{equation}
obtained by forgetting the action of $\Ft$. 
To describe explicitly the image of $\cS$ under this map, 
let us fix $\fra\in\frI$ and $\bsalpha=(\alpha_1,\ldots,\alpha_g)\in\sA_\fra^g$ giving a basis of $\fra$. 
Then $\frU^\fra_\bsalpha\coloneqq\{U^\fra_{\alpha_1},\ldots,U^\fra_{\alpha_g}\}$ is a \v{C}ech covering of $U^\fra$, 
and the de Rham cohomology $H_\dR^m(U^\fra,\cLog_\fra^N)$ is calculated by 
the usual \v{C}ech--de Rham complex 
\[
	C^{p,q}(\frU^\fra_\bsalpha, \cLog_\fra^N)\coloneqq\prod_{i_0<\cdots<i_q}
	\Gamma(U^\fra_{\alpha_{i_0}\cdots\alpha_{i_q}},\cLog^N_\fra\otimes\Omega^p_{U^\fra}),
\]
where $U^\fra_{\alpha_{i_0}\cdots\alpha_{i_q}}\coloneqq U^\fra_{\alpha_{i_0}}\cap\cdots\cap U^\fra_{\alpha_{i_q}}$.
The $\fra$-component of the image of $\cS$ under the map \eqref{eq: degeneration} is given by 
\[
\cS^\fra_\bsalpha\in C^{g,g-1}(\frU^\fra_\bsalpha,\cLog_\fra^N)
=\Gamma(U^\fra_\bsalpha,\cLog^N_\fra\otimes\Omega^g_{U^\fra}). 
\]

Next we consider the de Rham counterpart of the exact sequence \eqref{eq: localization bbLog}: 
\[
    \cdots\rightarrow H^m_\dR(\bbT^\fra, \cLog^N_\fra) \rightarrow  H^m_\dR(U^\fra, \cLog^N_\fra) 
    \to  H^{m+1-2g}_\dR(Z^\fra, i^*_\fra\cLog^N_\fra)(-g) \rightarrow \cdots.  
\]
In particular, we have a canonical map 
\begin{equation}\label{eq: residue map'}
H^{2g-1}_\dR(U^\fra,\cLog_\fra^N)\lra H^0_\dR(Z^\fra,i_\fra^*\cLog_\fra^N)(-g)
\cong\prod_{\absv{\bsk}\le N}\bbC\, u_\fra^\bsk. 
\end{equation}
In order to describe this map explicitly, 
we replace the \v{C}ech--de Rham complex $C^{\bullet,\bullet}(\frU^\fra_\bsalpha, \cLog^N_\fra)$ 
with the logarithmic \v{C}ech--de Rham complex defined as follows. 
Set $E_i\coloneqq\{t^{\alpha_i}=1\}\subset\T^\fra$, so that $Z^\fra=E_1\cap\cdots\cap E_g$. 
Let $C^\bullet_\dR(\frU^\fra_\bsalpha,\cLog^N_\fra(E))$ be the total complex of the double complex 
\[
	C^{p,q}(\frU^\fra_\bsalpha,\cLog^N_\fra(E))\coloneqq 
	\prod_{i_0<\cdots<i_q} \Gamma\bigl(\bbT^\fra, 
	\cLog^N_\fra\otimes\Omega^p_{\bbT^\fra}(E_{i_0\cdots i_q})\bigr), 
\]
where $E_{i_0\cdots i_q}\coloneqq E_{i_0}\cup\cdots\cup E_{i_q}$ 
and $\Omega^p_{\bbT^\fra}(E_{i_0\cdots i_q})$ denotes the sheaf of $p$-differentials 
with logarithmic poles along $E_{i_0\cdots i_q}$. 

\begin{lemma}\label{lem: log de Rham}
The natural inclusion 
\[
	C^\bullet_\dR(\frU^\fra_\bsalpha,\cLog^N_\fra(E))\hookrightarrow C^\bullet_\dR(\frU^\fra_\bsalpha,\cLog^N_\fra)
\]
induces an isomorphism 
\[
	H^m\bigl(C^\bullet_\dR(\frU^\fra_\bsalpha,\cLog^N_\fra(E))\bigr)
	\cong H^m\bigl(C^\bullet_\dR(\frU^\fra_\bsalpha,\cLog^N_\fra)\bigr). 
\]
\end{lemma}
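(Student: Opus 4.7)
The plan is to reduce to the classical logarithmic comparison theorem via two successive spectral-sequence (or five-lemma) arguments: first on the \v{C}ech direction, then on the weight filtration of $\cLog_\fra^N$.

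First, the inclusion is a morphism of bounded double complexes (the \v{C}ech degree is at most $g-1$ and the de Rham degree at most $g$) which preserves the \v{C}ech index. Comparing the two spectral sequences obtained by filtering by the \v{C}ech degree, the claim reduces to the column-wise statement: for each tuple $(i_0<\cdots<i_q)$, the inclusion
\[
\Gamma\bigl(\T^\fra,\cLog_\fra^N\otimes\Omega^\bullet_{\T^\fra}(E_{i_0\cdots i_q})\bigr)\hookrightarrow\Gamma\bigl(U^\fra_{\alpha_{i_0}\cdots\alpha_{i_q}},\cLog_\fra^N\otimes\Omega^\bullet\bigr)
\]
should be a quasi-isomorphism. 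Note that $E_{i_0\cdots i_q}=E_{i_0}\cup\cdots\cup E_{i_q}$ is a normal crossings divisor in the smooth affine variety $\T^\fra\cong\bbG_m^g$, since $(t^{\alpha_1},\ldots,t^{\alpha_g})$ is a coordinate system.

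Next, I would reduce this column-wise statement to the case of trivial coefficients using the weight filtration $W_\bullet\cLog_\fra^N$. From \eqref{eq: connection}, the $\eta_\fra$-contribution of $\nabla$ sends $\Sym^k\cH_\fra$ into $\Sym^{k+1}\cH_\fra$ (hence into a smaller piece of $W_\bullet$), while the $1\otimes d$-contribution preserves each $\Sym^k\cH_\fra$. Consequently $W_\bullet$ is $\nabla$-stable, and the induced connection on the graded piece $\Gr^W_{-2m}\cLog_\fra^N\cong\Sym^m_{\sO_{\T^\fra}}\cH_\fra$ is just $1\otimes d$ on a free $\sO_{\T^\fra}$-module of finite rank. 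The filtration induces filtrations on both sides of the column-wise inclusion, and applying the five-lemma inductively to the finite filtration $W_\bullet$ reduces the column-wise quasi-isomorphism to the case where $\cLog_\fra^N$ is replaced by $\sO_{\T^\fra}$ with the de Rham differential.

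Finally, the base case
\[
\Gamma\bigl(\T^\fra,\Omega^\bullet_{\T^\fra}(E_{i_0\cdots i_q})\bigr)\hookrightarrow\Gamma\bigl(U^\fra_{\alpha_{i_0}\cdots\alpha_{i_q}},\Omega^\bullet\bigr)
\]
is the standard logarithmic comparison theorem (equivalently, Grothendieck's algebraic de Rham theorem applied to the smooth affine complement $U^\fra_{\alpha_{i_0}\cdots\alpha_{i_q}}=\T^\fra\setminus E_{i_0\cdots i_q}$): both complexes compute $H^\bullet_\dR(U^\fra_{\alpha_{i_0}\cdots\alpha_{i_q}})$ --- the right side by affineness of $U^\fra_{\alpha_{i_0}\cdots\alpha_{i_q}}$, the left side by affineness of $\T^\fra$ combined with the coherence of the log forms and Deligne's logarithmic comparison of sheaf complexes on $\T^\fra$ --- and the natural inclusion realizes the comparison isomorphism. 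The \emph{main obstacle} is the weight-filtration step: one must verify the stability of $W_\bullet$ under $\nabla$ and the triviality of the induced connection on $\Gr^W_\bullet$, but both follow immediately from the explicit formula \eqref{eq: connection}. The remainder of the argument is a routine assembly of spectral sequence and classical de Rham techniques.
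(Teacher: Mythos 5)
Your proof is correct, but it takes a slightly longer route than the paper. The paper reduces, as you do, to the column-wise statement that
\[
\Gamma\bigl(\T^\fra,\cLog^N_\fra\otimes\Omega^\bullet_{\T^\fra}(E_{i_0\cdots i_q})\bigr)\hookrightarrow\Gamma\bigl(V,\cLog^N_\fra\otimes\Omega^\bullet_V\bigr),\qquad V=U^\fra_{\alpha_{i_0}\cdots\alpha_{i_q}},
\]
is a quasi-isomorphism, and then applies Deligne's logarithmic comparison theorem \cite{Del70}*{II.3.14} \emph{directly} to the logarithmic connection $\bigl(\cLog^N_\fra(E_{i_0\cdots i_q}),\nabla\bigr)$: since the residues of $\nabla$ are nilpotent (the connection is unipotent), the residue hypothesis of II.3.14 is automatic, so the sheaf-level map $\cLog^N_\fra\otimes\Omega^\bullet_{\T^\fra}(E_{i_0\cdots i_q})\to Rj_*(\cLog^N_\fra\otimes\Omega^\bullet_V)$ is a quasi-isomorphism, and affineness of $\T^\fra$, $V$ and of $j$ finishes. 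Your additional reduction through the weight filtration $W_\bullet$, checking $\nabla$-stability from \eqref{eq: connection} and that the induced connection on $\Gr^W$ is trivial, is correct and does land you on the trivial-coefficient statement; but notice that for the left-hand side of your base case you \emph{still} invoke Deligne's logarithmic comparison (just with trivial coefficients), so the $W_\bullet$-d\'evissage buys nothing: the very same theorem already applies to the full $\cLog^N_\fra$ because its residues are nilpotent. The filtration step is therefore redundant, though it does make the argument self-contained modulo only the trivial-coefficient version of the comparison, which some readers may find more familiar. One small point worth making explicit if you keep the filtration argument: the short exact sequences of complexes obtained from $W_\bullet$ remain exact after applying $\Gamma$ on both $\T^\fra$ and $V$ because all the terms are coherent on affine varieties, which is what makes the inductive five-lemma legitimate.
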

\begin{proof}
For each $i_0<\cdots<i_q$, put $V\coloneqq U^\fra_{\alpha_{i_0}\cdots\alpha_{i_q}}$ 
and let $j\colon V\to \T^\fra$ be the inclusion. 
Then the natural inclusion 
\[
\cLog^N_\fra\otimes\Omega^\bullet_{\bbT^\fra}(E_{i_0\cdots i_q})
\hookrightarrow j_*\cLog_\fra^N\otimes\Omega^\bullet_{U^\fra_{\alpha_{i_0}\cdots\alpha_{i_q}}}
\]
is a quasi-isomorphism of complexes of sheaves on $\T^\fra$ by \cite{Del70}*{II.3.14}. 
Since $\T^\fra$ and $V$ are affine varieties and $j$ is an affine morphism, we have quasi-isomorphisms 
\begin{align*}
\Gamma\bigl(\T^\fra,\cLog_\fra^N\otimes\Omega_{\T^\fra}^\bullet(E_{i_0\cdots i_q})\bigr)
&\cong R\Gamma\bigl(\T^\fra,\cLog_\fra^N\otimes\Omega_{\T^\fra}^\bullet(E_{i_0\cdots i_q})\bigr)
\cong R\Gamma\bigl(\T^\fra,j_*\cLog_\fra^N\otimes\Omega^\bullet_{V}\bigr)\\
&\cong R\Gamma\bigl(\T^\fra,Rj_*\cLog_\fra^N\otimes\Omega^\bullet_{V}\bigr)
\cong R\Gamma\bigl(V,\cLog_\fra^N\otimes\Omega^\bullet_{V}\bigr)\\
&\cong \Gamma\bigl(V,\cLog_\fra^N\otimes\Omega^\bullet_{V}\bigr). 
\end{align*}
Thus the assertion immediately follows. 
\end{proof}

Consider the map
\[
	\cLog_\fra^N\otimes\Omega^g_{\bbT^\fra}(E_{1\cdots g})
	\lra i_{\fra*}(i_\fra^*\cLog_\fra^N\otimes\Omega^0_{Z^\fra})
\]
given in any neighborhood of $Z^\fra$ by
\[
	f\, u_\fra^\bsk\otimes\frac{dt^{\alpha_1}}{t^{\alpha_1}-1}\wedge\cdots
	\wedge\frac{dt^{\alpha_g}}{t^{\alpha_g}-1}
	\longmapsto f|_{ Z^\fra}\,u_\fra^\bsk\otimes 1.
\]
This map induces a map of complexes
\[
	C^\bullet_\dR(\frU^\fra_\bsalpha,\cLog^N_\fra(E))
	\rightarrow \Gamma(Z^\fra,i_\fra^*\cLog_\fra^N\otimes\Omega^\bullet_ {Z^\fra})[-2g+1],
\]
hence the residue map
\begin{equation}\label{eq: residue}
	\res_\fra\colon H^{2g-1}_\dR(U^\fra,\cLog^N_\fra)\rightarrow
	H^0_\dR(Z^\fra, i_\fra^*\cLog^N_\fra)=\prod_{\absv{\bsk}\leq N}\bbC u^\bsk_\fra.
\end{equation}

\begin{proposition}
The residue map $\res_\fra$ coincides with the map \eqref{eq: residue map'}. 
In particular, we have an exact sequence 
\begin{equation}\label{eq: Gyzin}
	H^{2g-1}_\dR(\bbT^\fra,\cLog^N_\fra)
	\to H^{2g-1}_\dR(U^\fra,\cLog^N_\fra)
	\xrightarrow{\res_\fra} H^0_\dR(Z^\fra, i_\fra^*\cLog^N_\fra)
	\to H^{2g}_\dR(\bbT^\fra,\cLog^N_\fra). 
\end{equation}
\end{proposition}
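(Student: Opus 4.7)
The plan is to identify the residue map $\res_\fra$ as the connecting homomorphism of the localization (Gysin) triangle in de Rham cohomology, and then deduce \eqref{eq: Gyzin} as the induced long exact sequence.

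First, I would set up the localization framework. Let $j\colon U^\fra\hookrightarrow\bbT^\fra$ be the open immersion and $i_\fra\colon Z^\fra\hookrightarrow\bbT^\fra$ the closed immersion; the de Rham realization of \eqref{eq: localization bbLog} is induced by the distinguished triangle
\[
i_{\fra *}\,i_\fra^!\,\cLog_\fra^N\lra \cLog_\fra^N\lra Rj_*\,j^*\cLog_\fra^N\stackrel{+1}{\lra},
\]
after tensoring with $\Omega^\bullet_{\bbT^\fra}$ and applying $R\Gamma(\bbT^\fra,-)$, together with the Gysin purity isomorphism $i_\fra^!\,\cLog_\fra^N\cong i_\fra^*\cLog_\fra^N(-g)[-2g]$. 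The connecting map of this long exact sequence is, by definition, the map \eqref{eq: residue map'} (where the Tate twist $(-g)$ disappears on the underlying $\bbC$-vector space).

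Second, I would compute both sides using the logarithmic \v{C}ech--de Rham complex, following Lemma \ref{lem: log de Rham}. By \cite{Del70}*{II.3.14}, the inclusion $\cLog_\fra^N\otimes\Omega^\bullet_{\bbT^\fra}(\log Z^\fra)\hookrightarrow Rj_*(\cLog_\fra^N\otimes\Omega^\bullet_{U^\fra})$ is a quasi-isomorphism. Combined with Lemma \ref{lem: log de Rham}, this gives a concrete model for $H^{2g-1}_\dR(U^\fra,\cLog_\fra^N)$ and identifies the connecting morphism in the triangle above with the quotient
\[
	\cLog_\fra^N\otimes\Omega^\bullet_{\bbT^\fra}(\log Z^\fra)/\cLog_\fra^N\otimes\Omega^\bullet_{\bbT^\fra}
	\stackrel{\sim}{\lra} i_{\fra*}\bigl(i_\fra^*\cLog_\fra^N\otimes\Omega^{\bullet-g}_{Z^\fra}\bigr),
\]
where the quasi-isomorphism on the right is induced by the iterated Poincaré residue along the normal crossing divisor $E_1\cup\cdots\cup E_g$.

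Third, I would check the explicit local formula. Near $Z^\fra$, the functions $t^{\alpha_1}-1,\ldots,t^{\alpha_g}-1$ form a regular system of parameters, and the classical Poincaré residue along a simple normal crossing divisor sends
\[
	f\,u_\fra^\bsk\otimes\frac{dt^{\alpha_1}}{t^{\alpha_1}-1}\wedge\cdots\wedge\frac{dt^{\alpha_g}}{t^{\alpha_g}-1}
	\longmapsto f|_{Z^\fra}\,u_\fra^\bsk\otimes 1,
\]
which is precisely the formula defining $\res_\fra$. Since $\cLog_\fra^N$ is a locally free $\sO_{\T^\fra}$-module with an integrable connection having no singularities at $Z^\fra$, this computation goes through with coefficients. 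The resulting map on \v{C}ech--de Rham cohomology is exactly $\res_\fra$, and by construction it agrees with the connecting map \eqref{eq: residue map'}. Once this identification is established, the exact sequence \eqref{eq: Gyzin} is simply the long exact sequence of the localization triangle truncated at the relevant degrees.

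The main obstacle will be the careful bookkeeping of signs and of the Tate twist $(-g)$: the Gysin isomorphism produces a shift by $[-2g]$ and a twist by $(-g)$, whereas the naive residue in \eqref{eq: residue} lands in $H^0_\dR(Z^\fra,i_\fra^*\cLog_\fra^N)$ untwisted. Over $\bbC$ the twist is invisible as a vector space, but ensuring the normalizations match (including the sign coming from the order in which the residues are taken) is the delicate part of the argument; once this is fixed, the rest of the proof is formal.
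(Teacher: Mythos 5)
Your strategy—identify both $\res_\fra$ and \eqref{eq: residue map'} with the connecting morphism of the localization triangle and deduce \eqref{eq: Gyzin} formally—is a genuinely different and a priori attractive route from the paper's. The paper instead argues analytically: after reducing to trivial coefficients in a neighbourhood of $Z^\fra$, it identifies \eqref{eq: residue map'} with integration over a $(2g-1)$-sphere linking $Z^\fra$ and $\res_\fra$ with $(2\pi i)^{-g}$ times integration over the compact $(g-1)$-torus $\{\lvert t^{\alpha_i}-1\rvert=\varepsilon\}$, and then invokes the classical comparison of these two integrals from Griffiths--Harris.

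As written, however, your proposal has a genuine gap. You write $\cLog^N_\fra\otimes\Omega^\bullet_{\T^\fra}(\log Z^\fra)$ and apply Deligne's II.3.14 to compare it with $Rj_*(\cLog^N_\fra\otimes\Omega^\bullet_{U^\fra})$; but $Z^\fra=\{1\}$ has codimension $g\ge1$, it is not a divisor, so no logarithmic de Rham complex along $Z^\fra$ exists and Deligne's theorem does not directly apply. This is precisely why the paper replaces $U^\fra$ with the $g$-fold covering $\frU^\fra_\bsalpha=\{\T^\fra\setminus E_i\}$ and works with the \v{C}ech--log--de~Rham \emph{total} complex in Lemma~\ref{lem: log de Rham}; you cite the lemma but never bring the \v{C}ech differential into your identification of the connecting map, so the passage from a divisor complement to the complement of a codimension-$g$ locus is unaccounted for. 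Relatedly, the quotient complex $\Omega^\bullet_{\T^\fra}(\log E)/\Omega^\bullet_{\T^\fra}$ for the normal-crossing divisor $E=E_1\cup\cdots\cup E_g$ is \emph{not} quasi-isomorphic to $i_{\fra*}\bigl(i_\fra^*\cLog_\fra^N\otimes\Omega^{\bullet-g}_{Z^\fra}\bigr)$ alone: it carries a weight filtration whose graded pieces are supported on all strata $E_J=\bigcap_{j\in J}E_j$, not just the deepest stratum $Z^\fra$. The iterated residue you describe is only the top graded piece. To repair the argument you would need to show that, after totalizing over the \v{C}ech covering, the degree-$(2g-1)$ cohomology is detected by the single $(g,g-1)$-component on which the full iterated residue acts, and then identify that with the abstract connecting map; this is a nontrivial homological verification that your sketch skips. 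The normalization $(2\pi i)^{-g}$ you flag as the delicate point is, by contrast, the routine part once those two issues are resolved.
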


\begin{proof}
By passing through a sufficiently small neighborhood of $Z^\fra$ in $\bbT^\fra$, 
we can replace the coefficient $\cLog_\fra^N$ with (a direct sum of) $\cO_{\bbT^\fra}$. 
Then, in terms of differential forms, the map \eqref{eq: residue map'} is given by the integral 
over a $(2g-1)$-dimensional sphere around $Z^\fra$. 
On the other hand, the residue map $\res_\fra$ is given by $(2\pi i)^{-g}$ times the integral 
over the $(g-1)$-dimensional (real) torus 
$\{\lvert t^{\alpha_i}-1\rvert=\varepsilon,\,i=1,\ldots,g\}$ for some small $\varepsilon>0$. 
The equality of these two kinds of integrals is described in \cite{GriHar}*{Chapter 5, \S1}. 
\end{proof}

Since $\bbT^\fra$ is affine of dimension $g$, we have $H^{m}_\dR(\bbT^\fra,\cLog^N_\fra)=0$ for any $m>g$. 
Hence we see that $\res_\fra$ is an isomorphism if $g>1$. 

The de Rham Shintani class may be characterized as follows.

\begin{proposition}\label{prop: characterization}
	For any integer $N>0$ such that $g|N$, 
	the de Rham Shintani class $\cS$ 
	is a class in $H^{2g-1}_\dR(U/\Ft,\cLog^N)$
	characterized by the following properties: 
	\begin{enumerate}
		\item[(i)] $\cS$ maps to $(-1)^g$ times $(u_\fra^{1_I})_{\fra\in\frI}$ through the map 
		\begin{align*}
		    H^{2g-1}_\dR(U/\Ft,\cLog^N)
		    &\xrightarrow{\ \eqref{eq: degeneration}\ }
		    H^{2g-1}_\dR(U,\cLog^N)=\prod_{\fra\in\frI}H^{2g-1}_\dR(U^\fra,\cLog_\fra^N)\\
		    &\xrightarrow{\prod\res_\fra}
		    \prod_{\fra\in\frI}H^0_\dR(Z^\fra,i_\fra^*\cLog_\fra^N)
		    \cong\prod_{\fra\in\frI}\prod_{\absv{\bsk}\le N}\bbC\,u_\fra^\bsk. 
		\end{align*}
		\item[(ii)] $\cS$ maps to $0$ through the homomorphism
		\[
			H^{2g-1}_\dR(U/\Ft,\cLog^N)\rightarrow H^{2g-1}_\dR(U/\Ft,\cLog^0)
		\]
		induced from the surjection $\cLog^N\rightarrow\cLog^0$.
	\end{enumerate}
\end{proposition}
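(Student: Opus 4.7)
The plan is to verify properties (i) and (ii) by direct computation with the explicit cocycle $(\cS^\fra_\bsalpha)$ from \eqref{eq: S^fra_bsalpha}, and then obtain uniqueness by combining these two maps with the structural description of $H^{2g-1}_\dR(U/\Ft,\cLog^N)$ provided by Proposition \ref{prop: splits}.

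For (i), I would compute the residue \v{C}ech-cocycle by \v{C}ech-cocycle in the logarithmic \v{C}ech--de Rham complex, which is legitimate by Lemma \ref{lem: log de Rham}. Using the identity $\dlog t^{\alpha_i} = \frac{t^{\alpha_i}-1}{t^{\alpha_i}}\cdot\frac{dt^{\alpha_i}}{t^{\alpha_i}-1}$ to convert each factor of the differential form and combining with $\cG^\fra_\bsalpha$, the cocycle becomes
\[
\cS^\fra_\bsalpha \;=\; \frac{(-1)^g}{\absv{P_\bsalpha \cap \fra}} \cdot \frac{\sum_{\alpha \in \breve P_\bsalpha \cap \fra} t^\alpha}{\prod_{i=1}^g t^{\alpha_i}}\cdot u_\fra^{1_I} \otimes \frac{dt^{\alpha_1}}{t^{\alpha_1}-1}\wedge\cdots\wedge\frac{dt^{\alpha_g}}{t^{\alpha_g}-1}.
\]
Restricting the coefficient to $Z^\fra=\{t^{\alpha_i}=1\}$ and using the combinatorial identity $\absv{P_\bsalpha \cap \fra} = \absv{\breve P_\bsalpha \cap \fra}$ (both count the lattice index $[\fra:\bbZ\bsalpha]$), the residue simplifies to $(-1)^g u_\fra^{1_I}$, which is the $\fra$-component of the desired image. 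Property (ii) is immediate from the cocycle formula: since $u_\fra^{1_I}$ lies in the summand $\Sym^g_{\sO_\bbT}\cH \subset \cLog^N$ and $g \ge 1$, it is annihilated by the projection $\cLog^N \twoheadrightarrow \cLog^0 = \sO_U$, so $\cS^\fra_\bsalpha$ itself vanishes after this projection.

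For uniqueness, it suffices to prove that any class $c \in H^{2g-1}_\dR(U/\Ft,\cLog^N)$ mapping to zero under both maps is itself zero. I would tensor Proposition \ref{prop: splits} with $\bbC$ to obtain, for each $\fra\in\frC$, a split short exact sequence
\[
0 \to \bbC(-g) \to H^{2g-1}_\dR(U^\fra/\Delta,\cLog_\fra^N) \to \prod_{n=0}^{N/g}\bbC((n-1)g) \to 0.
\]
The naturality of this sequence with respect to $\cLog^N\to\cLog^0$ shows that the subspace $\bbC(-g)$ maps isomorphically onto the corresponding subspace for $\cLog^0$, so the vanishing hypothesis in (ii) forces the subspace-component of $c$ to vanish. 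Meanwhile, the edge map $H^{2g-1}_\dR(U^\fra/\Delta,\cLog_\fra^N) \to H^{2g-1}_\dR(U^\fra,\cLog_\fra^N)^\Delta$ from the Hochschild--Serre spectral sequence kills the subspace $\bbC(-g)$ but identifies the quotient $\prod_{n=0}^{N/g} \bbC((n-1)g)$ with the $\Delta$-invariants of $H^{2g-1}_\dR(U^\fra,\cLog_\fra^N)$; this latter space injects into the target $\prod_{\absv{\bsk}\le N}\bbC\,u_\fra^\bsk$ of $\res_\fra$ by \eqref{eq: Gyzin} together with Proposition \ref{prop: U} (the case $g=1$ being handled by the splitting of \eqref{eq:g=1 short exact seq}). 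Hence the vanishing hypothesis in (i) forces the quotient-component of $c$ to vanish, completing the proof.

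The main obstacle is the combinatorial identity $\absv{\breve P_\bsalpha \cap \fra} = \absv{P_\bsalpha \cap \fra}$ together with the sign book-keeping in the residue calculation, and the careful justification that the edge maps of the Hochschild--Serre spectral sequence identify the quotient factor of Proposition \ref{prop: splits} with exactly the $\Delta$-invariants of $H^{2g-1}_\dR(U^\fra,\cLog_\fra^N)$ on which the residue acts injectively.
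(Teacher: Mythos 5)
Your proof is correct and takes essentially the same approach as the paper: verification of (i) and (ii) by direct computation with the cocycle $(\cS^\fra_\bsalpha)$ and the residue map, and uniqueness from the two-step filtration of $H^{2g-1}_\dR(U/\Ft,\cLog^N)$ coming from the Hochschild--Serre spectral sequence. The only cosmetic difference is in the uniqueness step: you invoke Proposition~\ref{prop: splits} tensored with $\bbC$ together with the edge maps, while the paper writes the commutative diagram of short exact sequences for the spectral sequence $E_2^{p,q}=H^p(\Ft,H^q_\dR(U,\cLog^N))$ directly --- these are the same argument.
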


\begin{proof}
	Property (i) follows from the definition of the map \eqref{eq: degeneration},
	the definition of the residue map \eqref{eq: residue}, 
	and the description of the de Rham Shintani class in terms of the cocycle  \eqref{eq: S^fra_bsalpha}.
	Property (ii) follows immediately from the fact that $u_\fra^{1_I}=0$ in $\cLog^0_\fra$. 
	
	Let us show the uniqueness. First we assume $g>1$. 
	The spectral sequence 
	\[
    E_2^{p,q}=H^p(\Ft,H^q_\dR(U,\cLog^N))\Longrightarrow H^{p+q}_\dR(U/\Ft,\cLog^N) 
	\]
	and a similar one for $N=0$ give the commutative diagram with exact rows 
	\[\small\xymatrix{
		0\ar[r]& H^{g-1}(\Ft,H^g_\dR(U,\cLog^N))\ar[d]^\cong
		\ar[r]&  H^{2g-1}_\dR(U/\Ft,\cLog^N)\ar[d]
		\ar[r]& H^{2g-1}_\dR(U,\cLog^N)^{\Ft}\ar[d]
		\ar[r]& 0\\
		0\ar[r]& H^{g-1}(\Ft,H^g_\dR(U,\cLog^0))
		\ar[r]&  H^{2g-1}_\dR(U/\Ft,\cLog^0)
		\ar[r]&   H^{2g-1}_\dR(U,\cLog^0)^{\Ft}
		\ar[r]& 0,
	}\]
	where the exactness can be shown by similar arguments to the proof of Proposition \ref{prop: splits} 
	and the left vertical isomorphism follows from \eqref{eq: calc2} and Proposition \ref{prop: calculation}.
	This shows that any element in $H^{2g-1}_\dR(U/\Ft,\cLog^N)$ 
    is characterized by its images in $H^{2g-1}_\dR(U,\cLog^N)^{\Ft}$ and $H^{2g-1}_\dR(U/\Ft,\cLog^0)$ as desired. 
    
    For $g=1$, we note that $H^m_\dR(U/\Ft,\cLog^N)\cong H^m_\dR(U^\bbZ,\cLog_\bbZ^N)$. 
    The exact sequence \eqref{eq: Gyzin} gives the commutative diagram 
	\[\small\xymatrix{
		0\ar[r]& H^1_\dR(\T^\bbZ,\cLog_\bbZ^N)\ar[d]^\cong
		\ar[r]&  H^1_\dR(U^\bbZ,\cLog_\bbZ^N)\ar[d]
		\ar[r]& H^0_\dR(Z^\bbZ,i_\bbZ^*\cLog_\bbZ^N)\ar[d]
		\ar[r]& 0\\
		0\ar[r]& H^1_\dR(\T^\bbZ,\cLog_\bbZ^0)
		\ar[r]&  H^1_\dR(U^\bbZ,\cLog_\bbZ^0)
		\ar[r]& H^0_\dR(Z^\bbZ,i_\bbZ^*\cLog_\bbZ^0)
		\ar[r]& 0, 
	}\]
    which shows the assertion. 
\end{proof}

%
%
\subsection{The Main Theorem}\label{subsec: main}
%
%

In this subsection, we prove our main theorem, Theorem \ref{thm: main}, which asserts
that the de Rham realization of the polylogarithm class $\pol$ coincides with the
de Rham Shintani class $\cS$.

Let $N>0$ be such that $g|N$.
Recall that we have an isomorphism \eqref{eq: canonical2} 
\[
	H^{2g-1}_{\sD}(U/\Ft,\bbLog^N)\cong\Hom_{\MHS_{\bbR}}\bigl(\bbR(0),H^{2g-1}(U/\Ft,\bbLog^N)\bigr).		
\]
Then, by evaluating $1\in\bbR(0)$, we obtain a natural injection 
\[
    H^{2g-1}_{\sD}(U/\Ft,\bbLog^N)\hookrightarrow H^{2g-1}(U/\Ft,\bbLog^N). 
\]
In addition, we have a canonical isomorphism 
\[
H^{2g-1}(U/\Ft,\bbLog^N)\otimes\bbC=H^{2g-1}_\dR(U/\Ft,\cLog^N), 
\]
since the de Rham complex $\cLog^N\otimes \Omega_U^\bullet$ gives a resolution of the sheaf $\bbLog^N_\bbC$. 
Therefore, we obtain a natural injection 
\begin{equation}\label{eq: natural injection}
    H^{2g-1}_{\sD}(U/\Ft,\bbLog^N)\hookrightarrow H^{2g-1}_\dR(U/\Ft,\cLog^N). 
\end{equation}

\begin{theorem}\label{thm: main}
    The injection \eqref{eq: natural injection} maps the polylogarithm class $\pol^N$ 
    to $(-1)^g$ times the de Rham Shintani class $\cS$. 
\end{theorem}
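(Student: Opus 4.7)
The plan is to exploit the characterization of the de Rham Shintani class provided by Proposition \ref{prop: characterization}. Writing $\eta$ for the image of $\pol^N$ under the natural injection \eqref{eq: natural injection}, it suffices to verify that (a) $\eta$ vanishes under the projection to $H^{2g-1}_\dR(U/\Ft,\cLog^0)$, and (b) the composition of \eqref{eq: degeneration} with $\prod_\fra \res_\fra$ sends $\eta$ to $(u_\fra^{1_I})_{\fra\in\frI}$. These two conditions are precisely the characterizing properties of Proposition \ref{prop: characterization} with the overall sign flipped, so they determine $(-1)^g\cS$ uniquely.

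Property (a) is immediate from functoriality. The surjection $\bbLog^N\twoheadrightarrow\bbLog^0$ induces a commutative square relating \eqref{eq: natural injection} for $N$ and for $0$. Since $H^{2g-1}_{\sD}(U/\Ft,\bbLog^0)=0$ by Proposition \ref{prop: key} with $N=0$, the class $\pol^N$ first maps to zero in the Deligne--Beilinson cohomology with coefficients in $\bbLog^0$, and hence its image in $H^{2g-1}_\dR(U/\Ft,\cLog^0)$ is also zero.

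For property (b), one works one ideal class $\fra\in\frC$ at a time. In the Betti picture, the degeneration of the spectral sequence in Lemma \ref{lem: degenerate} combined with Proposition \ref{prop: splits} yields the decomposition
\[
	H^{2g-1}(U^\fra/\Delta,\bbLog_\fra^N)\cong\bbR(-g)\oplus\prod_{n=0}^{\lfloor N/g \rfloor}\bbR((n-1)g),
\]
and by Lemma \ref{lem: ext1} the functor $\Hom_{\MHS_\bbR}(\bbR(0),-)$ picks out only the $n=1$ summand, which is $\bbR(0)$. By Definition \ref{def: polylog class}, $\pol^N$ evaluated at $1\in\bbR(0)$ is precisely the generator of this line. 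Next, I would trace this generator explicitly: the $n=1$ summand arises from the edge term $H^0(\Delta,H^{2g-1}(U^\fra,\bbLog_\fra^N))$ of the spectral sequence, and combining the Betti residue isomorphism $H^{2g-1}(U^\fra,\bbLog_\fra^N)\cong H^0(Z^\fra,i_\fra^*\bbLog_\fra^N)(-g)$ with the splitting principle (Proposition \ref{prop: splitting principle}) at the torsion point $1\in\T^\fra$, this summand is identified with the $\Delta$-invariant line $\bbR\cdot e^{1_I}\subset\Sym^g\bbR(\bone)(-g)$. Finally, to pass to the de Rham side, I would use that under the splitting at $\xi=1$ the Betti basis vector $e^\bsk$ corresponds to $(2\pi i)^{\absv{\bsk}}u_\fra^\bsk$, since $\gamma_\fra^\bsk|_{Z^\fra}=(2\pi i)^{\absv{\bsk}}u_\fra^\bsk$ by \eqref{eq: transformation}; the twist $(-g)$ contributes a factor of $(2\pi i)^{-g}$ via the canonical identification $\bbR(-g)\otimes\bbC\cong\bbC$, so these two powers of $2\pi i$ cancel, and the de Rham residue of $\eta$ becomes precisely $u_\fra^{1_I}$ as required.

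The principal difficulty is this bookkeeping of $(2\pi i)$-factors, which come from three sources --- the Tate twist $(-g)$ in the residue isomorphism, the normalization $(2\pi i)^{\absv{\bsk}}$ in the splitting at torsion points, and the canonical identification $\bbR(n)\otimes\bbC\cong\bbC$ --- which must cancel exactly to produce the coefficient $1$ for $u_\fra^{1_I}$ rather than some power of $2\pi i$ times it. A secondary technical point is that $\res_\fra$ is an isomorphism only when $g>1$; the case $g=1$, in which $\Delta$ is trivial, should be handled separately using the split short exact sequence \eqref{eq: Gyzin} or a direct \v{C}ech--de Rham cocycle computation.
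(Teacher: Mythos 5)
Your proposal is correct and follows essentially the same strategy as the paper: reduce to the characterization in Proposition~\ref{prop: characterization}, use Proposition~\ref{prop: key} at $N=0$ for the vanishing condition, and trace $\pol^N$ through the Betti residue to the de Rham side via the commutative diagram. The only difference is that you explicitly unpack the $(2\pi i)$-bookkeeping behind the paper's terse ``by construction'' claim for condition~(i), which is a valid elaboration rather than a change of method.
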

\begin{proof}
Let us confirm the conditions (i) and (ii) in Proposition \ref{prop: characterization} 
for the image of $(-1)^g\pol^N$. 
First note that we have the following natural homomorphisms 
\begin{align*}
H^{2g-1}_{\sD}(U/\Ft,\bbLog^N)
&\lra H^{2g-1}(U/\Ft,\bbLog^N)\\
&\lra H^{2g-1}(U,\bbLog^N)=\prod_{\fra\in\frI}H^{2g-1}(U^\fra,\bbLog_\fra^N)\\
&\lra \prod_{\fra\in\frI}\Biggl(\prod_{k=0}^N\Sym^k\bbR(\bone)\Bigg)(-g)
=\prod_{\fra\in\frI}\prod_{\absv{\bsk}\le N} \bbR\cdot(2\pi i)^{\absv{\bsk}-g}\,u_\fra^\bsk, 
\end{align*}
where the last map is induced from Proposition \ref{prop: U}. 
By construction, the image of $\pol^N$ under the composite of these maps is $(u_\fra^{1_I})$. 
Since we have a natural commutative diagram 
\[
\xymatrix{
H^{2g-1}(U/\Ft,\bbLog^N) \ar[r] \ar[d]
&H^{2g-1}(U^\fra,\bbLog_\fra^N)\ar[r] \ar[d] 
& \prod_{\absv{\bsk}\le N} \bbR\cdot(2\pi i)^{\absv{\bsk}-g}\,u_\fra^\bsk \ar[d]\\
H^{2g-1}_\dR(U/\Ft,\cLog^N) \ar[r] 
&H^{2g-1}_\dR(U^\fra,\cLog_\fra^N)\ar[r]^-{\res_\fra} 
& \prod_{\absv{\bsk}\le N} \bbC\,u_\fra^\bsk 
}
\]
for each $\fra\in\frI$, we see that the image of $(-1)^g\pol^N$ satisfies the condition (i). 
On the other hand, it follows from Proposition \ref{prop: key} that $\pol^N$ vanishes under the map 
\[
H^{2g-1}_{\sD}(U/\Ft,\bbLog^N)\hookrightarrow H^{2g-1}(U/\Ft,\bbLog^N)\lra H^{2g-1}(U/\Ft,\bbLog^0), 
\]
which implies the condition (ii). 
\end{proof}

%
%
%
%
%
\section{The Plectic Polylogarithm Class}\label{sec: plectic}
%
%
%
%
%

%
%
\subsection{Construction of the Plectic Polylogarithm}
%
%
In \cite{NS16}, \Nekovar and Scholl defined 
the category of $g$-mixed plectic $\bbR$-Hodge structures to be the category $\Rep_{\bbR}(\cG^g)$, 
where $\cG^g$ is the $g$-fold product of the Tannakian fundamental group $\cG$ of 
the category $\MHS_{\bbR}$ of mixed $\bbR$-Hodge structures.
This category is known to be equivalent to the category $\MHS^{\boxtimes I}_{\bbR}$
defined as follows (see \cite{BHKY17}*{Theorem 5.7}). 

\begin{definition}\label{def: MHSg}
	We define $\MHS^{\boxtimes I}_{\bbR}$ to be the category whose object is a triple 
	$V = (V, \{W^\tau_\bullet\}, \{F^\bullet_\tau\})$, 
	where $V$ is a finite dimensional $\bbR$-vector space, $\{W^\tau_\bullet\}_{\tau\in I}$ is 
	a family of finite ascending filtrations by $\bbR$-linear subspaces of $V$,
	and $\{F^\bullet_\tau\}_{\tau\in I}$ is 
	a family of finite descending filtrations by $\bbC$-linear subspaces of $V_\bbC\coloneqq V \otimes \bbC$, 
	satisfying the following conditions:
	\begin{enumerate}
		\item For $\tau\in I$, the triple $(V,W^\tau_\bullet,F^\bullet_\tau)$ is 
		a mixed $\bbR$-Hodge structure in the usual sense.
		\item For $\tau,\nu\in I$ with $\tau\ne\nu$, the spaces $W^\nu_n V_\bbC$, $F^m_\nu V_\bbC$, 
		$\bar F^m_\nu V_\bbC$ are mixed $\bbC$-Hodge structures with respect to the filtrations
		induced from $W^\tau_\bullet$, $F^\bullet_\tau$ and $\bar F^\bullet_\tau$.
	\end{enumerate}
	A morphism in $\MHS^{\boxtimes I}_{\bbR}$ is an $\bbR$-linear homomorphism of the underlying $\bbR$-vector spaces 
	compatible with the filtrations.
\end{definition}

In this article, a mixed plectic $\bbR$-Hodge structure refers to any object in $\MHS^{\boxtimes I}_{\bbR}$.
One standard method for constructing an object in $\MHS^{\boxtimes I}_{\bbR}$ is 
to consider the outer product of objects in $\MHS_{\bbR}$. 

\begin{proposition}
	Let $\{V^{(\tau)}\}_{\tau\in I}$ be a family of objects in $\MHS_{\bbR}$. 
	We define the \emph{outer product} 
	$V\coloneqq \bigboxtimes_{\tau\in I} V^{(\tau)}$ 
	to be the triple $V = (V, \{W^\tau_\bullet\}, \{F^\bullet_\tau\})$ consisting of the $\bbR$-vector space 
	\[
		V \coloneqq \bigotimes_{\tau\in I}V^{(\tau)}, 
	\]
	and filtrations $W^\tau_\bullet$ of $V$ and $F^\bullet_\tau$ of $V_\bbC\coloneqq V\otimes\bbC$ given by
	\begin{align*}
		W^\tau_n V &\coloneqq \bigotimes_{\nu\ne\tau} V^{(\nu)} \otimes W^\tau_n V^{(\tau)},  &
		F^p_\tau V_\bbC &\coloneqq \bigotimes_{\nu\ne\tau}V^{(\nu)}_{\bbC} \otimes F^p_\tau V^{(\tau)}_{\bbC}. 
	\end{align*}
	Then $V$ is an object of $\MHS^{\boxtimes I}_\bbR$. 
\end{proposition}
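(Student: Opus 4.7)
The plan is to verify directly the two conditions in Definition \ref{def: MHSg} for the outer product. The whole argument reduces to the following elementary fact, which I will establish first and then apply twice.

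Step 1 (Lemma). For any finite-dimensional $\bbR$-vector space $A$ and any mixed $\bbR$-Hodge structure $B=(B,W_\bullet,F^\bullet)$, the triple $(A\otimes_\bbR B,\,A\otimes_\bbR W_\bullet B,\,A_\bbC\otimes_\bbC F^\bullet B_\bbC)$ is a mixed $\bbR$-Hodge structure. This is immediate from the canonical identification $\Gr^W_n(A\otimes B)_\bbC=A_\bbC\otimes\Gr^W_n B_\bbC$, under which the induced Hodge and conjugate filtrations are $A_\bbC\otimes F^\bullet$ and $A_\bbC\otimes\bar F^\bullet$, so that the Hodge decomposition of $B$ tensors up to give the Hodge decomposition of $A\otimes B$. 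The same statement holds in the $\bbC$-linear setting: tensoring a mixed $\bbC$-Hodge structure by a $\bbC$-vector space yields a mixed $\bbC$-Hodge structure.

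Step 2 (Condition (1)). Fix $\tau\in I$ and set $A\coloneqq\bigotimes_{\nu\ne\tau}V^{(\nu)}$, viewed merely as an $\bbR$-vector space. Then $V=A\otimes_\bbR V^{(\tau)}$, and by construction $W^\tau_\bullet V=A\otimes W^\tau_\bullet V^{(\tau)}$ and $F^\bullet_\tau V_\bbC=A_\bbC\otimes F^\bullet_\tau V^{(\tau)}_\bbC$. Step 1 applied to $A$ and $B=V^{(\tau)}$ yields the claim.

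Step 3 (Condition (2)). Fix $\tau\ne\nu$ in $I$. Write $A_{\tau,\nu}\coloneqq\bigotimes_{\mu\ne\tau,\nu}V^{(\mu)}_\bbC$, so that $V_\bbC=A_{\tau,\nu}\otimes V^{(\nu)}_\bbC\otimes V^{(\tau)}_\bbC$. The $\tau$-filtrations on $V_\bbC$ only touch the last factor, while $W^\nu_m V_\bbC$, $F^m_\nu V_\bbC$, and $\bar F^m_\nu V_\bbC$ only touch the middle factor. Taking intersections and using the flatness of $\bbC$-vector spaces yields, for example,
\[
W^\tau_n V_\bbC\cap W^\nu_m V_\bbC=\bigl(A_{\tau,\nu}\otimes W^\nu_m V^{(\nu)}_\bbC\bigr)\otimes W^\tau_n V^{(\tau)}_\bbC,
\]
and similarly for the Hodge and conjugate filtrations. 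Thus $W^\nu_m V_\bbC$ with the filtrations induced by $W^\tau_\bullet$, $F^\bullet_\tau$, $\bar F^\bullet_\tau$ is of the form $C\otimes_\bbC V^{(\tau)}_\bbC$ with $C\coloneqq A_{\tau,\nu}\otimes W^\nu_m V^{(\nu)}_\bbC$, and its filtrations come entirely from the mixed $\bbC$-Hodge structure induced on $V^{(\tau)}_\bbC$. By the $\bbC$-linear version of Step 1, this is a mixed $\bbC$-Hodge structure. The arguments for $F^m_\nu V_\bbC$ and $\bar F^m_\nu V_\bbC$ are identical, with $W^\nu_m V^{(\nu)}_\bbC$ replaced by $F^m_\nu V^{(\nu)}_\bbC$ or $\bar F^m_\nu V^{(\nu)}_\bbC$ respectively.

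There is no genuine obstacle here: the proposition is essentially a bookkeeping exercise in the Hodge decomposition of a tensor product, made easy by the fact that each of the filtrations appearing in the outer product only affects one tensor factor. The only care required is in Step 3, where one must keep straight which filtrations are being intersected and verify that the intersection decomposes as the tensor product of a plain $\bbC$-vector space with an honest mixed $\bbC$-Hodge structure on $V^{(\tau)}_\bbC$; once this is set up, the verification is immediate.
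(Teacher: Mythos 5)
Your proof is correct. The paper's own ``proof'' is a one-liner (``The conditions of Definition \ref{def: MHSg} can be easily verified''), so you are simply filling in the elementary verification that the authors omitted, and your Steps 1--3 do so accurately: the key observation that each $\tau$-filtration touches only the $\tau$-th tensor factor, so that intersections reduce to a single-factor statement, is exactly the right bookkeeping and is applied correctly in both conditions.
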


\begin{proof}
The conditions of Definition \ref{def: MHSg} can be easily verified. 
\end{proof}

\begin{example}\label{ex: plectic Tate}
	For any $\bsn=(n_\tau)_{\tau\in I}\in\bbZ^I$, we let
	\[
		\bbR(\bsn)\coloneqq \bigboxtimes_{\tau\in I}\bbR(n_\tau),
	\]
	where $\bbR(n_\tau)$ is the usual Tate object in $\MHS_{\bbR}$.
	Then $\bbR(\bsn)$ is an object in $\MHS^{\boxtimes I}_{\bbR}$, which we call the \emph{plectic Tate object}.
\end{example}

We have the following result on the extensions of plectic Tate objects. 

\begin{lemma}\label{lem: ext2}
	For $n\in\bbZ$, let $n_I\coloneqq(n,\ldots,n)\in\bbZ^I$ and 
	$\bbR(n_I)$ the corresponding plectic Tate object. 
	Then we have
	\begin{align*}
		\Hom_{\MHS^{\boxtimes I}_{\bbR}}(\bbR(0_I),\bbR(n_I))
		&=\begin{cases}
		 \bbR &  n=0,\\
		\{0\}  & n\neq0,
		\end{cases}  &
		\Ext^g_{\MHS^{\boxtimes I}_{\bbR}}(\bbR(0_I),\bbR(n_I))
		&=\begin{cases}
		(2\pi i)^{g(n-1)}\bbR &  n> 0,\\
		\{0\}  & n\leq 0,
		\end{cases}
	\end{align*}
	and $\Ext^m_{\MHS_{\bbR}}(\bbR(0_I),\bbR(n_I))=\{0\}$ for $m\neq 0,g$.
\end{lemma}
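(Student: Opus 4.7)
The plan is to reduce to the non-plectic Lemma \ref{lem: ext1} via a Künneth-type formula for Ext groups of outer products in $\MHS^{\boxtimes I}_\bbR$. Using $\bbR(n_I)=\bigboxtimes_{\tau\in I}\bbR(n)$ together with the Tannakian equivalence $\MHS^{\boxtimes I}_\bbR\cong\Rep_\bbR(\cG^g)$ of \cite{BHKY17}*{Theorem 5.7}, the first step is to establish the decomposition
\[
\Ext^m_{\MHS^{\boxtimes I}_\bbR}\bigl(\bbR(0_I),\bbR(n_I)\bigr)
\cong \bigoplus_{\substack{(m_\tau)\in\bbN^I \\ \sum_\tau m_\tau = m}}\ \bigotimes_{\tau\in I}\Ext^{m_\tau}_{\MHS_\bbR}\bigl(\bbR(0),\bbR(n)\bigr).
\]

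Once this Künneth formula is in hand, the stated values follow at once from Lemma \ref{lem: ext1}: each tensor factor vanishes unless $m_\tau\in\{0,1\}$. If $n=0$, only the index $(m_\tau)=(0,\ldots,0)$ contributes, forcing $m=0$ and giving $\bbR$. If $n>0$, the $\Ext^0$ factor vanishes and only $(m_\tau)=(1,\ldots,1)$ contributes, forcing $m=g$ and giving $\bigotimes_{\tau\in I}(2\pi i)^{n-1}\bbR=(2\pi i)^{g(n-1)}\bbR$. If $n<0$, every summand vanishes.

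To prove the Künneth formula itself, I would induct on $g$ to reduce to the binary case of a product of two copies of $\cG$. Starting from injective resolutions $I^\bullet\to\bbR(n)$ and $J^\bullet\to\bbR(n)$ in (an appropriate enlargement of) $\MHS_\bbR$, which by Lemma \ref{lem: ext1} may be taken of length at most $1$, the outer product $I^\bullet\boxtimes J^\bullet$ resolves $\bbR(n)\boxtimes\bbR(n)$ by objects acyclic for $\Hom(\bbR(0)\boxtimes\bbR(0),-)$, and the natural identity $\Hom(V_1\boxtimes V_2,P_1\boxtimes P_2)=\Hom(V_1,P_1)\otimes\Hom(V_2,P_2)$ on finite-dimensional objects converts the resulting double complex into the tensor product $\Hom(\bbR(0),I^\bullet)\otimes\Hom(\bbR(0),J^\bullet)$. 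Since we work over the field $\bbR$, the algebraic Künneth theorem produces the displayed decomposition with no Tor contributions. The main technical hurdle is this acyclicity-and-flatness step, but because all objects involved are finite-dimensional and the resolutions have length $\le 1$, it reduces to a short bookkeeping exercise with $\Ext^0$ and $\Ext^1$ of ordinary mixed $\bbR$-Hodge structures.
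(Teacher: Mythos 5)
Your argument is correct and, in contrast to the paper, is self-contained. The paper disposes of this lemma by citing \cite{BHKY17}*{Example 5.28} together with a correction ``for the action of the complex conjugation on $\bbC(\bsn)$''; that phrasing, and the accompanying remark about the erroneous power $(2\pi i)^{gn}$, suggests the cited computation worked over $\bbC$ and then passed to conjugation invariants, which is exactly where a power of $2\pi i$ can be miscounted. Your route stays over $\bbR$ throughout: via the equivalence $\MHS^{\boxtimes I}_\bbR\cong\Rep_\bbR(\cG^g)$, the group $\Ext^m_{\MHS^{\boxtimes I}_\bbR}(\bbR(0_I),\bbR(n_I))$ is the degree-$m$ rational cohomology of the pro-affine $\bbR$-group scheme $\cG^g$ with coefficients in the outer product of $g$ copies of $\bbR(n)$, and the K\"unneth theorem for products of (pro-)affine group schemes over a field yields precisely your direct sum, with no $\Tor$ correction since $\bbR$ is a field. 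Feeding in Lemma~\ref{lem: ext1} then produces the corrected power $(2\pi i)^{g(n-1)}$ automatically, and the case analysis on $n$ is immediate.

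One point to tighten before writing this up: the claim that $\bbR(n)$ admits an injective resolution of length $\le 1$ in $\Ind(\MHS_\bbR)$ does not follow from Lemma~\ref{lem: ext1}, which only bounds $\Ext^m(\bbR(0),\bbR(n))$ and says nothing about the injective dimension of $\bbR(n)$; the correct reference is the standard fact that $\MHS_\bbR$ has global dimension one. In fact you do not need the length bound at all. Once you check that the outer product of injectives is injective as a $\cG^{2}$-module (a summand of $\cO(\cG)\otimes V$ boxed with a summand of $\cO(\cG)\otimes W$ is a summand of $\cO(\cG^{2})\otimes(V\otimes W)$) and that $\Hom(V_1\boxtimes V_2,-)$ factors as a tensor product of $\Hom$'s when the sources $V_1,V_2$ are finite dimensional, the K\"unneth formula follows for resolutions of arbitrary length. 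Either cite the global-dimension-one fact for what it is, or simply drop the length-$1$ claim.
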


\begin{proof}
	This is essentially \cite{BHKY17}*{Example 5.28}, but 
	corrected for the action of the complex conjugation on
	$\bbC(\bsn)$.
\end{proof}

\begin{remark}
	In \cite{BHKY17}*{Example 5.28}, we have claimed that 
	\[
		\Ext^g_{\MHS^{\boxtimes I}_{\bbR}}(\bbR(0_I),\bbR(n_I))
		=\begin{cases}
		(2\pi i)^{gn}\bbR &  n> 0\\
		\{0\}  & n\leq 0,
		\end{cases}
	\]
	but the calculation there is incorrect.  The statement in Lemma \ref{lem: ext2} is the correct calculation.
	We apologize for any inconvenience this may have caused.
\end{remark}

The underlying conjectural philosophy of the plectic theory of \Nekovar and Scholl \cite{NS16} 
states that in the presence of real multiplication, the associated motive has an additional structure 
called a plectic structure. In light of this philosophy, there should be a general theory of 
variations of mixed plectic $\bbR$-Hodge structures and a natural way to equip 
their cohomology groups with plectic $\bbR$-Hodge structures. 
Although we do not have such a general theory at present, 
we can guess, in some special cases, the plectic $\bbR$-Hodge structures by computing the cohomology groups concretely. 

Let us recall the isomorphism $\bbR(\bone)\cong H_\fra$ given in \eqref{eq: R(1) to H_fra}. 
By taking the dual, we have an isomorphism of $\bbR$-Hodge structures 
\begin{equation}\label{eq: H^1(T) plectic}
	H^1(\bbT^\fra,\bbR)\isomto \bbR(-\bone)=\bigoplus_{\tau\in I}\bbR(-1_\tau), 
\end{equation}
which maps the basis $(2\pi i)^{-1}\dlog t_\tau$ to the standard basis. 
We assume that $H^1(\bbT^\fra,\bbR)$ has a natural plectic $\bbR$-Hodge structure 
and the isomorphism \eqref{eq: H^1(T) plectic} preserves it, 
viewing $\bbR(-1_\tau)$ as the plectic Tate object 
$\bbR(0)\boxtimes\cdots\boxtimes\bbR(-1)\boxtimes\cdots\boxtimes\bbR(0)$ 
with $\bbR(-1)$ in the $\tau$-component.

We also consider the cohomology with coefficients in the logarithm sheaf $\bbLog^N$ in a similar way. 
Let us recall the isomorphisms in Corollary \ref{cor: splits} and Proposition \ref{prop: intermediate}: 
For $N>0$ divisible by $g$, 
\begin{equation}\label{eq: H^m(bbLog)}
H^m(U/\Ft,\bbLog^N)\cong \begin{cases}
    \bigoplus_{\Cl^+_F}\bbR(N)^{\oplus \binom{g-1}{m}} & (0\le m<g),\\
    \bigoplus_{\Cl^+_F}\bbR(-g)^{\oplus \binom{g-1}{m-g}} & (g\le m<2g-1),\\
    \bigoplus_{\Cl^+_F}\Bigl(\bbR(-g)
	\oplus\prod_{n=0}^{N/g}\bbR((n-1)g)\Bigr) & (m=2g-1).
\end{cases}
\end{equation}
Again we assume that $H^m(U/\Ft,\bbLog^N)$ has a natural plectic $\bbR$-Hodge structure 
and \eqref{eq: H^m(bbLog)} is refined as an isomorphism of plectic $\bbR$-Hodge structures 
\begin{equation}\label{eq: H^m(bbLog) plectic}
H^m(U/\Ft,\bbLog^N)\cong \begin{cases}
    \bigoplus_{\Cl^+_F}\bbR((N/g)_I)^{\oplus \binom{g-1}{m}} & (0\le m<g),\\
    \bigoplus_{\Cl^+_F}\bbR(-1_I)^{\oplus \binom{g-1}{m-g}} & (g\le m<2g-1),\\
    \bigoplus_{\Cl^+_F}\Bigl(\bbR(-1_I)
	\oplus\prod_{n=0}^{N/g}\bbR((n-1)_I)\Bigr) & (m=2g-1).
\end{cases}
\end{equation}

The plectic philosophy also suggests the existence of the theory of plectic Deligne--Beilinson cohomology 
with coefficients in variations of mixed plectic $\bbR$-Hodge structures, 
though we have not yet established such a theory. 
In this paper, we assume that the equivariant plectic Deligne--Beilinson cohomology 
$H^m_{\sD^I}(U/\Ft,\bbLog^N)$ of $U/\Ft$ with coefficients in $\bbLog^N$ exists 
and fits into the spectral sequence 
\begin{equation}\label{eq: ss2}
	E_2^{p,q}=\Ext^p_{\MHS^{\boxtimes I}_{\bbR}}(\bbR(0_I),H^q(U/\Ft,\bbLog^N))
	\Rightarrow H^{p+q}_{\sD^I}(U/\Ft,\bbLog^N).
\end{equation}

In summary, we assume the following: 

\begin{hypothesis}\label{hypo:plectic}\ 
\begin{itemize}
\item 
For each $\fra\in\frI$, $H^1(\bbT^\fra,\bbR)$ is equipped with a natural plectic $\bbR$-Hodge structure 
and \eqref{eq: H^1(T) plectic} is an isomorphism in $\MHS^{\boxtimes I}_\bbR$. 
\item 
For each $m\in\bbZ$ and $N\in\bbN$, $H^m(U/\Ft,\bbLog^N)$ is equipped with 
a natural plectic $\bbR$-Hodge structure and \eqref{eq: H^m(bbLog) plectic} is an isomorphism in $\MHS^{\boxtimes I}_\bbR$. 
\item 
The equivariant plectic Deligne--Beilinson cohomology $H^m_{\sD^I}(U/\Ft,\bbLog^N)$ exists 
and fits into the spectral sequence \eqref{eq: ss2}. 
\end{itemize}
\end{hypothesis}

\begin{proposition}\label{prop: assumption}
    We have a canonical isomorphism
	\begin{equation}\label{eq: pcan}
		 H^{2g-1}_{\sD^I}(U/\Ft,\bbLog)\cong \bigoplus_{\Cl^+_F}\bbR,
	\end{equation}
	where $ H^{2g-1}_{\sD^I}(U/\Ft,\bbLog)\coloneqq\varprojlim_N  H^{2g-1}_{\sD^I}(U/\Ft,\bbLog^N)$.
\end{proposition}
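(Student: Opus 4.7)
The plan is to run the spectral sequence \eqref{eq: ss2} provided by Hypothesis \ref{hypo:plectic} in parallel with the argument of Proposition \ref{prop: key}. By Lemma \ref{lem: ext2}, the plectic extension groups $\Ext^p_{\MHS^{\boxtimes I}_\bbR}(\bbR(0_I), \bbR(\bsn))$ are concentrated in degrees $p \in \{0, g\}$. Combined with the description \eqref{eq: H^m(bbLog) plectic}, which realizes each $H^q(U/\Ft, \bbLog^N)$ as a direct sum of plectic Tate twists, this forces $E_2^{p, q} = 0$ for $p \notin \{0, g\}$. Consequently all intermediate differentials vanish, $E_2 = E_g$, and the only possibly nonzero differential is $d_g \colon E_g^{0, q} \to E_g^{g, q-g+1}$.

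For total degree $p + q = 2g - 1$, only the bidegrees $(0, 2g-1)$ and $(g, g-1)$ can contribute. A direct computation, using \eqref{eq: H^m(bbLog) plectic} together with Lemma \ref{lem: ext2}, gives
\[
E_g^{0, 2g-1} \cong \bigoplus\nolimits_{\Cl^+_F}\bbR, \qquad E_g^{g, g-1} \cong \bigoplus\nolimits_{\Cl^+_F} (2\pi i)^{N-g}\bbR
\]
for $N \ge g$, where the first comes from the single $n=1$ summand $\bbR(0_I)$ of $\prod_n \bbR((n-1)_I)$. The two differentials $d_g$ impinging on these bidegrees have target $\Ext^g_{\MHS^{\boxtimes I}_\bbR}(\bbR(0_I), \bbR(-1_I))$ and source $\Hom_{\MHS^{\boxtimes I}_\bbR}(\bbR(0_I), \bbR(-1_I))$, respectively, both of which vanish by Lemma \ref{lem: ext2}. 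Hence at each finite level $N \ge g$ the spectral sequence yields a canonical short exact sequence
\[
0 \lra \bigoplus\nolimits_{\Cl^+_F}(2\pi i)^{N-g}\bbR \lra H^{2g-1}_{\sD^I}(U/\Ft, \bbLog^N) \lra \bigoplus\nolimits_{\Cl^+_F}\bbR \lra 0.
\]

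The final step is to pass to the projective limit over $N$ divisible by $g$. The transition map $\bbLog^N \to \bbLog^{N-g}$ induces on $H^{g-1}$ a morphism $\bigoplus \bbR((N/g)_I) \to \bigoplus \bbR((N/g - 1)_I)$ of plectic Hodge structures between plectic Tate twists of distinct plectic weights, hence identically zero. Applying $\Ext^g_{\MHS^{\boxtimes I}_\bbR}(\bbR(0_I), -)$, all transition maps on the subsystem $E_g^{g, g-1}$ are zero, so its projective limit vanishes (and Mittag--Leffler is vacuous). Meanwhile the transition maps on the quotient system $E_g^{0, 2g-1}$ restrict to isomorphisms on the relevant summand, compatibly with the non-plectic calculation in the proof of Proposition \ref{prop: key}, so $\varprojlim_N E_g^{0, 2g-1} \cong \bigoplus_{\Cl^+_F}\bbR$. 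Taking $\varprojlim$ of the short exact sequence then delivers \eqref{eq: pcan}, with the canonicity inherited from the edge map of the spectral sequence. The main novelty beyond Proposition \ref{prop: key} is that the plectic setting creates an $\Ext^g$ contribution at each finite level which is absent in the non-plectic case; the hard step is therefore the plectic weight argument showing that this extra contribution dies in the limit.
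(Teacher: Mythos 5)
Your argument is correct and follows essentially the same route as the paper's proof: run the spectral sequence \eqref{eq: ss2}, use Lemma \ref{lem: ext2} together with \eqref{eq: H^m(bbLog) plectic} to isolate the $E_2^{0,2g-1}$ and $E_2^{g,g-1}$ terms, obtain the resulting short exact sequence at each finite level $N$, and observe that the transition maps on the $\Ext^g$ contribution vanish by the plectic weight argument before passing to the projective limit. The only cosmetic slips are writing ``$N\ge g$'' where the standing hypothesis is $N>0$ with $g\mid N$, and your explicit verification that $E_2=E_g=E_\infty$, which the paper leaves implicit.
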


\begin{proof}
	Assume that $N>0$ and $g|N$. 
	By \eqref{eq: H^m(bbLog) plectic} and Lemma \ref{lem: ext2}, we have 
    \[
    E_2^{0,2g-1}\cong E_\infty^{0,2g-1}\cong\bigoplus_{\Cl^+_F}\bbR,\qquad 
    E_2^{g,g-1}\cong E_\infty^{g,g-1}\cong\bigoplus_{\Cl^+_F}(2\pi i)^{N-g}\bbR, 
    \]
    and these are the only nonzero terms with $p+q=2g-1$. 
    Thus there is an exact sequence 
    \[
    0\lra E_2^{g,g-1} \lra H^{2g-1}_{\sD^I}(U/\Ft,\bbLog^N) 
    \lra \bigoplus_{\Cl^+_F}\bbR \lra 0. 
    \]
	Moreover, the transition map $H^{g-1}(U/\Ft,\bbLog^{N+g})\to H^{g-1}(U/\Ft,\bbLog^{N})$ is zero 
	by \eqref{eq: H^m(bbLog) plectic} and Lemma \ref{lem: ext2}. 
	Hence the corresponding transition map on $E_2^{g,g-1}$ is also zero, 
	and Proposition follows by taking the projective limit. 
\end{proof}

\begin{definition}
	We define the \textit{plectic polylogarithm class} $\pol$ to be the element 
	\[
		\pol\in H^{2g-1}_{\sD^I}(U/\Ft,\bbLog)
	\]
	which maps to $(1,\ldots,1)\in \bigoplus_{\Cl^+_F}\bbR$ through the isomorphism \eqref{eq: pcan}.
\end{definition}

\begin{remark}\label{rem: same}
The diagonal embedding $\cG\to\cG^I$ of the Tannakian fundamental groups induces a functor $\MHS^{\boxtimes I}_\bbR\to\MHS_\bbR$. 
This should give a commutative diagram 
\[\xymatrix{
H^{2g-1}_{\sD^I}(U/\Ft,\bbLog) \ar[d] \ar[r]^-{\cong} 
& \Hom_{\MHS^{\boxtimes I}_{\bbR}}(\bbR(0_I),H^{2g-1}(U/\Ft,\bbLog)) \ar[d] \ar[r]^-{\cong} 
& \bigoplus_{\Cl^+_F}\bbR \ar@{=}[d] \\
H^{2g-1}_{\sD}(U/\Ft,\bbLog) \ar[r]^-{\cong} 
& \Hom_{\MHS_{\bbR}}(\bbR(0),H^{2g-1}(U/\Ft,\bbLog)) \ar[r]^-{\cong} 
& \bigoplus_{\Cl^+_F}\bbR 
}\]
from which we see that the vertical arrows are isomorphisms, 
and the plectic polylogarithm class $\pol\in H^{2g-1}_{\sD^I}(U/\Ft,\bbLog)$ 
maps to the polylogarithm class $\pol\in H^{2g-1}_{\sD}(U/\Ft,\bbLog)$. 
\end{remark}

%
%
\subsection{Specializations of the Polylogarithm and Lerch Zeta Values}\label{subsec: Beilinson}
%
%

The advantage of considering the plectic Deligne--Beilinson cohomology over the usual 
Deligne--Beilinson cohomology is that the equivariant plectic Deligne--Beilinson cohomology of degree $2g-1$ 
does not vanish on torsion points even if $g>1$.   Hence we are able to consider the specializations of 
the plectic polylogarithm class to torsion points of the algebraic torus. 

For any torsion point $\xi\in\bbT(\bbC)$, we denote by $\xi\Delta$ the $\Delta$-orbit of $\xi$ 
viewed as a zero-dimensional scheme over $\bbC$, which is isomorphic to a finite disjoint union of $\Spec\bbC$. 
We endow $\xi\Delta$ with a natural $\Delta$-action. 
Then we define the notion of an equivariant variation of mixed plectic $\bbR$-Hodge structures on $\xi\Delta$ as follows. 
Although we do not have a general theory of equivariant variations of mixed plectic $\bbR$-Hodge structures, 
we believe that this is the right definition in this case. 

\begin{definition}
    A variation of mixed plectic $\bbR$-Hodge structures on $\xi\Delta$ is 
    a collection $V=(V_\eta)_{\eta\in\xi\Delta}$ of mixed plectic $\bbR$-Hodge structues 
    indexed by the points of $\xi\Delta$. 
    A $\Delta$-equivariant structure on such $V$ is a collection of isomorphisms 
    $\iota_\eps\colon V_{\eta^\eps}\isomto V_\eta$ indexed by $\eta\in\xi\Delta$ and $\eps\in\Delta$ 
    satisfying $\iota_1=\id$ and $\iota_\eps\circ\iota_{\eps'}=\iota_{\eps\eps'}$. 
\end{definition}

\begin{remark}
    We can also define the notions of equivariant variations of mixed plectic $\bbR$-Hodge structures 
    on the $\Ft$-scheme $\xi\Ft$ and the $\Delta_\xi$-scheme $\xi$ in the same way. 
    We easily see that these three notions are equivalent through the equivariant maps $\xi\to\xi\Delta\to\xi\Ft$. 
\end{remark}

Note that a $\Delta$-equivariant variation of mixed plectic $\bbR$-Hodge structures $V$ on $\xi\Delta$ 
can be regarded as a $\Delta$-equivariant sheaf, whose global section is given by 
\[
    \Gamma(\xi\Delta/\Delta,V)=\Biggl(\bigoplus_{\eta\in\xi\Delta}V_\eta\Biggr)^\Delta
    \cong V_\xi^{\Delta_\xi}. 
\]
Since the right hand side can be written as 
\[
    V_\xi^{\Delta_\xi}=\Ker\Biggl(\bigl(\iota_{\eps_i}-\id\bigr)_i\colon V_\xi\lra \bigoplus_{i=1}^{g-1}V_\xi\Biggr)
\]
by choosing a set of generators $\{\eps_1,\ldots,\eps_{g-1}\}$ of $\Delta_\xi$, 
it has a natural mixed plectic $\bbR$-Hodge structure. More generally, we have the following: 

\begin{proposition}
Let $V$ be a $\Delta$-equivariant variation of mixed plectic $\bbR$-Hodge structures on $\xi\Delta$. 
Then, for any $q\ge 0$, the cohomology $H^q(\xi\Delta/\Delta,V)$ with coefficients in the associated sheaf 
has a natural mixed plectic $\bbR$-Hodge structure. 
\end{proposition}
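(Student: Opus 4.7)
The plan is to reduce the computation of $H^q(\xi\Delta/\Delta,V)$ to the cohomology of a finite explicit complex in the abelian category $\MHS^{\boxtimes I}_{\bbR}$, so that the resulting vector space automatically acquires a plectic $\bbR$-Hodge structure. First I would observe that $\xi\Delta$ is zero-dimensional, so the sheaf cohomology of the associated $\Delta$-equivariant sheaf reduces to group cohomology, and moreover the equivalence between $\Delta$-equivariant sheaves on $\xi\Delta$ and $\Delta_\xi$-modules (sending a sheaf to its stalk at $\xi$ with the $\Delta_\xi$-action coming from $\iota$) gives a canonical isomorphism
\[
H^q(\xi\Delta/\Delta,V)\cong H^q(\Delta_\xi,V_\xi).
\]

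Next I would use the fact that $\xi$ is a torsion point to conclude that $\xi\Delta$ is finite, hence $\Delta_\xi$ has finite index in $\Delta\cong\bbZ^{g-1}$ and is itself free abelian of rank $g-1$. Choosing generators $\eps_1,\ldots,\eps_{g-1}$ of $\Delta_\xi$, Lemma \ref{lemma: resolution} (applied with $n=g-1$) produces a finite free resolution $K_\bullet$ of $\bbZ$ as a $\bbZ[\Delta_\xi]$-module. Applying $\Hom_{\Delta_\xi}(K_\bullet,V_\xi)$ then yields a complex of length $g-1$ whose terms are finite direct sums of copies of $V_\xi$, and whose differentials are built out of the endomorphisms $\iota_{\eps_i}-\id$. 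Crucially, by the very definition of a $\Delta$-equivariant variation each $\iota_{\eps_i}$ is an \emph{isomorphism} in $\MHS^{\boxtimes I}_{\bbR}$, so every differential of this complex is a morphism in $\MHS^{\boxtimes I}_{\bbR}$.

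Finally, since $\MHS^{\boxtimes I}_{\bbR}\cong\Rep_{\bbR}(\cG^g)$ is an abelian category, the kernels and cokernels of these differentials exist in $\MHS^{\boxtimes I}_{\bbR}$, and the cohomology of the above complex defines an object of $\MHS^{\boxtimes I}_{\bbR}$ whose underlying $\bbR$-vector space is $H^q(\Delta_\xi,V_\xi)=H^q(\xi\Delta/\Delta,V)$. This endows the latter with a natural mixed plectic $\bbR$-Hodge structure. The only nontrivial point to check is that this structure is independent of the chosen generators of $\Delta_\xi$; the cleanest way to handle this is to phrase the construction as the derived functor $R\Gamma(\Delta_\xi,-)\colon D^+(\Delta_\xi\hyp\MHS^{\boxtimes I}_{\bbR})\to D^+(\MHS^{\boxtimes I}_{\bbR})$, exactly parallel to Lemma \ref{lem: G-invariant} in the non-plectic setting, so that the Koszul complex merely serves as one explicit representative. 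The main (and essentially only) obstacle is therefore the formal one of setting up this derived functor within the plectic framework, which is straightforward once one grants that $\MHS^{\boxtimes I}_{\bbR}$ is a $\bbQ$-linear abelian category with enough injectives in its ind-category.
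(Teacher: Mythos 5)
Your proof follows the paper's argument exactly: reduce $H^q(\xi\Delta/\Delta,V)$ to the group cohomology $H^q(\Delta_\xi,V_\xi)$, compute it via the Koszul resolution $K_\bullet$ from Lemma~\ref{lemma: resolution}, and observe that $\Hom_{\Delta_\xi}(K_\bullet,V_\xi)$ is a complex in the abelian category $\MHS^{\boxtimes I}_\bbR$ whose cohomology therefore carries a plectic $\bbR$-Hodge structure. The extra remarks on independence of the chosen generators are a reasonable refinement but do not change the route.
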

\begin{proof}
Similarly to the proof of Proposition \ref{prop: complex}, we can compute the cohomology as 
\[
H^q(\xi\Delta/\Delta,V)\cong H^q(\Delta_\xi,V_\xi)\cong H^q(\Hom_{\Delta_\xi}(K_\bullet,V_\xi)). 
\]
Since $\Hom_{\Delta_\xi}(K_\bullet,V_\xi)$ can be regarded as 
a complex in the category of mixed plectic $\bbR$-Hodge structures, 
its cohomology carries a mixed plectic $\bbR$-Hodge structure. 
\end{proof}

Next we define the equivariant plectic Deligne--Beilinson cohomology on $\xi\Delta$ by 
\[H^m_{\sD^I}(\xi\Delta/\Delta,V)
\coloneqq H^m\bigl(R\Hom_{\MHS^{\boxtimes I}_\bbR}\bigl(\bbR(0_I),R\Gamma(\xi\Delta/\Delta,V)\bigr)\bigr).\]
Then we have the spectral sequence 
\begin{equation}\label{eq: ss3}
	E_2^{p,q}=\Ext^p_{\MHS^{\boxtimes I}_{\bbR}}(\bbR(0_I),H^q(\xi\Delta/\Delta,V)) 
	\Rightarrow H^{p+q}_{\sD^I}(\xi\Delta/\Delta,V). 
\end{equation}

Now we want to consider the pull-back $V=i_{\xi\Delta}^*\bbLog^N$ of the logarithm sheaf 
along the inclusion $i_{\xi\Delta}\colon\xi\Delta\rightarrow U$. 
We make the following hypotheses on its plectic structure: 

\begin{hypothesis}\label{hypo: specialization}\ 
\begin{itemize}
\item The sheaf $V=i_{\xi\Delta}^*\bbLog^N$ on $\xi\Delta$ naturally becomes 
a $\Delta$-equivariant variation of mixed plectic $\bbR$-Hodge structures. 
Moreover, for each $\eta\in\xi\Delta$, the splitting principle \eqref{eq: splitting} is 
refined to be an isomorphism 
\[
	(i_{\xi\Delta}^*\bbLog^N)_\eta=i^*_{\eta}\bbLog^N\cong\prod_{k=0}^N\Sym^k\bbR(\bone)
\]
in $\MHS_\bbR^{\boxtimes I}$. 
\item The equivariant map $i_{\xi\Delta}\colon\xi\Delta\to U$ induces a homomorphism on 
the plectic Deligne--Beilinson cohomology 
\[
	i^*_{\xi\Delta}\colon H^m_{\sD^I}(U/\Ft,\bbLog^N) \rightarrow 
	H^m_{\sD^I}(\xi\Delta/\Delta,i^*_{\xi\Delta}\bbLog^N). 
\]
\end{itemize}
\end{hypothesis}

The first hypothesis implies the following: 

\begin{lemma}\label{lem: splitting plectic cohomology}
	For any torsion point $\xi\in\bbT(\bbC)$, we have an isomorphism 
	\[
		H^{2g-1}_{\sD^I}(\xi\Delta/\Delta,i^*_{\xi\Delta}\bbLog)\cong
		\prod_{n=1}^\infty(2\pi i)^{(n-1)g}\bbR.
	\]
\end{lemma}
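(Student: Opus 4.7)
The plan is to reduce the computation to a group cohomology computation for $\Delta_\xi$ using the plectic splitting in Hypothesis \ref{hypo: specialization}, and then run the plectic Deligne--Beilinson spectral sequence \eqref{eq: ss3} together with the extension calculation of Lemma \ref{lem: ext2}. Shapiro's lemma applied to the $\Delta$-set $\xi\Delta\cong\Delta/\Delta_\xi$ first gives a natural isomorphism
\[
H^q(\xi\Delta/\Delta,i_{\xi\Delta}^*\bbLog^N)\cong H^q(\Delta_\xi, i_\xi^*\bbLog^N)
\]
of mixed plectic $\bbR$-Hodge structures, and Hypothesis \ref{hypo: specialization} then identifies $i_\xi^*\bbLog^N$ with $\prod_{k=0}^N\Sym^k\bbR(\bone)$ as a $\Delta_\xi$-equivariant object of $\MHS^{\boxtimes I}_\bbR$.

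Next, I would decompose $\Sym^k\bbR(\bone)=\bigoplus_{\absv{\bsk}=k}\bbR(\bsk)$ with $\bbR(\bsk)=\boxtimes_\tau\bbR(k_\tau)$, and observe that $\eps\in\Delta_\xi$ acts on $\bbR(\bsk)=\bbR\,e^\bsk$ by multiplication by $\eps^{-\bsk}=\prod_\tau(\eps^\tau)^{-k_\tau}$. Dirichlet's unit theorem shows that the log image of $\Delta_\xi$ is a full-rank sublattice of the hyperplane $\{(\ell_\tau)\mid\sum_\tau\ell_\tau=0\}\subset\bbR^I$, since $\Delta_\xi\subset\Delta$ has finite index. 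Hence $\eps^\bsk=1$ for all $\eps\in\Delta_\xi$ if and only if $\bsk=n\cdot 1_I$ for some $n\in\bbN$, in which case $k=ng$ and the corresponding line is the plectic Tate object $\bbR(n_I)$. Running the plectic analog of Proposition \ref{prop: complex} — i.e.\ computing $R\Gamma(\Delta_\xi,-)$ via the Koszul resolution of Lemma \ref{lemma: resolution} internally to $\MHS^{\boxtimes I}_\bbR$ — then yields
\[
H^q(\xi\Delta/\Delta,i_{\xi\Delta}^*\bbLog^N)\cong \prod_{n=0}^{\lfloor N/g\rfloor} \bbR(n_I)^{\oplus\binom{g-1}{q}}
\]
in $\MHS^{\boxtimes I}_\bbR$.

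Finally, I would plug this into the spectral sequence \eqref{eq: ss3}. On the total-degree-$(2g-1)$ diagonal the only surviving $E_2^{p,q}$ is $(p,q)=(g,g-1)$: the term $(0,2g-1)$ vanishes because $\Delta_\xi$ has cohomological dimension $g-1$, and every other $\Ext^p_{\MHS^{\boxtimes I}_\bbR}(\bbR(0_I),\bbR(n_I))$ vanishes by Lemma \ref{lem: ext2}. What remains is
\[
E_2^{g,g-1}\cong \prod_{n=1}^{\lfloor N/g\rfloor}(2\pi i)^{(n-1)g}\bbR,
\]
where the $n=0$ factor drops out because $\Ext^g_{\MHS^{\boxtimes I}_\bbR}(\bbR(0_I),\bbR(0_I))=0$. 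Thus the spectral sequence degenerates at $E_2$ on this diagonal. Passing to the projective limit along $N\in g\bbN$ — where the transition maps on $E_2^{g,g-1}$ are plainly surjective, so Mittag--Leffler holds and $\varprojlim^1$ vanishes — delivers the claimed isomorphism.

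The main potential obstacle is not a computational one but a coherence check: one must know that the $\Delta_\xi$-action on $i_\xi^*\bbLog^N$ is compatible with the plectic $\bbR$-Hodge structure so that the Koszul-style group cohomology computation takes place inside $\MHS^{\boxtimes I}_\bbR$ and its output really is a mixed plectic $\bbR$-Hodge structure. This compatibility is precisely what the first clause of Hypothesis \ref{hypo: specialization} was formulated to supply, so once that hypothesis is granted the argument reduces to the formal spectral-sequence manipulation above.
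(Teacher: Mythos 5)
Your proposal is correct and follows essentially the same route as the paper: reduce to $H^q(\Delta_\xi,-)$ of the plectic splitting $\prod_k\Sym^k\bbR(\bone)$ supplied by Hypothesis~\ref{hypo: specialization}, compute that group cohomology by the Koszul/diagonalization argument (which the paper simply cites as Lemma~\ref{lem: Delta} applied to $\Delta_\xi$), and then run the spectral sequence~\eqref{eq: ss3} with Lemma~\ref{lem: ext2} to isolate $E_2^{g,g-1}$ before passing to the projective limit. The only cosmetic difference is that you spell out the Dirichlet-unit-theorem step and the Mittag--Leffler check explicitly rather than citing Lemma~\ref{lem: Delta} and leaving the limit implicit.
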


\begin{proof}
	By applying Lemma \ref{lem: Delta} to $\Delta_\xi$ instead of $\Delta$, we have 
    \[
        H^q(\xi\Delta/\Delta,i^*_{\xi\Delta}\bbLog^N)
        \cong\prod_{k=0}^N H^q(\Delta_\xi,\Sym^k\bbR(\bone))
        \cong\prod_{n=0}^{\lfloor N/g\rfloor} \bbR(n_I)^{\binom{g-1}{q}}. 
    \]
    Then Lemma \ref{lem: ext2} and the spectral sequence \eqref{eq: ss3} implies that 
    \[
        E_\infty^{g,g-1}\cong E_2^{g,g-1}\cong \prod_{n=1}^{\lfloor N/g\rfloor}(2\pi i)^{(n-1)g}\bbR 
    \]
    and this is the only non-zero term with $p+q=2g-1$. 
	Thus our assertion follows.
\end{proof}

Inspired by the relation between the Shintani generating class and the 
Lerch zeta values given in Theorem \ref{thm: generate}, 
as well as the classical case when $U=\bbP^1\setminus\{0,1,\infty\}$, 
we conjecture that the specialization of the polylogarithm $\pol$ at $\xi\Delta/\Delta$ gives 
the Lerch zeta values at positive integers, as follows. 

\begin{conjecture}\label{conj: i^*pol}
	Let $\xi$ be a torsion point in $U$, and we let $\xi\Delta$ be the orbit of $\xi$. 
	Then 
	$
		i_{\xi\Delta}^*\pol\in
		H^{2g-1}_{\sD^I}(\xi\Delta/\Delta,i_{\xi\Delta}^*\bbLog)=\prod_{n=1}^\infty(2\pi i)^{(n-1)g}\bbR
	$
	satisfies
	\[
		i_{\xi\Delta}^*\pol=(d_F^{1/2}\cL^\infty(\xi\Delta,n))_{n=1}^\infty\in\prod_{n=1}^\infty(2\pi i)^{(n-1)g}\bbR, 
	\]
	where $d_F$ denotes the discriminant of $F$ and 
	\begin{align*}
		\cL^\infty(\xi\Delta,n)
		&\coloneqq\begin{cases}
		     \Re \cL(\xi\Delta,n) & \text{if $(n-1)g$ is even}, \\
		     i\,\Im \cL(\xi\Delta,n) & \text{if $(n-1)g$ is odd}
		\end{cases}\\
		&=\frac{1}{2}\Bigl(\cL(\xi\Delta,n)+(-1)^{(n-1)g}\overline{\cL(\xi\Delta,n)}\Bigr)
		\in (2\pi i)^{(n-1)g}\bbR. 
	\end{align*}
\end{conjecture}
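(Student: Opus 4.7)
The plan is to first decompose the target via Lemma \ref{lem: splitting plectic cohomology} and reduce to showing, for each $n \geq 1$, that the $n$-th component of $i_{\xi\Delta}^*\pol$ equals $d_F^{1/2}\cL^\infty(\xi\Delta,n)$. By tracing through the proof of Proposition \ref{prop: assumption}, this $n$-th component is extracted by first projecting $\bbLog \twoheadrightarrow \Sym^{gn}\bbR(\bone)$ and then onto the diagonal plectic Tate summand $\bbR(n_I) \subset \Sym^{gn}\bbR(\bone)$ spanned by the monomial $\prod_{\tau\in I} e_\tau^n$. The problem thus reduces to computing, for each $n \geq 1$, a single extension class in $\Ext^g_{\MHS^{\boxtimes I}_{\bbR}}(\bbR(0_I),\bbR(n_I)) \cong (2\pi i)^{g(n-1)}\bbR$.

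To do so, I would represent $\pol$ by a mixed Betti--de Rham cocycle in an appropriate plectic Deligne complex. The de Rham part is governed by Theorem \ref{thm: main}: it is $(-1)^g$ times the explicit Shintani cocycle $(\cS^\fra_\bsalpha)$ of \eqref{eq: S^fra_bsalpha}. The Betti part is uniquely pinned down by the requirement that the class maps to $(1,\ldots,1) \in \bigoplus_{\Cl^+_F}\bbR$ under the isomorphism of Proposition \ref{prop: assumption}, and admits a description in terms of the horizontal basis $(\gamma^\bsk_\fra)$ of $\bbLog^N$ from \eqref{eq: transformation}. The specialization $i_{\xi\Delta}^*$ is then computed by pulling back this cocycle and extracting the $\bbR(n_I)$-component of the resulting length-$g$ extension.

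The heart of the argument is identifying this extension class with $d_F^{1/2}\cL^\infty(\xi\Delta,n)$. Concretely, the period of the mixed cocycle should be expressible as a $g$-fold iterated integral of the form $\int \cG^\fra_\bsalpha(\xi\cdot t)\prod_{\tau}(\log t_\tau)^{n-1}/(n-1)!\,\bigwedge_\tau d\log t_\tau$ over an appropriate Betti cycle; matching this against the series defining $\cL(\xi\Delta,n)$ after analytic continuation should yield the formula. The factor $d_F^{1/2}$ should emerge from the Jacobian $\det(\alpha_i^\tau) = \Nr(\fra)\sqrt{d_F}$ of the change of basis between $(d\log t^{\alpha_i})$ and $(d\log t_\tau)$, while the $\cL^\infty$ projection arises from the reality constraint imposed on the extension class by complex conjugation of the plectic Hodge data.

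The main obstacles are twofold. First, the plectic Hodge formalism underlying Hypotheses \ref{hypo:plectic} and \ref{hypo: specialization} is not yet available, so the very cohomology groups and specialization maps are conjectural; any rigorous approach must either develop this formalism in sufficient generality or find an intrinsic characterization of the desired periods that bypasses it. Second, the Shintani generating function $\cG^\fra_\bsalpha$ most naturally encodes Lerch values at \emph{nonpositive} integers via the differential operator $\partial^k$ (Theorem \ref{thm: generate}), whereas the conjecture concerns values at \emph{positive} integers; bridging these two regimes will plausibly require a plectic Mellin-transform or Hecke-type functional-equation identity (which naturally brings in the factor $d_F^{1/2}$) together with a regulator computation significantly more intricate than Beilinson--Deligne's in the classical $F = \bbQ$ setting.
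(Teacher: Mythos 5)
What you are addressing here is a \emph{conjecture} in the paper, not a theorem: the paper supplies no proof. The authors present it as an open problem, support it by the $g=1$ case (where it reduces to the Beilinson--Deligne identification of $\Li_n(\xi)$ with the specialization of the Hodge polylogarithm at roots of unity) and by its coherence with Conjecture \ref{conj: Ler}, and then use it as an input in the plectic roadmap of \S\ref{subsec: plectic roadmap}. There is therefore no argument in the paper against which your proposal can be compared.

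Your outline is a reasonable plan of attack, and you are honest that it is only that: you flag the two decisive obstacles yourself. The first one --- that Hypotheses \ref{hypo:plectic} and \ref{hypo: specialization} are not established, so the very groups and maps in the statement are conjectural --- is currently fatal, and the paper says as much. On the second obstacle I would push back slightly: you suggest a plectic Mellin transform or a functional-equation step to bridge the nonpositive and positive regimes, attributing $d_F^{1/2}$ in part to that mechanism. But in the $g=1$ case the Beilinson--Deligne proof produces $\Li_n(\xi)$ directly as a period of the logarithm extension, with no functional equation of $\Li_n$ involved. The natural analogue here is an honest period computation of a mixed Betti--de Rham cocycle whose de Rham component is $(-1)^g$ times the Shintani cocycle of Theorem \ref{thm: main}, carried out in the $\dlog t_\tau$ coordinates, where the Jacobian $\det(\alpha_i^\tau)=\pm\Nr(\fra)\sqrt{d_F}$ supplies $d_F^{1/2}$ and the reality constraint on the $\Ext^g$-class (cf.\ Lemma \ref{lem: ext2}) supplies the projection $\cL^\infty$; that reading is closer to your iterated-integral picture than to a functional-equation argument. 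Either way, no period computation is actually executed in the proposal, and given the missing plectic formalism it cannot presently be, so this remains --- as you correctly label it --- a roadmap, not a proof.
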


\begin{remark}
\begin{enumerate}
\item 
    The appearance of $d_F^{1/2}$ is motivated by Conjecture \ref{conj: Ler} below. 
    Note also that Zagier's conjecture \cite{Zag89} on the Dedekind zeta values contains the factor $d_F^{1/2}$. 
\item 
    For the case $g=1$ and $\fra=\bbZ$, we have $\bbT^\bbZ=\bbG_m$, 
	and a nontrivial torsion point $\xi\in\bbT^\bbZ(\bbC)$ is given by a root of unity.  
	Then $\Delta=\{1\}$, and we have
	\[
		\cL(\xi,n)=\Li_n(\xi)
	\]
	for any integer $n>0$,
	where $\Li_n(\xi)$ is the value at $\xi$ of the polylogarithm function $\Li_n(t)$.
	Noting that
	\[
		\cL^\infty(\xi,n)\equiv\Li_n(\xi)\mod{(2\pi i)^n\bbR}
	\]
	in $\bbC/(2\pi i)^n\bbR\cong(2\pi i)^{n-1}\bbR$, we see that
	our conjecture in this case is the theorem of Beilinson and Deligne
	(see for example \cite{HW98}*{Proposition V2.1} or \cite{BHY18}*{Corollary 5.10}).
\end{enumerate}
\end{remark}

%
%
\subsection{Lerch versus Hecke}\label{subsec: Lerch vs Hecke}
%
%

In the rest of this paper, based on Conjecture \ref{conj: i^*pol}, 
we will establish a ``plectic framework'' for attacking Beilinson's conjectures 
for some types of Hecke $L$-functions over totally real fields. 
In this subsection, we give a relation between Lerch zeta functions introduced in Section \ref{sec: shintani} 
and Hecke $L$-functions. 

\subsubsection{Gauss Sums}
First we recall the definition of Gauss sums discussed in \cite{BHY20}, and give some standard properties. 

Let $\frg$ be a nonzero integral ideal of $\mathcal{O}_F$, and 
we denote by $\Cl_F^+(\mathfrak{g})$ the narrow ray class group. 
For $\fra\in\frI$, we denote by $\bbT^\fra[\frg]=\Hom(\fra/\frg\fra,\bbG_m)$ 
the subgroup scheme of $\bbT^\fra$ consisting of $\frg$-torsion points, 
and set $\bbT[\frg]=\coprod_{\fra\in\frI}\bbT^\fra[\frg]$. 
Note that $\bbT[\frg]$ is stable under the isomorphism $\pair{x}\colon\bbT\isomto\bbT$ for each $x\in \Ft$. 

The torsion point $\xi\in\mathbb{T}^\fra[\frg](\mathbb{C})$ is called \emph{primitive} 
if $\xi\notin\mathbb{T}^\fra[\frg'](\mathbb{C})$ for any $\frg'\supsetneq\frg$. 
We denote by $\mathbb{T}^\fra_0[\frg](\mathbb{C})$ the subset of primitive torsion points. As is easily proved, 
the set $\mathbb{T}_0[\frg](\mathbb{C}):=\coprod_{\fra\in\frI}\bbT^\fra_0[\frg](\bbC)$ is stable 
under the action of $\Ft$. 

We define $\sT$, $\sT[\frg]$, and $\sT_0[\frg]$ to be the quotient set $\bbT(\bbC)/\Ft$, $\bbT[\frg](\bbC)/\Ft$, 
and $\bbT_0[\frg](\bbC)/\Ft$, respectively. 
As in \cite{BHY20}*{\S5.3}, we define an action of $\Cl_F^+(\frg)$ on the set $\sT[\frg]$ as follows: 
For an integral ideal $\frb$ and $\xi\in\bbT^\fra(\bbC)$, 
we define $\xi^\frb\in\mathbb{T}^{\frb\fra}(\mathbb{C})$ to be the composite 
\[
\frb\fra \hooklongrightarrow \fra \overset{\xi}{\longrightarrow} \mathbb{C}^\times. 
\]
The map $(-)^\frb\colon\bbT(\bbC)\to\bbT(\bbC)$ is $\Ft$-equivariant. 
Moreover, as shown in \cite{BHY20}*{Lemma 5.11, Lemma 5.12}, 
it induces a well-defined action $(\xi F_+^\times, [\frb])\mapsto \xi^{\frb}F_+^\times$ of $\Cl_F^+(\frg)$ on $\sT[\frg]$ 
and the subset $\sT_0[\frg]\subset\sT[\frg]$ becomes a $\Cl_F^+(\frg)$-torsor. 

\begin{definition}
Let $\psi:\Cl_F^+(\frg)\rightarrow\mathbb{C}^\times$ be a Hecke character and $\xi\in\bbT^\fra[\frg](\bbC)$. 
Then we define the Gauss sum $g(\psi, \xi)$ to be
\[
g(\psi, \xi)\coloneqq\sum_{\alpha\in\fra/\frg\fra}\psi_\fra(\alpha)\xi(-\alpha).
\]
\end{definition}

Here, for $\alpha\in\fra/\frg\fra$, we define
\[
\psi_\fra(\alpha)\coloneqq\begin{cases}
\psi([\widetilde{\alpha}\fra^{-1}]) & \text{if $\alpha$ generates $\fra/\frg\fra$ as an $\mathcal{O}_F/\fra$-module,}\\
0 & \text{otherwise,}
\end{cases}
\]
where $\widetilde{\alpha}$ is any element of $\fra_+$ lifting $\alpha$ (cf.~\cite{BHY20}*{Definition 5.9}). 
Note that the multiplicativity $\psi_{\fra\frb}(\alpha\beta)=\psi_\fra(\alpha)\psi_\frb(\beta)$ holds 
for any $\alpha\in\fra$ and $\beta\in\frb$ (\cite{BHY20}*{Eq.~(7)}). 

Note that, since this sum satisfies $g(\psi, \xi) = g(\psi, \pair{x}\xi)$ for any $x\in \Ft$ \cite{BHY20}*{Lemma 2.6}, 
the sum $g(\psi, \eta)$ makes sense for any element $\eta\in\sT[\frg]$. 
In addition, as in the classical case, the Gauss sum has the following properties:

\begin{proposition}\label{prop: The property of Gauss sums}
Let $\psi\colon\Cl_F^+(\frg)\rightarrow\bbC^\times$ be a \emph{primitive} Hecke character.
\begin{enumerate}
\item For $\xi\in\mathbb{T}_0^{\fra}[\frg](\mathbb{C})$ and $\beta \in \frb/\frg \frb$, we have 
\[g(\psi,\xi)\overline{\psi}_\frb(\beta)
=\sum_{\gamma\in \fra\frb^{-1}/\frg\fra\frb^{-1}} \psi_{\fra\frb^{-1}}(\gamma) \xi(-\beta \gamma). \]

\item For $\eta\in\sT_0[\frg]$, we have the equality
\[
g(\psi, \eta) g(\overline{\psi}, \eta) = \psi_{\cO_F}(-1)\Nr\frg. 
\]
\end{enumerate}
\end{proposition}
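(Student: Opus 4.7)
The plan is to establish (1) by direct calculation and then deduce (2) from (1) together with orthogonality for the primitive character $\xi$.

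For part (1), I would first handle the easy case where $\beta$ generates $\frb/\frg\frb$ as an $\cO_F/\frg$-module. Then $\psi_\frb(\beta)$ has absolute value $1$, multiplication by $\beta$ induces a bijection $\fra\frb^{-1}/\frg\fra\frb^{-1} \isomto \fra/\frg\fra$, and substituting $\alpha = \beta\gamma$ in the definition of $g(\psi,\xi)$, together with the multiplicativity $\psi_\fra(\beta\gamma) = \psi_\frb(\beta)\,\psi_{\fra\frb^{-1}}(\gamma)$ recorded in \cite{BHY20}*{Eq.~(7)}, yields the identity after multiplying both sides by $\overline\psi_\frb(\beta) = \psi_\frb(\beta)^{-1}$.

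The hard part will be the remaining case, in which $\beta$ fails to generate $\frb/\frg\frb$ and so $\overline\psi_\frb(\beta)=0$; here one must show that the right-hand side of (1) also vanishes, and this is where the primitivity of $\psi$ is essential. My plan is to choose a prime $\frp \mid \frg$ with $\beta \in \frp\frb$. Since $\beta \in \frp\frb$ and $\delta \in (\frg/\frp)\fra\frb^{-1}$ imply $\beta\delta \in \frg\fra$, the character $\gamma \mapsto \xi(-\beta\gamma)$ on $\fra\frb^{-1}/\frg\fra\frb^{-1}$ factors through $\fra\frb^{-1}/(\frg/\frp)\fra\frb^{-1}$. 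Grouping the sum by the fibers of this projection, the inner sums become character sums of $\psi_{\fra\frb^{-1}}$ over translates of $(\frg/\frp)\fra\frb^{-1}/\frg\fra\frb^{-1}\cong\cO_F/\frp$, and these vanish by the primitivity of $\psi$ at $\frp$ together with orthogonality on $(\cO_F/\frp)^\times$. Setting up this decomposition carefully, and identifying the inner sum with a genuine character sum on $(\cO_F/\frp)^\times$ to which primitivity applies, is the main technical obstacle.

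For part (2), choose a representative $\xi \in \bbT^\fra_0[\frg](\bbC)$ of the orbit $\eta$ and expand
\[g(\psi,\xi)g(\overline\psi,\xi) = \sum_{\beta\in\fra/\frg\fra}\overline\psi_\fra(\beta)\,\xi(-\beta)\,g(\psi,\xi).\]
Applying part (1) with $\frb=\fra$ rewrites the factor $\overline\psi_\fra(\beta)g(\psi,\xi)$ as $\sum_{\gamma\in\cO_F/\frg}\psi_{\cO_F}(\gamma)\,\xi(-\beta\gamma)$, and interchanging the order of summation produces
\[g(\psi,\xi)g(\overline\psi,\xi)=\sum_{\gamma\in\cO_F/\frg}\psi_{\cO_F}(\gamma)\sum_{\beta\in\fra/\frg\fra}\xi(-\beta(1+\gamma)).\]
By orthogonality and the primitivity of $\xi$ as a character of $\fra/\frg\fra$, the inner sum equals $\Nr\frg$ when $1+\gamma \equiv 0 \pmod{\frg}$ and vanishes otherwise, leaving exactly $\psi_{\cO_F}(-1)\Nr\frg$ on the right-hand side. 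Independence of the identity on the choice of representative $\xi$ of $\eta$ follows from the $\Ft$-invariance $g(\psi, \pair{x}\xi) = g(\psi,\xi)$ recorded in \cite{BHY20}*{Lemma 2.6}.
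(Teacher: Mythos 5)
Your proof of part (2), and of the generator case of part (1), match the paper's exactly: substitute $\alpha=\beta\gamma$, use multiplicativity of $\psi_{\bullet}$, then for (2) apply (1) with $\frb=\fra$, interchange sums, and use orthogonality of the primitive character $\xi$ on $\fra/\frg\fra$.

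Where you diverge is the non-generator case of (1). The paper's argument at this step is short: if $\beta$ fails to generate $\frb/\frg\frb$, then the character $\gamma\mapsto\xi(\beta\gamma)$ on $\fra\frb^{-1}/\frg\fra\frb^{-1}$ is \emph{not primitive} (your computation that $\beta\delta\in\frg\fra$ for $\delta\in(\frg/\frp)\fra\frb^{-1}$ is exactly what proves this), and then the vanishing of $\sum_\gamma\psi_{\fra\frb^{-1}}(\gamma)\xi(-\beta\gamma)$ follows by citing \cite{BHY20}*{Proposition 2.8}, which says precisely that such a ``degenerate Gauss sum'' of a primitive $\psi$ against a non-primitive torsion point vanishes. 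Your coset decomposition is, in effect, a re-derivation of that cited lemma from scratch. It is the right idea, but you have flagged the crux --- showing the inner sum $\sum_{\delta}\psi_{\fra\frb^{-1}}(\gamma_0+\delta)$ vanishes by primitivity --- as an unclosed ``main technical obstacle.'' That step is genuinely fiddly (one has to separate the case $v_\frp(\frg)=1$ from $v_\frp(\frg)\ge 2$, and control which $\gamma_0+\delta$ remain generators, before one can identify the sum with a nontrivial character sum over a coset of the kernel of $\Cl_F^+(\frg)\to\Cl_F^+(\frg/\frp)$). As written, then, your argument has an acknowledged gap precisely where the paper instead invokes an existing result; you should either close the gap in full or note that \cite{BHY20}*{Proposition 2.8} supplies exactly this vanishing once non-primitivity of $\gamma\mapsto\xi(\beta\gamma)$ is observed.
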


\begin{proof}
(1) First we assume that $\beta$ generates $\frb/\frg\frb$ as an $\cO_F/\frg$-module. 
Then we may write the Gauss sum as 
\[g(\psi,\xi)=\sum_{\gamma\in \fra\frb^{-1}/\frg\fra\frb^{-1}} 
\psi_{\fra}(\beta\gamma) \xi(-\beta \gamma)
=\psi_\frb(\beta)\sum_{\gamma\in \fra\frb^{-1}/\frg\fra\frb^{-1}} 
\psi_{\fra\frb^{-1}}(\gamma) \xi(-\beta \gamma). \]
Thus the result follows. 
On the other hand, if $\beta$ is not a generator of $\frb/\frg\frb$, the character 
\[\fra\frb^{-1}/\frg\fra\frb^{-1}\lra\bbC^\times;\ \gamma\longmapsto \xi(\beta\gamma)\]
is not primitive in $\bbT^{\fra\frb^{-1}}[\frg](\bbC)$. 
Hence the right hand side is zero by \cite{BHY20}*{Proposition 2.8}, 
and the result follows. 

(2) We take a representative $\xi\in\mathbb{T}_0^{\fra}[\frg](\mathbb{C})$ of $\eta$. 
Then, by (1), we have 
\begin{align*}
g(\psi, \xi)g(\overline{\psi}, \xi) 
&=\sum_{\beta\in\fra/\frg\fra}g(\psi,\xi)\overline{\psi}_{\fra}(\beta)\xi(-\beta) 
=\sum_{\beta\in\fra/\frg\fra} \sum_{\gamma \in \cO_F/\frg}\psi_{\cO_F}(\gamma)\xi(-\beta\gamma)\xi(-\beta)\\
&=\sum_{\gamma \in \cO_F/\frg} \psi_{\cO_F}(\gamma) \sum_{\beta \in \fra/\frg\fra} \xi(-\beta(\gamma+1))
=\psi_{\cO_F}(-1) \Nr\frg. 
\end{align*}
The last equality holds since $\xi$ is primitive. 
\end{proof}

\subsubsection{Functional Equation for Hecke $L$-Functions}\label{subsub:Functional equation}
Next we recall the functional equations for Hecke $L$-functions in the case we will discuss. 
Recall here that $\Cl_F^+(\frg)$ is a quotient of the idele group $\bbA_F^\times$, 
so we can define an element $c'_\tau\in\Cl_F^+(\frg)$ for each embedding $\tau\in I$ to be 
the class of the image of the idele $\tilde{c}_\tau\in\bbA_F^\times$ 
such that $(\tilde{c}_\tau)_v=1$ for all places $v$ except for $\tau$, and $(\tilde{c}_\tau)_\tau=-1$. 

\begin{definition}
Let $\psi\colon\Cl_F^+(\frg)\rightarrow\mathbb{C}^\times$ be a Hecke character.
\begin{enumerate}
\item We denote by $u(\psi)$ the number of elements $\tau\in I$ such that $\psi(c'_\tau)=-1$.

\item We denote by $\mathfrak{D}_F$ and $d_F$ the different and the discriminant of $F$, respectively. 

\item We define an additive character 
$\xi_\mathrm{can}\in\mathbb{T}^{\frg^{-1}\mathfrak{D}_F^{-1}}[\frg](\mathbb{C})$ to be
\[
\xi_{\mathrm{can}}:\frg^{-1}\mathfrak{D}_F^{-1}/\mathfrak{D}_F^{-1}
\xrightarrow{\mathrm{Tr}_{F/\bbQ}} \mathbb{Q}/\mathbb{Z}
\xrightarrow{\exp(-2\pi i\cdot)} \mathbb{C}^\times.
\]
Note that $\xi_\mathrm{can}$ is primitive. 
\item We set
\[
\Lambda(\psi, s)=(d_F\Nr\frg)^{s/2}
\Gamma_{\bbR}(s)^{g-u(\psi)}\Gamma_{\bbR}(s+1)^{u(\psi)}L(\psi, s), 
\]
where $\Gamma_{\bbR}(s)\coloneqq \pi^{-s/2}\Gamma(s/2)$. 
\end{enumerate}
\end{definition}

With these definitions, we have the following functional equation: 

\begin{theorem}\label{thm: Functional equations}
Assume that $\psi$ is primitive. Then we have
\[
\Lambda(\psi, 1-s) = W(\psi)\Lambda(\overline{\psi}, s),
\]
where $W(\psi)\coloneqq i^{-u(\psi)}g(\psi, \xi_{\mathrm{can}})/(\Nr\frg)^{1/2}$.
\end{theorem}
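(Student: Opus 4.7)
The plan is to reduce to the classical functional equation of Hecke (or equivalently Tate's adelic version) for $L$-functions of totally real fields, and then identify the root number with the specific expression $W(\psi) = i^{-u(\psi)}g(\psi,\xi_{\mathrm{can}})/(\Nr\frg)^{1/2}$ given in the statement. Meromorphic continuation of $\Lambda(\psi,s)$ and the existence of \emph{some} functional equation of the required shape are standard, so the real content is the explicit determination of the root number in terms of the Gauss sum of Definition above.

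First I would invoke the adelic formalism of Tate: writing $\psi$ as an idele class character, the completed $L$-function $\Lambda(\psi,s)$ is obtained from a global zeta integral, and the functional equation
\[
\Lambda(\psi,1-s) = W(\psi)\Lambda(\overline\psi,s)
\]
follows from the Poisson summation formula on the adeles, with $W(\psi)$ expressible as a product of local $\varepsilon$-factors $W(\psi)=\prod_v \varepsilon_v(\psi_v,\mathrm{d}x_v,\chi_v)$ for a compatible choice of local additive characters $\chi_v$ and Haar measures. I would make the standard choice in which $\chi_v$ for finite $v$ is obtained by restriction of the canonical adelic character $\chi = \exp(2\pi i\cdot\mathrm{Tr}_{F/\bbQ})\colon F\backslash\bbA_F\to\bbC^\times$, which is precisely the datum encoding $\xi_{\mathrm{can}}$ on $\frg^{-1}\frD_F^{-1}/\frD_F^{-1}$.

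Next I would compute the local factors:
\begin{enumerate}
\item At a finite place $v\nmid\frg\frD_F$, the character $\psi_v$ and $\chi_v$ are both unramified with the standard normalizations, so $\varepsilon_v = 1$.
\item At an archimedean place $\tau\in I$: if $\psi(c'_\tau)=1$, then $\psi_\tau$ is the trivial sign character and $\varepsilon_\tau=1$; if $\psi(c'_\tau)=-1$, then $\psi_\tau=\mathrm{sgn}$ and a direct computation gives $\varepsilon_\tau = i^{-1}$. Multiplying over all $\tau\in I$ yields the factor $i^{-u(\psi)}$, and the associated $\Gamma$-factors account for the exponent $u(\psi)$ appearing in $\Gamma_\bbR(s+1)^{u(\psi)}$.
\item At the finite places dividing $\frg\frD_F$, a standard computation of the local Tate $\varepsilon$-factor (see e.g.\ Tate's thesis) expresses the product $\prod_{v\mid \frg\frD_F}\varepsilon_v$ as a local Gauss sum divided by the square root of the norm of the local conductor. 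Combining these factors over all ramified $v$ produces a global Gauss sum on $\frg^{-1}\frD_F^{-1}/\frD_F^{-1}$.
\end{enumerate}

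Finally, the core matching step: I would identify the resulting global Gauss sum with the one of the paper. Using the multiplicativity $\psi_{\fra\frb}(\alpha\beta) = \psi_\fra(\alpha)\psi_\frb(\beta)$ (\cite{BHY20}*{Eq.~(7)}) together with the fact that $\xi_{\mathrm{can}}$ is defined by the trace to $\bbQ/\bbZ$, one sees that the adelic Gauss sum obtained by assembling the local $\varepsilon$-factors at ramified finite places coincides with $g(\psi,\xi_{\mathrm{can}})$. Dividing by $(\Nr\frg)^{1/2}$ (which comes from the conductor normalization in the local $\varepsilon$-factors and matches the factor $(\Nr\frg)^{s/2}$ in the definition of $\Lambda$) then gives the desired formula $W(\psi)=i^{-u(\psi)}g(\psi,\xi_{\mathrm{can}})/(\Nr\frg)^{1/2}$.

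The main obstacle is the last step: carefully matching the ideal-theoretic Gauss sum $g(\psi,\xi_{\mathrm{can}})$ defined here (as a sum over the finite abelian group $\fra/\frg\fra$ with the character $\psi_\fra$ depending on the fractional ideal $\fra = \frg^{-1}\frD_F^{-1}$) with the adelic local $\varepsilon$-factor computation at ramified primes, including bookkeeping of all Haar measure normalizations and the choice of $\frD_F^{-1}$ as the dual lattice with respect to $\mathrm{Tr}_{F/\bbQ}$. Part (2) of Proposition \ref{prop: The property of Gauss sums}, giving $|g(\psi,\xi_{\mathrm{can}})|^2 = \Nr\frg$, provides a useful consistency check that $|W(\psi)|=1$, as it must be.
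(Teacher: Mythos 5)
Your approach is correct but genuinely different from what the paper does. The paper simply cites Neukirch, \emph{Algebraic Number Theory}, Chapter~VII, (8.6) Corollary, which derives the functional equation by Hecke's classical theta-function method (Poisson summation for lattices in the Minkowski space, together with theta-transformation formulas). You instead outline Tate's adelic proof: global Poisson summation on $\bbA_F$ and a local-to-global factorization of the root number into local $\varepsilon$-factors. Both routes are standard and correct. What Tate's approach buys you is a transparent local factorization of $W(\psi)$, making the archimedean contribution $i^{-u(\psi)}$ and the finite contribution (the Gauss sum normalized by $(\Nr\frg)^{1/2}$) appear term by term; what Neukirch's approach buys is self-containment at the level of theta series and avoids the adelic machinery. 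Your structure of the computation — unramified places give $1$, real places with $\psi(c'_\tau)=-1$ give $i^{-1}$, ramified finite places assemble into a Gauss sum — is the standard shape of the $\varepsilon$-factor computation.

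A few points to be careful about if you carry the sketch through. First, the sign conventions: the paper's $\xi_{\mathrm{can}}$ is built from $\exp(-2\pi i\cdot)$ while the Gauss sum $g(\psi,\xi)$ sums $\psi_\fra(\alpha)\xi(-\alpha)$; these two minus signs cancel, so $g(\psi,\xi_{\mathrm{can}})=\sum_\alpha\psi_\fra(\alpha)\exp(2\pi i\,\Tr(\alpha))$, which then matches the standard adelic character $\chi=\exp(2\pi i\,\Tr)$ you chose, but this should be said explicitly. Second, at finite $v\mid\frD_F$ with $v\nmid\frg$, the additive character $\chi_v$ is not self-dual for $\cO_v$ (its conductor is $\frD_{F,v}^{-1}$), so those $\varepsilon_v$ are not $1$; they contribute the $d_F^{s/2}$ normalization in $\Lambda$, and this bookkeeping needs to be done alongside the $(\Nr\frg)^{1/2}$ normalization. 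Third, the identification of the product of local Gauss sums with the ideal-theoretic sum over $\fra/\frg\fra$ with $\fra=\frg^{-1}\frD_F^{-1}$ relies on the multiplicativity $\psi_{\fra\frb}(\alpha\beta)=\psi_\fra(\alpha)\psi_\frb(\beta)$ and on an explicit CRT decomposition of $\fra/\frg\fra$; you flag this as the main obstacle, and indeed it is the step that requires careful work. None of these issues are gaps in principle — they are precisely the normalization checks any proof via Tate's thesis must do — but the sketch should not be read as a complete proof without them.
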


\begin{proof}
Refer, for example, to \cite{Neu99}*{(8.6) Corollary in Chapter VII}.
\end{proof}

\begin{corollary}
Let $\psi:\Cl_F^+(\frg)\rightarrow\mathbb{C}^\times$ be a primitive character and $k>1$ an integer. 
We denote by $L^*(\psi,1-k)$ the leading Taylor coefficient of $L(\psi,s)$ at $s=1-k$. 
Then, 
\begin{enumerate}
\item If $k$ is even, we have
\[
L(\psi, k) = 2^{-g+2u(\psi)}(\Nr\frg)^{1-k}((k-1)!)^{-g}d_F^{\frac{1}{2}-k}
(2\pi i)^{kg-u(\psi)}g(\overline{\psi},\xi_{\mathrm{can}})^{-1}L^\ast(\overline{\psi}, 1-k).
\]
\item If $k$ is odd, we have
\[
L(\psi, k) = 2^{g-2u(\psi)}(\Nr\frg)^{1-k}((k-1)!)^{-g}d_F^{\frac{1}{2}-k}(2\pi i)^{(k-1)g+u(\psi)}g(\overline{\psi},\xi_{\mathrm{can}})^{-1}L^\ast(\overline{\psi}, 1-k).
\]
\end{enumerate}
\end{corollary}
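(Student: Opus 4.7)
The plan is to derive the two formulas by a direct computation starting from the functional equation in Theorem \ref{thm: Functional equations}. Substituting $s=1-k$ into that equation yields
\[
\Lambda(\psi,k)\,=\,W(\psi)\,\Lambda(\overline{\psi},1-k),
\]
so I would expand each side explicitly in terms of $L$-values and elementary factors, then isolate $L(\psi,k)$.

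On the left, since $k>1$, every Gamma factor in $\Lambda(\psi,k)=(d_F\Nr\frg)^{k/2}\Gamma_{\bbR}(k)^{g-u(\psi)}\Gamma_{\bbR}(k+1)^{u(\psi)}L(\psi,k)$ is holomorphic. Using $\Gamma_{\bbR}(s)=\pi^{-s/2}\Gamma(s/2)$ together with $\Gamma(n)=(n-1)!$ and the standard half-integer Gamma values (equivalently, the Legendre duplication formula $\Gamma_{\bbR}(k)\Gamma_{\bbR}(k+1)=2^{1-k}\pi^{-k/2}(k-1)!\,\pi^{-k/2}$ after regrouping), the Gamma product reduces to a clean expression in $(k-1)!$ and powers of $2$ and $\pi$.

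On the right, because $\overline{\psi}$ is non-trivial and primitive, $\Lambda(\overline{\psi},s)$ is entire, so the poles of the Gamma factors at $s=1-k$ must be cancelled by trivial zeros of $L(\overline{\psi},s)$. When $k$ is even, $\Gamma_{\bbR}(s+1)^{u(\psi)}$ has a pole of order $u(\psi)$ at $s=1-k$ (since $2-k$ is then a non-positive even integer), forcing $L(\overline{\psi},s)$ to vanish to that order; when $k$ is odd, the role is played by $\Gamma_{\bbR}(s)^{g-u(\psi)}$, which contributes a pole of order $g-u(\psi)$. Invoking the standard residue calculation
\[
\Res_{s=-2m}\Gamma_{\bbR}(s)\,=\,\frac{2(-1)^m\pi^m}{m!},
\]
the finite value of $\Gamma_{\bbR}(s)^{g-u(\psi)}\Gamma_{\bbR}(s+1)^{u(\psi)}L(\overline{\psi},s)$ at $s=1-k$ is expressed as $L^\ast(\overline{\psi},1-k)$ times an explicit product of factorials and powers of $2$, $\pi$ and $-1$.

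Finally, $W(\psi)=i^{-u(\psi)}g(\psi,\xi_{\mathrm{can}})/(\Nr\frg)^{1/2}$ involves $g(\psi,\xi_{\mathrm{can}})$, whereas the target formula is written using $g(\overline{\psi},\xi_{\mathrm{can}})^{-1}$. I would make the conversion via Proposition \ref{prop: The property of Gauss sums}(2) together with the identity $\psi_{\cO_F}(-1)=(-1)^{u(\psi)}$, which itself follows from evaluating $\psi$ on $-1\in F^\times$ viewed as an idele: the infinite component contributes $\prod_{\tau\in I}\psi(c'_\tau)=(-1)^{u(\psi)}$, while the full principal idele lies in the kernel of $\psi$, so the finite component contributes the reciprocal, namely $(-1)^{u(\psi)}$. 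Substituting this back into $W(\psi)$, combining with the expansions of the two sides of the functional equation, and collecting all numerical factors---separately in the two parity cases---produces the stated formulas. The work is elementary; the main obstacle is careful bookkeeping of the signs and powers of $i$, $2$, and $\pi$ arising from the residue formula, the Gauss sum conversion, and the parity split of $k$.
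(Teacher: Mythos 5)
Your proposal is correct and reconstructs the standard derivation that the paper leaves implicit; the Corollary is stated without proof in the paper, so there is nothing to compare against except plausibility, and every ingredient you invoke is the right one. The substitution $s=1-k$ in Theorem \ref{thm: Functional equations} gives $\Lambda(\psi,k)=W(\psi)\Lambda(\overline\psi,1-k)$; since $u(\overline\psi)=u(\psi)$ (as $\psi(c'_\tau)\in\{\pm1\}$), the parity split you describe is accurate --- for $k$ even the pole at $s=1-k$ comes from $\Gamma_\bbR(s+1)^{u(\psi)}$ (order $u(\psi)$), and for $k$ odd from $\Gamma_\bbR(s)^{g-u(\psi)}$ (order $g-u(\psi)$), forcing a zero of the same order in $L(\overline\psi,s)$ and producing $L^\ast(\overline\psi,1-k)$. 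Your residue formula $\Res_{s=-2m}\Gamma_\bbR(s)=2(-1)^m\pi^m/m!$ and the duplication identity $\Gamma_\bbR(k)\Gamma_\bbR(k+1)=2^{1-k}\pi^{-k}(k-1)!$ are both correct, and I have checked that the bookkeeping closes: combining $W(\psi)=i^{-u(\psi)}g(\psi,\xi_\mathrm{can})/(\Nr\frg)^{1/2}$, the Gauss sum relation from Proposition \ref{prop: The property of Gauss sums}(2), and $\psi_{\cO_F}(-1)=(-1)^{u(\psi)}$ yields $W(\psi)=i^{u(\psi)}(\Nr\frg)^{1/2}g(\overline\psi,\xi_\mathrm{can})^{-1}$, and the factors of $2$, $\pi$, $(-1)$ and $i$ assemble to $(2\pi i)^{kg-u(\psi)}$ in the even case and $(2\pi i)^{(k-1)g+u(\psi)}$ in the odd case exactly as stated.

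One small caveat on the verification of $\psi_{\cO_F}(-1)=(-1)^{u(\psi)}$: strictly speaking $\psi_{\cO_F}(-1)$ equals $\psi$ evaluated on the class of the ideal $(\widetilde\alpha)$, where $\widetilde\alpha$ is a totally positive lift of $-1\bmod\frg$, and passing between that ideal class and the finite part of the principal idele $\delta(-1)$ requires unwinding the isomorphism between the ideal-theoretic and idelic descriptions of $\Cl_F^+(\frg)$, which introduces an inverse. Your phrasing ("the finite component contributes the reciprocal") slides past this, but since the value in question is $\pm1$, the inverse is harmless and the conclusion stands. It would be worth spelling this out if the computation were written up in full, since the relevant contribution is specifically the $\frg$-part of the finite idele, the remaining finite places contributing trivially because $-1$ is a unit there.
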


We say that a character $\psi$ is \emph{totally noncritical for $k$} if $k$ is even and $u(\psi)=g$, 
or if $k$ is odd and $u(\psi)=0$. Then, as a particular case, we obtain the following:

\begin{corollary}\label{cor: Corollary of functional equations}
For a primitive character $\psi$ totally noncritical for $k$, we have
\[
  L(\psi, k)\in\mathbb{Q}((k-1)g)\cdot d_F^{\frac{1}{2}}
  g(\overline{\psi}, \xi_{\mathrm{can}})^{-1}L^\ast(\overline{\psi}, 1-k).
\]
\end{corollary}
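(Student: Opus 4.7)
The plan is to derive Corollary \ref{cor: Corollary of functional equations} directly from the explicit formulas in the preceding Corollary, by substituting the totally noncritical condition on $u(\psi)$ and doing careful bookkeeping of the various factors. Since the preceding Corollary already expresses $L(\psi,k)$ as a product of $L^\ast(\overline\psi,1-k)$, $g(\overline\psi,\xi_{\mathrm{can}})^{-1}$, and several explicit rational and transcendental factors, the task reduces to a term-by-term identification.

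First, I would split into the two cases of the definition of \emph{totally noncritical for $k$}. For $k$ even with $u(\psi)=g$, the exponent of $(2\pi i)$ in the first formula becomes $kg-u(\psi)=(k-1)g$. For $k$ odd with $u(\psi)=0$, the exponent in the second formula is $(k-1)g+u(\psi)=(k-1)g$. Thus in either case the $(2\pi i)$ part produces exactly the factor $(2\pi i)^{(k-1)g}$ matching the Tate twist $\bbQ((k-1)g)$.

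Next, I would separate $d_F^{1/2}$ from the power $d_F^{1/2-k}=d_F^{1/2}\cdot d_F^{-k}$. Since $d_F\in\bbZ\setminus\{0\}$, the factor $d_F^{-k}$ is a nonzero rational number. Combined with the remaining prefactors $2^{\pm g+2u(\psi)}$, $(\Nr\frg)^{1-k}$, and $((k-1)!)^{-g}$, all of which are manifestly rational, we conclude that in both parities of $k$,
\[
L(\psi,k)=q\cdot(2\pi i)^{(k-1)g}\cdot d_F^{1/2}\cdot g(\overline\psi,\xi_{\mathrm{can}})^{-1}\cdot L^\ast(\overline\psi,1-k)
\]
for some $q\in\bbQ$, which is precisely the claimed membership in $\bbQ((k-1)g)\cdot d_F^{1/2}\,g(\overline\psi,\xi_{\mathrm{can}})^{-1}L^\ast(\overline\psi,1-k)$.

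There is no real obstacle here: the content of the corollary is essentially a repackaging of the preceding one into a form that isolates the transcendental period factor and the Gauss sum from a rational coefficient, and the ``totally noncritical'' condition is tailored precisely so that the exponent of $(2\pi i)$ collapses to $(k-1)g$. The only care required is in separating the two parities of $k$ and verifying that the two admissible values $u(\psi)\in\{0,g\}$ produce the same final exponent in both cases.
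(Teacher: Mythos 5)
Your proof is correct and is exactly the intended argument: the paper leaves this corollary as an immediate consequence of the explicit formulas in the preceding corollary, and your case split on the parity of $k$ with $u(\psi)\in\{0,g\}$, the collapse of the $(2\pi i)$-exponent to $(k-1)g$, and the isolation of $d_F^{1/2}$ from the manifestly rational prefactor are precisely what is needed.
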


\subsubsection{Hecke versus Lerch}

Here we express some type of Hecke $L$-functions in terms of Lerch zeta functions. Recall that, for $\fra\in\frI$ and 
a torsion point $\xi\in\mathbb{T}^\fra(\mathbb{C})$, we defined the Lerch zeta function $\cL(\xi\Delta, s)$ 
in Definition \ref{def: Lerch} (cf. \cite{BHY20}*{Definition 2.1}). 
Note that, since $\cL(\xi\Delta, s)=\cL(\pair{x}\xi\Delta, s)$ \cite{BHY20}*{Lemma 2.3}, 
the notion $\cL(\eta, s)$ makes sense for any torsion point $\eta\in\mathscr{T}$.

Moreover, from Lemmas 2.8 and 5.13 of \cite{BHY20}, we can directly prove the following result:

\begin{theorem}\label{thm: Hecke vs Lerch}
For $\eta\in\sT_0[\frg]$ and a primitive character $\psi\colon\Cl_F^+(\frg)\rightarrow\mathbb{C}^\times$, we have
\[
L(\psi, s) =\frac{g(\psi, \eta)}{\Nr\frg}\sum_{\frb\in\Cl_F^+(\frg)}\psi(\frb)^{-1}\cL(\eta^{\frb}, s).
\]
In particular, we have 
\[
\sum_{\frb\in\Cl_F^+(\frg)}\psi(\frb)^{-1}\cL(\xi_\mathrm{can}^{\frb}\Delta, k)
\in \mathbb{Q}\cdot g(\psi, \xi_{\mathrm{can}})^{-1}L(\psi, k).
\]
\end{theorem}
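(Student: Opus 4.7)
The plan is to derive the first identity by combining Lemmas 2.8 and 5.13 of \cite{BHY20} with a Gauss-sum Fourier inversion, after which the ``in particular'' statement will follow by a direct rearrangement.

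First I would fix a representative $\xi\in\bbT^\fra_0[\frg](\bbC)$ of $\eta\in\sT_0[\frg]$, so that $\xi^\frb\in\bbT^{\frb\fra}_0[\frg](\bbC)$ represents $\eta^\frb$. Unfolding Definition \ref{def: Lerch} and interchanging the order of summation turns the right-hand side into a double sum over $[\frb]\in\Cl_F^+(\frg)$ and $\Delta$-orbits of totally positive elements $\alpha\in\frb\fra$. The key reorganization is to regroup this according to the integral ideal $\fra'\coloneqq(\alpha)(\frb\fra)^{-1}$, which is coprime to $\frg$ exactly when the contribution is nonzero; the torsor property of $\sT_0[\frg]$ under $\Cl_F^+(\frg)$ together with the compatibility of the action $(-)^\frb$ with the character $\psi_\fra$ ensures that this reindexing is bijective and transports Lerch data to Hecke $L$-series data. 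This matching is essentially the content of Lemmas 2.8 and 5.13 of \cite{BHY20}.

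Next I would evaluate the inner character sum $\sum_{[\frb]}\psi(\frb)^{-1}\xi^\frb(\alpha)$ using Proposition \ref{prop: The property of Gauss sums}(1), viewed as a Gauss-sum Fourier inversion, which produces the factor $g(\psi,\eta)\psi(\fra')/\Nr\frg$. What remains then collapses to
\[
\frac{g(\psi,\eta)}{\Nr\frg}\sum_{\fra'}\psi(\fra')\Nr(\fra')^{-s}=\frac{g(\psi,\eta)}{\Nr\frg}L(\psi,s)
\]
by the Dirichlet-series definition of the Hecke $L$-function, yielding the first identity. The main obstacle will be the fine bookkeeping of nonvanishing contributions: $\psi_\fra(\alpha)$ vanishes unless $\alpha$ generates $\fra/\frg\fra$ as an $\cO_F/\fra$-module, and the stabilizers $\Delta_{\xi^\frb}$ implicit in $\xi^\frb\Delta$ must be tracked consistently across all classes $[\frb]$, so that the passage between $\Delta$-orbits in $(\frb\fra)_+$ and integral ideals coprime to $\frg$ becomes a clean bijection. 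This technical content is precisely what Lemmas 2.8 and 5.13 of \cite{BHY20} encapsulate.

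For the ``in particular'' statement, I would specialize $\eta=\xi_\mathrm{can}\Ft\in\sT_0[\frg]$ (membership following from the primitivity of $\xi_\mathrm{can}$) and $s=k$, and then solve the first identity for the inner sum to obtain
\[
\sum_{\frb\in\Cl_F^+(\frg)}\psi(\frb)^{-1}\cL(\xi_\mathrm{can}^\frb\Delta,k)=\Nr\frg\cdot g(\psi,\xi_\mathrm{can})^{-1}L(\psi,k),
\]
which lies in $\mathbb{Q}\cdot g(\psi,\xi_\mathrm{can})^{-1}L(\psi,k)$ since $\Nr\frg\in\mathbb{Q}$, as claimed.
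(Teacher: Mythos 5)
The overall strategy — unfold $\cL$, reindex the double sum by the integral ideal $\fra'=(\alpha)(\frb\fra)^{-1}$, and use a Gauss-sum inversion via the torsor structure on $\sT_0[\frg]$ — is exactly what the paper implicitly invokes by citing Lemmas 2.8 and 5.13 of \cite{BHY20}, and the deduction of the ``in particular'' statement from the main identity is correct. However, the Gauss-sum bookkeeping in your middle step is off by a factor of $g(\psi,\eta)/\Nr\frg$: the inner sum over $[\frb]$ should produce $\psi(\fra')\,\Nr\frg/g(\psi,\eta)$, not $g(\psi,\eta)\psi(\fra')/\Nr\frg$, precisely so that the external prefactor $g(\psi,\eta)/\Nr\frg$ cancels and the whole expression collapses to $\sum_{\fra'}\psi(\fra')\Nr(\fra')^{-s}=L(\psi,s)$. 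As written, your display collapses to $(g(\psi,\eta)/\Nr\frg)\,L(\psi,s)$, which would only yield the stated identity if $g(\psi,\eta)=\Nr\frg$; this fails in general, since by Proposition \ref{prop: The property of Gauss sums}(2) one has $\lvert g(\psi,\eta)\rvert^2=\Nr\frg$, so $\lvert g(\psi,\eta)/\Nr\frg\rvert=(\Nr\frg)^{-1/2}\neq 1$ once $\frg\neq\cO_F$.

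One further imprecision worth noting: Proposition \ref{prop: The property of Gauss sums}(1) is a relation at a fixed $\frb$ (a sum over $\gamma\in\fra\frb^{-1}/\frg\fra\frb^{-1}$), whereas the inversion you actually need is an orthogonality relation governing the sum over classes $\frb\in\Cl_F^+(\frg)$ (equivalently over the $\Cl_F^+(\frg)$-torsor $\sT_0[\frg]$), with the primitivity of $\xi$ ensuring the vanishing for $\fra'$ not coprime to $\frg$. This is the content that Lemmas 2.8 and 5.13 of \cite{BHY20} supply; invoking Proposition (1) alone does not give it. With the factor corrected and the appeal to the BHY20 lemmas made explicit, the argument goes through and matches the paper's intended proof.
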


Combining Theorem \ref{thm: Hecke vs Lerch}, Corollary \ref{cor: Corollary of functional equations}, 
and Proposition \ref{prop: The property of Gauss sums}, we obtain the following:
\begin{corollary}\label{cor: Hecke vs Lerch}
If $\psi\colon\Cl_F^+(\frg)\rightarrow\mathbb{C}^\times$ is primitive and totally noncritical for $k$, we have
\[
\sum_{\frb\in\Cl_F^+(\frg)}\psi(\frb)^{-1}\mathcal{L}(\xi_\mathrm{can}^{\frb}\Delta, k)\in \mathbb{Q}((k-1)g)\cdot d_F^{1/2}L^\ast(\overline{\psi}, 1-k).
\]
\end{corollary}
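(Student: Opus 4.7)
The plan is to chain together the three ingredients cited in the hint, reading off the desired rationality from the standard functional equation once the Gauss sums are paired off.

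First I would apply Theorem \ref{thm: Hecke vs Lerch} with the choice $\eta = \xi_{\mathrm{can}}F_+^\times \in \sT_0[\frg]$, which is legitimate since $\xi_{\mathrm{can}}$ is primitive. Evaluating at $s=k$ and solving for the Lerch sum yields
\[
\sum_{\frb\in\Cl_F^+(\frg)}\psi(\frb)^{-1}\cL(\xi_{\mathrm{can}}^{\frb}\Delta, k)
= \frac{\Nr\frg}{g(\psi,\xi_{\mathrm{can}})}\, L(\psi,k).
\]
This transfers the problem from Lerch zeta values at positive integers to a single Hecke $L$-value.

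Next I would invoke Corollary \ref{cor: Corollary of functional equations}, which, under the assumption that $\psi$ is primitive and totally noncritical for $k$, gives
\[
L(\psi,k) \in \bbQ((k-1)g)\cdot d_F^{1/2}\, g(\ol\psi,\xi_{\mathrm{can}})^{-1}\, L^\ast(\ol\psi,1-k).
\]
Substituting this into the previous identity, the right-hand side becomes an element of
\[
\frac{\Nr\frg}{g(\psi,\xi_{\mathrm{can}})\, g(\ol\psi,\xi_{\mathrm{can}})} \cdot \bbQ((k-1)g)\cdot d_F^{1/2}\, L^\ast(\ol\psi,1-k).
\]

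Finally I would use Proposition \ref{prop: The property of Gauss sums}(2), which provides the clean identity $g(\psi,\xi_{\mathrm{can}})\, g(\ol\psi,\xi_{\mathrm{can}}) = \psi_{\cO_F}(-1)\,\Nr\frg$. Since $\psi_{\cO_F}(-1) = \pm 1 \in \bbQ^\times$, the scalar $\Nr\frg / (g(\psi,\xi_{\mathrm{can}})\, g(\ol\psi,\xi_{\mathrm{can}}))$ lies in $\bbQ$, and the claimed containment follows immediately. The argument is entirely formal once the three cited results are in hand; no step should present a genuine obstacle, and the main point is simply organizing the chain so that the Gauss sums appear in the complementary pair required by Proposition \ref{prop: The property of Gauss sums}(2).
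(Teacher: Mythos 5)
Your proof is correct and is precisely the route the paper takes: the text preceding the corollary states that it follows by combining Theorem \ref{thm: Hecke vs Lerch}, Corollary \ref{cor: Corollary of functional equations}, and Proposition \ref{prop: The property of Gauss sums}, which is exactly the chain you have written out. Your added observation that $\psi_{\cO_F}(-1)=\pm 1$ (by the multiplicativity $\psi_{\cO_F}(-1)^2=\psi_{\cO_F}(1)=1$) is a small step the paper leaves implicit, and it is accurate.
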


\subsubsection{Primitive versus Imprimitive}
Here we investigate the linear combinations of Lerch zeta functions 
appearing in Theorem \ref{thm: Hecke vs Lerch} for imprimitive Hecke characters. 

Let $\frg_0$ be a nonzero integral ideal, $\frn$ an integral ideal, and set $\frg_1=\frn\frg_0$. 
Then we have a natural surjection $\mathrm{pr}\colon\Cl_F^+(\frg_1) \rightarrow \Cl_F^+(\frg_0)$. 
On the other hand, the homomorphism $(-)^\frn\colon\bbT^\fra(\bbC)\to\bbT^{\frn\fra}(\bbC)$ induces 
\[
(-)^\frn\colon\mathbb{T}^\fra[\frg_1](\mathbb{C})\rightarrow\mathbb{T}^{\frn\fra}[\frg_0](\mathbb{C}).
\]
Note that it induces maps $\sT[\frg_1]\to\sT[\frg_0]$ and $\sT_0[\frg_1]\to\sT_0[\frg_0]$. 

For the moment, we consider the case where $\frn=\frp$ is a prime ideal. 

\begin{theorem}\label{thm: Primitive vs imprimitive}
Let $\frg_0$ be a nonzero integral ideal, $\frp$ a prime ideal, and set $\frg_1=\frp\frg_0$. 
Let $\xi_1$ be an element of $\mathbb{T}_0^\fra[\frg_1](\mathbb{C})$ and $\psi_0$ a character of $\Cl_F^+(\frg_0)$. 
Setting $\xi_0=\xi_1^\frp\in \mathbb{T}_0^{\frp\fra}[\frg_0](\mathbb{C})$ and $\psi_1=\psi_0\circ\mathrm{pr}$, we have 
\[
(\Nr\frp)^{s-1}\sum_{\frb\in\Cl_F^+(\frg_1)}\psi_1(\frb)^{-1}\mathcal{L}(\xi_1^\frb\Delta,s) 
= C(\frp)\sum_{\frb\in\Cl_F^+(\frg_0)}\psi_0(\frb)^{-1}\mathcal{L}(\xi_0^\frb\Delta,s),
\]
where
\[
C(\frp)=\begin{cases}
1 & \text{if $\frp\mid\frg_0$,} \\
1-\psi_0(\frp)^{-1}(\Nr\frp)^{s-1}  &  \text{if $\frp\nmid\frg_0$.}
\end{cases}
\]
\end{theorem}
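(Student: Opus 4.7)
The plan is to reduce the identity to a Gauss-sum computation at the prime $\frp$, using the hypothesis $\psi_1 = \psi_0\circ\pr$ and the Chinese Remainder Theorem. I would first reduce to the case in which $\psi_0$ is primitive (i.e., has conductor exactly $\frg_0$) by iteratively applying the theorem at the prime divisors of the imprimitivity. Under this assumption, Theorem~\ref{thm: Hecke vs Lerch} immediately expresses the right-hand side as
\[C(\frp) \cdot \Nr\frg_0 \cdot g(\psi_0, \xi_0)^{-1} L(\psi_0, s),\]
so it suffices to establish an analogous formula for the left-hand side.

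To compute the left-hand side, I would unfold each $\cL(\xi_1^\frb\Delta, s)$ using its definition as a Dirichlet series, swap the order of summation, and evaluate the resulting inner exponential sum over $\frb \in \Cl_F^+(\frg_1)$ via the CRT decomposition $\fra/\frg_1\fra \cong \fra/\frp^{v_\frp(\frg_1)}\fra \oplus \fra/(\frg_1/\frp^{v_\frp(\frg_1)})\fra$. Using the Gauss-sum multiplicativity of Proposition~\ref{prop: The property of Gauss sums}(1) applied locally at $\frp$, the inner sum factors as a product of a local Ramanujan-type sum at $\frp$ (contributing $\Nr\frp - 1$ when $\frp$ divides the argument and $-1$ otherwise) and the primitive Gauss sum $g(\psi_0, \xi_0)$ at level $\frg_0$, modified by the local twist $\psi_0(\frp)^{-1}$ coming from the class inverse of $\frp$. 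Combining this with the Euler factor identity $L(\psi_1, s) = L(\psi_0, s)(1 - \psi_0(\frp)\Nr\frp^{-s})$ (valid when $\frp \nmid \frg_0$) yields
\[\sum_{\frb\in\Cl_F^+(\frg_1)}\psi_1(\frb)^{-1}\cL(\xi_1^\frb\Delta, s) = \psi_0(\frp)^{-1}\bigl[\psi_0(\frp)\Nr\frp^{1-s} - 1\bigr]\sum_{\frb_0\in\Cl_F^+(\frg_0)}\psi_0(\frb_0)^{-1}\cL(\xi_0^{\frb_0}\Delta, s).\]
Multiplying both sides by $\Nr\frp^{s-1}$ collapses the bracketed factor to $C(\frp) = 1 - \psi_0(\frp)^{-1}\Nr\frp^{s-1}$, giving the desired identity. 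The case $\frp \mid \frg_0$ is simpler: the local Ramanujan factor at $\frp$ forces the inner argument to be divisible by $\frp$, the identity becomes pointwise in $\frb_0\in\Cl_F^+(\frg_0)$, and $C(\frp) = 1$.

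The main technical obstacle is adapting the Gauss-sum computation from $F = \bbQ$ to the totally real setting, where $\frp$ need not be principal and one must work intrinsically with ideal classes and the local structure of $(\cO_F/\frp)^\times$. The key input is the multiplicativity formula recorded in Proposition~\ref{prop: The property of Gauss sums}(1), which enables the local--global CRT factorization of $g(\psi_1, \xi_1)$ as a product involving $g(\psi_0, \xi_0)$ and an explicit local factor at $\frp$; once this factorization is in place, the remaining algebraic manipulations mirror the classical case of Dirichlet $L$-functions.
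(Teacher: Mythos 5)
Your proposal takes a genuinely different route from the paper. The paper's proof is elementary and self-contained: it first proves Lemma~\ref{lem: The character sum} (an orthogonality/averaging identity over the fiber of $(-)^\frp$, which immediately yields the fundamental relation $(\Nr\frp)^{s-1}\sum_{\zeta\Delta\mapsto\xi\Delta}\mathcal{L}(\zeta\Delta,s)=\mathcal{L}(\xi\Delta,s)$) and Lemma~\ref{lem: The pullback of primitive sets} (classifying whether $(-)^\frp$-preimages of $\sT_0[\frg_0]$ lie in $\sT_0[\frg_1]$ or in $\sT_0[\frg_0]$); it then groups the $\Cl_F^+(\frg_1)$-sum by fibers over $\Cl_F^+(\frg_0)$ and reads off $C(\frp)$ directly, with no assumption on $\psi_0$. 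By contrast, you route the argument through Gauss sums and $L$-functions via Theorem~\ref{thm: Hecke vs Lerch}, which requires $\psi_0$ primitive, and propose first to strip off the imprimitivity of $\psi_0$ by iterating the theorem itself. That induction is not circular if you genuinely treat the primitive case as an independently-proved base case, but it duplicates work the paper avoids, and it requires verifying that the accumulated $C$-factors on both levels $\frg_0$ and $\frg_1=\frp\frg_0$ match up under iterated reduction.

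There are also concrete gaps in the base case you outline. The claim that the inner sum factors via CRT into ``a local Ramanujan-type sum at $\frp$'' times the primitive Gauss sum $g(\psi_0,\xi_0)$ is asserted rather than derived, and is not a consequence of Proposition~\ref{prop: The property of Gauss sums}(1) as cited (that proposition relates $g(\psi,\xi)\overline{\psi}_\frb(\beta)$ to a single twisted sum, not a local--global factorization). When you unfold $\sum_\frb\psi_1(\frb)^{-1}\cL(\xi_1^\frb\Delta,s)$ and group $\frb$ into fibers of $\pr\colon\Cl_F^+(\frg_1)\to\Cl_F^+(\frg_0)$, the inner sum over $\frc\in\Ker(\pr)$ is a sum over certain $(-)^\frp$-preimages of $\xi_0^{\frb_0}\Delta$ --- exactly the situation handled by the paper's two lemmas; so making your computation rigorous effectively reproduces the paper's proof in a less transparent disguise, after which the Gauss-sum and Euler-factor machinery cancel out. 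The Euler-factor identity $L(\psi_1,s)=L(\psi_0,s)(1-\psi_0(\frp)\Nr\frp^{-s})$ you invoke is correct, and your final algebra recovering $C(\frp)=1-\psi_0(\frp)^{-1}\Nr\frp^{s-1}$ after multiplying by $\Nr\frp^{s-1}$ is right, but these are serving to undo the $L$-function detour rather than to shorten the argument. Overall: a workable but substantially longer path whose central local-factorization step still needs to be established, where the paper's direct character-sum and primitivity lemmas do the job at once.
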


Before the proof of Theorem \ref{thm: Primitive vs imprimitive}, we prove two lemmas.

\begin{lemma}\label{lem: The character sum}
Let $\fra$ be a nonzero fractional ideal and $\frp$ a prime ideal. 
For a torsion point $\xi\in\mathbb{T}^{\frp\fra}(\mathbb{C})$ and $\alpha\in\fra$, we have
\[
\frac{1}{\Nr\frp}\sum_{\zeta\Delta\mapsto\xi\Delta}\zeta\Delta(\alpha) =
\begin{cases}
\xi\Delta(\alpha) & \text{if $\alpha\in\frp\fra$}, \\
0 & \text{if $\alpha\notin\frp\fra$},
\end{cases}
\]
where the sum is taken over all $\zeta\Delta\in\mathbb{T}^\fra(\mathbb{C})/\Delta$ such that $\zeta^\frp\Delta=\xi\Delta$.
\end{lemma}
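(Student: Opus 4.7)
The plan is to convert the given sum over $\Delta$-orbits of characters into an elementary character sum on $\fra$ and then apply orthogonality on the finite group $\fra/\frp\fra$. First, by unpacking Definition \ref{def: Lerch}, for any orbit $\zeta\Delta\subset\bbT^\fra(\bbC)$,
\[
\zeta\Delta(\alpha) = \sum_{\eps\in\Delta_\zeta\backslash\Delta}\zeta^\eps(\alpha) = \sum_{\zeta'\in\zeta\Delta}\zeta'(\alpha).
\]
Since the map $(-)^\frp\colon\bbT^\fra(\bbC)\to\bbT^{\frp\fra}(\bbC)$ is $\Delta$-equivariant (the $\Delta$-action on either side is induced by multiplication on $\fra$ and $\frp\fra$ respectively, and $\eps\in\Delta$ preserves $\frp\fra$), the set $T\coloneqq\{\zeta\in\bbT^\fra(\bbC)\mid \zeta^\frp\in\xi\Delta\}$ is $\Delta$-stable and decomposes into the disjoint union of the orbits $\zeta\Delta$ with $\zeta^\frp\Delta=\xi\Delta$. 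Therefore
\[
\sum_{\zeta\Delta\mapsto\xi\Delta}\zeta\Delta(\alpha) = \sum_{\zeta\in T}\zeta(\alpha).
\]

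Next, I stratify $T$ by the fibers of $(-)^\frp\colon T\to\xi\Delta$. Each fiber is a torsor under the kernel of $(-)^\frp$, namely $\Hom(\fra/\frp\fra,\bbC^\times)$, which has order $\Nr\frp$. Choosing any $\zeta_0$ with $\zeta_0^\frp=\xi'$ and writing any other element of the fiber as $\zeta_0\chi$ for a character $\chi$ of $\fra/\frp\fra$, I obtain
\[
\sum_{\zeta^\frp=\xi'}\zeta(\alpha) = \zeta_0(\alpha)\sum_{\chi\in\Hom(\fra/\frp\fra,\bbC^\times)}\chi(\bar\alpha),
\]
where $\bar\alpha$ denotes the image of $\alpha$ in $\fra/\frp\fra$. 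By the standard orthogonality relation on the finite abelian group $\fra/\frp\fra$, the inner sum equals $\Nr\frp$ if $\bar\alpha=0$ (equivalently, $\alpha\in\frp\fra$) and vanishes otherwise. Moreover, when $\alpha\in\frp\fra$, the definition of $(-)^\frp$ gives $\zeta_0(\alpha)=\zeta_0^\frp(\alpha)=\xi'(\alpha)$.

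Summing over $\xi'\in\xi\Delta$ and combining the two steps, if $\alpha\in\frp\fra$ then
\[
\sum_{\zeta\in T}\zeta(\alpha)=\Nr\frp\cdot\sum_{\xi'\in\xi\Delta}\xi'(\alpha)=\Nr\frp\cdot\xi\Delta(\alpha),
\]
while otherwise the sum vanishes; dividing by $\Nr\frp$ yields the claim. There is no genuine obstacle: the argument consists of bookkeeping to reduce an orbit sum to an element sum followed by one application of character orthogonality. The only delicate point is ensuring that the fiber decomposition is used \emph{before} quotienting by $\Delta$, so that the orthogonality relation on $\fra/\frp\fra$ can be applied cleanly and only afterwards repackaged as the orbit sum $\xi\Delta(\alpha)$.
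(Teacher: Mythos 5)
Your proof is correct and follows essentially the same strategy as the paper's: both first rewrite the orbit sum $\sum_{\zeta\Delta\mapsto\xi\Delta}\zeta\Delta(\alpha)$ as the double sum $\sum_{\kappa\in\xi\Delta}\sum_{\zeta^\frp=\kappa}\zeta(\alpha)$, then evaluate each inner fiber sum by character theory on $\fra/\frp\fra$. The only cosmetic difference is that the paper proves the vanishing for $\alpha\notin\frp\fra$ via the translation trick (multiplying by a nontrivial $\lambda\in\bbT^\fra[\frp](\bbC)$ with $\lambda(\alpha)\neq 1$), whereas you invoke the orthogonality relation as a black box — but that trick is precisely the standard proof of the orthogonality you cite.
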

\begin{proof}
By definition we have
\[
\sum_{\zeta\Delta\mapsto\xi\Delta}\zeta\Delta(\alpha) 
= \sum_{\kappa\in\xi\Delta}\sum_{\zeta^\frp=\kappa}\zeta(\alpha).
\]
If $\alpha\in\frp\fra$, we have $\zeta^\frp(\alpha)=\zeta(\alpha)$ for every $\zeta\in\mathbb{T}^\fra(\mathbb{C})$, 
and for every $\kappa\in\mathbb{T}^{\frp\fra}(\mathbb{C})$, 
the number of elements in $\zeta\in\mathbb{T}^\fra(\mathbb{C})$ such that $\zeta^\frp=\kappa$ is $\Nr\frp$. 
Therefore we obtain 
\[
\sum_{\kappa\in\xi\Delta}\sum_{\zeta^\frp=\kappa}\zeta(\alpha) 
= \sum_{\kappa\in\xi\Delta}(\Nr\frp)\kappa(\alpha) = (\Nr\frp)\xi\Delta(\alpha),
\]
as desired.

If $\alpha\notin\frp\fra$, there exists a character $\lambda\in\mathbb{T}^\fra[\frp](\mathbb{C})$ such that $\lambda(\alpha)\neq 1$. 
Then we have 
\[
\sum_{\zeta^\frp=\kappa}\zeta(\alpha) = \sum_{\zeta^\frp=\kappa}(\lambda\zeta)(\alpha) =\lambda(\alpha)\sum_{\zeta^\frp=\kappa}\zeta(\alpha),
\]
which implies that the sum is zero. 
\end{proof}

\begin{lemma}\label{lem: The pullback of primitive sets}
Let $\frg_0$ be a nonzero integral ideal and $\frp$ a prime ideal. Set $\frg_1=\frp\frg_0$. We define 
\[
\mathscr{T}'[\frg_1] \coloneqq \{\zeta\in\mathscr{T}[\frg_1]\mid \zeta^\frp\in\mathscr{T}_0[\frg_0] \},
\]
that is, the inverse image of $\mathscr{T}_0[\frg_0]$ under $(-)^\frp:\mathscr{T}[\frg_1]\rightarrow\mathscr{T}[\frg_0]$.
Then we have
\[
\mathscr{T}'[\frg_1] =\begin{cases}
\mathscr{T}_0[\frg_1] & \text{if $\frp\mid\frg_0$}, \\
\mathscr{T}_0[\frg_1]\amalg \mathscr{T}_0[\frg_0] & \text{if $\frp\nmid\frg_0$}.
\end{cases}
\]
\end{lemma}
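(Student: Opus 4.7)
The plan is to reduce the lemma to a combinatorial computation of the conductor of $\zeta^\frp$ in terms of the conductor of $\zeta$. Since the $\Ft$-action permutes the tori $\bbT^\fra$, preserves exact conductors, and commutes with the restriction $(-)^\frp$, it suffices to work at the level of representatives $\zeta\in\bbT^\fra[\frg_1](\bbC)$ and descend to $\sT$ at the very end. For such a $\zeta$, let $\frf$ denote its exact conductor: the unique divisor of $\frg_1$ such that $\zeta$ is primitive in $\bbT^\fra[\frf](\bbC)$.

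The key calculation is the formula for the conductor $\frf^\frp$ of $\zeta^\frp$ viewed as an element of $\bbT^{\frp\fra}(\bbC)$. Unwinding the definitions, $\zeta^\frp$ kills $\fri\frp\fra$ if and only if $\frf\mid\fri\frp$. Writing $\frf=\frp^a\frf_0$ with $\gcd(\frp,\frf_0)=1$ and identifying the minimal divisor $\fri$ satisfying this divisibility, one obtains
\[
\frf^\frp=\begin{cases}\frf/\frp & \text{if }\frp\mid\frf,\\ \frf & \text{otherwise.}\end{cases}
\]

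With this in hand, the lemma follows by solving the equation $\frf^\frp=\frg_0$ for $\frf\mid\frg_1$. Writing $\frg_0=\frp^c\frg_0'$ with $\gcd(\frp,\frg_0')=1$: the first branch forces $a=c+1$ and $\frf_0=\frg_0'$, hence $\frf=\frp\frg_0=\frg_1$; the second branch forces $\frf=\frg_0$ and is consistent only when $c=0$, i.e.\ $\frp\nmid\frg_0$. Therefore, when $\frp\mid\frg_0$ only the first branch contributes, yielding $\sT'[\frg_1]=\sT_0[\frg_1]$; when $\frp\nmid\frg_0$ both branches contribute, yielding the disjoint decomposition $\sT_0[\frg_1]\amalg\sT_0[\frg_0]$, where the second component is embedded into $\sT[\frg_1]$ via the natural inclusion $\bbT^\fra[\frg_0]\subset\bbT^\fra[\frg_1]$. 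The disjointness is automatic because the two pieces are distinguished by distinct conductors. I anticipate no conceptual obstacle here; the argument reduces to careful bookkeeping of $\frp$-adic valuations of ideals, and the only point requiring attention is confirming that the notions of conductor and the restriction $(-)^\frp$ descend correctly to the $\Ft$-quotient $\sT$.
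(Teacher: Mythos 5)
Your argument is correct and follows essentially the same route as the paper's proof: the conductor formula you derive, $\frf^\frp=\frf/\frp$ if $\frp\mid\frf$ and $\frf^\frp=\frf$ otherwise, is equivalent to the paper's relation $\frf_0\frp=\frf_1\cap\frp$ obtained from $\Ker(\zeta^\frp)=\Ker(\zeta)\cap\frp\fra$, and the remaining case analysis matches. The concern you raise about descent to $\sT$ is unproblematic since $(-)^\frp$ is $\Ft$-equivariant and the exact conductor is an $\Ft$-invariant of the orbit, exactly as the paper implicitly uses.
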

\begin{proof}
Take a torsion point $\zeta\in\bbT^\fra[\frg_1](\bbC)$, and regard it as a character $\zeta\colon\fra\to\bbC^\times$ 
satisfying $\frg_1\fra\subset\Ker(\zeta)$. 
Then the maximal fractional ideal contained in $\Ker(\zeta)$ is written as $\frf_1\fra$ 
with an integral ideal $\frf_1\mid\frg_1$, and we have $\zeta\in\bbT^\fra_0[\frf_1]$. 
Similarly, there is an integral ideal $\frf_0\mid\frg_0$ such that $\zeta^\frp\in\bbT^{\frp\fra}_0[\frf_0]$, 
which is characterized by the property that the maximal fractional ideal contained in $\Ker(\zeta^\frp)$ is $\frf_0\frp\fra$. 
Since $\Ker(\zeta^\frp)=\Ker(\zeta)\cap\frp\fra$, we have $\frf_0\frp=\frf_1\cap\frp$. 
In particular, we see that 
\[\frf_0=\frg_0\iff \begin{cases}
\frf_1=\frg_0\frp & \text{if $\frp\mid\frg_0$}, \\
\frf_1=\frg_0 \text{ or }\frg_0\frp & \text{if $\frp\nmid\frg_0$}, 
\end{cases}\]
as desired. 
\end{proof}

\begin{proof}[Proof of Theorem \ref{thm: Primitive vs imprimitive}]
Let $\xi\in\mathbb{T}^{\frp\fra}(\mathbb{C})$ be any torsion point. 
By Lemma \ref{lem: The character sum}, we have (at least for $\Re\,s\gg 0$)
\begin{align*}
  (\Nr\frp)^{s-1}\sum_{\zeta\Delta\mapsto\xi\Delta}\mathcal{L}(\zeta\Delta, s) 
  &= (\Nr\frp)^{s-1}\sum_{\zeta\Delta\mapsto\xi\Delta}\sum_{\alpha\in\Delta\backslash\fra_+}
  (\zeta\Delta)(\alpha)\Nr(\alpha\fra^{-1})^{-s} \\
  &= \sum_{\alpha\in\Delta\backslash\fra_+}\Biggl(\frac{1}{\Nr\frp}\sum_{\zeta\Delta\mapsto\xi\Delta}
  (\zeta\Delta)(\alpha)\Biggr)\Nr(\alpha\fra^{-1}\frp^{-1})^{-s} \\
  &= \sum_{\alpha\in\Delta\backslash(\frp\fra)_+}(\xi\Delta)(\alpha)\Nr(\alpha\fra^{-1}\frp^{-1})^{-s} \\
  &= \mathcal{L}(\xi\Delta, s).
\end{align*}
Hence, by noting that $\mathscr{T}_0[\frg_1]$ is a $\Cl_F^+(\frg_1)$-torsor and 
by using Lemma \ref{lem: The pullback of primitive sets}, if $\frp\mid\frg_0$, we have 
\begin{align*}
(\Nr\frp)^{s-1}\sum_{\frb\in\Cl_F^+(\frg_1)}\psi_1(\frb)^{-1}\mathcal{L}(\xi_1^\frb\Delta,s)
&= (\Nr\frp)^{s-1}\sum_{\frb'\in\Cl_F^+(\frg_0)}\sum_{\frb\in\Cl_F^+(\frg_1), 
\frb\mapsto\frb'}\psi_1(\frb)^{-1}\mathcal{L}(\xi_1^\frb\Delta,s) \\
&= \sum_{\frb'\in\Cl_F^+(\frg_0)}\psi_0(\frb')^{-1}\Biggl(\sum_{\frb\in\Cl_F^+(\frg_1), 
\frb\mapsto\frb'}(\Nr\frp)^{s-1}\mathcal{L}(\xi_1^\frb\Delta,s)\Biggr)\\
&= \sum_{\frb'\in\Cl_F^+(\frg_0)}\psi_0(\frb')^{-1}
\Biggl(\sum_{\zeta\Delta\mapsto\xi_0^{\frb'}\Delta}(\Nr\frp)^{s-1}\mathcal{L}(\zeta\Delta,s)\Biggr)\\
&= \sum_{\frb'\in\Cl_F^+(\frg_0)}\psi_0(\frb')^{-1}\mathcal{L}(\xi_0^{\frb'}\Delta,s).
\end{align*}
Similarly, if $\frp\nmid\frg_0$, we have
\begin{align*}
& (\Nr\frp)^{s-1}\sum_{\frb\in\Cl_F^+(\frg_1)}\psi_1(\frb)^{-1}\mathcal{L}(\xi_1^\frb\Delta,s) \\
&= \sum_{\frb'\in\Cl_F^+(\frg_0)}\psi_0(\frb')^{-1}\Biggl(\sum_{\frb\in\Cl_F^+(\frg_1), 
\frb\mapsto\frb'}(\Nr\frp)^{s-1}\mathcal{L}(\xi_1^\frb\Delta,s)\Biggr)\\
&= \sum_{\frb'\in\Cl_F^+(\frg_0)}\psi_0(\frb')^{-1}\Biggl(\sum_{\zeta\Delta\mapsto\xi_0^{\frb'}\Delta}
(\Nr\frp)^{s-1}\mathcal{L}(\zeta\Delta,s)-(\Nr\frp)^{s-1}\mathcal{L}(\xi_0^{\frp^{-1}\frb'}\Delta, s)\Biggr) \\
&= \sum_{\frb'\in\Cl_F^+(\frg_0)}\psi_0(\frb')^{-1}\mathcal{L}(\xi_0^{\frb'}\Delta,s)- 
(\Nr\frp)^{s-1}\sum_{\frb'\in\Cl_F^+(\frg_0)}\psi_0(\frb')^{-1}\mathcal{L}(\xi_0^{\frp^{-1}\frb'}\Delta, s) \\
&= \bigl(1-\psi_0(\frp)^{-1}(\Nr\frp)^{s-1}\bigr)
\sum_{\frb'\in\Cl_F^+(\frg_0)}\psi_0(\frb')^{-1}\mathcal{L}(\xi_0^{\frb'}\Delta,s).\qedhere
\end{align*}
\end{proof}

\begin{corollary}\label{cor: Primitive vs imprimitive}
Let $\frg$ be a nonzero integral ideal, $\eta\in\mathscr{T}_0[\frg]$, and $\psi$ a character of $\Cl_F^+(\frg)$ of conductor $\frg_0$. We denote by $\psi_0$ the corresponding primitive character of $\Cl_F^+(\frg_0)$, and set $\frn =\frg\frg_0^{-1}$ and $\eta_0 =\eta^\frn$. Then we have 
\[
(\Nr\frn)^{s-1}\sum_{\frb\in\Cl_F^+(\frg)}\psi(\frb)^{-1}\mathcal{L}(\eta^\frb,s) = \prod_{\frp\mid\frg, \frp\nmid\frg_0}(1-\psi_0(\frp)^{-1}(\Nr\frp)^{s-1})\sum_{\frb\in\Cl_F^+(\frg_0)}\psi_0(\frb)^{-1}\mathcal{L}(\eta_0^\frb,s).
\]
In particular, for an integer $k>1$, we have
\[
\sum_{\frb\in\Cl_F^+(\frg)}\psi(\frb)^{-1}\mathcal{L}(\eta^\frb,k) 
\in \mathbb{Q}(\psi)\cdot \sum_{\frb\in\Cl_F^+(\frg_0)}\psi_0(\frb)^{-1}\mathcal{L}(\eta_0^\frb,k), 
\]
where $\bbQ(\psi)$ denotes the field generated by the values of $\psi$. 
\end{corollary}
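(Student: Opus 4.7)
The plan is to iterate Theorem \ref{thm: Primitive vs imprimitive} by peeling off the prime factors of $\frn$ one at a time. Write $\frn=\frp_1\cdots\frp_r$ as a product of prime ideals counted with multiplicity, and for $0\le i\le r$ set
\[\frg^{(i)}\coloneqq\frg_0\prod_{j=1}^{i}\frp_j,\qquad \eta^{(i)}\coloneqq\eta^{\prod_{j=i+1}^{r}\frp_j},\]
so $(\frg^{(0)},\eta^{(0)})=(\frg_0,\eta_0)$ and $(\frg^{(r)},\eta^{(r)})=(\frg,\eta)$. Let $\psi^{(i)}$ be the character on $\Cl_F^+(\frg^{(i)})$ induced from $\psi_0$ by the natural projection $\Cl_F^+(\frg^{(i)})\to\Cl_F^+(\frg_0)$, so that $\psi^{(0)}=\psi_0$ and $\psi^{(r)}=\psi$.

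A preliminary step is to check that each $\eta^{(i)}$ is primitive, i.e., lies in $\mathscr{T}_0[\frg^{(i)}]$. More generally, I claim that for any $\eta\in\mathscr{T}_0[\frg]$ and integral ideal $\frm$ with $\frm\mid\frg$, one has $\eta^\frm\in\mathscr{T}_0[\frg\frm^{-1}]$. Indeed, the conductor of $\eta^\frm$ in $\mathbb{T}^{\frm\fra}$ is the smallest integral ideal $\frc$ with $\frc\frm\fra\subseteq\Ker(\eta^\frm)=\Ker(\eta)\cap\frm\fra$, and the containment $\frc\frm\fra\subseteq\Ker(\eta)$ is equivalent to $\frg\mid\frc\frm$ by the primitivity of $\eta$, so the minimal such $\frc$ is $\frg\frm^{-1}$. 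Taking $\frm=\prod_{j>i}\frp_j$ (which divides $\frn$, hence $\frg$) yields the desired primitivity of $\eta^{(i)}$.

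Next I will apply Theorem \ref{thm: Primitive vs imprimitive} at each step $i\to i-1$, taking $(\frg_1,\frg_0,\frp,\xi_1,\xi_0,\psi_1,\psi_0)=(\frg^{(i)},\frg^{(i-1)},\frp_i,\eta^{(i)},\eta^{(i-1)},\psi^{(i)},\psi^{(i-1)})$, to obtain
\[(\Nr\frp_i)^{s-1}\sum_{\frb\in\Cl_F^+(\frg^{(i)})}\psi^{(i)}(\frb)^{-1}\mathcal{L}\bigl((\eta^{(i)})^\frb,s\bigr)=C(\frp_i)\sum_{\frb\in\Cl_F^+(\frg^{(i-1)})}\psi^{(i-1)}(\frb)^{-1}\mathcal{L}\bigl((\eta^{(i-1)})^\frb,s\bigr),\]
where $C(\frp_i)=1$ if $\frp_i\mid\frg^{(i-1)}$ and $C(\frp_i)=1-\psi^{(i-1)}(\frp_i)^{-1}(\Nr\frp_i)^{s-1}$ otherwise. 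Composing these $r$ identities yields the first assertion, once we verify that $\prod_{i=1}^{r}C(\frp_i)$ equals $\prod_{\frp\mid\frg,\,\frp\nmid\frg_0}(1-\psi_0(\frp)^{-1}(\Nr\frp)^{s-1})$; this is a combinatorial bookkeeping, since for each distinct prime $\frp\mid\frn$ only its first peel-off can contribute a nontrivial factor, and that contribution is $1-\psi_0(\frp)^{-1}(\Nr\frp)^{s-1}$ precisely when $\frp\nmid\frg_0$ (using $\psi^{(i-1)}(\frp)=\psi_0(\frp)$ whenever $\frp$ is coprime to $\frg_0$), while later occurrences of the same prime give $1$ because then $\frp\mid\frg^{(i-1)}$. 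Together with the identity $\{\frp\mid\frn,\,\frp\nmid\frg_0\}=\{\frp\mid\frg,\,\frp\nmid\frg_0\}$, this completes the first part.

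The second assertion follows by specializing at $s=k$: the scalar $(\Nr\frn)^{1-k}$ is rational, and each Euler factor $1-\psi_0(\frp)^{-1}(\Nr\frp)^{k-1}$ lies in $\mathbb{Q}(\psi_0)=\mathbb{Q}(\psi)$, so the proportionality constant belongs to $\mathbb{Q}(\psi)$. The main technical point is simply tracking the primitivity of the intermediate $\eta^{(i)}$ and correctly accumulating the factors $C(\frp_i)$; the substantive content is entirely furnished by Theorem \ref{thm: Primitive vs imprimitive}, so no serious obstacle is expected beyond this bookkeeping.
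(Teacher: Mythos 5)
Your proof is correct and follows the route the paper evidently intends for this corollary (which is stated without proof): peel off the primes of $\frn$ one at a time, applying Theorem~\ref{thm: Primitive vs imprimitive} at each step, and track the accumulated factors $(\Nr\frp_i)^{s-1}$ and $C(\frp_i)$. Your direct verification that $\eta^\frm\in\mathscr{T}_0[\frg\frm^{-1}]$ for $\frm\mid\frg$ is sound and is in fact the iterated version of what Lemma~\ref{lem: The pullback of primitive sets} already guarantees step by step, and the bookkeeping of the Euler factors (only the first peel-off of each prime not dividing $\frg_0$ contributes) is handled correctly.
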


Now we rewrite the above results in terms of the Artin $L$-function. 
Let $F(\frg)$ be the narrow ray class field and 
\[
  \mathrm{Art}_F\colon\Cl_F^+(\mathfrak{g})\stackrel{\cong}{\longrightarrow} \Gamma\coloneqq\Gal(F(\frg)/F)
\]
be the reciprocity isomorphism which sends the class of each prime ideal $\mathfrak{p}$ prime to $\mathfrak{g}$ 
to the corresponding geometric Frobenius $\mathrm{Frob}_\mathfrak{p}$. 

For a character $\chi:\Gamma\rightarrow\mathbb{C}^\times$, the corresponding Artin $L$-function can be rewritten 
as the Hecke $L$-function, that is, we have the equality
\[
L(\chi, s) = L(\psi_\chi, s) \coloneqq \sum_{\fra\subset\mathcal{O}_F}\psi_\chi(\fra)(\Nr\fra)^{-s},
\]
where $\psi_\chi\colon\Cl_F^+(\frg_0)\rightarrow\mathbb{C}^\times$ is the primitive character of $\Cl_F^+(\frg_0)$ 
for some $\frg_0\mid\frg$ associated with $\chi\circ\mathrm{Art}_F$. 
We say that $\chi$ is \emph{totally noncritical} for $k>1$ if $\psi_\chi$ is so. 

On the other hand, through the isomorphism $\mathrm{Art}_F$, 
we define a simply transitive action of $\Gamma$ on $\mathscr{T}_0[\mathfrak{g}]$, that is, 
we define the action, so that the equality
\begin{equation}\label{eq: Galois action}
\mathrm{Art}_F([\fra])\eta = \eta^{[\fra^{-1}]}
\end{equation}
holds for $\eta\in\mathscr{T}_0[\mathfrak{g}]$ and $[\fra]\in \Cl_F^+(\frg)$. 
Then, by combining Corollaries \ref{cor: Hecke vs Lerch} and \ref{cor: Primitive vs imprimitive}, 
we obtain the following theorem: 

\begin{theorem}\label{thm: Lerch vs Artin}
For $\eta\in\mathscr{T}_0[\frg]$ and a totally noncritical character $\chi\colon\Gamma\rightarrow\mathbb{C}^\times$ for $k$, 
we obtain
\[
\sum_{\gamma\in\Gamma}\chi(\gamma)\mathcal{L}(\gamma(\eta), k)\in\mathbb{Q}(\chi)((k-1)g)\cdot 
d_F^{1/2}L^\ast(\chi^{-1}, 1-k).
\]
\end{theorem}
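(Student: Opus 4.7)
The plan is to chain together Corollary \ref{cor: Hecke vs Lerch} and Corollary \ref{cor: Primitive vs imprimitive} after rewriting the Galois sum as a sum over the narrow ray class group.

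First I would translate the left-hand side into a character sum over $\Cl_F^+(\frg)$. Let $\frg_0\mid\frg$ be the conductor of the character $\chi$, and write $\psi_\chi\colon\Cl_F^+(\frg_0)\to\bbC^\times$ for the associated primitive character and $\psi\coloneqq\psi_\chi\circ\mathrm{pr}\colon\Cl_F^+(\frg)\to\bbC^\times$ for its lift, so that $\chi\circ\mathrm{Art}_F=\psi$. Using the defining property \eqref{eq: Galois action} of the action of $\Gamma$ on $\sT_0[\frg]$ and the fact that this action is simply transitive, one may substitute $\gamma=\mathrm{Art}_F([\fra])$ and then reindex by $\frb=\fra^{-1}$ to obtain
\[
\sum_{\gamma\in\Gamma}\chi(\gamma)\cL(\gamma(\eta),k)
=\sum_{[\fra]\in\Cl_F^+(\frg)}\psi([\fra])\,\cL(\eta^{[\fra^{-1}]},k)
=\sum_{\frb\in\Cl_F^+(\frg)}\psi(\frb)^{-1}\cL(\eta^\frb,k).
\]

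Next I would apply Corollary \ref{cor: Primitive vs imprimitive} to this sum. Setting $\frn\coloneqq\frg\frg_0^{-1}$ and $\eta_0\coloneqq\eta^\frn$, Lemma \ref{lem: The pullback of primitive sets} (inductively, via successive prime divisors of $\frn$) guarantees that $\eta_0$ lies in $\sT_0[\frg_0]$, so the corollary yields
\[
\sum_{\frb\in\Cl_F^+(\frg)}\psi(\frb)^{-1}\cL(\eta^\frb,k)\in
\bbQ(\psi_\chi)\cdot\sum_{\frb\in\Cl_F^+(\frg_0)}\psi_\chi(\frb)^{-1}\cL(\eta_0^\frb,k).
\]
Finally, since $\psi_\chi$ is primitive and totally noncritical for $k$ (the latter being equivalent to the hypothesis on $\chi$), Corollary \ref{cor: Hecke vs Lerch} gives
\[
\sum_{\frb\in\Cl_F^+(\frg_0)}\psi_\chi(\frb)^{-1}\cL(\eta_0^\frb,k)\in
\bbQ((k-1)g)\cdot d_F^{1/2}L^\ast(\overline{\psi_\chi},1-k).
\]
Combining the two inclusions and observing that $\overline{\psi_\chi}$ corresponds to $\chi^{-1}$ (so that $L^\ast(\overline{\psi_\chi},1-k)=L^\ast(\chi^{-1},1-k)$) and that $\bbQ(\psi_\chi)=\bbQ(\chi)$ completes the proof.

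The argument is essentially bookkeeping; the only step demanding a little care is the first one, where one must verify that the reciprocity isomorphism $\mathrm{Art}_F$ together with the convention \eqref{eq: Galois action} exchanges the Galois sum with the standard Hecke-type sum $\sum\psi(\frb)^{-1}\cL(\eta^\frb,k)$ using the same character $\psi$ that subsequently feeds into the two corollaries. Once this compatibility is nailed down, the rest is a direct substitution.
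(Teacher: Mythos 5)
Your proof is correct and follows the same route the paper intends (the paper simply says "by combining Corollaries \ref{cor: Hecke vs Lerch} and \ref{cor: Primitive vs imprimitive}"); the first translation step from the Galois sum to the narrow ray class group sum is exactly the right preparation, and it does come out cleanly from \eqref{eq: Galois action} with $\psi=\chi\circ\mathrm{Art}_F$ and the reindexing $\frb=\fra^{-1}$.

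One small imprecision worth tightening: Corollary \ref{cor: Hecke vs Lerch} is stated specifically for the canonical point $\xi_\mathrm{can}$, and the literal inclusion there has coefficients in $\bbQ((k-1)g)$. You apply it directly with the possibly different primitive point $\eta_0=\eta^\frn\in\sT_0[\frg_0]$. To justify that, note that $\sT_0[\frg_0]$ is a $\Cl_F^+(\frg_0)$-torsor, so $\eta_0=(\xi_\mathrm{can})^\frc$ for some $\frc$, and then
\[
\sum_{\frb\in\Cl_F^+(\frg_0)}\psi_\chi(\frb)^{-1}\cL(\eta_0^\frb,k)
=\psi_\chi(\frc)\sum_{\frb'\in\Cl_F^+(\frg_0)}\psi_\chi(\frb')^{-1}\cL(\xi_\mathrm{can}^{\frb'}\Delta,k),
\]
so replacing $\xi_\mathrm{can}$ by $\eta_0$ only enlarges the coefficient field from $\bbQ$ to $\bbQ(\psi_\chi)$. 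Since the final assertion already allows $\bbQ(\chi)=\bbQ(\psi_\chi)$-coefficients, this does not affect the conclusion, but the step should be spelled out rather than asserted as a direct application of the corollary. With that one sentence inserted, the argument is complete.
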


%
%
\subsection{A Plectic Roadmap for Beilinson's Conjectures}\label{subsec: plectic roadmap}
%
%

In the former half of this subsection, we recall a fragment of Beilinson's conjectures and 
rewrite it in terms of Lerch zeta functions, and in the latter half, we discuss a possible plectic approach to it. 
For the details on Beilinson's conjectures, we refer to the book \cite{RSS88}, 
especially the articles by Schneider \cite{Schneider88} and Neukirch \cite{Neu88} therein. 

Recall that, for an integer $k>1$, Borel defined a canonical homomorphism 
$K_{2k-1}(\mathbb{C})\rightarrow \mathbb{R}(k-1)=\mathbb{R}\,i^{k-1}$. 
By using this, for a number field $L$, we obtain a natural homomorphism 
\[
r_\mathrm{Bo}:K_{2k-1}(L)\longrightarrow 
\bigoplus_{\tau:L\rightarrow\mathbb{C}}K_{2k-1}(\mathbb{C})\longrightarrow 
\bigoplus_{\tau:L\rightarrow\mathbb{C}}\mathbb{R}(k-1).
\]
For this homomorphism, Borel proved the following theorem:
\begin{theorem}\label{thm: Borel}
For $k>1$, the homomorphism $r_{\mathrm{Bo}}$ induces an isomorphism
\[
\mathbb{R}\otimes_\mathbb{Z} K_{2k-1}(L)\stackrel{\cong}{\longrightarrow}\Bigl(\bigoplus_{\tau}\mathbb{R}(k-1)\Bigr)^{+},
\]
where ``$+$'' means the invariant part for the involution 
\begin{equation}\label{eq:involution}
    (a_\tau)_\tau\longmapsto (\overline{a_{\ol\tau}})_\tau =((-1)^{k-1}a_{\ol\tau})_\tau.
\end{equation}
\end{theorem}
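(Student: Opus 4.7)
The plan is to follow Borel's original strategy, which identifies $K_{2k-1}(L)\otimes\bbR$ with a piece of the stable cohomology of $GL_n(\cO_L)$ and computes both sides via the geometry of the associated symmetric space.

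First, I would reduce to a statement about group homology. The localization sequence gives $K_{2k-1}(L)\otimes\bbQ\cong K_{2k-1}(\cO_L)\otimes\bbQ$ for $k\geq 2$, while Quillen's plus construction combined with the Milnor--Moore theorem yields
\[
K_m(\cO_L)\otimes\bbQ\cong\operatorname{Prim} H_m(GL(\cO_L),\bbQ)
\]
for $m>0$, where $GL(\cO_L)=\varinjlim_n GL_n(\cO_L)$. The problem is thereby transferred to computing a specific primitive piece of the stable rational homology of the arithmetic group $GL_n(\cO_L)$ for $n$ large.

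Second, I would invoke Borel's stability theorem: for $n$ sufficiently large relative to $m$, the natural map
\[
H^m(GL_n(\cO_L),\bbR)\isomto H^m_{\mathrm{cts}}(GL_n(L_\infty),\bbR),
\]
with $L_\infty\coloneqq L\otimes_\bbQ\bbR$, is an isomorphism. The right-hand side is identified, via relative Lie algebra cohomology, with the space of $GL_n(L_\infty)$-invariant differential forms on the symmetric space
\[
X_n=\prod_{\tau\text{ real}}SL_n(\bbR)/SO(n)\times\prod_{\tau\text{ complex}}SL_n(\bbC)/SU(n).
\]
A classical computation using the Cartan model and Hodge theory on the compact dual shows that in degree $2k-1$ the primitive stable elements form a space of dimension $r_1+r_2$ if $k$ is odd and $r_2$ if $k$ is even, which matches on the nose the dimension of $\bigl(\bigoplus_\tau\bbR(k-1)\bigr)^{+}$ under the given involution.

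Third, one must check that $r_\mathrm{Bo}$ is precisely the comparison map just constructed. Each Borel class $b_{k,\tau}$ is represented by a $GL_n(L_\infty)$-invariant form obtained by transgressing the $k$-th universal Chern class on $BGL_n(\bbC)$ (restricted appropriately at a real place); its pairing against a cycle coming from $K_{2k-1}(\cO_L)$ recovers $r_\mathrm{Bo}$ up to an explicit normalization, while the interchange of $\tau$ and $\ol\tau$ under complex conjugation accounts precisely for the sign $(-1)^{k-1}$ in the involution on $\bbR(k-1)$. The main obstacle—and the deepest ingredient of Borel's original proof—is the stability isomorphism above: one must show that the non-invariant part of $H^\bullet(X_n/\Gamma,\bbR)$ vanishes in the relevant degree. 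This requires an analytic argument using square-integrable harmonic forms and Matsushima's formula to control the discrete spectrum of the arithmetic group, and it is what elevates the statement from a formal Lie-theoretic computation to a genuinely hard theorem.
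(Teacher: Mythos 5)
The paper does not prove this result; it states it as Borel's theorem \cite{Bor77} and uses it as a black box. Your sketch is thus not competing with an argument in the text but reconstructing Borel's original proof, and it gets the main architecture right: the reduction $K_{2k-1}(L)\otimes\bbQ\cong K_{2k-1}(\cO_L)\otimes\bbQ$ via localization, the passage from $K$-groups to primitives in stable group homology via Quillen's $+$-construction and Milnor--Moore, Borel's stability isomorphism with continuous cohomology of $GL_n(L_\infty)$, the computation of stable primitive invariant forms on the symmetric space giving dimension $r_1+r_2$ for $k$ odd and $r_2$ for $k$ even, and the matching with $\bigl(\bigoplus_\tau\bbR(k-1)\bigr)^{+}$ under the involution $(a_\tau)\mapsto((-1)^{k-1}a_{\overline\tau})$.

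Two small imprecisions are worth flagging. First, the product in your formula for $X_n$ should run over pairs of complex conjugate embeddings ($r_2$ factors), not over all complex embeddings, and since $GL_n$ is not semisimple one should really work with $SL_n$ (or quotient by the center) before taking the symmetric space. Second, attributing the stability step to ``Matsushima's formula'' is not quite accurate: Matsushima's formula is for cocompact lattices, whereas arithmetic groups like $SL_n(\cO_L)$ are generally non-cocompact. Borel's actual argument truncates the locally symmetric space using Siegel sets (or the Borel--Serre compactification), runs an $L^2$-argument on the resulting manifold with boundary, and then invokes a vanishing theorem for the relative Lie algebra cohomology $H^\bullet(\frg,K;\pi)$ of nontrivial irreducible unitary $(\frg,K)$-modules in degrees below the stable range. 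The spirit of your remark is right---the heart of the proof is an analytic input isolating the trivial representation---but Matsushima's formula is not the mechanism. With those caveats, the outline is a faithful account of the proof that the paper cites.
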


In the modern language, the domain and the codomain have cohomological interpretations, that is, we have 
\[
\mathbb{Q}\otimes_\mathbb{Z} K_{2k-1}(L)=H^1_{\mathscr{M}}(\Spec L,\mathbb{Q}(k))_{\mathrm{fin}},
\]
the finite part of the motivic cohomology, and
\[
\Bigl(\bigoplus_{\tau}\mathbb{R}(k-1)\Bigr)^{+} = H^1_{\mathscr{D}}((\Spec L)_{/\mathbb{R}},\mathbb{R}(k)),
\]
the Deligne cohomology over $\mathbb{R}$. 
Let us recall, in passing, the Deligne cohomology over $\mathbb{C}$ is given by 
\[
H^1_{\mathscr{D}}((\Spec L)_{/\mathbb{C}},\mathbb{R}(k))=\bigoplus_{\tau}\mathbb{R}(k-1).
\]

Note that Theorem \ref{thm: Borel} gives a $\mathbb{Q}$-structure on the Deligne cohomology over $\bbR$. 
On the other hand, the Deligne cohomology has another $\mathbb{Q}$-structure induced from ``Betti-de Rham structure''. 
In our case, it can be explicitly written as follows: 
\[
B^+_L = \Bigl(\bigoplus_{\tau}\mathbb{Q}(k-1)\Bigr)^{+} \subset H^1_{\mathscr{D}}((\Spec L)_{/\mathbb{R}},\mathbb{R}(k)).
\]
Here $B_L^+$ denotes the invariant part of $B_L \coloneqq \bigoplus_{\tau}\mathbb{Q}(k-1) = \bigoplus_{\tau}\mathbb{Q}\cdot(2\pi i)^{k-1}$ 
with respect to the involution \eqref{eq:involution}. 

Now let $L/K$ be an abelian extension of number fields and $G=\Gal(L/K)$. As is easily checked, 
the isomorphism $r_\mathrm{Bo}$ is $G$-equivariant with $\gamma\in G$ acting from the left on the codomain 
as $(a_\tau)_\tau\mapsto (a_{\tau\gamma})_\tau$. Thus we obtain an $\bbR[G]$-isomorphism 
\begin{equation}\label{eq: Borel}
\mathrm{det}_{\mathbb{R}[G]}(\mathbb{R}\otimes_\mathbb{Q} H^1_{\mathscr{M}}(\Spec L,\mathbb{Q}(k))_\mathrm{fin}) \cong
\mathrm{det}_{\mathbb{R}[G]}(H^1_{\mathscr{D}}((\Spec L)_{/\mathbb{R}},\mathbb{R}(k))).
\end{equation}

The following conjecture is known as a part of (the equivariant version of) Beilinson's conjectures:
\begin{conjecture}\label{conj: equiv Beilinson conjectures}
Under the isomorphism (\ref{eq: Borel}), the equality
\[
\det_{\mathbb{Q}[G]}(H^1_{\mathscr{M}}(\Spec L,\mathbb{Q}(k))_\mathrm{fin}) = L^\ast(1-k)\bigl(\mathrm{det}_{\mathbb{Q}[G]}B^+_L\bigr) 
\]
holds. Here $L^\ast(1-k)\in\mathbb{R}[G]$ is the element which maps to $(L^\ast(\chi^{-1}, 1-k))_{\chi}$ under the map
\[
\mathbb{R}[G]\hookrightarrow \mathbb{C}[G]\stackrel{\cong}{\longrightarrow} \prod_{\chi:G\rightarrow\mathbb{C}^\times}\mathbb{C};\quad 
[\gamma]\mapsto (\chi(\gamma))_\chi. 
\]
\end{conjecture}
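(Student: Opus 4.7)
The plan is to follow the paradigm of Beilinson--Deligne in the Dirichlet case: construct enough explicit elements in motivic cohomology via specialization of a motivic lift of the polylogarithm, compute their regulators in terms of $L$-values, and conclude the conjecture by a $\bbQ[G]$-rank comparison. The first step is to postulate a \emph{motivic plectic polylogarithm} $\pol_\sM \in H^{2g-1}_{\sM^I}(U/\Ft,\bbLog)$ whose plectic Hodge realization is the class $\pol$ of Definition \ref{def: polylog class}, analogous to the motivic polylogarithm of Huber--Kings. Its specialization at a torsion point $\xi\Delta$ with $\xi \in \sT_0[\frg]$, followed by the projection onto the weight-$k$ component via the splitting principle, should yield a canonical element
\[
	\pol_\sM(\xi,k) \in H^1_\sM\bigl(\Spec F(\frg),\bbQ(k)\bigr)_{\mathrm{fin}},
\]
where $F(\frg)$ denotes the narrow ray class field of conductor $\frg$.

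Granting Conjecture \ref{conj: i^*pol}, the Beilinson regulator of $\pol_\sM(\xi,k)$ should equal $d_F^{1/2}\cL^\infty(\xi\Delta,k)$ inside $H^1_\sD((\Spec F(\frg))_{/\bbR},\bbR(k))$. With $L=F(\frg)$ and $G=\Gal(L/F)\cong\Cl_F^+(\frg)$, one considers the $G$-equivariant family $\{\gamma\cdot\pol_\sM(\xi,k)\}_{\gamma\in G}$ attached via \eqref{eq: Galois action}. Its $\chi^{-1}$-isotypic projection, for any character $\chi$ of $G$ totally noncritical for $k$, has regulator belonging to $\bbQ(\chi)((k-1)g)\cdot d_F^{1/2}L^\ast(\chi^{-1},1-k)$ by Theorem \ref{thm: Lerch vs Artin}; characters outside the totally noncritical range can be reduced to this case by combining the functional equation of Theorem \ref{thm: Functional equations} with the same formalism. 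Summing over all $\chi$ identifies the determinant $\det_{\bbQ[G]}\bigl(H^1_\sM(\Spec L,\bbQ(k))_\mathrm{fin}\bigr)$ with $L^\ast(1-k)\det_{\bbQ[G]}B_L^+$ inside the right-hand side of \eqref{eq: Borel}, once one knows the $\bbQ[G]$-generation of the polylog specializations. That generation follows from a dimension count: the $\bbR[G]$-span of the regulators has the expected rank by Borel's Theorem \ref{thm: Borel}, and the non-vanishing $L^\ast(\chi^{-1},1-k)\ne 0$ forces $\bbQ[G]$-spanning via the isomorphism \eqref{eq: Borel}.

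The principal obstacles are threefold, in roughly increasing order of difficulty. First, one must develop a theory of motivic plectic cohomology for totally real fields in which a class $\pol_\sM$ can be defined and shown to realize to $\pol$; this should be accessible by combining the Huber--Kings construction of motivic polylogarithms for commutative group schemes with the equivariant and plectic formalism of \Nekovar--Scholl. Second, and most critical, is Conjecture \ref{conj: i^*pol} itself, whose proof presumably reduces to an explicit plectic residue computation of the Hodge realization at a torsion point, carried out component-by-component over $I$ and paralleling the classical calculation on $\bbP^1\setminus\{0,1,\infty\}$. Third, the \emph{polylog-generation} problem---showing that the specializations span the motivic cohomology---is expected to be the hardest step to make unconditional and likely requires further motivic input beyond the purely Hodge-theoretic construction presented here.
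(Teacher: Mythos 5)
The statement you are addressing is a conjecture, not a theorem: the paper does not prove it, and neither does your proposal. What the paper offers in \S\ref{subsec: plectic roadmap} is a conditional reduction, showing that Conjecture \ref{conj: equiv Beilinson conjectures} (equivalently \ref{conj: Beilinson conjectures}) for a character $\chi$ totally noncritical for $k$ is equivalent to the motivicity statement Conjecture \ref{conj: Ler}, which in turn would follow from Conjecture \ref{conj: i^*pol} together with a motivic lift of $\pol$. Your outline reproduces this roadmap in broad strokes — postulate a motivic plectic polylogarithm, specialize at torsion points, use Conjecture \ref{conj: i^*pol} to compute regulators via Theorem \ref{thm: Lerch vs Artin}, then try to span the motivic cohomology — so the overall architecture agrees with the paper's.

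There is, however, a genuine gap in your treatment of characters that are not totally noncritical. You assert that ``characters outside the totally noncritical range can be reduced to this case by combining the functional equation of Theorem \ref{thm: Functional equations} with the same formalism.'' This is not correct: the functional equation relates $L(\psi,s)$ to $L(\overline{\psi},1-s)$, so it exchanges the two sides of the critical strip for the \emph{same} character; it does not turn a partially critical character (one where $\chi(c_\tau)=(-1)^{k-1}$ holds for some but not all $\tau$) into a totally noncritical one. For such $\chi$, the isomorphism in Proposition \ref{over_R_vs_over_C} fails — the $\chi$-isotypic piece of $H^1_{\sD}((\Spec F(\frg))_{/\bbR},\bbR(k))$ has strictly smaller rank than that of the complex version — and the identification of $\bigl(\det_{\bbQ[\Gamma]}B_{F(\frg)}^+\bigr)_\chi$ with $\bbQ(\chi)((k-1)g)$ breaks down. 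The paper is explicit on this point (see the final Remark of \S\ref{subsec: plectic roadmap}): the presented roadmap covers only the totally noncritical case, and extending it is expected to require an equivariant plectic Deligne--Beilinson theory over $\bbR$, not merely the functional equation. Aside from this, your identification of the ``polylog-generation'' problem as the principal remaining obstacle, and your emphasis on Conjecture \ref{conj: i^*pol} as the crucial Hodge-theoretic input, match the paper's assessment.
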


Next we rewrite this conjecture in an inequivariant way. 

\begin{definition}
Let $\chi\colon G\rightarrow\mathbb{C}^\times$ be a character. 
For a $\mathbb{Q}[G]$-module $V$, we set $V_\chi\coloneqq\mathbb{Q}(\chi)\otimes_{\mathbb{Q}[G]}V$, 
where $\bbQ[G]\to\bbQ(\chi)$ is the natural ring homomorphism induced by $\chi$, which will be also denoted by $\chi$. 
\end{definition}

Note that we have a natural isomorphism 
\[
\mathbb{C}\otimes_{\mathbb{Q}(\chi)}V_{\chi} =\mathbb{C}\otimes_{\chi, \mathbb{Q}[G]}V 
\cong \{v\in\mathbb{C}\otimes_\mathbb{Q} V \mid [\gamma](v)=\chi(\gamma)v \text{ for all $\gamma\in G$}\},
\]
under which $1\otimes v$ is mapped to $\displaystyle \frac{1}{\#G}\sum_{\gamma\in G}\chi^{-1}(\gamma)[\gamma](v)$. 
In other words, $V_\chi$ gives a $\mathbb{Q}(\chi)$-structure of $\mathbb{C}\otimes_{\chi, \mathbb{Q}[G]}V$ and 
of the ``$\chi$-part'' of $\mathbb{C}\otimes_\mathbb{Q} V$.
For instance, the one-dimensional $\mathbb{Q}(\chi)$-vector space $\bigl(\mathrm{det}_{\mathbb{Q}[G]}B_{L}\bigr)_\chi$ gives a $\mathbb{Q}(\chi)$-structure of the one-dimensional $\mathbb{C}$-vector space
\[
\mathbb{C}\otimes_{\chi, \mathbb{Q}[G]}\mathrm{det}_{\mathbb{Q}[G]}B_{L} =
\mathbb{C}\otimes_{\chi, \mathbb{R}[G]}\mathrm{det}_{\mathbb{R}[G]}\Bigl(H^1_{\mathscr{D}}((\Spec L)_{/\mathbb{C}}, \mathbb{R}(k))\Bigr).
\]
Similarly, $\bigl(\mathrm{det}_{\mathbb{Q}[G]}B^+_{L}\bigr)_\chi$ gives a $\mathbb{Q}(\chi)$-structure of $\mathbb{C}\otimes_{\chi, \mathbb{R}[G]}\mathrm{det}_{\mathbb{R}[G]}\Bigl(H^1_{\mathscr{D}}((\Spec L)_{/\mathbb{R}}, \mathbb{R}(k))\Bigr)$.

Conjecture \ref{conj: equiv Beilinson conjectures} is equivalent to the following:

\begin{conjecture}\label{conj: Beilinson conjectures}
For any character $\chi:G\rightarrow\mathbb{C}^\times$, 
 under the isomorphism
\[
\bbC\otimes_{\chi,\mathbb{Q}[G]}\det_{\mathbb{Q}[G]}(H^1_{\mathscr{M}}(\Spec L,\mathbb{Q}(k))_{\mathrm{fin}}) \cong
\bbC\otimes_{\chi,\mathbb{R}[G]}\det_{\mathbb{R}[G]}(H^1_{\mathscr{D}}((\Spec L)_{/\mathbb{R}},\mathbb{R}(k))),
\]
the equality
\[
\bigl(\det_{\mathbb{Q}[G]}(H^1_{\mathscr{M}}(\Spec L,\mathbb{Q}(k))_{\mathrm{fin}})\bigr)_\chi 
= L^\ast(\chi^{-1},1-k)\bigl(\det_{\mathbb{Q}[G]}B^+_L\bigr)_\chi
\]
holds.
\end{conjecture}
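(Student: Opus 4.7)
The plan is to derive Conjecture \ref{conj: Beilinson conjectures} from Conjecture \ref{conj: i^*pol} together with an additional hypothesis of \emph{motivicity} for the plectic polylogarithm class $\pol$, following the paradigm of the Beilinson--Deligne proof of Beilinson's conjecture for Dirichlet $L$-values via the cyclotomic polylogarithm. Concretely, I would assume the existence of a motivic lift
\[
\pol_{\mathscr{M}} \in H^{2g-1}_{\mathscr{M}}(U/\Ft, \bbLog_{\mathscr{M}})
\]
in a suitable equivariant motivic cohomology with coefficients in a motivic avatar $\bbLog_{\mathscr{M}}$ of the logarithm sheaf, whose plectic Hodge realization is the class $\pol$ of Definition \ref{def: polylog class}. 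This is the analogue for totally real fields of the motivic cyclotomic polylogarithm, and its construction is the principal input not already supplied by the paper.

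Given this, I would specialize at torsion points. Fix an integer $k > 1$ and a primitive torsion point $\eta_0 \in \sT_0[\frg]$; applying the motivic analog of Lemma \ref{lem: splitting plectic cohomology} to $i^*_{\eta_0\Delta}\pol_{\mathscr{M}}$ and projecting to the weight-$(k-1)g$ component produces an element
\[
\mathrm{eis}_k(\eta_0) \in H^1_{\mathscr{M}}(\Spec F(\eta_0), \bbQ(k))_{\mathrm{fin}}.
\]
By Conjecture \ref{conj: i^*pol} applied after taking the Beilinson regulator, the image of $\mathrm{eis}_k(\eta_0)$ in $H^1_{\sD}((\Spec F(\eta_0))_{/\bbR}, \bbR(k))$ equals, up to an explicit rational multiple, the vector $d_F^{1/2}\cL^\infty(\eta_0, k)$ in the natural basis coming from $B^+_{F(\eta_0)}$.

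Now let $\chi\colon G = \Gal(L/F) \to \bbC^\times$ correspond under Artin reciprocity to $\psi_\chi$ of conductor dividing $\frg$, with $L \subseteq F(\frg)$, and choose $\eta_0$ so that its Galois orbit generates $L$. Form the $\chi$-twisted trace
\[
e_\chi(\eta_0, k) \coloneqq \sum_{\gamma \in G} \chi(\gamma)\,\gamma \cdot \mathrm{eis}_k(\eta_0)
\in \bbQ(\chi) \otimes_{\bbQ[G]} H^1_{\mathscr{M}}(\Spec L, \bbQ(k))_{\mathrm{fin}}.
\]
Combining the regulator formula above with Theorem \ref{thm: Lerch vs Artin}, the Beilinson regulator of $e_\chi(\eta_0, k)$ equals $d_F^{1/2} L^*(\chi^{-1}, 1-k)$ times a $\bbQ(\chi)$-rational generator of $(\det_{\bbQ[G]} B^+_L)_\chi$. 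To conclude Conjecture \ref{conj: Beilinson conjectures}, I would invoke Borel's Theorem \ref{thm: Borel}, which ensures that $(H^1_{\mathscr{M}}(\Spec L, \bbQ(k))_{\mathrm{fin}})_\chi$ is one-dimensional over $\bbQ(\chi)$, together with the non-vanishing of $L^*(\chi^{-1}, 1-k)$ at the noncritical point $1-k$, to see that $e_\chi(\eta_0, k)$ generates this line, whence the claimed identity of $\bbQ(\chi)$-structures.

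The main obstacle is unquestionably the motivicity of $\pol$: to construct $\pol_{\mathscr{M}}$ one needs a theory of equivariant motivic sheaves on $U/\Ft$ with a motivic logarithm sheaf analogous to $\bbLog$, which is currently unavailable. A secondary but serious obstacle is that Conjecture \ref{conj: i^*pol} itself is open for $g > 1$; the expected proof would presumably proceed via distribution/norm compatibilities of the motivic polylogarithm paralleling the cyclotomic story, and thus is entangled with the construction of $\pol_{\mathscr{M}}$. A subsidiary issue is to verify the precise rationality factors in $\bbQ(\chi)$ arising from the chain of identities in Corollaries \ref{cor: Hecke vs Lerch} and \ref{cor: Primitive vs imprimitive}, which is routine but must be tracked carefully to match the Beilinson regulator with $L^*(\chi^{-1}, 1-k)$ on the nose.
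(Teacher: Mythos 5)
The statement you are addressing is a \emph{conjecture} in the paper, not a theorem: Conjecture \ref{conj: Beilinson conjectures} is introduced merely as a $\chi$-isotypic reformulation of the equivariant Beilinson conjecture (Conjecture \ref{conj: equiv Beilinson conjectures}), and the paper does not prove it. The only argument the paper supplies at that point is the (unwritten, routine) equivalence between the $\bbQ[G]$-module statement and its $\chi$-component statement; the actual content of the conjecture is left open. Your proposal therefore cannot be compared to a ``paper's proof'' because none exists, and it should be judged as a conditional roadmap toward the conjecture rather than as a proof.

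Read as a roadmap, your sketch is essentially what the paper itself lays out in \S\ref{subsec: plectic roadmap}: motivicity of $\pol$ together with Conjecture \ref{conj: i^*pol} would give motivic classes whose regulator matches $d_F^{1/2}\cL^\infty(\gamma(\eta_0),k)$, and then Theorems \ref{thm: Lerch vs Artin} and \ref{thm: Lerch version of Beilinson} translate this into Conjecture \ref{conj: Ler} and hence into the Beilinson-type statement. However, there is a concrete gap relative to what you claim: you state Conjecture \ref{conj: Beilinson conjectures} ``for any character $\chi$,'' but every intermediate step you invoke is only available in the \emph{totally noncritical} case. Theorem \ref{thm: Lerch vs Artin} is stated and proved only for $\chi$ totally noncritical for $k$; the isomorphism in Proposition \ref{over_R_vs_over_C} between the $\chi$-components of Deligne cohomology over $\bbR$ and over $\bbC$ likewise holds only when $\chi(c_\tau)=(-1)^{k-1}$ for all $\tau$; and the paper's Remark at the very end of \S\ref{subsec: plectic roadmap} explicitly flags that the non-totally-noncritical case is out of reach without further machinery (a plectic Deligne--Beilinson cohomology over $\bbR$). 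So, even granting the motivicity of $\pol$ and Conjecture \ref{conj: i^*pol}, your argument would establish Conjecture \ref{conj: Beilinson conjectures} only for totally noncritical $\chi$, not for an arbitrary character; you should either restrict your claim or explain how to handle the remaining $\chi$, which the paper itself does not do.

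One further small caution: you invoke Borel's Theorem~\ref{thm: Borel} to say $(H^1_{\mathscr{M}}(\Spec L,\bbQ(k))_{\mathrm{fin}})_\chi$ is one-dimensional. After tensoring with $\bbC$ via $\chi$, the dimension of the $\chi$-part of $\bigoplus_\tau\bbR(k-1)^+$ depends on the parities $\chi(c_\tau)$; in the totally noncritical case it is indeed $g$-dimensional over $\bbR$ in a way that yields a one-dimensional determinant line via Proposition~\ref{over_R_vs_over_C}, but for general $\chi$ this bookkeeping changes and your final non-vanishing argument would have to be redone. This is another place where the totally-noncritical restriction is genuinely used.
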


In the following, we will interpret this conjecture in terms of Lerch zeta functions in the case where $G=\Gamma\coloneqq\Gal(F(\frg)/F)$.

First we describe the determinant module of Deligne cohomology more explicitly. 
For each $\tau\in I=\Hom(F,\mathbb{C})$, we fix its lifting $\tilde{\tau}\colon F(\mathfrak{g})\rightarrow\mathbb{C}$, 
and denote by $c_\tau\in\Gamma$ the ``complex conjugate'', that is, the homomorphism making the diagram 
\[
\xymatrix{
F(\mathfrak{g}) \ar[r]^-{\tilde{\tau}} \ar[d]_{c_\tau}& \mathbb{C} \ar[d]^{\text{conj.}} \\
F(\mathfrak{g}) \ar[r]^-{\tilde{\tau}} & \mathbb{C}
}
\]
commute. Note that, since $\Gamma$ is abelian, $c_\tau$ is independent of the choice of the lifting $\tilde{\tau}$. 
Note also that $c'_\tau\in \Cl_F^+(\frg)$ (see the first paragraph of \S\ref{subsub:Functional equation}) 
is mapped by $\mathrm{Art}_F$ to $c_\tau\in\Gamma$. 

\begin{definition}
We define an isomorphism 
\[
j_1\colon\bigoplus_{\sigma\colon F(\mathfrak{g})\rightarrow\mathbb{C}} \mathbb{R}(k-1) 
\stackrel{\cong}{\longrightarrow} \bigoplus_{\tau\in I} \mathbb{R}(k-1)\otimes_\mathbb{R}\mathbb{R}[\Gamma]
\]
via $(a_\sigma)_\sigma\mapsto (\sum_{\alpha\in\Gamma}a_{\tilde{\tau}\circ\alpha^{-1}}\otimes[\alpha])_{\tau}$.
Note that this map depends on the liftings $\tilde{\tau}$. 
\end{definition}

\begin{lemma}\label{lem:j_1}
The map $j_1$ is compatible with:
\begin{enumerate}
\item the left $\Gamma$-actions, where $\gamma\in\Gamma$ acts as 
\begin{align*}
(a_\sigma)_\sigma&\longmapsto (a_{\sigma\gamma})_\sigma \text{ on the left hand side and }\\
[\alpha]&\longmapsto[\gamma\alpha] \text{ on the $\tau$-component of the right hand side}, and 
\end{align*}
\item the involutions, which are defined by 
\begin{align*}
(a_\sigma)_\sigma&\longmapsto (\overline{a_{\overline{\sigma}}})_\sigma=((-1)^{k-1}a_{\overline{\sigma}})_\sigma 
\text{ on the left hand side and }\\
a\otimes[\alpha]
&\longmapsto \overline{a}\otimes[\alpha c_\tau]
=(-1)^{k-1}a\otimes[\alpha c_\tau]
\text{ on the $\tau$-component of the right hand side}. 
\end{align*}
\end{enumerate}
\end{lemma}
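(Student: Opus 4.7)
The plan is to verify both compatibilities by direct computation, using the defining property of $c_\tau$ (namely $\overline{\tilde\tau} = \tilde\tau\circ c_\tau$) and the commutativity of $\Gamma$. Both checks reduce to simple reindexing of the sums appearing in the definition of $j_1$.

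For compatibility (1), starting from $(a_\sigma)_\sigma$, its image under $\gamma\in\Gamma$ on the left is $(a_{\sigma\gamma})_\sigma$, whose $\tau$-component under $j_1$ is $\sum_{\alpha\in\Gamma} a_{\tilde\tau\alpha^{-1}\gamma}\otimes[\alpha]$. Substituting $\beta=\gamma^{-1}\alpha$ and using $\Gamma$ abelian yields $\sum_{\beta\in\Gamma} a_{\tilde\tau\beta^{-1}}\otimes[\gamma\beta]$, which is exactly the $\tau$-component of $\gamma\cdot j_1((a_\sigma)_\sigma)$ by the right-hand action $[\alpha]\mapsto[\gamma\alpha]$.

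For compatibility (2), the key point is to express $\overline{\tilde\tau\circ\alpha^{-1}}$ in a form compatible with the parametrization by $\Gamma$. By the definition of $c_\tau$ we have $\overline{\tilde\tau} = \tilde\tau\circ c_\tau$, and since $\Gamma$ is abelian and $c_\tau$ is an involution,
\[
\overline{\tilde\tau\circ\alpha^{-1}} = \tilde\tau\circ c_\tau\circ\alpha^{-1} = \tilde\tau\circ(\alpha c_\tau)^{-1}.
\]
Applying the left involution to $(a_\sigma)_\sigma$ and then $j_1$ gives, at the $\tau$-component,
\[
\sum_{\alpha\in\Gamma}(-1)^{k-1}a_{\tilde\tau\circ(\alpha c_\tau)^{-1}}\otimes[\alpha].
\]
The substitution $\beta=\alpha c_\tau$ (using $c_\tau^2=1$) transforms this into $\sum_{\beta\in\Gamma}(-1)^{k-1}a_{\tilde\tau\beta^{-1}}\otimes[\beta c_\tau]$, which is precisely the image of $j_1((a_\sigma)_\sigma)$ under the right-hand involution $a\otimes[\alpha]\mapsto(-1)^{k-1}a\otimes[\alpha c_\tau]$.

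Both computations are routine; the only subtlety is bookkeeping with $c_\tau$, which is made harmless by the abelianness of $\Gamma$. There is no substantive obstacle to overcome, so I expect a short proof of just a few lines consisting essentially of the two change-of-variable arguments above.
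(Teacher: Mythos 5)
Your computation is correct, and it matches the paper's (omitted) proof, which is stated simply as "Straightforward from the definitions." Both parts amount exactly to the reindexings you carry out, using $\overline{\tilde\tau}=\tilde\tau\circ c_\tau$, $c_\tau^2=\mathrm{id}$, and commutativity of $\Gamma$.
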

\begin{proof}
Straightforward from the definitions. 
\end{proof}

\begin{corollary}\label{cor: determinant}
By fixing a total order $\mathbf{o}$ of $I$, we have an isomorphism $j_1^{\mathbf{o}}$
\begin{equation}\label{eq: isomorphism 3}
\mathrm{det}_{\mathbb{R}[\Gamma]}\Bigl(H^1_{\mathscr{D}}((\Spec F(\mathfrak{g}))_{/\mathbb{C}}, \mathbb{R}(k))\Bigr)\cong \mathrm{det}_{\mathbb{R}[\Gamma]}(\mathbb{R}(k-1)\otimes_\mathbb{R}\mathbb{R}[\Gamma])^{\otimes I}\cong \mathbb{R}((k-1)g)\otimes_\mathbb{R}\mathbb{R}[\Gamma],
\end{equation}
and, under this isomorphism, we have an equality
\[
\mathrm{det}_{\mathbb{Q}[\Gamma]}(B_{F(\mathfrak{g})})=\mathbb{Q}((k-1)g)\otimes_\mathbb{Q}\mathbb{Q}[\Gamma].
\]
\end{corollary}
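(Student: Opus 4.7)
The plan is to obtain both statements by taking $\mathbb{R}[\Gamma]$-determinants of the isomorphism supplied by Lemma \ref{lem:j_1}, then iterating the obvious tensor-product computation. Recall that the Deligne cohomology in question is $\bigoplus_{\sigma\colon F(\frg)\to\bbC}\bbR(k-1)$ with the $\bbR[\Gamma]$-action described just before the definition of $j_1$, and that each $\Gamma$-orbit on the set of embeddings is free of size $|\Gamma|$, yielding exactly $g$ orbits. Consequently the source of $j_1$ is a free $\bbR[\Gamma]$-module of rank $g$, matching the target, which is visibly the direct sum of $g$ free rank-one $\bbR[\Gamma]$-modules $M_\tau\coloneqq \bbR(k-1)\otimes_\bbR\bbR[\Gamma]$.

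The first step is to fix the total order $\mathbf{o}$ on $I$ and apply the standard identification
\[
    \det_{\bbR[\Gamma]}\Bigl(\bigoplus_{\tau\in I} M_\tau\Bigr)\cong\bigotimes_{\tau\in I,\,\bbR[\Gamma]}\det_{\bbR[\Gamma]}(M_\tau)=\bigotimes_{\tau\in I,\,\bbR[\Gamma]}M_\tau,
\]
where the ordered tensor product is taken with respect to $\mathbf{o}$ (a different order would alter the identification by the sign of the relevant permutation). Applying this after $j_1$ gives the first isomorphism in \eqref{eq: isomorphism 3}.

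Next I compute the iterated tensor product by the elementary identity
\[
    (\bbR(a)\otimes_\bbR\bbR[\Gamma])\otimes_{\bbR[\Gamma]}(\bbR(b)\otimes_\bbR\bbR[\Gamma])\cong \bbR(a+b)\otimes_\bbR\bbR[\Gamma],
\]
which follows from the isomorphism $\bbR[\Gamma]\otimes_{\bbR[\Gamma]}N\cong N$ applied to $N=\bbR(b)\otimes_\bbR\bbR[\Gamma]$. Iterating $g$ times collapses $\bigotimes_{\tau,\,\bbR[\Gamma]}M_\tau$ to $\bbR((k-1)g)\otimes_\bbR\bbR[\Gamma]$, giving the second isomorphism.

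Finally, for the $\bbQ$-structure claim, I observe that both the formula defining $j_1$ and the tensor-product identity above are defined over $\bbQ$. Hence $j_1$ restricts to an $\bbQ[\Gamma]$-equivariant isomorphism $B_{F(\frg)}=\bigoplus_\sigma\bbQ(k-1)\xrightarrow{\cong}\bigoplus_{\tau\in I}\bbQ(k-1)\otimes_\bbQ\bbQ[\Gamma]$, and repeating the same determinant-and-tensor-product computation over $\bbQ[\Gamma]$ identifies $\det_{\bbQ[\Gamma]}(B_{F(\frg)})$ with $\bbQ((k-1)g)\otimes_\bbQ\bbQ[\Gamma]$ inside its scalar extension to $\bbR[\Gamma]$. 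The only real point to watch is that the sign data in the identification $\det(\bigoplus M_\tau)\cong \bigotimes M_\tau$ must be controlled uniformly by the single choice $\mathbf{o}$; once this is done the $\bbQ$-lattice behaves compatibly with the $\bbR$-identification, and there is no further obstacle.
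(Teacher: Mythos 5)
Your proof is correct and matches what the paper intends (the paper gives no explicit proof of Corollary \ref{cor: determinant}, treating it as immediate from Lemma \ref{lem:j_1} and the definition of $j_1$). You correctly note that taking $\det_{\bbR[\Gamma]}$ of the isomorphism $j_1$, ordering the factors by $\mathbf{o}$, and iterating $\bbR(a)\otimes_\bbR\bbR(b)\cong\bbR(a+b)$ across the $g$ rank-one factors produces $\bbR((k-1)g)\otimes_\bbR\bbR[\Gamma]$, and that the rationality of $j_1$'s structure constants (they are just $0$s and $1$s) carries the $\bbQ$-lattice $B_{F(\frg)}$ through the same computation.
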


By Corollary \ref{cor: determinant}, we obtain a natural isomorphism 
\[
\mathbb{C}\otimes_{\chi, \mathbb{R}[\Gamma]}\mathrm{det}_{\mathbb{R}[\Gamma]}\Bigl(H^1_{\mathscr{D}}((\Spec F(\mathfrak{g}))_{/\mathbb{C}}, \mathbb{R}(k))\Bigr)
\cong \mathbb{C}\otimes_{\chi,\mathbb{R}[\Gamma]}\bigl(\mathbb{R}((k-1)g)\otimes_\mathbb{R}\mathbb{R}[\Gamma]\bigr) \cong \mathbb{C}
\]
(the second isomorphism sending $\lambda\otimes a\otimes [\gamma]$ to $\lambda a \chi(\gamma)$), 
and through this isomorphism, $\bigl(\mathrm{det}_{\mathbb{Q}[\Gamma]}(B_{F(\mathfrak{g})})\bigr)_\chi$ is mapped to 
$\mathbb{Q}(\chi)((k-1)g)=\mathbb{Q}(\chi)\cdot (2\pi i)^{(k-1)g}\subset\mathbb{C}$.

Next we investigate the case for Deligne cohomology over $\bbR$.
For $\tau\in I$, we denote by $(\mathbb{Q}(k-1)\otimes_\mathbb{Q}\mathbb{Q}[\Gamma])_\tau^+$ 
the $\mathbb{Q}[\Gamma]$-submodule of $\mathbb{Q}(k-1)\otimes_\mathbb{Q}\mathbb{Q}[\Gamma]$ 
consisting of elements satisfying 
\[
\sum_{\alpha\in\Gamma}a_\alpha\otimes [\alpha] = \sum_{\alpha\in\Gamma}\overline{a_\alpha}\otimes [\alpha c_\tau].
\]
We also define $(\mathbb{R}(k-1)\otimes_\mathbb{R}\mathbb{R}[\Gamma])_\tau^+$ in a similar way.

\begin{proposition}\label{over_R_vs_over_C}
We have 
\[
\bigl((\mathbb{Q}(k-1)\otimes_\mathbb{Q}\mathbb{Q}[\Gamma])_\tau^+\bigr)_\chi\cong
\begin{cases}
\mathbb{Q}(\chi)(k-1) &  \text{if $\chi(c_\tau)=(-1)^{k-1}$,} \\
0  &  \text{if $\chi(c_\tau)\neq (-1)^{k-1}$.} 
\end{cases}
\]
In particular, if $\chi$ is totally noncritical for $k$, we have a natural isomorphism
\[
\mathbb{C}\otimes_{\chi, \mathbb{R}[\Gamma]}\mathrm{det}_{\mathbb{R}[\Gamma]}
\Bigl(H^1_{\mathscr{D}}((\Spec F(\mathfrak{g}))_{/\mathbb{R}}, \mathbb{R}(k))\Bigr) \cong
\mathbb{C}\otimes_{\chi, \mathbb{R}[\Gamma]}\mathrm{det}_{\mathbb{R}[\Gamma]}
\Bigl(H^1_{\mathscr{D}}((\Spec F(\mathfrak{g}))_{/\mathbb{C}}, \mathbb{R}(k))\Bigr). \]
\end{proposition}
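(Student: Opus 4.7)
The first assertion is essentially linear algebra via an idempotent. Set $V\coloneqq \bbQ(k-1)\otimes_\bbQ\bbQ[\Gamma]$. The plan is to identify the involution $\iota_\tau\colon a\otimes[\alpha]\mapsto\ol{a}\otimes[\alpha c_\tau]$ on $V$ with multiplication by the element $(-1)^{k-1}[c_\tau]\in\bbQ[\Gamma]$, using that complex conjugation on $\bbQ(k-1)=\bbQ\cdot(2\pi i)^{k-1}$ acts as $(-1)^{k-1}$ and that $\Gamma$ is abelian. Consequently $V^+_\tau=e_\tau^+\cdot V$ is cut out by the idempotent $e_\tau^+\coloneqq \tfrac{1}{2}\bigl(1+(-1)^{k-1}[c_\tau]\bigr)\in\bbQ[\Gamma]$. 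Applying the exact base-change functor $(-)_\chi=\bbQ(\chi)\otimes_{\chi,\bbQ[\Gamma]}(-)$ turns $[c_\tau]$ into the scalar $\chi(c_\tau)\in\{\pm 1\}$, so $e_\tau^+$ becomes $1$ or $0$ according to whether $\chi(c_\tau)=(-1)^{k-1}$ or not. Since $V_\chi=\bbQ(\chi)\otimes_\bbQ\bbQ(k-1)=\bbQ(\chi)(k-1)$, the claimed formula for $\bigl((\bbQ(k-1)\otimes_\bbQ\bbQ[\Gamma])^+_\tau\bigr)_\chi$ follows.

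For the second assertion, I would invoke Lemma \ref{lem:j_1}(2) to see that under the $\bbR[\Gamma]$-isomorphism $j_1^{\mathbf{o}}$ of Corollary \ref{cor: determinant}, the involution on $H^1_{\sD}((\Spec F(\frg))_{/\bbC},\bbR(k))\cong\bigoplus_\tau\bbR(k-1)\otimes_\bbR\bbR[\Gamma]$ is the direct sum of the $\bbR$-linear analogues of $\iota_\tau$, acting summand-wise on each $\tau$-component. Consequently the canonical inclusion of Deligne cohomologies is identified with
\[
\bigoplus_{\tau\in I}(\bbR(k-1)\otimes_\bbR\bbR[\Gamma])^+_\tau \hookrightarrow \bigoplus_{\tau\in I}(\bbR(k-1)\otimes_\bbR\bbR[\Gamma]).
\]
When $\chi$ is totally noncritical for $k$, both cases ($u(\chi)=0$ for odd $k$ and $u(\chi)=g$ for even $k$) amount to $\chi(c_\tau)=(-1)^{k-1}$ for every $\tau\in I$. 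The first part applied to each summand then shows that this inclusion becomes an isomorphism of $\bbC$-vector spaces after $\bbC\otimes_{\chi,\bbR[\Gamma]}(-)$. The desired natural isomorphism of determinant lines follows, since the formation of $\det_{\bbR[\Gamma]}$ commutes with this base change.

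The calculation is essentially routine; the only small point is the compatibility of the $+$-decomposition with base change along $\chi$, which is immediate from the existence of the idempotent $e_\tau^+\in\bbQ[\Gamma]$ and the fact that $[c_\tau]^2=1$. I do not anticipate any serious obstacle.
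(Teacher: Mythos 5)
Your proof is correct and follows essentially the same route as the paper: identify the involution on $\bbQ(k-1)\otimes\bbQ[\Gamma]$ with multiplication by $(-1)^{k-1}[c_\tau]$, use the resulting idempotent decomposition, transport the $+$-part structure through $j_1$ to the cohomology groups, and then pass to determinants via $\bbC\otimes_{\chi,\bbR[\Gamma]}\det_{\bbR[\Gamma]}(V)\cong\det_\bbC(\bbC\otimes_{\chi,\bbR[\Gamma]}V)$. Your idempotent argument makes explicit what the paper dismisses as ``straightforward'' for the first assertion, which is a helpful addition.
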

\begin{proof}
The proof of the former part is straightforward. 
For the latter part, first note that, 
for any finitely generated  $\bbR[\Gamma]$-module $V$, we have 
\[\bbC\otimes_{\chi,\bbR[\Gamma]}\det_{\bbR[\Gamma]}(V)\cong \det_{\bbC}(\bbC\otimes_{\chi,\bbR[\Gamma]}V). \]
Thus it suffices to show that 
\[
\bbC\otimes_{\chi,\bbR[\Gamma]}H^1_{\mathscr{D}}((\Spec F(\mathfrak{g}))_{/\mathbb{R}}, \mathbb{R}(k))
\cong
\bbC\otimes_{\chi,\bbR[\Gamma]}H^1_{\mathscr{D}}((\Spec F(\mathfrak{g}))_{/\mathbb{C}}, \mathbb{R}(k)). 
\]
Now we have 
\[
H^1_{\mathscr{D}}((\Spec F(\mathfrak{g}))_{/\mathbb{C}}, \mathbb{R}(k))
\cong \bigoplus_{\sigma\colon F(\mathfrak{g})\rightarrow\mathbb{C}} \mathbb{R}(k-1) 
\stackrel{j_1}{\cong} \bigoplus_{\tau\in I} \mathbb{R}(k-1)\otimes_\mathbb{R}\mathbb{R}[\Gamma]
\]
and 
\[
H^1_{\mathscr{D}}((\Spec F(\mathfrak{g}))_{/\mathbb{R}}, \mathbb{R}(k))
\cong \Biggl(\bigoplus_{\sigma\colon F(\mathfrak{g})\rightarrow\mathbb{C}} \mathbb{R}(k-1) \Biggr)^+
\stackrel{j_1}{\cong} \bigoplus_{\tau\in I} \bigl(\mathbb{R}(k-1)\otimes_\mathbb{R}\mathbb{R}[\Gamma]\bigr)_\tau^+ 
\]
by Lemma \ref{lem:j_1}. 
In particular, if $\chi$ is totally noncritical for $k$, the former part implies that 
\begin{align*}
\bbC\otimes_{\chi,\bbR[\Gamma]}H^1_{\mathscr{D}}((\Spec F(\mathfrak{g}))_{/\mathbb{R}}, \mathbb{R}(k))
&\cong \bigoplus_{\tau\in I}\bbC\otimes_{\chi,\bbR[\Gamma]}
\bigl(\mathbb{R}(k-1)\otimes_\mathbb{R}\mathbb{R}[\Gamma]\bigr)_\tau^+\cong \bigoplus_{\tau\in I}\bbC\\
&\cong \bigoplus_{\tau\in I}\bbC\otimes_{\chi,\bbR[\Gamma]}
\bigl(\mathbb{R}(k-1)\otimes_\mathbb{R}\mathbb{R}[\Gamma]\bigr)\\
&\cong\bbC\otimes_{\chi,\bbR[\Gamma]}H^1_{\mathscr{D}}((\Spec F(\mathfrak{g}))_{/\mathbb{C}}, \mathbb{R}(k)). \qedhere
\end{align*}
\end{proof}

Now we introduce an element by using Lerch zeta values. Fix an element $\eta_0\in\sT_0[\frg]$, and put
\begin{align*}
\mathbf{L}(\eta_0, k) 
&\coloneqq \sum_{\gamma\in\Gamma}\cL^\infty(\gamma(\eta_0), k)\otimes[\gamma] \\
&= \frac{1}{2}\sum_{\gamma\in\Gamma}\bigl(\cL(\gamma(\eta_0), k)+(-1)^{(k-1)g}\overline{\cL(\gamma(\eta_0), k)}\bigr)
\otimes[\gamma] \in \bbR((k-1)g)\otimes_\bbR\bbR[\Gamma].
\end{align*}

We also denote by $\mathbf{L}_{\sD}(\eta_0, k)$ the element of 
$\mathrm{det}_{\mathbb{R}[\Gamma]}
\Bigl(H^1_{\mathscr{D}}((\Spec F(\mathfrak{g}))_{/\mathbb{C}}, \mathbb{R}(k)\Bigr)$ 
corresponding to $\mathbf{L}(\eta_0, k)$ under the isomorphism $j^{\mathbf{o}}_1$ 
in Corollary \ref{cor: determinant}. 
Then we have the following interpretation of a part of Conjecture \ref{conj: Beilinson conjectures} 
in terms of Lerch zeta values: 

\begin{theorem}\label{thm: Lerch version of Beilinson}
Let $\chi\colon\Gamma\rightarrow\bbC$ be a character totally noncritical for $k$. Then, under the isomorphism 
\[
\mathbb{C}\otimes_{\chi,\mathbb{R}[\Gamma]}\bigl(\mathbb{R}((k-1)g)\otimes_\mathbb{R}\mathbb{R}[\Gamma]\bigr)
\stackrel{\cong}{\longrightarrow} \mathbb{C};\quad
\lambda\otimes a\otimes[\gamma]\longmapsto \lambda\cdot a\cdot\chi(\gamma),
\]
the element $1\otimes \mathbf{L}(\eta_0, k)$ is mapped to $\sum_{\gamma\in\Gamma}\chi(\gamma)\mathcal{L}(\gamma(\eta_0), k)$.

In particular, for a character $\chi$ totally noncritical for $k$, 
Conjecture \ref{conj: Beilinson conjectures} is equivalent to the assertion that
\[
1\otimes d_F^{1/2}\mathbf{L}_{\sD}(\eta_0, k)\in
\bbC\otimes_{\chi,\bbR[\Gamma]}\det_{\bbR[\Gamma]}\bigl(H^1_{\sD}((\Spec F(\frg))_{/\bbC},\bbR(k))\bigr)
\]
belongs to
$\bigl(\mathrm{det}_{\mathbb{Q}[\Gamma]}
(H^1_{\mathscr{M}}(\Spec F(\frg),\mathbb{Q}(k))_\mathrm{fin})\bigr)_\chi$ through the isomorphisms 
in Proposition \ref{over_R_vs_over_C} and Conjecture \ref{conj: Beilinson conjectures}.
\end{theorem}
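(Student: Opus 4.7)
For the first assertion, the plan is a direct unwinding of definitions. By the definition of the tensor product $\bbC\otimes_{\chi,\bbR[\Gamma]}$, each generator $1\otimes\bigl(\cL^\infty(\gamma(\eta_0),k)\otimes[\gamma]\bigr)$ is sent to $\chi(\gamma)\cL^\infty(\gamma(\eta_0),k)\in\bbC$ under the stated isomorphism, so the image of $1\otimes\mathbf{L}(\eta_0,k)$ is $\sum_{\gamma\in\Gamma}\chi(\gamma)\cL^\infty(\gamma(\eta_0),k)$. To replace $\cL^\infty$ by $\cL$ under the totally noncritical hypothesis, I would first observe that term-by-term conjugation of the defining series (convergent for $k\geq 2$) gives $\overline{\cL(\eta,k)}=\cL(\bar\eta,k)$, where $\bar\eta(\alpha)=\overline{\eta(\alpha)}=\eta(-\alpha)$ is the pointwise complex-conjugate character. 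The key identification is that the involution $\eta\mapsto\bar\eta$ on $\sT_0[\frg]$ coincides with the action of $c\coloneqq\prod_{\tau\in I}c_\tau\in\Gamma$, which follows from the compatibility of $\mathrm{Art}_F$ with archimedean local reciprocity (equivalently, from the Galois-equivariance of values of Hecke characters as embodied in Proposition \ref{prop: The property of Gauss sums}). Re-indexing the sum by $\gamma\mapsto c^{-1}\gamma$ and using the totally noncritical hypothesis $\chi(c_\tau)=(-1)^{k-1}$ to compute $\chi(c)=(-1)^{(k-1)g}$, the two summands in the definition of $\cL^\infty$ collapse to a single copy of $\cL(\gamma(\eta_0),k)$, yielding the desired equality.

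For the second assertion, Theorem \ref{thm: Lerch vs Artin} asserts that $\sum_\gamma\chi(\gamma)\cL(\gamma(\eta_0),k)$ lies in the one-dimensional $\bbQ(\chi)$-subspace $\bbQ(\chi)((k-1)g)\cdot d_F^{1/2}L^\ast(\chi^{-1},1-k)\subset\bbC$. Combined with the first assertion, this already places the image of $1\otimes d_F^{1/2}\mathbf{L}(\eta_0,k)$ in that line. I would then invoke Corollary \ref{cor: determinant} to identify $\bigl(\det_{\bbQ[\Gamma]}B_{F(\frg)}\bigr)_\chi\subset\bbC$ with $\bbQ(\chi)((k-1)g)$, and Proposition \ref{over_R_vs_over_C} to replace $B^+_{F(\frg)}$ by $B_{F(\frg)}$ after extension of scalars to $\bbC$, which is permissible exactly because $\chi$ is totally noncritical. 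Together these identifications show that $1\otimes d_F^{1/2}\mathbf{L}_{\sD}(\eta_0,k)$ automatically lies in $L^\ast(\chi^{-1},1-k)\bigl(\det_{\bbQ[\Gamma]}B^+_{F(\frg)}\bigr)_\chi$. The equivalence with Conjecture \ref{conj: Beilinson conjectures} then reduces to the statement that this element in fact \emph{generates} the $\bbQ(\chi)$-line; for this I would check $L^\ast(\chi^{-1},1-k)\ne 0$ by analyzing the functional equation Theorem \ref{thm: Functional equations} (noting the trivial zeros of the Gamma factors force a zero of $L(\chi^{-1},s)$ of order exactly $g$ at $s=1-k$ in the totally noncritical regime) together with the nonvanishing $L(\chi,k)\ne 0$ for $k\geq 2$ from the Euler product, and verify the nonvanishing of the Lerch sum itself by chaining Corollary \ref{cor: Hecke vs Lerch} with Corollary \ref{cor: Primitive vs imprimitive} under a mild genericity condition on the Euler factors at primes dividing $\frg/\frg_0$.

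The main technical obstacle is the identification, used in the first paragraph, of the pointwise complex conjugation on $\sT_0[\frg]$ with the specific Galois element $\prod_\tau c_\tau\in\Gamma$. This requires a careful appeal to the compatibility of the global Artin map with the archimedean local reciprocity maps in the precise idele-theoretic normalization used in the paper, and is the one place where one cannot get away with formal manipulation alone. A secondary subtlety is to exclude the degenerate situations in which the Euler factor in Corollary \ref{cor: Primitive vs imprimitive} vanishes at $s=k$, which would force the Lerch sum to be zero and render the claimed equivalence vacuous; outside such exceptional loci, the translation between the Lerch side and the Beilinson side is essentially a formal bookkeeping of one-dimensional $\bbQ(\chi)$-lines inside a common $\bbC$-line.
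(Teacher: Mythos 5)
Your first paragraph reproduces the paper's argument almost verbatim: the identification of pointwise complex conjugation on $\sT_0[\frg]$ with the element $c_{\mathrm{abs}}=\prod_\tau c_\tau$ is exactly Lemma~\ref{lem: Comparison of involutions}, proved there by precisely the appeal to the idele-theoretic description of $\mathrm{Art}_F$ that you invoke, and the re-indexing $\gamma\mapsto \gamma c_{\mathrm{abs}}$ together with $\chi(c_{\mathrm{abs}})=(-1)^{(k-1)g}$ is the paper's computation word for word. Your second paragraph also follows the paper's bookkeeping via Corollary~\ref{cor: determinant}, Proposition~\ref{over_R_vs_over_C} and Theorem~\ref{thm: Lerch vs Artin}. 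You correctly flag something the paper leaves implicit: Theorem~\ref{thm: Lerch vs Artin} literally gives a \emph{containment} of the Lerch sum in the line $\bbQ(\chi)((k-1)g)\cdot d_F^{1/2}L^*(\chi^{-1},1-k)$, whereas the paper's final display asserts an \emph{equality} of $\bbQ(\chi)$-lines, which requires the Lerch sum to be nonzero.

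However, your handling of that nonvanishing is partly off. There is no ``mild genericity condition'' on primes dividing $\frg/\frg_0$ and no ``exceptional loci'': in the Euler factor $1-\psi_0(\frp)^{-1}(\Nr\frp)^{k-1}$ appearing in Corollary~\ref{cor: Primitive vs imprimitive}, the value $\psi_0(\frp)^{-1}$ is a root of unity while $(\Nr\frp)^{k-1}\ge 2^{k-1}>1$ for $k\ge 2$, so the factor has absolute value at least $2^{k-1}-1>0$ and \emph{never} vanishes. Thus the nonvanishing of $\sum_\gamma\chi(\gamma)\cL(\gamma(\eta_0),k)$ follows unconditionally from the chain Theorem~\ref{thm: Hecke vs Lerch} (with the Gauss sum nonvanishing of Proposition~\ref{prop: The property of Gauss sums}(2)), Corollary~\ref{cor: Primitive vs imprimitive}, and the elementary nonvanishing $L(\chi,k)\ne 0$ for $k\ge 2$ from the Euler product. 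Your separate argument about $L^*(\chi^{-1},1-k)\ne 0$ via trivial Gamma-factor zeros is correct but redundant, since $L^*$ is by definition the \emph{leading} Taylor coefficient and hence nonzero. In short: same approach as the paper, with a legitimate observation that the paper elides the nonvanishing step, but marred by the spurious claim that a genericity hypothesis is needed.
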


Before the proof, we need some preparations. 
Note first that $\mathbb{T}$ has an involution $\iota$ 
which is given by $\colon t\mapsto t^{-1}$ on each component $\bbT^\fra$. 
This induces an involution $\sT[\frg]$, which is denoted again by $\iota$. 

\begin{lemma}\label{lem: Comparison of involutions}
We set $c_{\mathrm{abs}}=\prod_\tau c_\tau \in \Gamma$. Then, for $\eta\in\sT[\frg]$, we have an equality
\[
c_\mathrm{abs}(\eta) = \iota(\eta).
\]
\end{lemma}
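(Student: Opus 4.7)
The plan is to reduce the verification to the single canonical torsion point $\xi_\mathrm{can}$ and then perform a direct computation using the trace-pairing duality.

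First I check that $\iota$ is $\Gamma$-equivariant: it commutes with $\pair{x}$ for $x\in\Ft$ (since $\pair{x}(\iota\xi)(\alpha)=\xi(x\alpha)^{-1}=\iota(\pair{x}\xi)(\alpha)$) and with $\xi\mapsto\xi^\frb$ for integral $\frb$ (since both $\iota(\xi^\frb)$ and $(\iota\xi)^\frb$ are the restriction of $\xi^{-1}$ to $\frb\fra$), while $c_\mathrm{abs}$ commutes with all of $\Gamma$ by abelianness. Both operations preserve the conductor of $\eta\in\sT[\frg]$, so after replacing $\frg$ by the conductor of $\eta$ we are reduced to the case that $\eta$ is primitive of conductor $\frg$. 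Since $\sT_0[\frg]$ is a $\Gamma$-torsor, it then suffices to establish $c_\mathrm{abs}(\xi_\mathrm{can})=\iota(\xi_\mathrm{can})$ at this single point.

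The central ideal-theoretic step is an explicit identification of $c'_\mathrm{abs}\in\Cl_F^+(\frg)$. Pick $\delta\in\cO_F\cap\Ft$ with $\delta\equiv-1\pmod{\frg}$---for instance $\delta=\Nr\frg-1$ when $\frg\neq\cO_F$---which forces $(\delta)$ to be coprime to $\frg$. I claim $c'_\mathrm{abs}=[(\delta)^{-1}]$ in $\Cl_F^+(\frg)$. Indeed, $c'_\mathrm{abs}$ is by definition the class of the idele $\tilde c_\mathrm{abs}$ which equals $-1$ at each archimedean place and $1$ at each finite place, and multiplying by $-\delta^{-1}\in F^\times$ produces an idele whose archimedean components $\delta_\tau^{-1}$ are totally positive and whose component at each $\frp\mid\frg$ satisfies $-\delta^{-1}\equiv 1\pmod{\frp^{v_\frp(\frg)}}$; under the isomorphism $\Cl_F^+(\frg)\cong I_F(\frg)/P_F^+(\frg)$ this represents the fractional ideal $(\delta)^{-1}$.

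To conclude, by \eqref{eq: Galois action} we have $c_\mathrm{abs}(\xi_\mathrm{can})=\xi_\mathrm{can}^{(\delta)}$ in $\sT[\frg]$, and transporting along $\pair{\delta}\colon\bbT^{\delta\frg^{-1}\frD_F^{-1}}\isomto\bbT^{\frg^{-1}\frD_F^{-1}}$ realizes this as the character $\alpha\mapsto\xi_\mathrm{can}(\delta\alpha)=\exp(-2\pi i\,\Tr_{F/\bbQ}(\delta\alpha))$ on $\frg^{-1}\frD_F^{-1}$, whereas $\iota(\xi_\mathrm{can})(\alpha)=\exp(+2\pi i\,\Tr_{F/\bbQ}(\alpha))$. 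Equality of these two characters amounts to $\Tr_{F/\bbQ}((\delta+1)\alpha)\in\bbZ$ for all $\alpha\in\frg^{-1}\frD_F^{-1}$, and since $\frg$ is the dual of $\frg^{-1}\frD_F^{-1}$ under the trace pairing, this is equivalent to $\delta+1\in\frg$---exactly the congruence we imposed on $\delta$. I expect the main obstacle to be the ideal-class-group computation in the second paragraph: while standard, it requires careful bookkeeping of sign and conductor conventions; the remaining verification is a clean consequence of the duality.
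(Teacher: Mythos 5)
Your proof is correct, and it hinges on the same key fact as the paper's: identifying $c'_\mathrm{abs}\in\Cl_F^+(\frg)$ with the class of a principal ideal generated by a suitable element. You take a totally positive $\delta\equiv -1\pmod\frg$ and get $c'_\mathrm{abs}=[(\delta)^{-1}]$; the paper takes a totally negative $\beta\equiv 1\pmod\frg$ (so $\beta=-\delta$) and gets $[(\beta)]$ — these agree because $c_\mathrm{abs}$ has order two. Where you differ is the finish: the paper verifies $\eta^{\frb}=\iota(\eta)$ directly for an arbitrary torsion point $\eta$ via the shift $\pair{-\beta}$, so no reduction is needed, whereas you first argue that both $c_\mathrm{abs}$ and $\iota$ are $\Gamma$-equivariant and conductor-preserving to reduce to the single point $\xi_\mathrm{can}$, then invoke the trace-pairing duality between $\frg$ and $\frg^{-1}\frD_F^{-1}$. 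Your route costs an extra reduction step (and a small side case $\frg=\cO_F$, which is trivial), but it buys a transparent computation via the explicit formula for $\xi_\mathrm{can}$ rather than a manipulation with a general $\xi$. Both are sound; the paper's is marginally more economical.
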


\begin{proof}
Let $c'_\mathrm{abs}=\prod_\tau c'_\tau$ denote the element of $\Cl_F^+(\frg)$ 
corresponding to $c_\mathrm{abs}$. This element is the image of the idele 
$\prod_\tau\tilde{c}_\tau$ whose Archimedean components are $-1$ and non-Archimedean components are $1$, 
under the canonical map $\mathbb{A}_F^\times\to\Cl_F^+(\frg)$. 
By the definition of this canonical map, $c'_\mathrm{abs}$ is equal to  
the class of a principal ideal $\frb=(\beta)$ where $\beta$ is a totally negative element of $\cO_F$ 
satisfying $\beta\equiv 1\bmod \frg$. 

For any $\xi\in\bbT^\fra[\frg]$, the element $\xi^\frb\in\bbT^{\frb\fra}[\frg]$ is given by 
$\xi^\frb(\alpha\beta)=\xi(\alpha\beta)=\xi(\alpha)$ ($\alpha\in\fra$), 
where the latter equality follows from the congruence $\beta\equiv 1\bmod\frg$. 
On the other hand, since $-\beta$ is totally positive, 
$\xi^{\frb}\in\bbT^{\frb\fra}[\frg]$ is identified with 
$\pair{-\beta}\xi^\frb\in\bbT^{\fra}[\frg]$ defined by 
\[\pair{-\beta}\xi^\frb(\alpha)=\xi^\frb(-\beta\alpha)=\xi(-\alpha)=\xi(\alpha)^{-1}\] 
under the $F^\times_+$-action. 

By our definition \eqref{eq: Galois action} of the $\Gamma$-action, 
we obtain the identity $c_\mathrm{abs}^{-1}(\eta)=\iota(\eta)$ . 
Since $c_\mathrm{abs}$ is an involution, this completes the proof. 
\end{proof}

\begin{proof}[Proof of Theorem \ref{thm: Lerch version of Beilinson}]
By Lemma \ref{lem: Comparison of involutions}, we have 
\begin{align*}
\sum_{\gamma}\chi(\gamma)\cdot(-1)^{(k-1)g}\overline{\mathcal{L}(\gamma(\eta_0), k)} 
& = \sum_{\gamma}\chi(\gamma)\cdot(-1)^{(k-1)g}\mathcal{L}(\gamma(\iota(\eta_0)), k) \\
& = \sum_{\gamma}\chi(\gamma)\cdot(-1)^{(k-1)g}\mathcal{L}((\gamma c_\mathrm{abs})(\eta_0), k) \\
& = \sum_{\gamma'}\chi(\gamma')\bigl(\chi(c_\mathrm{abs})^{-1}\cdot(-1)^{(k-1)g}\bigr)\mathcal{L}(\gamma'(\eta_0), k).
\end{align*}
Since $\chi$ is totally noncritical, we have
\[
\chi(c_\mathrm{abs}) = \prod_{\tau\in I}\chi(c_\tau) = (-1)^{(k-1)g}. 
\]
Thus we obtain the former part. 

For the latter part, we consider the isomorphisms 
\begin{align*}
\bbC\otimes_{\chi,\bbR[\Gamma]}\det_{\bbR[\Gamma]}
\Bigl(H^1_{\sD}((\Spec F(\frg))_{/\bbR},\bbR(k))\Bigr)
&\cong
\bbC\otimes_{\chi,\bbR[\Gamma]}\det_{\bbR[\Gamma]}
\Bigl(H^1_{\sD}((\Spec F(\frg))_{/\bbC},\bbR(k))\Bigr)\\
&\cong
\bbC\otimes_{\chi,\bbR[\Gamma]}
\Bigl(\bbR((k-1)g)\otimes_{\bbR}\bbR[\Gamma]\Bigr)\\
&\cong \bbC
\end{align*}
given in Proposition \ref{over_R_vs_over_C}, Corollary \ref{cor: determinant} and the former part. 
The composition maps $(\det_{\bbQ[\Gamma]}B_{F(\frg)}^+)_\chi$ onto $\bbQ(\chi)((k-1)g)$. 
Therefore, Theorem \ref{thm: Lerch vs Artin} and the former part show that 
\[L^*(\chi^{-1},1-k)\cdot\bigl(\det_{\bbQ[\Gamma]}B_{F(\frg)}^+\bigr)_\chi
=(1\otimes d_F^{1/2}\mathbf{L}_{\sD}(\eta_0,k))\cdot\bbQ(\chi). \]
Thus the desired equivalence follows. 
\end{proof}

Now we can discuss a plectic approach. First, recalling the calculation in Lemma \ref{lem: ext2}, 
we introduce an element of plectic cohomology groups:

\begin{definition}
Let $\frg\subsetneq\cO_F$ be a nonzero ideal, so that $\bbT_0[\frg]\subset U$. 
An element $\mathbf{Ler}_\frg(k)$ of 
$H^{2g-1}_{\mathscr{D}^I}(\mathscr{T}_0[\mathfrak{g}]_{/\mathbb{C}}, \mathbb{R}(k_I))
\coloneqq H^{2g-1}_{\mathscr{D}^I}(\bbT_0[\frg]/\Ft, \mathbb{R}(k_I))$ 
is defined as 
\[
\mathbf{Ler}_\frg(k) = (\cL^\infty(\eta, k))_{\eta\in\sT_0[\frg]}
\]
under the identification via the natural isomorphism (cf.~Lemma \ref{lem: splitting plectic cohomology})
\begin{equation}\label{eq: isomorphism 1}
H^{2g-1}_{\mathscr{D}^I}(\sT_0[\frg]_{/\mathbb{C}}, \mathbb{R}(k_I)) 
\cong  \bigoplus_{\eta\in\sT_0[\frg]}\bbR((k-1)g).
\end{equation}
\end{definition}

To discuss the relation between this element $\mathbf{Ler}_\frg(k)$ and the element $\bL(\eta_0, k)$, 
we introduce the following isomorphism: 

\begin{definition}
For an element $\eta_0\in \sT_0[\frg]$, we define an isomorphism 
\begin{equation}\label{eq: isomorphism 2}
j^{\eta_0}_2:\bigoplus_{\eta\in\mathscr{T}_0[\mathfrak{g}]}\mathbb{R}((k-1)g) 
\cong \mathbb{R}((k-1)g)\otimes_\mathbb{R}\mathbb{R}[\Gamma]
\end{equation}
to be 
\[
(a_\eta)_\eta \mapsto \sum_{\gamma\in\Gamma}a_{\gamma(\eta_0)}\otimes[\gamma].
\]
\end{definition}

We can easily check the following lemma by using Lemma \ref{lem: Comparison of involutions} 
and by noting that we have $\overline{\iota\eta}=\eta$:

\begin{lemma}
The homomorphism $j_2^{\eta_0}$ is $\Gamma$-equivariant, where the multiplication by $[\gamma]$($\gamma\in\Gamma$) is 
defined via ``$(a_\eta)_\eta\mapsto (a_{\gamma^{-1}(\eta)})_\eta$'' on the left hand side, and 
compatible with the involutions defined as 
\begin{itemize}
\item ``$(a_\eta)_\eta\mapsto (\overline{a_{\overline{\eta}}})_\eta$'' on the left hand side, and
\item ``$a\otimes[\delta]\mapsto \overline{a}\otimes[c_{\mathrm{abs}}\delta]$'' on the right hand side.
\end{itemize}
\end{lemma}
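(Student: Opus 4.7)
The plan is to verify both assertions by direct computation, unwinding the definition of $j_2^{\eta_0}$ and invoking the identification $\iota(\eta) = c_{\mathrm{abs}}(\eta)$ established in Lemma \ref{lem: Comparison of involutions}.

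First, for $\Gamma$-equivariance, I would fix $\delta \in \Gamma$ and compute $[\delta] \cdot j_2^{\eta_0}((a_\eta)_\eta)$. By the definition of the right $\Gamma$-action on $\bbR((k-1)g)\otimes_\bbR\bbR[\Gamma]$,
\[
[\delta]\cdot\sum_{\gamma\in\Gamma} a_{\gamma(\eta_0)}\otimes[\gamma]
= \sum_{\gamma\in\Gamma} a_{\gamma(\eta_0)}\otimes[\delta\gamma],
\]
and the substitution $\gamma'=\delta\gamma$ rewrites this as $\sum_{\gamma'} a_{\delta^{-1}\gamma'(\eta_0)}\otimes[\gamma']$, which is precisely $j_2^{\eta_0}$ applied to the tuple $(a_{\delta^{-1}(\eta)})_\eta$, i.e., the $\delta$-translate of $(a_\eta)_\eta$ on the source. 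This gives the equivariance.

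Next, for compatibility with the involutions, I would first apply the source involution to $(a_\eta)_\eta$ and compute
\[
j_2^{\eta_0}\bigl((\overline{a_{\overline{\eta}}})_\eta\bigr)
= \sum_{\gamma\in\Gamma} \overline{a_{\overline{\gamma(\eta_0)}}}\otimes[\gamma]
= \sum_{\gamma\in\Gamma} \overline{a_{\iota(\gamma(\eta_0))}}\otimes[\gamma]
= \sum_{\gamma\in\Gamma} \overline{a_{\gamma c_{\mathrm{abs}}(\eta_0)}}\otimes[\gamma],
\]
where the first equality uses $\overline{\eta}=\iota(\eta)$ (as complex conjugation on a torsion point inverts the character), and the last equality uses Lemma \ref{lem: Comparison of involutions} together with the commutativity of $\Gamma$. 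On the other hand, applying the target involution to $j_2^{\eta_0}((a_\eta)_\eta)$ yields $\sum_\gamma \overline{a_{\gamma(\eta_0)}}\otimes[c_{\mathrm{abs}}\gamma]$, and the substitution $\gamma'=c_{\mathrm{abs}}\gamma$ (using that $c_{\mathrm{abs}}^2=1$) converts this into the same expression as above. Hence the two involutions intertwine $j_2^{\eta_0}$.

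The argument is essentially bookkeeping, so no step is truly hard. The main potential pitfall is aligning conventions consistently: the left versus right action of $\Gamma$ on $\bbR[\Gamma]$, and the sign/inverse conventions hidden in $\overline{\eta}$ versus $\iota(\eta)$ and in $c_\tau$ versus $c'_\tau$. Once those are fixed, everything reduces to a change of variable in the index of summation, with Lemma \ref{lem: Comparison of involutions} being the only nontrivial input.
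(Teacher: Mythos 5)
Your proof is correct and takes exactly the route the paper intends: unwind the definition of $j_2^{\eta_0}$, change the summation variable, and use Lemma \ref{lem: Comparison of involutions} together with $\overline{\eta}=\iota(\eta)$ on torsion points. (The paper itself gives no written proof beyond citing those two facts.) One small terminological slip: you call the $\Gamma$-action on $\mathbb{R}((k-1)g)\otimes_\mathbb{R}\mathbb{R}[\Gamma]$ a ``right action'' but then compute with $[\delta\gamma]$, which is the left multiplication the paper fixes in Lemma \ref{lem:j_1}; this is harmless since $\Gamma$ is abelian, but worth keeping straight.
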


By the composition of isomorphisms (\ref{eq: isomorphism 1}), (\ref{eq: isomorphism 2}), and (\ref{eq: isomorphism 3}), we obtain an isomorphism
\begin{equation}\label{eq: isomorphism final}
H^{2g-1}_{\mathscr{D}^I}(\mathscr{T}_0[\mathfrak{g}]_{/\mathbb{C}}, \mathbb{R}(k_I))\cong \mathrm{det}_{\mathbb{R}[\Gamma]}\Bigl(H^1_{\mathscr{D}}((\Spec F(\mathfrak{g}))_{/\mathbb{C}}, \mathbb{R}(k))\Bigr), 
\end{equation} 
and, through this isomorphism, the element $\mathbf{Ler}_{\frg}(k)$ is mapped to $\bL_{\sD}(\eta_0, k)$. 
Then we can restate the assertion in the latter part of Theorem \ref{thm: Lerch version of Beilinson} 
as follows: 

\begin{conjecture}\label{conj: Ler}
For a character $\chi$ totally noncritical for $k$, the element 
\[
1\otimes d_F^{1/2}\mathbf{Ler}_\frg(k)
\in \mathbb{C}\otimes_{\chi,\mathbb{R}[\Gamma]}
H^{2g-1}_{\mathscr{D}^I}(\mathscr{T}_0[\mathfrak{g}]_{/\mathbb{C}}, \mathbb{R}(k_I))
\]
is ``motivic'' in the sense that its image through the isomorphism (\ref{eq: isomorphism final}) 
tensored with $\bbC\otimes_{\chi,\bbR[\Gamma]}(-)$ belongs to 
$\bigl(\mathrm{det}_{\mathbb{Q}[\Gamma]}(H^1_{\mathscr{M}}(\Spec F(\frg),\mathbb{Q}(k))_\mathrm{fin})\bigr)_\chi$.
\end{conjecture}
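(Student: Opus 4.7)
The plan is to realize $d_F^{1/2}\mathbf{Ler}_\frg(k)$ as the Deligne--Beilinson realization of a motivic class, by lifting the plectic polylogarithm $\pol$ to equivariant plectic motivic cohomology and then specializing along the torsion subscheme $\sT_0[\frg]$. This follows the template that proved the Beilinson conjecture for Dirichlet $L$-values via the cyclotomic elements obtained from the Beilinson--Deligne motivic polylogarithm, and for imaginary quadratic $L$-values via the Beilinson--Levin elliptic polylogarithm, adapted here to the algebraic tori $\bbT^\fra$ attached to a totally real field.

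First I would construct a motivic plectic polylogarithm class $\pol_\sM \in H^{2g-1}_\sM(U/\Ft,\bbLog_\sM)$ in an (as yet hypothetical) equivariant plectic motivic cohomology of $U/\Ft$ with coefficients in a motivic lift $\bbLog_\sM$ of the logarithm sheaf, mirroring the construction in \S\S\ref{sec: log}--\ref{sec: plectic}. A motivic analogue of the vanishing and purity computations of Proposition \ref{prop: splits} would yield a motivic counterpart of Proposition \ref{prop: key}, producing a canonical isomorphism of the relevant cohomology with $\bigoplus_{\Cl^+_F}\bbQ$, and $\pol_\sM$ would then be defined by the normalization $(1,\ldots,1)$. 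The plectic Deligne--Beilinson regulator should send $\pol_\sM$ to $\pol$.

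Next, specializing along the equivariant inclusion $\sT_0[\frg]\hookrightarrow U$ and using the identification of $\sT_0[\frg]$ as a $\Cl_F^+(\frg)$-torsor together with Artin reciprocity, I would obtain a motivic identification of $\sT_0[\frg]/\Ft$ with $\Spec F(\frg)$ in a $\Gamma$-equivariant fashion. This transports the components of the specialized class to a family of classes in $H^1_\sM(\Spec F(\frg),\bbQ(k))_{\mathrm{fin}}$ indexed by $k\geq 1$, whose $\bbQ[\Gamma]$-determinant, projected to the $\chi$-isotypic part, provides the motivic element required by Conjecture \ref{conj: Ler}. That its Beilinson regulator equals $d_F^{1/2}\mathbf{Ler}_\frg(k)$ would follow from Conjecture \ref{conj: i^*pol} and the reformulation in Theorem \ref{thm: Lerch version of Beilinson}, with the factor $d_F^{1/2}$ entering through the normalization in \eqref{eq: R(1) to H_fra} together with the functional-equation machinery of \S\ref{subsec: Lerch vs Hecke}.

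The main obstacles are threefold. The deepest is the construction of $\pol_\sM$: one first needs a satisfactory equivariant motivic cohomology theory for the quotient stack $U/\Ft$, together with a plectic refinement compatible with the Hodge-theoretic framework of \S\ref{sec: plectic}, and both lie well beyond the hypotheses currently assumed in this paper. Second, Conjecture \ref{conj: i^*pol} itself is still open; the most direct route is to extend Theorem \ref{thm: main} by computing $i^*_{\xi\Delta}\cS$ from the explicit cocycle \eqref{eq: S^fra_bsalpha} together with the differential operator $\partial$ used in Theorem \ref{thm: generate}. Third, one must match the plectic regulator on $H^{2g-1}_{\sD^I}(\sT_0[\frg]/\Ft,\bbR(k_I))$ with the ordinary Beilinson regulator on $H^1_\sD((\Spec F(\frg))_{/\bbR},\bbR(k))$ after specialization; this should be tractable once the compatibility of the diagonal embedding $\cG\hookrightarrow\cG^g$ with the isomorphisms $j_1^{\mathbf{o}}$ and $j_2^{\eta_0}$ of \S\ref{subsec: plectic roadmap} is pinned down, but it depends on having the full equivariant plectic motivic formalism in place.
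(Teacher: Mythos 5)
Conjecture~\ref{conj: Ler} is not proved in the paper; it is left open, and the only ``argument'' the paper offers is the closing paragraph of \S\ref{subsec: plectic roadmap}, which states that the conjecture would follow from (i) the motivicity of $\pol$, (ii) a motivic specialization map along $\sT_0[\frg]\hookrightarrow U$, and (iii) Conjecture~\ref{conj: i^*pol}, combined with Theorem~\ref{thm: Lerch version of Beilinson}. Your outline is essentially that same roadmap — motivic plectic polylogarithm, specialization to the $\Cl_F^+(\frg)$-torsor identified with $\Spec F(\frg)$ via Artin reciprocity, then Conjecture~\ref{conj: i^*pol} and Theorem~\ref{thm: Lerch version of Beilinson} to match regulators — and you correctly flag the same missing ingredients, so this is consistent with the paper's (non-)proof rather than a divergence from it.
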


On the other hand, Conjecture \ref{conj: i^*pol} implies that the element $\mathbf{pol}\in H^{2g-1}_{\sD^I}(U/\Ft, \bbLog)$ 
is specialized to $d_F^{1/2}\mathbf{Ler}_\frg(k)$. 
Therefore, if we establish the motivicity of $\pol$ and the specialization map 
(in addition to Conjecture \ref{conj: i^*pol}), then Conjecture \ref{conj: Ler} follows. 
We hope to pursue this problem in the future. 

\begin{remark}
In this section, we have considered only the totally noncritical case. 
To treat the general case, it might be necessary to introduce the notion of 
the (equivariant) plectic Deligne--Beilinson cohomology over $\bbR$. 
\end{remark}

\appendix
%
%
\section{Equivariant Deligne-Beilinson Cohomology}\label{sec: Appendix}
%
%

Let $\fra$ be a fractional ideal of $F$, and let $\T^\fra$ be as in \S
\ref{subsec: equivariant cohomology of Log}.
For each integer $N\geq0$, we let $\bbLog^N_\fra$ be the logarithm sheaf, 
which is an admissible unipotent variation of mixed $\bbR$-Hodge structures
on $\T^\fra$ with an equivariant action of $\Delta=\cO^\times_{F+}$.
In this appendix, using the logarithmic Dolbeault complex of Burgos \cite{Bur94},
we will construct an explicit mixed $\bbR$-Hodge complex $R\Gamma_{\mathrm{Hdg}}(U^\fra/\Delta,\bbLog^N_\fra)$,
which may be used to calculate the equivariant cohomology
$H^m(U^\fra/\Delta,\bbLog^N_\fra)$
and the equivariant Deligne-Beilinson cohomology 
$H^m_{\sD}(U^\fra/\Delta,\bbLog^N_\fra)$.

%
\subsection{Logarithmic Dolbeault Complex}\label{subsec: LDC}
%

In this subsection, we review the theory of logarithmic Dolbeault complex defined by Burgos.
Let $U$ be a $g$-dimensional smooth algebraic variety over $\bbC$, which we view as a complex manifold.
We denote by $\sA_{U,\bbR}$ (resp.  $\sE_{U,\bbR}$) the sheaf of real-valued real analytic (resp. $C^\infty$) functions on $U$, 
and by $\sA^\bullet_{U,\bbR}$ (resp. $\sE^\bullet_{U,\bbR}$) the corresponding complex of sheaves of differential forms.
The $C^\infty$-de Rham complex gives a resolution
\begin{equation}\label{eq: infinity resolution}
	0  \rightarrow \bbR \rightarrow \sE_{U,\bbR} \rightarrow  \sE^1_{U,\bbR} \rightarrow \sE^2_{U,\bbR}  \rightarrow \cdots
\end{equation}
of the constant sheaf $\bbR$ on $U$ by the fine sheaves $\sE^\bullet_{U,\bbR}$. 

Let $X$ be a smooth compactification of $U$ such that $D\coloneqq X\setminus U$ is 
a simple normal crossing divisor, and let $j\colon U\hookrightarrow X$ be the inclusion. 
To define the complex $\sA^\bullet_{X,\bbR}(D)$, we take a local coordinate system $(t_1,\ldots,t_g)$ such that $D$ is defined by $t_1\cdots t_h=0$. 
Then, on this coordinate neighborhood, we define the complex $\sA^\bullet_{X,\bbR}(D)$ to be the $\sA_{X,\bbR}$-subalgebra of $j_*\sA^\bullet_{U,\bbR}$ generated 
by  $\sA^\bullet_{X,\bbR}$ and the sections 
\begin{equation}\label{eq: weight one}
\log|t_\mu|, \qquad \Re(d\log t_\mu), \qquad \Im(d\log t_\mu) 
\end{equation}
for $1\le\mu \le h$. 
We define the weight filtration $W_\bullet$ on $\sA^\bullet_{X,\bbR}(D)$ 
to be the multiplicative ascending filtration obtained by assigning weight $0$ to the sections of $\sA_{X,\bbR}^\bullet$ and weight $1$ 
to the local sections given in \eqref{eq: weight one}.

Following Burgos \cite{Bur94}*{\S 2}, we define the complex $\sE^\bullet_{X,\bbR}(D)$ to be the image of the natural map
\[
	 \sA^\bullet_{X,\bbR}(D) \otimes_{\sA_{X,\bbR}} \sE_{X,\bbR} \rightarrow j_*  \sE^\bullet_{U,\bbR}.
\]
We let $\sE_{X,\bbR}(D):= \sE^0_{X,\bbR}(D)$.
The complex  $\sE^\bullet_{X,\bbR}(D)$ has a weight filtration $W_\bullet$ induced from the weight filtration $W_\bullet$ on $\sA^\bullet_{X,\bbR}(D)$.
The complex $\sE^\bullet_{U,\bbC}:=\sE^\bullet_{U,\bbR}\otimes\bbC$ has a bigrading 
\[
	\sE^m_{U,\bbC}=\bigoplus_{p,q\in\bbZ, p+q=m} \sE^{p,q}_{U,\bbC}
\]
induced from the complex structure of $U$, and this bigrading induces a bigrading 
\begin{align*}
	\sE^m_{X,\bbC}(D) &= \bigoplus_{p,q\in\bbZ, p+q=m} \sE^{p,q}_{X,\bbC}(D), &
	\sE^{p,q}_{X,\bbC}(D) &:= \sE^{p+q}_{X,\bbC}(D) \cap j_* \sE^{p,q}_{U,\bbC}
\end{align*}
on $\sE^\bullet_{X,\bbC}(D):=\sE^\bullet_{X,\bbR}(D)\otimes\bbC$.  We define the Hodge filtration on the complex $\sE^\bullet_{X,\bbC}(D)$ by 
\[
	F^p \sE^\bullet_{X,\bbC}(D) = \bigoplus_{r\geq p} \sE^{r,s}_{X,\bbC}(D).
\]
Then we have the following.

\begin{theorem}[\cite{Bur94}*{Theorem 1.2, Theorem 2.1}]\label{thm: Bu}
 	There exist filtered quasi-isomorphisms
	\[
		(Rj_*\bbR,\tau_{\leq\bullet}) 
		\xrightarrow\cong (\sA^\bullet_{X,\bbR}(D),W_\bullet) 
		\xrightarrow\cong (\sE^\bullet_{X,\bbR}(D),W_\bullet). 
	\]
	There also exists a bifiltered quasi-isomorphism
	\begin{equation}\label{eq: resolution2}
		(\Omega^\bullet_{X}(D),W_\bullet,F^\bullet) \xrightarrow\cong ( \sE^\bullet_{X,\bbC}(D),W_\bullet,F^\bullet).
	\end{equation}
\end{theorem}
We note that the filtration $W_\bullet$ on $\Omega^\bullet_{X}(D)$ is the multiplicative ascending
filtration obtained by assigning weight $0$ to the sections of $\Omega^\bullet_X$ and weight $1$
to the local sections $d\log t_\mu$ for $1\leq\mu\leq h$, where $(t_1,\ldots,t_g)$ are local coordinates
such that $D$ is defined by $t_1\cdots t_h=0$, and the filtration $F^\bullet$ on 
$\Omega^\bullet_{X}(D)$ is the stupid filtration.

The advantage of using the sheaf $\sE_{X,\bbR}$ is that it is \emph{fine}, so that
for integers $m,n,p\in\bbZ$, the sheaves $\Gr^W_n \sE^m_{X,\bbR}(D)$ and 
$\Gr^W_n \Gr^p_F \sE^m_{X,\bbC}(D)$ are acyclic with respect to the functor $\Gamma(X, -)$. 
Hence by Theorem \ref{thm: Bu} and the classical results (cf.~\cites{Del71,Del74}), 
the triple $(\Gamma(X,\sE^\bullet_{X,\bbR}(D)),W_\bullet,F^\bullet)$ defines
a \emph{mixed $\mathbb{R}$-Hodge complex} in the sense of Deligne \cite{Del74}*{(8.1.5)}.
We denote by $\Dec(W)_\bullet$ (the dual of) Deligne's d\'{e}calage \cite{Del71}*{(1.3.3)} of the weight filtration.
Then the triple $(\Gamma(X,\sE^\bullet_{X,\bbR}(D)),\Dec(W)_\bullet,F^\bullet)$ defines an $\bbR$-Hodge complex in the sense of Beilinson \cite{Bei86}*{Definition 3.2}, which we call an \emph{absolute $\bbR$-Hodge complex} according to the terminology of \cite{CG16}*{Definition 4.4}.
We note that, these objects correspond to a complex of mixed $\bbR$-Hodge structures in the following sense.

\begin{theorem}[{\cite{CG16}*{Theorem 4.11}, \cite{Bei86}*{Theorem 3.4} (see also \cite{CG16}*{Theorem 4.12})}]\label{thm: A2}
	Let $\Ho(\MHC_\bbR)$ and $\Ho(\AHC_\bbR)$ be the localizations by level-wise quasi-isomorphisms of the 
	categories of mixed $\bbR$-Hodge complexes and absolute $\bbR$-Hodge complexes, respectively.
	Then we have equivalences of categories
	\[
		\Ho(\MHC_\bbR)\xrightarrow[\Dec]{\cong}\Ho(\AHC_\bbR)\xleftarrow{\cong}D^+(\MHS_\bbR),
	\]
	where the first equivalence is induced by the d\'{e}calage with respect to weight filtration, 
	and the second equivalence is induced by the natural inclusion functor.
\end{theorem}

The mixed $\bbR$-Hodge structure on $H^m(U,\bbR)$ is the $m$-th cohomology of the corresponding object of $D^+(\MHS_\bbR)$.
Namely, its weight filtration and Hodge filtration are induced by $\Dec(W)_\bullet$ and $F^\bullet$ via the isomorphism $H^m(\Gamma(X,\sE^\bullet_{X,\bbR}(D)))\isomto H^m(U,\bbR)$.

Suppose that there is another smooth compactification $X'$ with $D'\coloneqq X'\setminus U$ 
a simple normal crossing divisor, and a morphism $\pi\colon X\to X'$ which gives a commutative diagram 
\[
	\xymatrix{
		X \ar[r]^-{\pi}&  X'\\
		U \ar@{^(->}[u]^{j} \ar@{=}[r] & U \ar@{^(->}[u]_{j'}.
	}
\]
Then we have the pullback morphism $\sE^\bullet_{X',\bbR}(D')\to\pi_*\sE^\bullet_{X,\bbR}(D)$ and, 
again by Theorem \ref{thm: Bu} and the classical results, 
the induced morphism $\Gamma(X',\sE^\bullet_{X',\bbR}(D'))\rightarrow\Gamma(X,\sE^\bullet_{X,\bbR}(D))$ 
is a bifiltered quasi-isomorphism for the filtrations $\Dec(W)_\bullet$ and $F^\bullet$. 


We now return to the case of the logarithm sheaf $\bbLog_\fra$. 
We let $U^\fra\coloneqq\T^\fra\setminus\{1\}$, and we let $\bbLog_\fra$ be the logarithm sheaf on $\T^\fra$, which 
by Proposition \ref{prop: bbLog_fra^N} is an admissible unipotent variation of mixed $\bbR$-Hodge 
structures on $\T^\fra$.
If we fix a $\bbZ$-basis $\a=(\alpha_1,\ldots,\alpha_g)$ of $\fra$, then we have an isomorphism
$\T^\fra\cong\bbG_m^g$ and hence a compactification $\T^\fra\hookrightarrow Y\coloneqq(\bbP^1)^g$.
Let $X$ be a smooth compactification of $U^\fra$ whose complement $D\coloneqq X\setminus U^\fra$ 
is a simple normal crossing divisor, and assume that there exists a morphism $X\to Y$ 
which is the identity on $U^\fra$. 
Then there are the sections $\log\absv{t^{\alpha_i}}$ ($i=1,\ldots,g$) of $\sE_{X,\bbR}(D)$.  

Recall that, for any integer $N\geq 0$, the logarithm sheaf $\cLog^N_\fra$ on $\bbT^\fra$ has 
a natural extension to a coherent $\sO_Y$-module with logarithmic connection 
(see the proof of Proposition \ref{prop: bbLog_fra^N}). 
We denote by $\cLog^N_\fra(D)$ its pullback to $X$, 
which is a coherent $\sO_{X}$-module with logarithmic connection 
 \[
 	\nabla\colon\cLog^N_\fra(D)\rightarrow\cLog^N_\fra(D)\otimes\Omega^1_{X}(D).
 \]
 Then the module
 $\cLog^N_\fra(D)\otimes\sE_{X,\bbC}(D)$ has an induced connection
 \[
 	\nabla\colon\cLog^N_\fra(D)\otimes\sE_{X,\bbC}(D)\rightarrow\cLog^N_\fra(D)\otimes\sE^1_{X,\bbC}(D).
 \]
 If we define the local sections $\upsilon_\a^\bsk$ of $\cLog^N_\fra(D)\otimes\sE_{X,\bbC}(D)$ by 
 \[
 	\upsilon_\a^\bsk\coloneqq i^{\absv{\bsk}}\exp\left(\sum_{i=1}^g(-\log\absv{t^{\alpha_i}})u_{\alpha_i}\right)
	\cdot u^{\bsk}_\a,
 \]
 then we have
 \begin{align*}
	\nabla(\upsilon_\a^\bsk)&=i^{\absv{\bsk}}\exp\left(\sum_{i=1}^g(-\log\absv{t^{\alpha_i}})u_{\alpha_i}\right)
	\cdot \sum_{j=1}^g u^{\bsk+1_j}_\a\otimes d\log t^{\alpha_j}\\
	&\qquad-i^{\absv{\bsk}}
	 \sum_{j=1}^g
	\exp\left(\sum_{i=1}^g(-\log\absv{t^{\alpha_i}})u_{\alpha_i}\right)
	u_{\alpha_j}\cdot u^{\bsk}_\a\otimes d\log\absv{t^{\alpha_j}}\\
	&=\sum_{j=1}^gi^{\absv{\bsk}}\exp\left(\sum_{i=1}^g(-\log\absv{t^{\alpha_i}})u_{\alpha_i}\right)
	\cdot  u^{\bsk+1_j}_\a\otimes i\Im(d\log t^{\alpha_j})
	= \sum_{j=1}^g \upsilon^{\bsk+1_j}_\a\otimes \Im(d\log t^{\alpha_j}).
 \end{align*}
 That is,
 \begin{equation}\label{eq: up}
	 \nabla(\upsilon_\a^\bsk)=\sum_{j=1}^g \upsilon^{\bsk+1_j}_\a\otimes \Im(d\log t^{\alpha_j})
 \end{equation}
 for any $\bsk\in\bbN^g$.
 
 \begin{definition}\label{def: sLog(D)}
 	We let $\sLog^N_{\fra,\bbR}(D)$ be the free 
	$\sE_{X,\bbR}(D)$-module generated by $\upsilon^\bsk_\a$ for $\bsk\in\bbN^g$
	with $\absv{\bsk}\leq N$, with connection
	\[
		\nabla\colon\sLog^N_{\fra,\bbR}(D)\rightarrow\sLog^N_{\fra,\bbR}(D)
		\otimes_{\sE_{X,\bbR}(D)}\sE^1_{X,\bbR}(D)
	\]	
	given as in \eqref{eq: up}. 
    In order to define the filtrations, 
    we introduce an auxiliary sheaf $\sL^N_{\fra,\bbR}$, defined to be the free $\sE_{X,\bbR}$-module 
    generated by $\upsilon^\bsk_\a$ for $\bsk\in\bbN^g$ with $\absv{\bsk}\leq N$, 
    so that $\sLog^N_{\fra,\bbR}(D)=\sL^N_{\fra,\bbR}\otimes_{\sE_{X,\bbR}}\sE_{X,\bbR}(D)$. 
    Then we define the filtrations $W_\bullet\sL^N_{\fra,\bbR}$ and 
    $F^\bullet\sL^N_{\fra,\bbC}$ by
    \begin{align*}
    W_{-2m}\sL^N_{\fra,\bbR}=W_{-2m+1}\sL^N_{\fra,\bbR}
    &\coloneqq\prod_{m\le\absv{\bsk}\le N}\sE_{X,\bbR}\cdot\upsilon^\bsk_\a,&
    F^{-p}\sL^N_{\fra,\bbC}
    &\coloneqq\prod_{0\leq\absv{\bsk}\leq p}\sE_{X,\bbC}\cdot\upsilon^\bsk_\a, 
    \end{align*}
    where $(-)_\bbC=(-)_\bbR\otimes\bbC$ as usual. 
    Then we define the corresponding filtrations on $\sLog^N_{\fra,\bbR}(D)$ 
    to be the tensor products of those on $\sL^N_{\fra,\bbR}$ and $\sE_{X,\bbR}(D)$. 
 \end{definition}
 
 From the identity
 \[
\gamma^{\mathbf{k}}_{\bsa}
=
(2\pi)^{\absv{\bsk}}\exp \left(\sum_{j=1}^g(-\Im (\log t^{\alpha_j})) \upsilon_{\bsa}^{1_j}\right)\upsilon_{\bsa}^{\bsk},  
 \]
 which can be verified easily, we see that 
 \[
 \sLog^N_{\fra, \bbR}(D)|_{U^{\fra}}
 =
 \bbLog^N_{\fra} \otimes_{\bbR} \sE_{U^{\fra}, \bbR}. 
 \]
In particular, $\sLog^N_{\fra, \bbR}(D)|_{U^{\fra}}$ naturally has a $\Delta$-equivariant structure.

By construction, the connection $\nabla$ is integrable, and defines a complex 
$\sLog^N_{\fra,\bbR}(D)\otimes_{\sE_{X,\bbR}(D)}\sE^\bullet_{X,\bbR}(D)
=\sL^N_{\fra,\bbR}\otimes_{\sE_{X,\bbR}}\sE^\bullet_{X,\bbR}(D)$, 
which is bifiltered by the tensor products of filtrations on $\sL^N_{\fra,\bbR}$ 
and $\sE^\bullet_{X,\bbR}(D)$ given as in \cite{BHY18}*{\S4.3}.

\begin{proposition}[{cf.~\cite{BHY18}*{\S4}}]
    Let $N$ be a non-negative integer and $j\colon U^\fra\hookrightarrow X$ be the natural inclusion.
    \begin{enumerate}
    \item 
    $R\Gamma\bigl(X,\sLog^N_{\fra,\bbR}(D)\otimes\sE^\bullet_{X,\bbR}(D)\bigr)$ 
    with the filtrations $W_\bullet$ (resp.~$\Dec(W)_\bullet$) and $F^\bullet$ is 
    a mixed $\bbR$-Hodge complex (resp.~an absolute $\bbR$-Hodge complex). 
    In particular, the hypercohomology groups 
    $\bbH^m\bigl(X,\sLog_{\fra,\bbR}^N(D)\otimes\sE_{X,\bbR}^\bullet(D)\bigr)$ 
    with the filtrations $\Dec(W)_\bullet$ and $F^\bullet$ are mixed $\bbR$-Hodge structures. 
    \item 
    We have a canonical quasi-isomorphism
	\begin{equation*}
		Rj_*\bbLog^N_\fra\cong\sLog^N_{\fra,\bbR}(D)\otimes\sE^\bullet_{X,\bbR}(D)
	\end{equation*}
	in the derived category of abelian sheaves on $X$, which induces an isomorphism 
    \[
        H^m(U^\fra,\bbLog_\fra^N)\cong 
        \bbH^m\bigl(X,\sLog_{\fra,\bbR}^N(D)\otimes\sE_{X,\bbR}^\bullet(D)\bigr). 
    \]
    \item 
    The natural inclusion 
	\begin{equation*}
		\cLog^N_\fra(D)\otimes_{\sO_X}\Omega^\bullet_{X}(D)
		\hookrightarrow
		\sLog^N_{\fra,\bbR}(D)\otimes\sE^\bullet_{X,\bbC}(D)
	\end{equation*}
    is a bifiltered quasi-isomorphism, where the filtrations on the left-hand side are those 
    given by Hain--Zucker \cite{HZ87}*{\S8}. 
    In particular, the mixed $\bbR$-Hodge structure on $H^m(U^\fra,\bbLog_\fra^N)$ 
    induced via (1) and (2) above coincides with the one 
    constructed in \cite{HZ87}*{(8.6) Proposition}.
    \item
    The complex $\sLog^N_{\fra,\bbR}(D)\otimes\sE^\bullet_{X,\bbR}(D)$ is acyclic 
    with respect to $\Gamma(X,-)$, so that we have a canonical isomorphism 
	\[
		\bbH^m\bigl(X,\sLog_{\fra,\bbR}^N(D)\otimes\sE_{X,\bbR}^\bullet(D)\bigr)\cong 
		H^m\bigl(\Gamma(X,\sLog^N_{\fra,\bbR}(D)\otimes\sE^\bullet_{X,\bbR}(D))\bigr). 
	\]
    Moreover, this isomorphism is compatible with the filtrations $\Dec(W)_\bullet$ and $F^\bullet$. 
    \end{enumerate}
\end{proposition}

\begin{proof}
    To show (1), we note that 
    \begin{align*}
    \Gr^W_n\bigl(\sLog_{\fra,\bbR}^N(D)\otimes_{\sE_{X,\bbR}(D)}\sE_{X,\bbR}^\bullet(D)\bigr)
    &\cong\Gr^W_n\bigl(\sL_{\fra,\bbR}^N\otimes_{\sE_{X,\bbR}}\sE_{X,\bbR}^\bullet(D)\bigr)\\
    &\cong\bigoplus_{m=0}^N\Gr^W_{-2m}\sL_{\fra,\bbR}^N
    \otimes_{\sE_{X,\bbR}}\Gr^W_{n+2m}\sE_{X,\bbR}^\bullet(D)\\
    &\cong\bigoplus_{m=0}^N \bbR(m)^{\#\{\bk\in\bbN^g\mid \absv{\bk}=m\}}
    \otimes_{\bbR}\Gr^W_{n+2m}\sE_{X,\bbR}^\bullet(D). 
    \end{align*}
    By Theorem \ref{thm: Bu} and the classical results (cf.~\cites{Del71,Del74}), 
    this is an $\bbR$-Hodge complex of sheaves of weight $n$ 
    (in the sense of Peters--Steenbrink \cite{PS08}*{Definition 2.32}). 
    Hence $\sLog_{\fra,\bbR}^N(D)\otimes_{\sE_{X,\bbR}(D)}\sE_{X,\bbR}^\bullet(D)$ is 
    a mixed $\bbR$-Hodge complex of sheaves (\cite{PS08}*{Definition 3.13}), and 
    our statement (1) follows from \cite{PS08}*{Theorem 3.18}. 

    (3) is proved by the same argument, that is, reducing to Theorem \ref{thm: Bu} 
    by taking $\Gr^W_n$. See also \cite{BHY18}*{Theorem 4.7}. 

    To prove (2), we use the long exact sequence associated to the short exact sequence
	\[
		0\rightarrow\Sym^{N}\bbR(\bone)\rightarrow
		\bbLog^{N}_\fra\rightarrow\bbLog^{N-1}_\fra\rightarrow 0.
	\]
    The quasi-isomorphism 
    \[Rj_*\Sym^{N}\bbR(\bone)\cong \Sym^{N}\bbR(\bone)\otimes_{\bbR}\sE_{X,\bbR}^\bullet(D)\]
    for the constant local system $\Sym^{N}\bbR(\bone)$ follows from Theorem \ref{thm: Bu}, 
    and the statement (2) holds by induction on $N$. 

	(4) follows from the fact that any $\sE_{X,\bbR}$-module is fine. 
\end{proof}

%
\subsection{Equivariant Deligne-Beilinson Cohomology}
%

In this subsection, we will construct the complex 
$R\Gamma_{\mathrm{Hdg}}(U^\fra/\Delta,\bbLog^N_\fra)$
and define equivariant Deligne-Beilinson cohomology $H_{\sD}^m(U^\fra/\Delta,\bbLog_\fra^N)$. 
Since the $\Delta$-action on $U^\fra$ does not seem to extend to a fixed compactification, 
we consider a certain system of compactifications of $U^\fra$. 
We first fix a basis $\bsalpha=(\alpha_1,\ldots,\alpha_g)$ of $\fra$ 
and let $X_\bsalpha\to Y=(\bbP^1)^g$ be the blowup of $Y$ with respect to $\{1\} \subset \mathbb{T}^{\fra} \hookrightarrow Y$. 
Note that the compactification $U^\fra\hookrightarrow X_\bsalpha$ satisfies 
the conditions considered in \S\ref{subsec: LDC}. 
We also fix a generator $\ve_1,\ldots,\ve_{g-1}$ of $\Delta$. 

\begin{proposition}\label{prop: system}
	There exists a system of smooth compactifications 
	$j_J\colon U^\fra\hookrightarrow X_J$ of $U^\fra$ over $\bbQ$, 
	indexed by subsets $J\subset\{1,\ldots,g-1\}$, 
	whose complement $X_J\setminus U^\fra$ is a simple normal crossing divisor 
	such that $X_\emptyset=X_\bsalpha$ 
	and satisfying the following conditions:
	\begin{enumerate}
		\item For any inclusion $J'\subset J$, there exists a morphism 
		$\pi=\pi_{J,J'}\colon X_J\rightarrow X_{J'}$ 
		such that the diagram
		\[
			\xymatrix{
				X_J\ar[r]^{\pi_{J,J'}} &X_{J'}\\
				U^\fra\ar@{=}[r]^\id\ar@{^(->}[u]&U^\fra\ar@{^(->}[u]
			}
		\]
		commutes.
		\item For any $i\in J$, there exists a morphism  
		$\pair{\wt\ve_i}=\pair{\wt\ve_i}_J\colon X_J\rightarrow X_{J'}$ for $J'\coloneqq J\setminus\{i\}$
		such that the diagram
		\[
			\xymatrix{
				X_J\ar[r]^{\pair{\wt\ve_i}} &X_{J'}\\
				U^\fra\ar[r]^{\pair{\ve_i}}\ar@{^(->}[u]&U^\fra\ar@{^(->}[u]
			}
		\]
		commutes.
	\end{enumerate}
\end{proposition}

\begin{proof}
	We construct $X_J$ by induction on $\absv{J}$. 
	For $\absv{J}=0$, we let $X_J=X_\emptyset\coloneqq X_\bsalpha$.
	For $J\ne\emptyset$, suppose we have constructed $X_{J'}$ for all $J'\subsetneq J$.
	Using the embeddings $j_{J'}\colon U^\fra\hookrightarrow X_{J'}$, consider the embedding
	\[
		U^\fra\rightarrow\prod_{\substack{J'\subsetneq J\\\absv{J'}=\absv{J}-1}} X_{J'}\times X_{J'}
	\]
	given as the product of the embeddings
	\[
		U^\fra\xrightarrow{(j_{J'},j_{J'}\circ\pair{\ve_i})}X_{J'}\times X_{J'},
	\]
	where $\pair{\ve_i}\colon U^\fra\rightarrow U^\fra$ is the morphism induced from the multiplication by $\ve_i$.
	We let $\ol U^\fra$ be the closure of $U^\fra$ in $\prod_{\absv{J'}=\absv{J}-1} X_{J'}\times X_{J'}$,
	and we take $X_J$ to be any smooth blowup of $\ol U^\fra$ such that the complement
	$X_J\setminus U^\fra$ is a simple normal crossing divisor.  
	For $J'\subsetneq J$ such that $|J'|=|J|-1$, the morphism $\pi_{J,J'}\colon X_{J}\rightarrow X_{J'}$ is 
	defined as the projection to the first component, 
	and $\pair{\wt\ve_i}$ for $\{i\}=J\setminus J'$ 
	is defined as the projection to the second component of $X_{J'}\times X_{J'}$. 
	Then, for $J''\subset J'$, we define $\pi_{J,J''}$ to be the composition $\pi_{J',J''}\circ\pi_{J,J'}$. 
\end{proof}

By \cite{Liu02}*{Chap. 3, Prop. 3.11}, the morphisms $\pi$ and $\pair{\widetilde{\varepsilon}_i}$ in Proposition \ref{prop: system} are unique. 
For the same reason, we have the commutativity like 
$\pair{\widetilde{\varepsilon}_i}\circ\pair{\widetilde{\varepsilon}_j}
=\pair{\widetilde{\varepsilon}_j}\circ\pair{\widetilde{\varepsilon}_i}$. 

We will define $R\Gamma_{\mathrm{Hdg}}(U^\fra/\Delta,\bbLog^N_\fra)$
using the system of smooth compactifications 
and the logarithmic Dolbeault complex.
Let $\{X_J\}$ be a system of compactifications as in Proposition \ref{prop: system} 
and set $D_J\coloneqq X_J\setminus U^\fra$. 
On each $X_J$, the logarithm sheaf $\sLog^N_{\fra,\bbR}(D_J)$ is defined 
as in Definition \ref{def: sLog(D)}. 

\begin{lemma}
	For $J'\coloneqq J\setminus\{i\}$, we have natural isomorphisms
	\[
        \pi^*\sLog^N_{\fra,\bbR}(D_{J'})\xrightarrow\cong\sLog^N_{\fra,\bbR}(D_J),\qquad
		\pair{\wt{\ve}_i}^*\sLog^N_{\fra,\bbR}(D_{J'})\xrightarrow\cong\sLog^N_{\fra,\bbR}(D_J),
	\]
	which are compatible with the connection and the filtrations $W_\b$ and $F^\b$.
	Here, the pullbacks are performed as $\sE_{X_J,\bbR}(D_J)$-modules, i.e., we set 
	\[\pi^*\sLog^N_{\fra,\bbR}(D_{J'})\coloneqq
	\pi^{-1}\sLog^N_{\fra,\bbR}(D_{J'})\otimes_{\pi^{-1}\sE_{X_{J'},\bbR}(D_{J'})}\sE_{X_J,\bbR}(D_J)\]
	and similarly for $\pair{\wt{\ve}_i}^*$. 
 Moreover, on $U^{\fra}$, these isomorphisms agree with $\id$ and the action of $\varepsilon_i$ respectively. 
\end{lemma}

\begin{proof}
Let $\sL_J$ denote the sheaf $\sL^N_{\fra,\bbR}$ introduced in Definition \ref{def: sLog(D)} 
for $X=X_J$. Then we have natural identifications 
\begin{align*}
\pi^*\sLog^N_{\fra,\bbR}(D_{J'})
&=\pi^{-1}\sL_{J'}\otimes_{\pi^{-1}\sE_{X_{J'},\bbR}}\sE_{X_J,\bbR}(D_J)\\
&=\pi^{-1}\sL_{J'}\otimes_{\pi^{-1}\sE_{X_{J'},\bbR}}
\sE_{X_J,\bbR}\otimes_{\sE_{X_J,\bbR}}\sE_{X_J,\bbR}(D_J)\\
&=\sL_J\otimes_{\sE_{X_J,\bbR}}\sE_{X_J,\bbR}(D_J)\\
&=\sLog^N_{\fra,\bbR}(D_J). 
\end{align*}
Here we identify the bases $\upsilon_\fra^\bsk$ for $\sL_{J'}$ and $\sL_J$. 
This gives the first isomorphism. 

The second isomorphism, roughly speaking, sends the basis $\upsilon_{\bsalpha}^\bsk$ 
to $\upsilon_{\ve_i\bsalpha}^\bsk$ defined by another basis $\ve_i\bsalpha$ of $\fra$. 
To describe it more precisely, we define an integer matrix $(c_{jl})_{j,l}$ by 
\[\ve_i\alpha_j=\sum_{l=1}^g c_{jl}\alpha_l \]
and denote the transpose of its inverse matrix by $(\tilde{c}_{jl})_{j,l}$. 
Then we have $u_{\ve_i\alpha_j}=\sum_{l=1}^g\tilde{c}_{jl}u_{\alpha_l}$ 
where $u_{\ve_i\alpha_j}$ is associated with the basis $\ve_i\bsalpha$, 
and hence the formula 
\[u_{\ve_i\bsalpha}^\bsk 
=\prod_{j=1}^g\Biggl(\sum_{l=1}^g\tilde{c}_{jl}u_{\alpha_l}\Biggr)^{k_j}\]
(we interpret the right-hand side by first expanding it into a linear combination of 
monomials $u_{\alpha_1}^{n_1}\cdots u_{\alpha_g}^{n_g}$ and then replacing them with 
$u_{\bsalpha}^{(n_1,\ldots,n_g)}$). 
Thus we obtain an expression $u_{\ve_i\bsalpha}^\bsk=\sum_{\bsl}a_{\bsk,\bsl}u_\bsalpha^\bsl$. 
Using the same coefficients, we define an isomorphism 
\[\pair{\wt{\ve}_i}^{-1}\sL_{J'}\otimes_{\pair{\wt{\ve}_i}^{-1}\sE_{X_{J'},\bbR}}\sE_{X_J,\bbR}
\to \sL_J\]
by sending $\upsilon_{\bsalpha}^\bsk$ to $\sum_{\bsl}a_{\bsk,\bsl}\upsilon_{\bsalpha}^\bsl$. 
This induces the second isomorphism of our statement. 
\end{proof}

Using the above isomorphisms, we obtain morphisms of complexes
\begin{align*}
	\pi^*,\,\pair{\wt\ve_i}^*\colon\Gamma(X_{J'},\sLog^N_{\fra,\bbR}(D_{J'})
	\otimes\sE^\b_{X_{J'},\bbR}(D_{J'}))
	&\rightarrow\Gamma(X_J,\sLog^N_{\fra,\bbR}(D_J)
	\otimes\sE^\b_{X_J,\bbR}(D_J))
\end{align*}
compatible with the filtrations $W_\b$ and $F^\b$.
We define the complex 
$R\Gamma_{\mathrm{Hdg}}(U^\fra/\Delta,\bbLog^N_\fra)$ as follows.

\begin{definition}
	We define the complex $R\Gamma_{\mathrm{Hdg}}(U^\fra/\Delta,\bbLog^N_\fra)$ to be the total complex
	associated to the double complex 
	\begin{align*}
		&\Gamma(X,\sLog^N_{\fra,\bbR}(D)\otimes\sE^\bullet_{X,\bbR}(D))
		 \rightarrow
		\bigoplus_{\substack{J\subset\{1,\ldots,g-1\}\\\absv{J}=1}}\Gamma(X_J,\sLog^N_{\fra,\bbR}(D_J)
	\otimes\sE^\b_{X_J,\bbR}(D_J))\rightarrow\cdots\\
	&\rightarrow
		\bigoplus_{\substack{J\subset\{1,\ldots,g-1\}\\\absv{J}=g-2}}\Gamma(X_J,\sLog^N_{\fra,\bbR}(D_J)
	\otimes\sE^\b_{X_J,\bbR}(D_J))\rightarrow
		\bigoplus_{\substack{J\subset\{1,\ldots,g-1\}\\\absv{J}=g-1}}\Gamma(X_J,\sLog^N_{\fra,\bbR}(D_J)
	\otimes\sE^\b_{X_J,\bbR}(D_J)),
	\end{align*}
	where the differential from the $J'=J\setminus\{i\}$ component
	to the $J$ component is given by 
	\[
		\pair{\wt\ve_i}^*-\pi^*\colon
		\Gamma(X_{J'},\sLog^N_{\fra,\bbR}(D_{J'})
	\otimes\sE^\b_{X_{J'},\bbR}(D_{J'}))
	\rightarrow\Gamma(X_J,\sLog^N_{\fra,\bbR}(D_J)
	\otimes\sE^\b_{X_J,\bbR}(D_J)).
	\]
	We equip a filtration $W_\b$ on $R\Gamma_{\mathrm{Hdg}}(U^\fra/\Delta,\bbLog^N_\fra)$ 
	so that $W_n R\Gamma_{\mathrm{Hdg}}(U^\fra/\Delta,\bbLog^N_\fra)$ for $n\in\bbN$
	is the total complex associated to the double complex
	\begin{align*}
	\Dec(W)_n\Gamma(X,\sLog^N_{\fra,\bbR}(D)\otimes\sE^\b_{X,\bbR}(D))
	& \rightarrow
	\bigoplus_{\substack{J\subset\{1,\ldots,g-1\}\\\absv{J}=1}}
	\Dec(W)_n\Gamma(X_J,\sLog^N_{\fra,\bbR}(D_J)	\otimes\sE^\b_{X_J,\bbR}(D_J))
	\rightarrow\cdots\\
	&\rightarrow
	\bigoplus_{\substack{J\subset\{1,\ldots,g-1\}\\\absv{J}=g-1}}
	\Dec(W)_n\Gamma(X_J,\sLog^N_{\fra,\bbR}(D_J)\otimes\sE^\b_{X_J,\bbR}(D_J)),
	\end{align*}
	and we equip a filtration $F^\b$ 
	so that $F^p(R\Gamma_{\mathrm{Hdg}}(U^\fra/\Delta,\bbLog^N_\fra)\otimes_\bbR\bbC)$ for $p\in\bbN$
	is the total complex associated to the double complex
	\begin{align*}
		\Gamma(X,F^p(\sLog^N_{\fra}(D)\otimes\sE^\b_{X,\bbC}(D)))
		& \rightarrow
		\bigoplus_{\substack{J\subset\{1,\ldots,g-1\}\\\absv{J}=1}}\Gamma(X_J,
		F^p(\sLog^N_{\fra}(D_J)
	\otimes\sE^\b_{X_J,\bbC}(D_J)))\rightarrow\cdots\\
	&\rightarrow
		\bigoplus_{\substack{J\subset\{1,\ldots,g-1\}\\\absv{J}=g-1}}
		\Gamma(X_J,F^p(\sLog^N_{\fra}(D_J)
	\otimes\sE^\b_{X_J,\bbC}(D_J))).
	\end{align*}
\end{definition}

\begin{lemma}\label{lem: A8}
	Let $K^{\bullet,\bullet}$ be a double complex of $\bbR$-vector spaces equipped with an $\bbR$-linear filtration 
	$W_\bullet$ and a $\bbC$-linear filtration $F^\bullet$ on $K^{\bullet,\bullet}_\bbC$.
	Suppose that $K^{i,j}=0$ if $i<0$ or $j<0$, and that the complex $K^{\bullet,j}$ is an absolute $\bbR$-Hodge complex for any $j$.
	Then the total complex $\Tot K^{\bullet,\bullet}$ is also an absolute $\bbR$-Hodge complex.
	In particular, $R\Gamma_{\mathrm{Hdg}}(U^\fra/\Delta,\bbLog_\fra^N)$ is an absolute $\bbR$-Hodge complex.
\end{lemma}

\begin{proof}
	For integers $i$ and $j$, let $A_i^{j}:=H^i(K^{\bullet,j})$ be the cohomology group with respect to the first differentials.
	Then $A_i^{\bullet}$ forms a complex of mixed $\bbR$-Hodge structures.
	We have the spectral sequence
	\[
		E_2^{p,q}=H^p(A_q^\bullet)\Rightarrow H^{p+q}(\Tot K^{\bullet,\bullet}).
	\]
	Since $K^{\bullet,j}$ and $A_q^\bullet$ are absolute $\bbR$-Hodge complexes, we have
	\begin{align*}
		\Gr^W_nE_2^{p,q}=H^p(\Gr^W_nA_q^\bullet),&&\Gr^W_nA_q^j=H^q(\Gr^W_nK^{\bullet,j}).
	\end{align*}
	Thus we have a spectral sequence
	\[
		\Gr^W_nE_2^{p,q}\Rightarrow H^{p+q}(\Gr^W_n\Tot K^{\bullet,\bullet}).
	\]
	As each $\Gr^W_nE_2^{p,q}$ is a pure $\bbR$-Hodge structure of weight $n$, 
	$\Gr^W_nE_\infty^{p,q}$ is also a pure $\bbR$-Hodge structure of weight $n$.
	Since Hodge structures are closed under extensions of filtered modules (\cite{Hub95}*{Lemma 8.1.4} 
	or \cite{PS08}*{Criterion 3.10}), we see that $H^{p+q}(\Gr^W_n\Tot K^{\bullet,\bullet})$ 
	is also a pure $\bbR$-Hodge structure of weight $n$.

	Moreover, we have
	\[
		\dim H^k(\Tot K^{\bullet,\bullet})=\sum_{l\in\bbZ}\dim E_\infty^{l,k-l}=\sum_{l\in\bbZ}\sum_{n\in\bbZ}	
		\dim\Gr^W_nE_\infty^{l,k-l}=\sum_{n\in\bbZ}\dim H^k(\Gr^W_n\Tot K^{\bullet,\bullet}).
	\]
	This shows that the spectral sequence with respect to the filtration $W_\bullet$ on 
	$\Tot K^{\bullet,\bullet}$ degenerates at $E_1$.
	The $E_1$-degeneration of other three spectral sequences can be shown in similar ways.
\end{proof}

\begin{lemma}\label{lem: A9}
	We have an isomorphism of $\bbR$-vector spaces 
	\[
		H^m(U^\fra/\Delta,\bbLog^N_\fra)\cong H^m(R\Gamma_{\mathrm{Hdg}}(U^\fra/\Delta,\bbLog^N_\fra)).
	\]
\end{lemma}

\begin{proof}
    Let $C^\bullet$ be the total complex associated to the double complex 
    \begin{align*}
		R\Gamma(U^\fra,\bbLog^N_{\fra})
		& \rightarrow
		\bigoplus_{\substack{J\subset\{1,\ldots,g-1\}\\\absv{J}=1}}R\Gamma(U^\fra,
		\bbLog^N_{\fra})\rightarrow\cdots
	\rightarrow
		\bigoplus_{\substack{J\subset\{1,\ldots,g-1\}\\\absv{J}=g-1}}
		R\Gamma(U^\fra,\bbLog^N_{\fra}),
	\end{align*}
    where we let $R\Gamma(U^\fra,\bbLog^N_{\fra})$ denote the Dolbeault complex 
    $\Gamma(U^\fra,\sLog^N_{\fra,\bbR}\otimes\sE^\bullet_{U^\fra,\bbR})$ and 
    the differential from the $J'=J\setminus\{i\}$ component to the $J$ component is 
    given by $\langle \varepsilon_i\rangle^*-1$.
    Then, by Lemma \ref{lemma: resolution}, the complex $C^\bullet$ realizes 
    $R\Hom_{\bbZ[\Delta]}(\bbZ,R\Gamma(U^\fra,\bbLog^N_\fra))$, 
    which shows that $H^m(U^\fra/\Delta,\bbLog^N_\fra)\cong H^m(C^\bullet)$. 

    On the other hand, by construction there exists a quasi-isomorphism 
    $R\Gamma_{\mathrm{Hdg}}(U^\fra/\Delta,\bbLog^N_\fra)\isomto C^\bullet$. 
    Hence we have $H^m(R\Gamma_{\mathrm{Hdg}}(U^\fra/\Delta,\bbLog^N_\fra))\cong H^m(C^\bullet)$. 
\end{proof}

By using Lemmas \ref{lem: A8} and \ref{lem: A9}, we equip $H^m(U^\fra/\Delta,\bbLog^N_\fra)$ with 
a mixed $\bbR$-Hodge structure. 
Moreover, by Theorem \ref{thm: A2}, the absolute $\bbR$-Hodge complex 
$R\Gamma_{\mathrm{Hdg}}(U^\fra/\Delta,\bbLog_\fra^N)$ gives a complex in the derived category
$D^+(\MHS_\bbR)$, which we again denote $R\Gamma_{\mathrm{Hdg}}(U^\fra/\Delta,\bbLog_\fra^N)$.

\begin{definition}\label{def: A10}
	We define the equivariant Deligne-Beilinson cohomology by 
	\[
		H_{\sD}^m(U^\fra/\Delta,\bbLog_\fra^N)\coloneqq 
		\Ext^m_{\MHS_\bbR}\bigl(\bbR(0),R\Gamma_{\mathrm{Hdg}}(U^\fra/\Delta,\bbLog_\fra^N)\bigr)
	\]
	for any $m\in\bbN$, where
	\begin{align*}
		\Ext^m_{\MHS_\bbR}\bigl(\bbR(0),R\Gamma_{\mathrm{Hdg}}(U^\fra/\Delta,\bbLog_\fra^N)\bigr)
		&\coloneqq
		H^m\bigl(R\Hom_{\MHS_\bbR}(\bbR(0),R\Gamma_{\mathrm{Hdg}}
		(U^\fra/\Delta,\bbLog_\fra^N))\bigr)\\
		&=
		\Hom_{D^+(\MHS_\bbR)}\bigl(\bbR(0),R\Gamma_{\mathrm{Hdg}}
		(U^\fra/\Delta,\bbLog_\fra^N)[m]\bigr). 
	\end{align*}
\end{definition}

The equivariant 
Deligne-Beilinson cohomology $H_{\sD}^m(U^\fra/\Delta,\bbLog_\fra^N)$ may be calculated explicitly as the cohomology of 
the simple complex associated to the double complex
\begin{equation}\label{eq: M}
	W_0 M^\bullet_\bbR \oplus (F^0\cap W_0) M^\bullet_\bbC\rightarrow  W_0M^\bullet_\bbC, 
    \qquad (x,y)\mapsto x-y
\end{equation}
for $M^\bullet\coloneqq R\Gamma_{\mathrm{Hdg}}(U^\fra/\Delta,\bbLog_\fra^N)$
(see \cite{Bei86}*{\S5} or \cite{Hub95}*{Proposition 8.4.2}).

\begin{proposition}\label{prop: A11}
\begin{enumerate}
\item 
We have a spectral sequence 
\[E^{p,q}_2=H^p(\Delta, H^q(U^\fra,\bbLog^N_\fra))\Rightarrow H^{p+q}(U^\fra/\Delta,\bbLog^N_\fra)\]
in the category of mixed $\bbR$-Hodge structures. 
\item 
We have a spectral sequence 
\[E_2^{p,q}=\Ext^p_{\MHS_{\bbR}}\bigl(\bbR(0), H^{q}(U^\fra/\Delta,\bbLog_\fra^N)\bigr)
\Rightarrow H^{p+q}_{\sD}(U^\fra/\Delta,\bbLog_\fra^N) \]
in the category of $\bbR$-vector spaces. 
\end{enumerate}
\end{proposition}

\begin{proof}
	The spectral sequences are simply the spectral sequences associated to the double complex
	$C^\bullet$ of Lemma \ref{lem: A9}
	and the double complex given in \eqref{eq: M}.  See \cite{Tohoku}*{\S2.4} or 
	\cite{Mil80}*{Appendix B Example 3} for the existence of such spectral sequences
	associated to double complexes.
\end{proof}

\subsection*{Acknowledgements}
The authors express their sincere gratitude to Seidai Yasuda for helpful suggestions 
on the construction of the complex $R\Gamma_{\mathrm{Hdg}}(U^\fra/\Delta,\bbLog_\fra^N)$. 

\begin{bibdiv}
	\begin{biblist}
		\bibselect{PolylogarithmBibliography}
	\end{biblist}
\end{bibdiv}
\end{document}